\newtheorem{theorem}{Theorem}[section]
\newtheorem{lemma}[theorem]{Lemma}
\newtheorem{corollary}[theorem]{Corollary}
\theoremstyle{definition}
\theoremstyle{remark}
\newtheorem{remark}[theorem]{Remark}
\newcommand*{\dif}{\ensuremath{\mathop{}\!\mathrm{d}}}
\def\R{\mathbb{R}}
\def\P{\mathbb{P}}
\numberwithin{equation}{section}
\begin{document}

 \title[From $0$ to $3$: Intermediate phases of  the maximum of two-type BBM]{From $0$ to $3$: Intermediate phases between normal and anomalous spreading  of two-type branching Brownian motion}


\author{Heng Ma}
\address{School of Mathematical Sciences, Peking
University}
\curraddr{}
\email{hengmamath@gmail.com}
\thanks{}

\author{Yan-Xia Ren}
\address{LMAM School of Mathematical Sciences \& Center for
Statistical Science, Peking
University}
\curraddr{}
\email{yxren@math.pku.edu.cn}
\thanks{Yan-Xia Ren is the corresponding author.}
\thanks{
The research of this project is supported
   by the National Key R\&D Program of China (No. 2020YFA0712900).
 The research of Yan-Xia Ren is supported by NSFC (Grant Nos. 12071011 and 12231002) and  the Fundamental Research Funds for Central Universities, Peking University LMEQF}

\subjclass[2020]{Primary 60J80; 60G55, Secondary 60G70; 92D25.}
 \keywords{branching Brownian motion; anomalous spreading; extremal values; multitype branching process; phase transition.}
\date{}

\dedicatory{}

\begin{abstract}
  The logarithmic correction for the order of the maximum of a two-type reducible branching
  Brownian motion on the real line
  exhibits a double jump when the parameters (the ratio of the diffusion
  coefficients of the two types of particles,
  and the ratio of the branching rates
  the two types of particles)
  cross the boundary of the anomalous spreading region identified by Biggins.

 In this paper, we further examine this double jump phenomenon  by
 studying a two-type reducible branching
  Brownian motion on the real line with its parameters depend on the time horizon $t$. We show that when the parameters
  approach the boundaries of the anomalous spreading region in an appropriate way, the order of the maximum can interpolate smoothly between different surrounding regimes. We also determine the asymptotic law of the maximum and characterize the extremal process.
\end{abstract}

\maketitle


\section{Introduction and main results}
Branching Brownian motion (BBM) is a probabilistic model that describes the evolution of a population of individuals.
This model has been intensively studied and continues to be the subject of many recent researches.
A large literature focused on the link between BBM and the F-KPP reaction-diffusion equation introduced in \cite{Fisher37} and \cite{KPP37}.
 For results in this direction, see
\cite{BGKM23, Bramson83, CR88, Harris99, Kyprianou04, McKean75, MRR22} and the references therein.
Understanding the spatial  spread of such a  population, particularly the propagation of the front, is a fundamental question
and has attracted significant interest, see e.g.
 \cite{ABBS13, ABK11, ABK13,  BBGMRR22,  BH17, Bramson78, CHL19, LS87}.
The insights and methods used in studying the extreme values of BBM are applicable to a large class of probabilistic models, including the two-dimensional discrete Gaussian free field, epsilon-cover time of the two-dimensional torus by Brownian motion, and characteristic polynomials of random matrices.
For further details, we refer our reader to the lecture notes
 \cite{Kistler14, Zeitouni12} and the reviews \cite{Arguin16, BK22}.

Multitype branching Brownian motion is a natural extension of BBM that
can be used to describe the evolution a population composed of different types or species.
In the irreducible case (where each type of individual can have descendants of
all types),
it behaves in some sense like  an \textit{effective} single-type BBM, see e.g. \cite{HRS23, RY}.
This paper focuses on a \textit{two-type reducible} case
(where individuals of type 2 cannot have descendants of type 1), which is the simplest setting
in which the maximum exhibits a phase transition
that is not observed in the case of single-type BBM.

In \cite{Biggins10,Biggins12}, Biggins gave a comprehensive description of the leading order of the maximum of the two-type reducible BBM.
    There are three cases $\mathscr{C}_{I},\mathscr{C}_{II}$ and $\mathscr{C}_{III}$:
 In $\mathscr{C}_{I}$, the maximum equals the speed of individuals of type $1$; in $\mathscr{C}_{II}$, the maximum is equal to the speed of individuals of type $2$; and in $\mathscr{C}_{III}$, the spreading speed of the two-type process is strictly larger than
the speeds of a single-type  BBM of particles of type $1$ or type  $2$.
In this paper, $\mathscr{C}_{I}$ and $\mathscr{C}_{II}$ are referred to as normal spreading regions, while $\mathscr{C}_{III}$ is referred to as the \textit{anomalous spreading} region.
The regime to which the process belongs
depends on  the ratio $\beta$ of the branching rates and the  ratio $\sigma^{2}$ of the diffusion coefficients for individuals of types $1$ and $2$.
Belloum and Mallein \cite{BM21} obtained the logarithmic correction of the  maximum and the limiting extremal process when $(\beta,\sigma^{2})$ are interior points of $\mathscr{C}_{I},\mathscr{C}_{II},\mathscr{C}_{III}$.
Further studies on the boundary case by Belloum in \cite{Belloum22} and by the authors in \cite{MR23} completed the phase diagram for the maximum (see Figure \ref{fig-phase}).
Notably, a double jump occurs in the logarithmic correction for
the order of the maximum when the parameters $(\beta,\sigma^{2})$
cross  the boundary of the anomalous spreading region $\mathscr{C}_{III}$.

\begin{figure}[htbp]
  \centering
 \includegraphics[width=\linewidth]{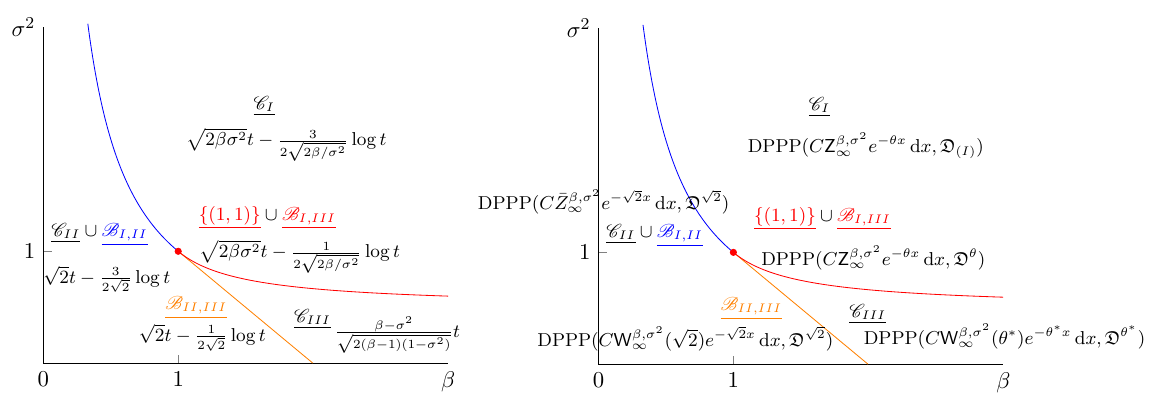}
  \caption{Phase diagram for the order of the  maximum and limiting extremal process of two-type BBM}\label{fig-phase}
  \end{figure}

A further interesting question (see \cite[Question 1]{MR23}) is to make the logarithmic correction smoothly interpolates between normal spreading cases and anomalous spreading case.
Similar problems  for variable speed BBM were investigated in \cite{Bh20} and \cite{KS15}:
The logarithmic correction for the order of the maximum for two-speed BBM changes discontinuously when approaching slopes $\sigma_1^2=$ $\sigma_2^2=1$, which corresponds to standard BBM. Bovier and Huang \cite{Bh20} further studied this transition by choosing $\sigma_1^2=1 \pm t^{-\alpha}$ and $\sigma_2^2=1 \mp t^{-\alpha}$, and showed that the logarithmic correction for the order of the maximum  smoothly interpolates between the correction in the i.i.d. case $\frac{1}{2 \sqrt{2}} \log t$, standard BBM case $\frac{3}{2 \sqrt{2}} \log t$, and $\frac{6}{2 \sqrt{2}} \log t$ when $\alpha \in (0,1/2)$.
Inspired by these two papers, we study in this paper a two-type reducible BMM with parameters depending on the time horizon $t$. We assume that the parameters $(\beta_{t},\sigma_{t}^{2})$  approach  the boundary of $\mathscr{C}_{III}$ appropriately. We show that  the logarithmic correction for the maximum smoothly interpolates between the normal spreading cases  and anomalous spreading case
(see Figure \ref{fig-phase-2}).
Moreover, we find the asymptotic law of the maximum and characterize the extremal process, which turns out to coincide (up to a constant)
with that of a two-type reducible BBM with parameters $(\lim\limits_{t \to \infty} \beta_{t}, \lim\limits_{t \to \infty} \sigma^{2}_{t})$.

\subsection{Branching Brownian motions}

A  BBM on the real line can be described as follows:
Initially, there is a particle which moves as a Brownian motion with diffusion coefficient $\sigma^{2}$ starting from the origin.
 At rate $\beta$,  the initial particle splits into two particles.
The offspring particles start moving from their place of birth independently, with same diffusion coefficient and obeying the same branching rule.
We denote this process by $\{ ( \mathsf{X}^{\beta,\sigma^{2}}_{u}(t), u \in \mathsf{N}_{t})_{t \geq 0}, \mathsf{P} \}$, where $\mathsf{N}_{t}$ is the set of all particles alive at time $t$ and $\mathsf{X}^{\beta,\sigma^{2}}_u(t)$ is the position of an individual $u \in \mathsf{N}_{t}$. If  $\beta=\sigma^{2}=1$, we  call $\{\mathsf{X}^{1,1}_{u}(t)\}$ the standard BBM and write $\{\mathsf{X}_{u}(t)\}$ for short.
The scaling property of Brownian motion implies that
\begin{equation*}
\left(\mathsf{X}_u^{\beta, \sigma^2}(t): u \in \mathsf{N}_t\right) \stackrel{\text { law }}{=}\left(\frac{\sigma}{\sqrt{\beta}} \mathsf{X}_u(\beta t): t \in \mathsf{N}_{\beta t}\right).
\end{equation*}

Let $\mathsf{M}^{\beta,\sigma^{2}}_t=\max_{u \in \mathsf{N}_{t}}\mathsf{X}_{u}^{\beta,\sigma^{2}} (t)$ be  the maximum of BBM at time $t$. It is well-known  that  the
centered maximum
$\mathsf{M}^{\beta,\sigma^{2}}_t$ converges in distribution to a randomly shifted Gumbel random variable  (see \cite{Bramson78,Bramson83,LS87}).
More precisely, letting
\begin{equation*}
  v= \sqrt{2 \beta \sigma^{2}} \quad \text{ and }  \quad \theta =
  \sqrt{2 \beta / \sigma^{2}},
\end{equation*}
then
\begin{equation}\label{eq-converge-to-Gumbel}
\lim_{t \to \infty}
  \mathsf{P} \left( \mathsf{M}^{\beta,\sigma^{2}} _{t}-  v t + \frac{3}{2 \theta} \log t \leq x \right)
  =  \mathsf{E} \left[ \exp \left(-  C  \mathsf{Z}^{\beta,\sigma^{2}}_{\infty} e^{-\theta x}\right)
  \right]
\end{equation}
for some  constant $C$ depending on $\beta, \sigma^{2}$, where
$\mathsf{Z}^{\beta,\sigma^{2}}_{\infty}$   is the almost sure limit of the \textit{derivative martingale} $(\mathsf{Z}^{\beta,\sigma^{2}}_{t})_{t>0}$ defined by $ \mathsf{Z}^{\beta,\sigma^{2}}_{t} :=\sum_{u \in \mathsf{N}_t} [ v t-\mathsf{X}^{\beta,\sigma^{2}}_u(t) ] \exp( \theta \mathsf{X}^{\beta,\sigma^{2}}_u(t)-2 \beta t ) $. The name ``derivative martingale" comes from the fact that $\mathsf{Z}^{\beta,\sigma^{2}}_{t}= -\frac{\partial}{\partial \lambda}|_{\lambda = \theta}\mathsf{W}^{\beta,\sigma^{2}}_t(\lambda) $, where
 $\mathsf{W}^{\beta,\sigma^{2}}_t(\lambda):=\sum_{u \in \mathsf{N}_t} \exp\{\lambda \mathsf{X}^{\beta,\sigma^{2}}_u(t)- (\beta+\frac{\lambda^2 \sigma^{2}}{2} )t\}$ are the \textit{additive martingales} for BBM.

 The construction of the limiting extremal process for BBM,
 obtained independently in \cite{ABBS13} and \cite{ABK13},
 gives a deeper understanding of the extreme value statistics for BBM. Precisely,
\begin{equation}
  \label{eq-converge-to-DPPP}
   \lim_{t \to \infty} \sum_{u \in \mathsf{N}_t} \delta_{\mathsf{X}_{u}(t)-\sqrt{2}t + \frac{3}{2\sqrt{2}}\log t }    =\operatorname{DPPP}\left(\sqrt{2} C_{\star} \mathsf{Z}_{\infty} \mathrm{e}^{-\sqrt{2} x} \mathrm{~d} x, \mathfrak{D}^{\sqrt{2}}\right)
  \end{equation}
  in law in the vague topology\footnote{
   We say that point measures $ \mathcal{E}_{t}$ converges to  $\mathcal{E}_{\infty}$ as $t\to\infty$ in the vague topology, if
 for every continuous and compactly supported test function $\phi$,
  $ \int \phi \dif \mathcal{E}_{t} $ converges weakly to
  $\int \phi \dif \mathcal{E}_{\infty}$.
  In this paper  when dealing with the weak convergence
 of point processes,
  we always assume that we are using this setting.},
where DPPP $(\mu, \mathfrak{D})$   stands for
a \textit{decorated Poisson point process} with intensity measure $\mu$ and  decoration law $\mathfrak{D}$.
Given a (random) measure $\mu$ on $\mathbb{R}$ and a point process $\mathfrak{D}$ on $\mathbb{R}$,
let $\sum_{i} \delta_{x_{i}} $ be a Poisson point process with intensity $\mu$
and let $\left( \sum_{j} \delta_{d^{i}_{j}} :i \geq 0\right)$  be an independent family of i.i.d. point processes with common law $\mathfrak{D}$, then $  \sum_{i,j}   \delta_{x_i+d_j^i}$ is a decorated Poisson point process with intensity measure $\mu$ and  decoration law $\mathfrak{D}$.
The decoration law $\mathfrak{D}^{\sqrt{2}}$ in \eqref{eq-converge-to-DPPP} belongs to a family of  ``gap processes" $\left(\mathfrak{D}^{\varrho}, \varrho \geq \sqrt{2}\right)$
(see \cite{BBCM22,BH14}),
defined as
\begin{equation}\label{eq-decoration-process}
\mathfrak{D}^{\varrho}(\cdot):=\lim _{t \rightarrow \infty}
\mathsf{ P}
 \left(\sum_{u \in \mathsf{N}_t} \delta_{\mathsf{X}_{u}(t)-\mathsf{M}_t } \in \cdot \mid \mathsf{M}_t \geq   \varrho t\right).
\end{equation}

\subsection{Two-type reducible branching Brownian motions}
In this paper, we study the following two-type reducible branching Brownian motion:
 Type $1$ particles move according to a Brownian motion with diffusion coefficient $\sigma^2$, branch at rate $\beta$ into two children of type $1$ and give birth to particles of type $2$ at rate $1$; type $2$ particles move as a standard Brownian motion and branch at rate $1$ into $2$ children of type $2$, but cannot give birth to offspring of type $1$.
 For $t\geq 0$, we use $N_{t}$ to denote the total number of particles alive at time $t$.
  We can further categorize these particles into type $1$ and $2$, represented by $N^{1}_{t}$ and $N^{2}_{t}$ respectively.
 The position of an individual $u \in N_{t}$ is denoted by $X_{u}(t)$.
 The maximum position at time $t$ is represented by $M_{t}= \max_{u \in N_{t}} X_{u}(t)$. Finally, the law that the two-type BBM starts with a type $1$ particle at the origin is denoted by $  \mathbb{P}^{\beta,\sigma^{2}}$.

The extremal value statistics of the two-type system behaves like that of the single-type BBM.
The centered maximum $(M_{t} - m^{\beta,\sigma^{2}}(t) , \mathbb{P}^{\beta,\sigma^{2}})$  converges in law to a random shifted Gumbel distribution, with a proper centering $m^{\beta,\sigma^{2}}(t)$ of the  form
$l(\beta,\sigma^{2}) \, t- s(\beta,\sigma^{2})\, \log t$. Additionally, the extremal process $(\sum_{u \in N_{t}} \delta_{X_{u}(t)-m^{\beta,\sigma^{2}}(t) }, \mathbb{P}^{\beta,\sigma^{2}})$ converges in law to a certain decorated Poisson point process.
However, an intriguing phase transition occurs in the centering $m^{\beta,\sigma^{2}}(t)$ of the  maximum of the two-type BBM (see Table \ref{tab-phase} and Figure \ref{fig-phase}), but not in single type BBMs, due to the  significant contribution of the added  type $2$ particles to the maximum in some situations.

Divide the parameter space $\left(\beta, \sigma^2\right) \in \mathbb{R}_{+}^2$ into three regions (see Figure \ref{fig-phase}):
  \begin{align*}
    \mathscr{C}_I  &=\left\{\left(\beta, \sigma^2\right): \sigma^2>\frac{1}{\beta}1_{\{\beta \leq 1\}}+\frac{\beta}{2 \beta-1}1_{\{\beta>1\}} \right\} ,\\
    \mathscr{C}_{I I} & =\left\{\left(\beta, \sigma^2\right): \sigma^2<\frac{1}{\beta}1_{\{\beta \leq 1\}}+(2-\beta)1_{\{\beta>1\}}\right\} ,\\
    \mathscr{C}_{I I I} & =\left\{\left(\beta, \sigma^2\right): \sigma^2+\beta>2 \text { and } \sigma^2<\frac{\beta}{2 \beta-1}\right\} ;
    \end{align*}
and define $\mathscr{B}_{i,j} = \partial \mathscr{C}_{i} \cap \partial \mathscr{C}_{j} \backslash \{(1,1)\} $ for $i \neq j$ and $i,j \in \{I,II,III\} $.
If $(\beta,\sigma^2) \in \mathscr{C}_{I} $, the order of the maximum  of the two-type process
is the same as that of particles of type $1$ alone,
and  the asymptotic behavior of the extremal process is dominated by the long-time behavior of particles of type 1. If $(\beta,\sigma^2) \in \mathscr{C}_{II} \cup \mathscr{B}_{I,II}  $
the asymptotic behavior of particles of type 2 dominates the extremal process.
If $(\beta,\sigma^2) \in \mathscr{C}_{III} \cup  \partial \mathscr{C}_{III}$, the so-called  anomalous spreading region, the speed of the two-type process is strictly larger than the speeds of both single type particle systems.
Extreme values can only be achieved by descendants of  first-generation type $2$ particles  born during a certain time interval and within a certain space interval.
To present the known results in a clear and precise manner
 we summarize the different regimes of the maximum and extremal process of the two-type BBM in a table.
For  cases $\mathscr{C}_{I},\mathscr{C}_{II},\mathscr{C}_{III} $, we refer to \cite{BM21}.
The case {(1,1)} was discussed in \cite{Belloum22}, and cases  $\mathscr{B}_{I,II},\mathscr{B}_{I,III},\mathscr{B}_{II,III} $ were covered in \cite{MR23}.
 Recall that the family of  decoration laws $(\mathfrak{D}^{\rho}: \rho \geq \sqrt{2})$ are defined in \eqref{eq-decoration-process}.
\begin{table}[h!]
  \centering
    \begin{tabular}{|l | l | l|}
      \hline
    Regime & Correct centering $m^{\beta,\sigma^{2}}(t)$ & Limiting extremal process  \\
    \hline
      $\mathscr{C}_{I}$ & $\sqrt{2 \beta \sigma^2}t-\frac{3}{2\theta }\log t $ &  $\mathrm{DPPP}(C \mathsf{Z}^{\beta,\sigma^2}_{\infty}e^{-\theta x } \dif x, \mathfrak{D}_{(I)}) $\\

      $\mathscr{C}_{II} \cup \mathscr{B}_{I,II}$  & $\sqrt{2}t-\frac{3}{2\sqrt{2}}\log t$ &  $\mathrm{DPPP}(C \bar{Z}^{\beta,\sigma^2}_{\infty}e^{-\sqrt{2}x } \dif x, \mathfrak{D}^{\sqrt{2}})$ \\

      $\mathscr{C}_{III}$ & $v^{*} t =  \frac{\beta-\sigma^2}{\sqrt{2(\beta-1)(1-\sigma^2)}}t$  &  $\mathrm{DPPP}(C \mathsf{W}^{\beta,\sigma^2}_{\infty}(\theta^{*})e^{-\theta^{*}x } \dif x, \mathfrak{D}^{\theta^{*}}) $  \\

      $\mathscr{B}_{II,III}$ &  $\sqrt{2}t-\frac{1}{2\sqrt{2}}\log t $   & $\mathrm{DPPP}(C \mathsf{W}^{\beta,\sigma^{2}}_{\infty}(\sqrt{2}) e^{-\sqrt{2}x} \dif x, \mathfrak{D}^{\sqrt{2}})$  \\
      $\mathscr{B}_{I,III} \cup \{(1,1)\} $ &  $\sqrt{2 \beta \sigma^{2}} t-\frac{1}{2\theta }\log t $   & $\mathrm{DPPP}(C \mathsf{Z}^{\beta,\sigma^{2}}_{\infty}e^{-\theta x} \dif x, \mathfrak{D}^{\theta})$ \\
  \hline
   \end{tabular}
  \caption{Five regimes of limiting behavior of $(\sum_{u \in N_{t}} \delta_{X_{u}(t)-m^{\beta,\sigma^{2}}(t) }, \mathbb{P}^{\beta,\sigma^{2}})$. The decoration process $\mathfrak{D}_{(I)}$
  was obtained implicitly in  \cite[Theorem 1.1]{BM21}. The random variable $\bar{Z}^{\beta,\sigma^{2}}_{\infty}$  is a composition of derivative martingale and additive martingale, see \cite[Lemma 5.3]{BM21}. Also,  $\theta^{*} = \sqrt{2(\beta-1)/(2-\sigma^{2})}$.}
  \label{tab-phase}
  \end{table}
One can also see Figure \ref{fig-phase} for a more visual representation of these results.
Note that the leading coefficient $l(\beta,\sigma^{2})$ is a continuous function of $(\beta,\sigma^{2})$. However the subleading coefficient $s(\beta,\sigma^{2})$  exhibits discontinuity.
 More interestingly,
 a double jump in the maximum is observed   when $(\beta,\sigma^{2})$ crosses the boundary of the anomalous spreading region $\mathscr{C}_{III}$.
Inspired by papers \cite{Bh20} and \cite{KS15},
we further study the apparent discontinuities in the maximum
that occur when
the parameters $(\beta,\sigma^{2})$ cross  the boundary of $\mathscr{C}_{III}$.
For this, we assume that the parameters $(\beta_{t},\sigma_{t}^{2})$ depend on the time horizon $t$ in an explicit way and approach the boundary of $\mathscr{C}_{III}$ appropriately.
Then we show  that the logarithmic correction for  the maximum now smoothly interpolates between
the normal spreading case ($\mathscr{C}_{I}$ or $\mathscr{C}_{II}$), the boundary case ($\mathscr{B}_{I,III}$, $\{(1,1)\}$ or $\mathscr{B}_{I,III}$) and
the anomalous spreading case $\mathscr{C}_{III}$, shown in the following Figure \ref{fig-phase-2}. (In Figure \ref{fig-phase-2} we only give the negative coefficient of the logarithmic correction, since the leading coefficient changes continuously.)

 \begin{figure}[htbp]
  \centering
 \includegraphics[width=\linewidth]{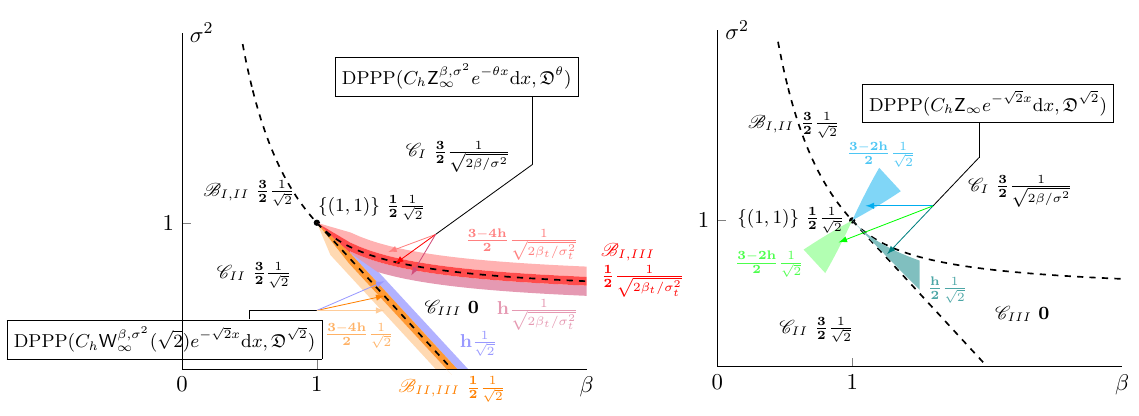}
  \caption{Intermediate phases between normal and anomalous spreading of two-type BBM.
    In the \textcolor{red}{red regime around $\mathscr{B}_{I,III}$} and   \textcolor{orange}{orange regime around  $\mathscr{B}_{II,III}$}
  (roughly speaking, this is the case that the $\mathrm{dist}((\beta_{t},\sigma^{2}_{t}), \mathscr{B}_{I,III})$ and  $\mathrm{dist}( (\beta_{t},\sigma^{2}_{t}), \mathscr{B}_{II,III}) $ respectively, is of order $o(1/\sqrt{t})$), the perturbation is negligible so that
everything is the same as on the boundary $\textcolor{red}{\mathscr{B}_{I,III}}$ and  $\textcolor{orange}{\mathscr{B}_{II,III}}$ respectively.
In the southeast anomalous spreading regime $\mathscr{C}_{III}$, the coefficient of the log-correction term is zero.
 In the two regimes corresponding to
$\textcolor{purple!70}{\mathrm{dist}((\beta_{t},\sigma^{2}_{t}), \mathscr{B}_{I,III})=\Theta(t^{-h})}$ and   $\textcolor{blue!70}{\mathrm{dist}( (\beta_{t},\sigma^{2}_{t}), \mathscr{B}_{II,III})= \Theta(t^{-h})}$  respectively, with
$h \in (0,1/2)$ and  $(\beta_{t},\sigma^{2}_{t}) \in \mathscr{C}_{III}$,
the coefficient interpolates continuously between the surrounding regimes.
In the normal spreading regimes $\mathscr{C}_{I}$ and $\mathscr{C}_{II}$, the coefficient of log-correction term is the same as in a single-type-$1$ BBM and  a single-type-$2$ BBM respectively. In the two regimes determined by \textcolor{red!50}{$\mathrm{dist}((\beta_{t},\sigma^{2}_{t}), \mathscr{B}_{I,III})=\Theta(t^{-h})$, $(\beta_{t},\sigma^{2}_{t}) \in \mathscr{C}_{I}$}
and \textcolor{orange!50}{$\mathrm{dist}((\beta_{t},\sigma^{2}_{t}), \mathscr{B}_{II,III})=\Theta(t^{-h})$, $(\beta_{t},\sigma^{2}_{t}) \in \mathscr{C}_{II}$} respectively with $h \in (0,1/2)$,
the coefficient interpolates continuously between the surrounding regimes.
Similar stories happen when $(\beta_{t},\sigma^{2}_{t})$ approaches $(1,1)$ via 
 certain curves in the highlighted regions 
 on the right figure, 
but now the order of the threshold becomes $1/t$.}
 \label{fig-phase-2}
  \end{figure}

Before presenting our results, we provide a very simple example to illustrate the idea.
Consider the function $f_{t}(x)= x^{t}$ for $x \geq 0$, $t>0$.
Clearly for fixed $x$, as $t \to \infty$, it holds that  $f_{t}(x) \to  0$ if $x<1$,
$f_{t}(x) \to 1$ if $x=1$ and $f_{t}(x) \to \infty $ if $x>1$. Hence $f$ has a double jump at $x=1$. To get a continuous phase transition, we let $x$ depend on $t$ and approach the critical point $1$ appropriately.
We define $x_{t,h}= 1 - \frac{h}{t}$, where $h$ stands for the proximity of $x_{t,h}$ and $1$.
 Then $\lim_{t\to \infty} f_{t}(x_{t,h}) = e^{-h}$
 which continuously interpolates between $1$ and $0$  as $h$ runs over $(0,\infty]$. Similarly by letting $x_{t}=1+ \frac{h}{t}$, $\lim_{t \to \infty} f(x_{t,h})$ interpolates continuously between $1$ and $\infty$.
 Our main results bear similarities to this example, but achieving the same goal in our problem is nontrivial and poses greater challenges.

\subsection{Main results}

As suggested by the previous simple example,  we fix a time horizon $t>0$ and run a two-type reducible BBM $\{X_{u}(s):u \in N_{s}, s \leq t \}$ up to time $t$ under the law $\mathbb{P}^{\beta_{t},\sigma^{2}_{t}}$. That is,  during time $s \in [0,t]$,  type $1$ particles have branching rate $\beta_{t}$ and diffusion coefficient $\sigma^{2}_{t}$,
 while type $2$ particles are standard.
 We need  to find parameters $(\beta_{t},\sigma_{t}^{2})$ that properly approximate a given point $(\beta,\sigma^{2})$ on the boundary of the anomalous spreading region  $\mathscr{C}_{III}$. To do this, we introduce our choice for approximations.

Given parameters $(\beta,\sigma^{2}) \in \mathscr{B}_{I,III} =\{   \frac{1}{\beta}+ \frac{1}{\sigma^{2}} = 2, \beta > 1 \}$ and $h \in (0, \infty]$,
we say $(\beta_{t},\sigma^{2}_{t})_{t>0}$ is
an $h$-admissible approximation  for  $(\beta,\sigma^{2})$ from $\mathscr{C}_{III}$,
denoted by $ (\beta_{t},\sigma^{2}_{t})_{t>0} \in  \mathscr{A}^{h,+}_{(\beta,\sigma^2)} $,
 if
\begin{equation}\label{A13+}
 (\beta_{t},\sigma^2_{t}) \in \mathscr{C}_{III}  \ , \ \frac{1}{\beta_{t}} + \frac{1}{\sigma^2_{t}}=2+ \frac{1}{t^{h}}    \text{ for large } t   \text{ and }(\beta_{t},\sigma^{2}_{t}) \to (\beta,\sigma^2) \text{ as } t \to \infty.
\end{equation}
(Here $h=\infty$ we use the notation $\frac{1}{\infty} = 0$).
Similarly, we say $(\beta_{t},\sigma^{2}_{t})_{t>0}$ is
an $h$-admissible approximation  for  $(\beta,\sigma^{2})$ from $\mathscr{C}_{I}$,
 denoted by
  $ (\beta_{t},\sigma^{2}_{t})_{t>0} \in  \mathscr{A}^{h,-}_{(\beta,\sigma^2)} $, if
\begin{equation}\label{A13-}
  (\beta_{t},\sigma^2_{t}) \in \mathscr{C}_{I}  \ , \  \frac{1}{\beta_{t}} + \frac{1}{\sigma^2_{t}}=2- \frac{1}{t^{h}}    \text{ for large } t   \text{ and }(\beta_{t},\sigma^{2}_{t}) \to (\beta,\sigma^2) \text{ as } t \to \infty.
 \end{equation}

Throughout this paper, for given $(\beta_{t},\sigma_{t}^{2})$, we set
\begin{equation*}
  \theta_{t}=  \sqrt{2 \beta_{t}/ \sigma_{t}^{2}}, \quad v_{t}= \sqrt{ 2 \beta_{t} \sigma_{t}^{2}} \quad   \text{ and }  \ v^{*}_{t} =\frac{\beta_{t}-\sigma^2_{t}}{\sqrt{2(\beta_{t}-1)(1-\sigma^2_{t})}} .
\end{equation*}

\begin{theorem} \label{thm-13-approximate}
  Let  $(\beta,\sigma^2) \in\mathscr{B}_{I,III}$, $h>0$. Define
   \begin{align*}
     m^{1,3}_{h,+}(t)  =  v^{*}_{t} t - \frac{\min\{h,1/2\} }{\theta_{t}} \log t   \ ; \
     m^{1,3}_{h,-}(t)  = v_{t}  t - \frac{3-4 \min\{h,1/2\} }{2\theta_{t}} \log t  .
     \end{align*}
 Then for  $(\beta_{t},\sigma^{2}_{t})_{t>0} \in \mathscr{A}^{h,\pm}_{(\beta,\sigma^2)}$,
 defined in \eqref{A13+} and \eqref{A13-},
 we have      \begin{equation*}
     \lim_{t \to \infty} \left(
       \sum_{u \in N_t} \delta_{X_u(t)-   m^{1,3}_{h,\pm}(t) } , \mathbb{P}^{\beta_{t},\sigma^{2}_{t}} \right)  = \operatorname{DPPP}\left(C_{h,\pm} \theta \mathsf{Z}_{\infty}^{\beta,\sigma^2}  e^{-\theta x}\dif  x, \mathfrak{D}^{\theta}\right)  \text{ in law},
    \end{equation*}
    for some constants $C_{h,\pm} $ depending only on
    $h$ and $(\beta, \sigma^2)$.
   \end{theorem}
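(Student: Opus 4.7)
The plan is to follow the first-generation decomposition employed in \cite{BM21} and \cite{MR23} and to adapt it to the time-dependent-parameter setting. Conditional on the type-1 genealogy $\{(X^1_u(s), u \in N^1_s) : s \leq t\}$, type-2 births appear along each type-1 lineage as a Poisson process of rate $1$ in time. Let $\pi_t = \{(T_i, Y_i)\}$ be the space-time point process of these births before time $t$ and let $\mathcal{B}^i$ be the independent standard BBM seeded at $(T_i,Y_i)$. Then
\[
\sum_{u \in N_t} \delta_{X_u(t)}
  = \sum_{u \in N_t^1} \delta_{X_u(t)}
    + \sum_i \sum_{v \in \mathcal{B}^i_{t - T_i}} \delta_{Y_i + v}.
\]
In the $+$ case the first sum, centered at $m^{1,3}_{h,+}(t)\approx v^{*}_{t}t$, is negligible because $v^{*}_{t} > v_{t}$; in the $-$ case, where $(\beta_t,\sigma_t^2)\in\mathscr{C}_I$, the first sum contributes on the same scale as the second and has to be tracked as a type-1 DPPP contribution.

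For each subtree, \eqref{eq-converge-to-DPPP} yields, conditionally on $(T_i,Y_i)$, a DPPP limit centered at $Y_i + \sqrt{2}(t-T_i) - \frac{3}{2\sqrt 2}\log(t-T_i)$, with common decoration $\mathfrak{D}^{\sqrt 2}$. To aggregate these into a DPPP with the claimed decoration $\mathfrak{D}^{\theta}$ (note $\theta>\sqrt 2$ since $\sigma^2<1$ on $\mathscr{B}_{I,III}$), one filters for subtrees whose max reaches the atypical level $m^{1,3}_{h,\pm}(t)+x$, which forces an effective speed $\theta$ over the interval $[T_i,t]$. Via the many-to-one formula, the expected number of such top-particles reduces to estimating
\[
\int_0^t \mathrm{d}s \int \mathrm{d}y\,
  e^{\beta_t s - \frac{y^2}{2\sigma_t^2 s}}\,
  \mathsf{P}\!\left(\mathsf{M}_{t-s} \geq m^{1,3}_{h,\pm}(t)+x-y\right).
\]
Combined with the right-tail asymptotic of $\mathsf{M}_{t-s}$, a saddle-point expansion around the optimizer $(s_t^{*}, y_t^{*})$ extracts the $e^{-\theta x}$ factor, identifies the decoration $\mathfrak{D}^{\theta}$ via the conditioning in \eqref{eq-decoration-process}, and produces a random prefactor asymptotically proportional to $\mathsf{Z}^{\beta,\sigma^2}_\infty$.

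The $\min\{h,1/2\}$ dichotomy in the log-correction arises from whether the $t^{-h}$-perturbation of $\frac{1}{\beta_t}+\frac{1}{\sigma_t^2}$ dominates over or is dominated by the Gaussian scale $t^{-1/2}$ of the saddle-point window. For $h < 1/2$ the perturbation sets the effective integration region and shifts the log-correction at an $h$-dependent rate, recovering the interpolating formulas in $m^{1,3}_{h,\pm}(t)$; for $h \geq 1/2$ the native Gaussian width wins and the result reduces to the $\mathscr{B}_{I,III}\cup\{(1,1)\}$ row of Table~\ref{tab-phase}. The constants $C_{h,\pm}$ are the resulting explicit Laplace-type integrals in the limit, with matching values at $h=1/2$ ensuring continuity in the parameter $h$.

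The main technical obstacle is the time-dependent parameters. Standard DPPP convergence and derivative-martingale convergence assume fixed parameters, whereas here the type-1 BBM runs under $\mathbb{P}^{\beta_t,\sigma_t^2}$ on $[0,t]$. I expect the heart of the work to be proving that the finite-horizon quantity $\mathsf{Z}^{\beta_t,\sigma_t^2}_t$ converges, in law or in probability after localization, to $\mathsf{Z}^{\beta,\sigma^2}_\infty$, presumably by coupling the perturbed type-1 BBM with the fixed-parameter $(\beta,\sigma^2)$-BBM via a many-to-one spine change of measure. One also needs to show that the DPPP convergence for each standard subtree is sufficiently uniform in $(T_i, Y_i)$ and robust under the $t^{-h}$ perturbation; these uniform second-moment and tail bounds are what force the three sub-regimes $h<1/2$, $h=1/2$, $h>1/2$ to be handled separately and matched at the thresholds.
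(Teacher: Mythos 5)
Your high-level scheme---decomposing over first-generation type-$2$ births, applying the many-to-one formula, and matching tail asymptotics of the standard BBM maximum against a Gaussian-type integral over the birth location---is the same framework the paper builds around (Lemma~\ref{many-to-one-1}, Corollary~\ref{cor-reduce-to-E-R}, Lemma~\ref{thm-Laplace-BBM-order}). But two of your concrete claims are wrong in a way that would derail the argument.

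First, you assert that in the $\mathscr{A}^{h,-}$ case the type-$1$ particles "contribute on the same scale" and must be tracked as a type-$1$ DPPP. This is false for every fixed $h>0$: the centering $m^{1,3}_{h,-}(t)=v_t t-\frac{3-4h'}{2\theta_t}\log t$ sits $\frac{2h'}{\theta_t}\log t\to\infty$ above the type-$1$ level $v_t t-\frac{3}{2\theta_t}\log t$, so $\sum_{u\in N^1_t}\delta_{X_u(t)-m^{1,3}_{h,-}(t)}$ is asymptotically empty. Indeed the theorem's decoration is $\mathfrak{D}^\theta$ for both signs $\pm$, not the mixed decoration $\mathfrak{D}_{(I)}$ of the $\mathscr{C}_I$ row of Table~\ref{tab-phase}; the type-$1$ cloud only enters at $h=0^+$ in the limit, never for a fixed $h>0$. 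Your proof would need to stop and prove this negligibility rather than keep a type-$1$ term that does not exist.

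Second, you identify the "heart of the work" as establishing $\mathsf{Z}^{\beta_t,\sigma^2_t}_t\to\mathsf{Z}^{\beta,\sigma^2}_\infty$ via a spine-coupling of the perturbed type-$1$ BBM with the fixed-parameter one. The paper does not do this, and indeed does not need to: Brownian scaling gives an \emph{exact} identity in law $\{X_u(s):u\in N^1_s\}\stackrel{\text{law}}{=}\{\frac{\sigma_t}{\sqrt{\beta_t}}\mathsf{X}_u(\beta_t s):u\in\mathsf{N}_{\beta_t s}\}$, which collapses the perturbed type-$1$ process into a \emph{standard} BBM with a $t$-dependent time-change and spectral parameter. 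After this reduction the real technical work is Lemma~\ref{lem-functional-convergence-derivative-martingale-2} (and Lemma~\ref{lem-functional-convergence-derivative-martingale}): a law-of-large-numbers/CLT for Gibbs functionals $\mathsf{W}^{F_t}_t(\lambda_t)$ of a \emph{standard} BBM where the inverse temperature $\lambda_t=\sqrt{2}(1-1/\alpha_t)$ tends to $\sqrt{2}$ while the test function $F_t$ is scaled with $t$. This is proven by a truncation at an intermediate time $k_t$ with $\alpha_t^2\ll k_t\ll t$ together with a von Bahr--Esseen moment bound around a conditional expectation---a quite different and more quantitative ingredient than the heuristic coupling/uniformity picture you sketch, and it is exactly where the three regimes $h<\frac12$, $h=\frac12$, $h>\frac12$ separate through the relative sizes of $\alpha_t$, $\sqrt{t-s}$, and the log-correction.

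Finally, your statement that for $h\ge\frac12$ "the result reduces to the $\mathscr{B}_{I,III}\cup\{(1,1)\}$ row" is only correct for $h>\frac12$; at $h=\frac12$ the constant $C_{h,\pm}$ is a genuine crossover integral interpolating between the two endpoints, and the matching happens at the level of the constant, not at the level of the centering.
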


\begin{remark}\label{rmk-interpolation-1}
   Recall Table \ref{tab-phase}.
   For $(\beta,\sigma^2) \in\mathscr{B}_{I,III}$ and $(\beta_{t}, \sigma_{t}^2) \in \mathscr{A}^{h,\pm}_{(\beta,\sigma^2)}$, Theorem \ref{thm-13-approximate} shows that, for $h \geq \frac{1}{2}$, the perturbation is so small  that
  the limiting behavior of the extreme values of BBM under $\mathbb{P}^{\beta_{t},\sigma^{2}_{t}}$ and  $\mathbb{P}^{\beta,\sigma^{2}}$ (i.e., no perturbation) are the same!
   For $(\beta_{t}, \sigma_{t}^2) \in \mathscr{A}^{h,+}_{(\beta,\sigma^2)}$,
   as  $h$ changes from $\frac{1}{2}$ to $0$,
   the coefficient for the log correction for the maximum  changes smoothly from $1$ (corresponding to the regime $\mathscr{B}_{I,III}$) to  $0$ (corresponding to the regime $\mathscr{C}_{III}$).   For $(\beta_{t}, \sigma_{t}^2) \in \mathscr{A}^{h,-}_{(\beta,\sigma^2)}$, as
   $h$ changes from $\frac{1}{2}$ to $0$,
    the coefficient  changes smoothly from $1$ (corresponding to the regime $\mathscr{B}_{I,III}$) and  $3$ (corresponding to the regime $\mathscr{C}_{I}$).
   \end{remark}

Given  $(\beta,\sigma^2) \in \mathscr{B}_{II,III} = \{ \beta+\sigma^{2}=2,\beta > 1 \} $ and $h \in (0,\infty]$, we say $(\beta_{t},\sigma^{2}_{t})_{t>0}$ is an  $h$-admissible approximation  for  $(\beta,\sigma^{2})$ from $\mathscr{C}_{III}$,
 denoted by
   $ (\beta_{t},\sigma^{2}_{t})_{t>0} \in
  \mathscr{A}^{h,+}_{(\beta,\sigma^2)} $,  if
  \begin{equation}\label{A23+}
    \beta_{t} + \sigma_{t}^{2} =2+ \frac{1}{t^{h}}   \ , \ (\beta_{t},\sigma^2_{t}) \in \mathscr{C}_{III} \text{ for large } t   \text{ and }(\beta_{t},\sigma^{2}_{t}) \to (\beta,\sigma^2) \text{ as } t \to \infty.
  \end{equation}
We say $(\beta_{t},\sigma^{2}_{t})_{t>0}$ is an  $h$-admissible approximation  for  $(\beta,\sigma^{2})$ from
$\mathscr{C}_{II}$, denoted by $ (\beta_{t},\sigma^{2}_{t})_{t>0} \in  \mathscr{A}^{h,-}_{(\beta,\sigma^2)} $, if
  \begin{equation}\label{A23-}
    \beta_{t} + \sigma_{t}^{2} =2- \frac{1}{t^{h}}   \ , \ (\beta_{t},\sigma^2_{t}) \in \mathscr{C}_{II} \text{ for large } t   \text{ and }(\beta_{t},\sigma^{2}_{t}) \to (\beta,\sigma^2) \text{ as } t \to \infty.
  \end{equation}

\begin{theorem} \label{thm-23-approximate}
Let $(\beta,\sigma^2) \in\mathscr{B}_{II,III}$ and $h \in (0,\infty]$.  Define
  \begin{equation*}
     m^{2,3}_{h,+}(t)   := v^{*}_{t} t - \frac{ \min \{h,1/2\} }{\sqrt{2}} \log t      \ ; \
     m^{2,3}_{h,-}(t)   := \sqrt{2} t - \frac{3-4  \min \{h,1/2\} }{2\sqrt{2}} \log t  .
  \end{equation*}
 Then for   $(\beta_{t},\sigma^{2}_{t})_{t>0} \in
 \mathscr{A}^{h,\pm}_{(\beta,\sigma^2)}$,
 defined in \eqref{A23+} and \eqref{A23-},
 we have   \begin{equation*}
  \lim_{t \to \infty} \left(
 \sum_{u \in N_t} \delta_{X_u(t)-   m^{2,3}_{h,\pm}(t) } , \mathbb{P}^{\beta_{t},\sigma^{2}_{t}} \right)   = \operatorname{DPPP}\left(C_{h,\pm} \sqrt{2}\mathsf{W}_{\infty}^{\beta,\sigma^2}(\sqrt{2}) e^{-\sqrt{2} x}\dif  x, \mathfrak{D}^{\sqrt{2}}\right) \text{ in law},
 \end{equation*}
 for some constants $C_{h,\pm} $ depending only on $h$ and $(\beta, \sigma^2)$.
\end{theorem}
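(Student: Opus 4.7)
The plan is to adapt the ``seed-and-sprout'' decomposition used for Theorem \ref{thm-13-approximate} and in \cite{BM21, MR23}. Conditionally on the type-$1$ backbone, the type-$2$ birth events $\{(s_i, u_i, Y_i)\}$ form a Cox point process along type-$1$ trajectories of rate $ds$, and each seed $(s_i, Y_i)$ launches an independent standard BBM of duration $t - s_i$. First, I would show that type-$1$ particles themselves do not contribute to the extremal process near $m^{2,3}_{h,\pm}(t)$. On $\mathscr{B}_{II,III}$ one has $\beta>1>\sigma^{2}$ and $\beta_{t}+\sigma_{t}^{2}\to 2$, so by AM--GM the type-$1$ speed $v_{t}=\sqrt{2\beta_{t}\sigma_{t}^{2}}$ is strictly smaller than $\sqrt{2}$; hence $v_{t}t+C\log t \ll m^{2,3}_{h,\pm}(t)$ for large $t$ and any fixed $C$, and the standard first-moment tail bound rules out the type-$1$ contribution. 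A similar argument rules out type-$2$ subtrees of age $O(1)$.

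Next, applying the convergence \eqref{eq-converge-to-DPPP} seed-by-seed, the type-$2$ extremal process is asymptotically $\operatorname{DPPP}(\mathcal{I}_t,\mathfrak{D}^{\sqrt{2}})$, where
\[
\mathcal{I}_t(dx) \;=\; \sqrt{2}\,C_{\star}\,e^{-\sqrt{2}x}\Bigl(\sum_i \mathsf{Z}_\infty^{(i)}\exp\bigl\{\sqrt{2}Y_i-2(t-s_i)+\tfrac{3}{2}\log(t-s_i)-\sqrt{2}\,m^{2,3}_{h,\pm}(t)\bigr\}\Bigr)dx ,
\]
with $\mathsf{Z}_\infty^{(i)}$ i.i.d.\ copies of the derivative-martingale limit for standard BBM, independent of the backbone. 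The decoration $\mathfrak{D}^{\sqrt{2}}$ is common to all seeds since all subtrees are standard.

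The heart of the proof is to show that the random total mass of $\mathcal{I}_t$ (modulo the factor $e^{-\sqrt{2}x}$) converges in probability to $C_{h,\pm}\,\mathsf{W}_\infty^{\beta,\sigma^2}(\sqrt{2})$. Integrating out the Poisson structure and the $\mathsf{Z}_\infty^{(i)}$'s conditionally on the backbone, the sum reduces to
\[
\int_0^t g_t(s)\Bigl(\sum_{u\in N^1_s} e^{\sqrt{2}X_u(s)-2s}\Bigr) ds ,
\]
where $g_t(s)$ is an explicit deterministic weight carrying the $h$-dependence. Substituting $m^{2,3}_{h,\pm}(t)$ with $v^{*}_{t}t$ or $\sqrt{2}\,t$, and using $\beta_{t}+\sigma_{t}^{2}=2\pm t^{-h}$ together with $\beta+\sigma^{2}=2$ on the limiting boundary, one checks that $g_t$ is of Laplace type: for $h>1/2$ it concentrates its mass near $s=0$, recovering the classical $\mathscr{B}_{II,III}$ constant; for $h<1/2$ it concentrates on a $t^{1-h}$-scale window inside $(0,t)$, yielding a different constant; and the exponent $\min\{h,1/2\}$ in the log-correction of $m^{2,3}_{h,\pm}(t)$ is precisely what makes the total mass $O(1)$. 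Combined with the a.s.\ convergence $\sum_{u\in N^1_s}e^{\sqrt{2}X_u(s)-2s}\to \mathsf{W}^{\beta,\sigma^2}_\infty(\sqrt{2})$ under $\mathbb{P}^{\beta,\sigma^2}$ (transferred to $\mathbb{P}^{\beta_t,\sigma_t^2}$ by a coupling exploiting $(\beta_t,\sigma_t^2)\to(\beta,\sigma^2)$), this yields the claimed DPPP limit with an explicit constant $C_{h,\pm}$.

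The main obstacle is the Laplace-type analysis of $\int_0^t g_t(s)\,ds$: tracking how the $t^{-h}$-perturbation of $\beta_{t}+\sigma_{t}^{2}$ shifts the saddle point in $s$, verifying that $\min\{h,1/2\}$ is the correct log-correction exponent, and upgrading first-moment estimates to convergence in probability via a uniform-in-$t$ truncation of both the backbone additive martingale $\mathsf{W}^{\beta_{t},\sigma_{t}^{2}}_{\cdot}(\sqrt{2})$ and the seed derivative martingales $\mathsf{Z}_\infty^{(i)}$, analogous to the constructions in \cite{MR23} and in the proof of Theorem \ref{thm-13-approximate}.
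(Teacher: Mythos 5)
Your high-level plan — decompose the type-$2$ extremal process over first-generation type-$2$ birth events, analyze each subtree via the single-type BBM limit, and then perform a Laplace-type analysis of the weighted backbone sum — is structurally the same as what the paper does through Corollary \ref{cor-reduce-to-E-R} together with Lemmas \ref{lem-contribution-area-2}, \ref{lem-contribution-area-1}, \ref{thm-Laplace-BBM-order} and \ref{lem-functional-convergence-derivative-martingale-2}. Your first reduction (type-$1$ particles never reach $m^{2,3}_{h,\pm}(t)$ since $v_t<\sqrt2-\delta$) is correct.

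However, there is a genuine gap in the central step, most visibly in the $\mathscr{A}^{h,+}_{(\beta,\sigma^2)}$ case with $h<1/2$. Your intensity
\[
\mathcal{I}_t(dx)\propto\sum_i\mathsf{Z}^{(i)}_\infty\exp\bigl\{\sqrt2\,Y_i-2(t-s_i)+\tfrac32\log(t-s_i)-\sqrt2\,m(t)\bigr\}\,e^{-\sqrt2x}\,dx
\]
is (aside from the flipped signs on $2(t-s_i)$ and $\tfrac32\log(t-s_i)$) exactly what a constant translation of \eqref{eq-converge-to-DPPP} would give, and that translation is only valid when the shift $c_i:=Y_i+\sqrt2(t-s_i)-\tfrac{3}{2\sqrt2}\log(t-s_i)-m(t)$ is $O(1)$. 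For $(\beta_t,\sigma^2_t)\in\mathscr{A}^{h,+}$ with $h<1/2$, the relevant seeds have $s_i\approx p_t t\sim t^{1-h}/(2(1-\sigma^2)^2)$ and $Y_i\approx a_t s_i$, so that the subtree must be queried at level $\approx b_t(t-s_i)=\sqrt2(t-s_i)+\Theta(t^{1-h})$, i.e.\ $|c_i|=\Theta(t^{1-h})\gg\sqrt{t-s_i}$. This is a moderate-deviation regime, and Lemma \ref{thm-Laplace-BBM-order}(i) says the tail probability carries a non-negligible factor $e^{-c_i^2/(2(t-s_i))}$, as well as a cross term coupling $X_u(s)-a_ts$ with $\mathrm{y}$. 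Omitting these does not merely change constants: the Gaussian factor is what makes the saddle at $s\approx p_tt$ have width $\Theta(\sqrt t)$ and makes the $\min\{h,1/2\}$ log-correction come out.

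The same omission propagates into your reduction of the random mass to
$\int_0^t g_t(s)\,\bigl(\sum_{u\in N^1_s}e^{\sqrt2X_u(s)-2s}\bigr)ds$
with $g_t$ deterministic. Because the cross term $\frac{\mathrm{y}(X_u(s)-a_ts)}{t-s}=(b_t-\sqrt2)(X_u(s)-a_ts)+o(1)$ is not negligible for $h<1/2$, the effective backbone tilt is $b_t$ rather than $\sqrt2$, and there is an additional Gaussian window in $X_u(s)$; the weight does not factor out of the sum over $u$. The $\sqrt2$-tilted Gibbs measure puts vanishing mass on the particles near $a_t s$ that actually contribute. The paper therefore works with a $b_t$-tilted and windowed additive martingale and proves its convergence via the varying-tilt CLT for Gibbs measures, Lemma \ref{lem-functional-convergence-derivative-martingale-2}(i), which you do not invoke; your plan of a ``uniform-in-$t$ truncation'' of $\mathsf{W}^{\beta_t,\sigma^2_t}_\cdot(\sqrt2)$ does not substitute for it, since that is not the object that appears. (Your approach is closer to sound for the $\mathscr{A}^{h,-}$ case with $h<1/2$, where the cross term and Gaussian factor are both $o(1)$, but even there the boundary $h=1/2$ contributes a $\Theta(1)$ Gaussian factor that your formula misses.)
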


Theorem \ref{thm-23-approximate} has a  similar
 explanation as in Remark \ref{rmk-interpolation-1}.

 Finally,  we introduce the   $h$-admissible approximation for $(1,1)$ from $\mathscr{C}_{I}, \mathscr{C}_{II}$, and $\mathscr{C}_{III}$ as follows respectively.
  \begin{itemize}
    \item Let $\mathscr{A}^{h,1}_{(1,1)} $ be the collection of all $(\beta_{t},\sigma^2_{t})_{t > 0}$ such that   $\frac{1}{\beta_{t}}+\frac{1}{\sigma^2_{t}}=2-t^{-h}$, $\beta_{t}= \sigma_{t}^2 $ for large $t$.
    \item  Let $\mathscr{A}^{h,2}_{(1,1)} $ be the collection of all $(\beta_{t},\sigma^2_{t})_{t > 0}$ such that   $\beta_{t}+ \sigma^2_{t}=2-t^{-h}$,  $\beta_{t}= \sigma_{t}^2 $  for large $t$.
    \item Let  $\mathscr{A}^{h,3}_{(1,1)} $ be the  collection of all $(\beta_{t},\sigma^2_{t})_{t > 0}$  such that  $\beta_{t}+\sigma^{2}_{t}=\frac{1}{\beta_{t}}+\frac{1}{\sigma^2_{t}}=  2+t^{-h}$ for large $t$.
  \end{itemize}

Our next theorem shows that the threshold  for negligible perturbation is $h = 1$, which is twice as much as that in Theorem \ref{thm-13-approximate} and \ref{thm-23-approximate}.
As $h$ changes from $1$ to $0$, the  coefficient
for the log correction
   changes smoothly from $1$ (corresponding to the regime $(1,1)$) to the target regime.

\begin{theorem}\label{thm-11-approximate}
Let $h \in (0,\infty]$. Define
\begin{align*}
    m^{(1,1)}_{h,1}(t) &  = v_{t} t- \frac{3-2 \min\{h,1\}}{2 \sqrt{2} }\log t  \ , \
      m^{(1,1)}_{h,2}(t)   = \sqrt{2}t- \frac{3-2 \min\{h,1\}}{2 \sqrt{2}} \log t , \text{ and } \\
     m^{(1,1)}_{h,3}(t) &  =
   v^{*}_{t}  t - \frac{\min\{h,1\}}{2\sqrt{2}} \log t.
\end{align*}
For  $i=1, 2,3$ and $(\beta_{t},\sigma^{2}_{t})_{t>0} \in \mathscr{A}_{(1,1)}^{h,i}$,
we have
 \begin{equation*}
  \lim_{t \to \infty} \left(
    \sum_{u \in N_t} \delta_{X_u(t)-   m^{(1,1)}_{h,i}(t) } , \mathbb{P}^{\beta_{t},\sigma^{2}_{t}} \right) = \operatorname{DPPP}\left(C_{h,i} \sqrt{2}\mathsf{Z}_{\infty} e^{-\sqrt{2} x}\dif  x, \mathfrak{D}^{\sqrt{2}}\right) \text{ in law},
\end{equation*}
  for some constants $C_{h,i} $ depending only on $ h$.
      \end{theorem}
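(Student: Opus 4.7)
The plan is to adapt the framework developed for Theorems~\ref{thm-13-approximate} and~\ref{thm-23-approximate} to the triple-boundary point $(1,1)$. The extremal process under $\mathbb{P}^{\beta_t,\sigma_t^2}$ decomposes into contributions from (a) particles of type $1$ alive at time $t$, and (b) type $2$ descendants of type $1$ ancestors that switched type at some time $s\in[0,t]$. Conditionally on the genealogy and positions of the type $1$ particles, each type $2$ subtree evolves as an independent standard BBM to which \eqref{eq-converge-to-DPPP} applies. Thus the full extremal process is a superposition of $\operatorname{DPPP}$'s indexed by the space-time locations of first-generation type $2$ births, and the task reduces to identifying the limiting intensity and the decoration.

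For direction $i=1$ (diagonal approach $\beta_t=\sigma_t^2>1$ from $\mathscr{C}_I$), the scaling identity $X^{\beta_t,\sigma_t^2}_u(t)\stackrel{d}{=}(\sigma_t/\sqrt{\beta_t})\mathsf{X}_u(\beta_t t)$ reduces the contribution of type $1$ to a standard BBM run until time $\beta_t t=t+O(t^{1-h})$; a Bramson-type asymptotic made uniform in the parameter drift produces the centering $v_t t-\frac{3-2\min\{h,1\}}{2\sqrt 2}\log t$, where the $O(t^{1-h})$ extra running time is precisely what shifts the log-correction by $\frac{h}{\sqrt 2}\log t$ when $h<1$. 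The contribution of type $2$ descendants is handled by a many-to-one computation on the type $1$ system and is shown to affect only the multiplicative constant $C_{h,1}$ while matching the decoration law $\mathfrak{D}^{\sqrt 2}$. Direction $i=2$ is symmetric with the roles of types $1$ and $2$ interchanged: type $2$ descendants now dominate, and the limiting intensity $\sqrt 2\,\mathsf{Z}_\infty e^{-\sqrt 2 x}\dif x$ arises from the standard BBM extremal asymptotics seeded by type $1$ births, integrated in time against the type $1$ density and controlled via the additive martingale $\mathsf{W}_t^{\beta_t,\sigma_t^2}(\sqrt 2)$.

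Direction $i=3$ requires the most work. The constraints $\beta_t+\sigma_t^2=\tfrac{1}{\beta_t}+\tfrac{1}{\sigma_t^2}=2+t^{-h}$ force $\beta_t\sigma_t^2=1$ and yield the explicit formula $v_t^*=\sqrt{2+t^{-h}/2}$. The optimal branching time is $p_t t$ with $p_t=(1-\sigma_t^2)/(\beta_t-\sigma_t^2)$, placing the dominant type $1$ ancestor near $v_t^* p_t t$, and a Laplace-functional analysis analogous to that of Theorem~\ref{thm-13-approximate} identifies the claimed limit. The main obstacle, and the reason the interpolation threshold doubles from $h=1/2$ in Theorems~\ref{thm-13-approximate}--\ref{thm-23-approximate} to $h=1$ here, is that at $(1,1)$ two distinct boundary conditions degenerate simultaneously, so $v_t,\theta_t,v_t^*$ deviate from $\sqrt 2$ at order $t^{-h/2}$ rather than $t^{-h}$; this effectively doubles the critical Taylor order in every asymptotic expansion and in the Bramson correction. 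Carrying these sharper expansions through the derivative martingale convergence $\mathsf{Z}_t^{\beta_t,\sigma_t^2}\to\mathsf{Z}_\infty$ \emph{uniformly} in the drifting parameters, and verifying that the type $2$ contribution indeed produces the prefactor $\min\{h,1\}$ in the log-correction rather than a $\min\{h,1/2\}$, is the technical heart of the proof.
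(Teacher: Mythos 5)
The general framework you outline—decomposing the extremal process into the type $1$ skeleton plus independent type $2$ subtrees attached at the first-generation births, then computing Laplace functionals via many-to-one—is indeed the framework used in the paper (through Corollary~\ref{cor-reduce-to-E-R}, Lemma~\ref{thm-Laplace-BBM-order}, and Lemmas~\ref{lem-functional-convergence-derivative-martingale-2}--\ref{lem-functional-convergence-derivative-martingale}). Your identification of $v_t^*=\sqrt{2+t^{-h}/2}$ for $i=3$ is correct, and you are right that the parameter drift forces uniform-in-$t$ versions of the Gibbs-measure CLTs (which is exactly what Lemmas~\ref{lem-functional-convergence-derivative-martingale-2} and~\ref{lem-functional-convergence-derivative-martingale} supply). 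However there is a genuine gap in your explanation of where the log-correction comes from, in particular for $i=1$ (and by your claimed ``symmetry'' also for $i=2$).

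You assert that, for $i=1$, the type $1$ contribution is a standard BBM run to time $\beta_t t=t+O(t^{1-h})$, that a Bramson-type asymptotic then produces the centering $v_t t-\frac{3-2\min\{h,1\}}{2\sqrt2}\log t$, and that the type $2$ part only affects the constant $C_{h,1}$. This cannot be right: replacing the horizon $t$ by $t+O(t^{1-h})$ changes the leading term $\sqrt2 t$ by $O(t^{1-h})$ but leaves the Bramson log-correction at $\frac{3}{2\sqrt2}\log t$ (an $O(t^{1-h})$ time shift with $h<1$ is not $O(\log t)$, so it does not alter the logarithmic term). Indeed the type $1$ maximum sits at $v_t t-\frac{3}{2\sqrt2}\log t$, which is $\frac{\min\{h,1\}}{\sqrt2}\log t$ \emph{below} the theorem's centering and therefore does not contribute to the limiting extremal process at all. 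The correct mechanism (cf.\ Lemma~\ref{lem-contribution-area-11} and the definition of $\Omega^R_{t,h}$ in \eqref{eq-def-Omega-t-R-11}) is that the dominant particles are type $2$ descendants whose type $2$ ancestor was born at time $T_u$ with $t-T_u=\Theta(t^{h'})$ (for $h<1$) and position roughly $v_t T_u-\Theta(\sqrt{t-T_u})$; the log-correction $\frac{3-2h'}{2\sqrt2}\log t$ is produced by integrating the Laplace-functional asymptotics $\Phi_{\sqrt2}$ over that localized window, via Lemma~\ref{thm-Laplace-BBM-order}(i), not by a shifted Bramson expansion for the type $1$ system. Similarly, $i=1$ and $i=2$ are not symmetric role-swaps of types $1$ and $2$: in both cases the limit is driven by type $2$ descendants, and what differs is only which variable ($v_t$ versus $\sqrt2$) appears in the centering. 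Two smaller points: for $i=3$ the optimal branching time is $p_t t$ with $p_t\equiv\frac12$ exactly (the constraints $\beta_t\sigma_t^2=1$ and $\beta_t+\sigma_t^2=2+t^{-h}$ force this), whereas your formula $p_t=(1-\sigma_t^2)/(\beta_t-\sigma_t^2)=\sigma_t^2/(1+\sigma_t^2)$ is only $\tfrac12+o(1)$; and the ``threshold doubling'' is not simply that $v_t,\theta_t,v_t^*$ all drift at order $t^{-h/2}$ (in fact $v_t^*-\sqrt2=\Theta(t^{-h})$), but rather that the width of the localization window scales as $t^{(1+h)/2}$ rather than $\sqrt t$, as made precise in Lemma~\ref{lem-computation}(ii). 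You would need to replace the $i=1$ (and $i=2$) argument with the localization-plus-Laplace-functional computation to close the proof.
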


\begin{remark}
  In the proof of Theorem \ref{thm-13-approximate} (similar for the proofs of Theorem \ref{thm-23-approximate} and Theorem \ref{thm-11-approximate}),
 it is sufficient to prove the convergence of the extremal process  consisting of particles of type $2$. Since by \eqref{eq-converge-to-Gumbel}, the highest type $1$ particles are located at level $v t- \frac{3}{2 \theta} \log t+O_{\P}(1)$, which is
   much below  type $2$ particles.
 Moreover, our results in Theorems \ref{thm-13-approximate}, \ref{thm-23-approximate} and \ref{thm-11-approximate}
  can be strengthened as the
  joint convergence of the extremal
   process
  and its maximum  $(\mathcal{E}_{t}, \max \mathcal{E}_{t})$ to
  $ (\mathcal{E}_{\infty}, \max \mathcal{E}_{\infty})$, where $\mathcal{E}_{\infty}$ is
the limiting point process
  and $\max \mathcal{E}_{\infty}$ is
the supremum of its support.
  The reason is that, by  \cite[Lemma 4.4]{BBCM22},   to prove the  convergence of $ (\mathcal{E}_t, \max \mathcal{E}_t)$,
   it suffices to show the convergence of Laplace functional $\mathbb{E}[e^{-\langle \phi, \mathcal{E}_{t} \rangle}]$ with certain test functions $\varphi \in \mathcal{T}$ introduced in the
    notation convention part below.
  \end{remark}

\textbf{Outline. }
The rest of the article is organized as follows. We discuss our results in the next sub-section, offering some heuristics of our proof and giving relation to coupled F-KPP equations.
In Section 2, we introduce several results on branching Brownian motions that will be needed in our proofs, in particular
some estimates for the Laplace functional of the point process associated to BBM
 and central limit theorems for the Gibbs measures associated to BBM.
In Sections 3, 4 and 5 we give the proofs of our theorems.
In section 3 we prove the case  $ \mathscr{A}^{h,-}_{(\beta,\sigma^2)}$ in Theorem \ref{thm-13-approximate} and the case $\mathscr{A}^{h,1}_{(1,1)}$ in Theorem \ref{thm-11-approximate}.  In section 4 we prove the case  $ \mathscr{A}^{h,+}_{(\beta,\sigma^2)}$ in Theorem \ref{thm-13-approximate},  \ref{thm-23-approximate} and the case $\mathscr{A}^{h,3}_{(1,1)}$ in Theorem \ref{thm-11-approximate}.  In section 5 we prove the case  $ \mathscr{A}^{h,-}_{(\beta,\sigma^2)}$ in Theorem \ref{thm-13-approximate} and the case $\mathscr{A}^{h,2}_{(1,1)}$ in Theorem \ref{thm-11-approximate}.

\textbf{Notation convention. }
Throughout this article $C$ (also $C_{h,+},   C_{h,-},\cdots$) are positive constants whose value may change from line to line.
Let  $\mathcal{T} $  be the set of  functions $\varphi \in C_{b}^{+}(\mathbb{R})$ such that $\inf \mathrm{supp}( \varphi)>-\infty$  and for some $a \in \mathbb{R}$,  $ \varphi(x) \equiv$ some positive constant for all $x > a$.
$\mathcal{T}$ will  serve as  test functions in the Laplace functional
(see \cite[Lemma 4.4]{BBCM22}). For two quantities $f$ and $g$, we write $f \sim g$ if $\lim f/g= 1$.  We write $f  \lesssim g$ if there exists a constant $C>0$ such that $f \leq C g $. We write $f  \lesssim_\lambda g$ to stress that the constant $C$ depends on parameter $\lambda$. We use the standard notation $\Theta(f)$  to denote a non-negative quantity such that there exists constant  $c_{1}, c_{2}>0$ such  that $c_{1} f \leq \Theta(f)  \leq c_{2}f $.
When this is no ambiguity, we use $\mathbb{P}$ and $\mathbb{E}$ to denote  $\mathbb{P}^{\beta_{t},\sigma^{2}_{t}}$  and $\mathbb{E}^{\beta_{t},\sigma^{2}_{t}}$, respectively.
We always use the front $\texttt{mathsf}$ to denote the probability or quantities related to single-type branching Brownian motion, like  $\mathsf{P},\mathsf{E}, \mathsf{X}_{u}, \mathsf{W}, \mathsf{Z}$ etc.
The probability  and expectation related to Brownian motion are denoted as  $\mathbf{P}$ and $\mathbf{E}$.

\subsection{Discussion of our results}

\subsubsection{Heuristics for localization of paths of extremal particles}

For each type $2$ particle $u \in N^{2}_{t}$, we define  $T_{u}$ as the time at which the oldest ancestor of type $2$ of $u$ was born. In other words, $T_{u}$ is the ``type transformation time"  of $u$. For convenience, we set $T_{u}=t$ for  $u \in N^{1}_{t}$.

We restate here the optimization problem introduced in \cite[Section 2.1]{BM21} (see also Biggins \cite{Biggins10}).
 For $p\in[0,1]$, let $\mathcal{N}_{p,a,b}(t)$ be the expected number of particles at time $t$
 that have speed $a$ before time $T_{u}\approx pt$ and speed $b$ after time $pt$ (under the law $\mathbb{P}^{\beta,\sigma^{2}}$).
Note that these particles are at level $[pa+(1-p)b]t$.
By first moment computations,
  $ \mathcal{N}_{p,a,b}(t)=\exp \left\{[(\beta -\frac{a^2}{2 \sigma^2})p+(1-\frac{b^2}{2})(1-p) ] t + o(t) \right\}$.  So the speed of
  the two-type
  BBM should be the maximum of $pa+(1-p)b$ among all the parameter $p,a,b$ such that $\left(\beta- \frac{a^2}{2 \sigma^2} \right)p  \geq 0$ and $\mathcal{N}_{p,a,b}(t) \geq 1$. That is,
\begin{equation}\label{eq-optimization-problem}
\begin{aligned}
    v^{*} = \max  \bigg\{p a+(1-p) b: & p \in [0,1],   \left(\beta-\frac{a^2}{2 \sigma^2}\right)p \geq 0,   \\
  & \left(\beta-\frac{a^2}{2 \sigma^2} \right)p+ \left(1-\frac{b^2}{2} \right)(1-p) \geq 0\bigg\} .
\end{aligned}
\end{equation}
Denote  by $\left(p^*, a^*, b^*\right)$  the maximizer of this optimization problem.
 If $\left(\beta, \sigma^2\right) \in \mathscr{C}_I$, then $p^*=1$, $a^{*}=v$, and $v^*=v$; if $\left(\beta, \sigma^2\right) \in \mathscr{C}_{II}$,  then  $p^*=0, b^{*}=\sqrt{2}$ and   $v^*=\sqrt{2}$; if   $\left(\beta, \sigma^2\right) \in \mathscr{C}_{III}$, then
 $p^{*}=\frac{\sigma^2+\beta-2}{2\left(1-\sigma^2\right)(\beta-1)}$, $b^{*}=\sqrt{2 \frac{\beta-1}{1-\sigma^2}}$, $a^{*}=\sigma^2 b^{*}$, and  $v^{*}= \frac{\beta-\sigma^2}{\sqrt{2\left(1-\sigma^2\right)(\beta-1)}} $.

 Inspired by the heuristics above, we are going to do
 some refined computations that provide more  precise  predictions for  localization of
  extremal particles, under the law $\mathbb{P}^{\beta_{t},\sigma^{2}_{t}}$.  To avoid duplication,
 we only do this under  the setting of Theorem \ref{thm-13-approximate}, i.e.,  $(\beta,\sigma^2)\in \mathscr{B}_{I,III}$  and $(\beta_{t},\sigma^2_{t})_{t>0}\in \mathscr{A}^{h,\pm}_{(\beta,\sigma^{2})} $. Under the setting of Theorem \ref{thm-23-approximate}, \ref{thm-11-approximate}, one can use a similar argument.

\paragraph*{The case $(\beta_{t},\sigma^2_{t})_{t>0}\in \mathscr{A}^{h,-}_{(\beta,\sigma^{2})}$. }

According to the  optimization problem \eqref{eq-optimization-problem}, $M_{t} \approx v_{t} t$ and   each individual $u \in N^{2}_{t}$ near the maximum should satisfy $T_u \approx t$.
The expected  number of type $1$ particles
 that are at level $v_{t} s-\delta(t)$ (where  $\delta(t)$ will be determined later)  at time $s=t-o(t)$
 is roughly $ e^{ \theta_{t} \delta(t) - \frac{\delta(t)^2}{2 \sigma^2_t s} +O(\log t)} $.
The probability that a typical particle of type $2$ has  a descendant at level $v_{t}(t-s)+\delta(t)$ at time $t-s$
is $ e^{-\left[ (\frac{v^2}{2}-1)(t-s)+  v_{t} \delta(t)+  \frac{\delta(t)^2}{2(t-s)}\right]+O(\log t)} $.
 Hence there are around
  \begin{equation}\label{eq-number-2}
    \exp\left\{  -\left[ \left(\frac{v_{t}^2}{2}-1\right)(t-s) + \frac{\delta(t)^2}{2 \sigma^2_t s}+\frac{\delta(t)^2}{2(t-s)}   \right] + (\theta_{t}-v_{t})\delta(t) +O(\log t) \right\}
  \end{equation}
   particles of type $2$ at level $v_{t} t $ at time $t$. In order for the limit of the the quantity in  \eqref{eq-number-2} to be non-zero as $  t\to\infty$,  using the prior knowledge $s \sim t$, we first have to  ensure that
  $\delta(t)$ has the same order as  $t-s$   or $\frac{\delta(t)^2}{t-s}$. So
  we get $\delta(t) =\Theta(t-s)$.
  We also need  to ensure that $\frac{\delta(t)^2}{2 \sigma^2_t s} = O(1)$, which implies  $\delta(t) =  O( \sqrt{t})$.
 Letting $\delta(t)=\lambda (t-s)$, we can rewrite \eqref{eq-number-2} as
  \begin{equation*}
\exp\left\{  \left[ -\frac{1}{2} [\lambda - (\theta-v_{t}) ]^{2} + \frac{ (\theta_{t}-v_{t})^{2} }{2}
   - \left( \frac{v_{t}^{2}}{2}-1 \right)          \right] (t-s)   +    O(\log t)  \right\}.
  \end{equation*}
As $\frac{1}{\beta_{t}}+\frac{1}{\sigma^{2}_{t}}=2-t^{-h}$, we have $
  (\theta_{t}-v_{t})^{2} -  (v_{t}^{2}-2) = 2 \beta_{t}(\frac{1}{\sigma^{2}_{t}}-2) + 2 =   - \frac{2\beta_{t}}{t^{h}}$. Then \eqref{eq-number-2} becomes
  \begin{equation*}
 \exp\left\{  \left[ -\frac{1}{2} [\lambda - (\theta-v_{t}) ]^{2} +  \frac{\beta_{t}}{t^{h}}    \right] (t-s) +  O(\log t)  \right\}.
 \end{equation*}
 To guarantee
 $\left[ -\frac{1}{2} [\lambda - (\theta-v_{t}) ]^{2} +  \frac{\beta_{t}}{t^{h}}    \right] (t-s) \geq 0$, we need $\lambda = (\theta_{t}-v_{t} )   $ and  $ t-s=O(t^{h})$.
In other words, the extremal particle $u \in N^2_{t}$ should satisfy
\begin{equation*}
  t-T_{u}= O(t^{h \wedge \frac{1}{2}}) \  \text{ and } \ X_{u}(T_{u}) \approx v_{t} T_{u}- (\theta_{t}-v_{t})(t-T_{u}) .
\end{equation*}

\paragraph*{The case $(\beta_{t},\sigma^2_{t})_{t>0}\in \mathscr{A}^{h,+}_{(\beta,\sigma^{2})}$. }
According to the   optimization problem \eqref{eq-optimization-problem}, $M_{t} \approx v^{*}_{t} t$ and each individual $u \in N^{2}_{t}$ near the maximum should satisfy $T_u \approx  p^{*}_{t} t$.
The expected  number of type $1$ particles
that are at level $a^{*}_{t} s- \delta(t)$ at time $s=p^{*}_{t} t-o(t)$
is roughly $ e^{ - \frac{ [a_{t}^{*} s - \delta(t) ]^{2} }{2 \sigma_{t}^{2} s} + \beta_{t} s +O(\log t)} $.
The probability that a typical particle of type $2$ has a descendant at level $v^{*}_{t} t - a^{*}_{t} s  + \delta(t)$ at time $t-s$
is $ e^{- \frac{ [ v^{*}_{t} t - a^{*}_{t} s  + \delta(t)  ]^{2} }{2 (t-s) } + (t-s)  +O(\log t)} $.
Hence there are around
\begin{equation}\label{eq-number-3}
  \exp\left\{  \beta_{t} s - \frac{ [a_{t}^{*} s - \delta(t) ]^{2} }{2 \sigma_{t}^{2} s}  +   (t-s)   - \frac{ [ v^{*}_{t} t - a^{*}_{t} s  + \delta(t)  ]^{2} }{2 (t-s) }       +O(\log t) \right\}
\end{equation}
particles of type $2$ at level $v^{*}_{t} t $ at time $t$. Let $ s = p^{*}_{t} t - \varepsilon(t)$. We have $ v^{*}_{t} t -a^{*}_{t} s= b^{*}_{t}(1-p^{*}_{t})t+a^{*}_{t} \varepsilon (t)=b^{*}_{t}(t-s)-(a^{*}_{t}-b^{*}_{t}) \varepsilon (t)  $.  Hence
   \begin{equation*}
    \frac{(v^{*}_{t} t -a^{*}_{t}s +\delta(t)   )^2}{2(t-s)}=\frac{(b^{*}_{t})^2}{2}(t-s) +  b^{*}_{t} \delta(t) - b^{*}_{t}(a^{*}_{t}-b^{*}_{t}) \varepsilon  (t)   + \frac{[\delta(t)-(a^{*}_{t}-b^{*}_{t})\varepsilon  (t)]^2}{2(t-s)}.
   \end{equation*}
  Applying the facts
  that  $ [\beta_{t} -\frac{(a^{*}_{t}) ^2}{2\sigma_{t}
  ^2 } ]p^{*}_{t}  + [1-\frac{(b^{*}_{t})^2}{2}](1-p^{*}_{t} )  =0 $  and $a^{*}_{t}=\sigma_{t}^2 b_{t}^{*}$, we get
   \begin{align*}
   & (\beta_{t}-\frac{(a^{*}_{t})^2}{2\sigma^{2}_{t}})s +\frac{a^{*}_{t}}{\sigma_{t}^{2}} \delta(t) - \frac{\delta(t)^{2}}{2\sigma_{t}^{2} s}  + (t-s)-\frac{[ v^{*}_{t} t -a^{*}_{t}s + \delta(t)   ]^2}{2(t-s)}   \\
   &=  (\beta_{t} -\frac{(a^{*}_{t})^2}{2 \sigma_{t}^2})p^{*}_{t} t +  (1-\frac{(b^{*}_{t})^2}{2} )(1-p^{*}_{t}) t \\
    & \qquad  \qquad  \qquad  - \left(\beta_{t}  -1+\frac{\sigma_{t}^2 -1}{2} (b^{*}_{t})^2 \right) \varepsilon (t)  -  \frac{[\delta(t)-(a^{*}_{t}-b^{*}_{t})\varepsilon  (t)]^2}{2(t-s)}  \\
   &=   -  \frac{[\delta(t)-(a^{*}_{t}-b^{*}_{t})\varepsilon  (t)]^2}{2(t-s)} - \frac{\delta(t)^{2}}{2\sigma_{t}^{2} s}.
   \end{align*}
Hence \eqref{eq-number-3} equals to  $\exp \{  -  \frac{[\delta(t)-(a^{*}_{t}-b^{*}_{t})\varepsilon  (t)]^2}{2(t-s)} - \frac{\delta(t)^{2}}{2\sigma_{t}^{2} s} + O(\log t)  \}$.
Thus we need $|\delta(t)|= O(\sqrt{t})$ and $| \delta(t)-(a^{*}_{t}-b^{*}_{t})\varepsilon  (t)| =O(\sqrt{t-s})$. Hence $|\varepsilon  (t) |=O(\sqrt{t})$.  In other words,
\begin{equation*}
  p^{*}_{t} t-T_{u} = O (\sqrt{t}) \  \text{ and } \ X_{u}(T_{u}) \approx a_{t}^{*} T_{u}- (a^{*}_{t}-b^{*}_{t})(t-T_{u}) .
\end{equation*}

\subsubsection{Application in F-KPP equations}
A multitype BBM, like standard BBM, is associated to
an F-KPP reaction diffusion equation.
For more details,  we refer to \cite[Section 2.3]{BM21}.
Specifically,  let $t>0$, and  $f, g: \mathbb{R} \rightarrow[0,1]$ be measurable functions.
  We define for all $x \in \mathbb{R}$ and $s\le t$ :
\begin{align*}
u(s, x) &
 =\mathbb{E}^{\beta_{t},\sigma^{2}_{t}}\left(\prod_{u \in N_s^1} f\left(X_u(t)+x\right) \prod_{u \in N_s^2} g\left(X_u(t)+x\right)\right), \\
 v(s,x) &
 = \mathsf{E} \left(  \prod_{u \in \mathsf{N}_s} g\left( \mathsf{X}_u(t)+x\right)\right), \text{ where }  (\mathsf{X}_{u}(t), u \in \mathsf{N}_{t}, \mathsf{P} ) \text{ is a standard BBM}.
\end{align*}
 Then  $(u, v)$ is a solution of the following coupled F-KPP equation
\begin{equation}\label{eq-coupled-pde}
   \left\{ \begin{aligned}
 & \partial_s u    =\frac{\sigma^2_{t}}{2} \Delta u  - \beta_{t}  u(1-u)-  u(1-v),  \  \
 0<s \leq t, \\
& \partial_s v  =\frac{1}{2} \Delta v-v(1-v),
\quad s>0,\\
& v(0, x)  =g(x)  \ , \   u(0, x)=f(x).
\end{aligned} \right.
\end{equation}

Our main results give the the existence of a function $m_t$ such that
(with good initial
functions $f, g$, e.g., $f= g =1$ on $(-\infty, -A]$ and $f =g  =0$ on $[A,\infty)$)    for all $x \in \mathbb{R}$,
\begin{equation*}
\lim _{t \rightarrow \infty}
 \left(u\left(t, x-m_t\right), v\left(t, x-m_t\right)\right)
=\left(w_1(x), w_2(x)\right),
\end{equation*}
where $(w_{1}, w_{2})$ is a solution of the the coupled  ordinary differential equations (ODEs):
\begin{equation}\label{eq-coupled-ode}
   \left\{ \begin{aligned}
& \frac{\sigma^2}{2} w_{1}''-\mathrm{c} w_{1}'   - \beta  w_{1}(1-w_{1})-  w_{1}(1-w_{2})=0,\\
& \frac{1}{2} w_{2}''-\mathrm{c} w_{2}'-w_{2}(1-w_{2})=0,
\end{aligned} \right.
\end{equation}
with $(\beta,\sigma^{2})=\lim_{t \to \infty} (\beta_{t},\sigma^{2}_{t})$ and $\mathrm{c}=\lim_{t \to \infty} m_{t}/t$.
In fact, $m_t$ is defined as follows:
\begin{itemize}
  \item if $\left(\beta, \sigma^2\right) \in \mathscr{B}_{I,III}$,
  $\left(\beta_{t}, \sigma^2_{t} \right)_{t>0} \in \mathscr{A}^{h,-}_{(\beta,\sigma^{2})}$, then $m_t=\sqrt{2 \beta_{t} \sigma_{t}^2} t-\frac{3-4 \min\{ h,1/2\}}{2 \sqrt{2 \beta_{t} / \sigma_{t}^2}} \log t$;
  if $(\beta,\sigma^{2})=(1,1)$ and   $\left(\beta_{t}, \sigma^2_{t} \right)_{t>0} \in \mathscr{A}^{h,1}_{(1,1)}$ then  $m_{t}= \sqrt{2 \beta_{t} \sigma_{t}^2} t-\frac{3-2 \min\{ h,1\}}{2 \sqrt{2 \beta_{t} / \sigma_{t}^2}} \log t   $;
  \item if $\left(\beta, \sigma^2\right) \in \mathscr{B}_{II,III}$ and
  $\left(\beta_{t}, \sigma^2_{t} \right)_{t>0} \in \mathscr{A}^{h,-}_{(\beta,\sigma^{2})}$, then $m_t=\sqrt{2  } t-\frac{3-4 \min\{ h,1/2\}}{2 \sqrt{2  }} \log t$; if $(\beta,\sigma^{2})=(1,1)$ and $\left(\beta_{t}, \sigma^2_{t} \right)_{t>0} \in \mathscr{A}^{h,2}_{(1,1)}$ then  $m_{t}=\sqrt{2  } t-\frac{3-2 \min\{ h,1\}}{2 \sqrt{2  }} \log t  $;
  \item
  if $\left(\beta, \sigma^2\right) \in \mathscr{B}_{I,III} \cup \in \mathscr{B}_{II,III}$ and
   $\left(\beta_{t}, \sigma^2_{t} \right)_{t>0} \in \mathscr{A}^{h,+}_{(\beta,\sigma^{2})}$,
  then $m_t= v^{*}_{t} t-\frac{  \min\{ h,1/2\}}{  \sqrt{2}} \log t$; if $(\beta,\sigma^{2})=(1,1)$ and $\left(\beta_{t}, \sigma^2_{t} \right)_{t>0} \in \mathscr{A}^{h,3}_{(1,1)}$ then $ m_{t}= v^{*}_{t}  t - \frac{\min\{ h,1\}}{2\sqrt{2}} \log t$.
\end{itemize}
Now we  show that $(w_{1}, w_{2})$ is a solution of \eqref{eq-coupled-ode}.
 By Theorems \ref{thm-13-approximate}, \ref{thm-23-approximate} and \ref{thm-11-approximate}, given $(\beta,\sigma^{2})$, for all $h$-admissible approximation $(\beta_{t},\sigma_{t})_{t>0}$ with $h \in (0,\infty]$, the limit $(w_{1}(x),w_{2}(x))$ are   the same (up to a translation depending on $h$).
So it suffices to consider the case $h=\infty$, i.e., $(\beta_{t},\sigma_{t}^{2})\equiv (\beta,\sigma^{2})$. Applying  the branching property, we have
\begin{equation*}
  u(t,x-m_{t}) = \mathbb{E}^{\beta,\sigma^{2}}\left(\prod_{u \in N_s^1} u\left(t-s,X_u(s)+x-m_{t}\right) \prod_{u \in N_s^2} v\left(t-s,X_u(s)+x-m_{t}\right)\right).
\end{equation*}
Letting $t \to \infty$, since $m_{t}= \mathrm{c} s + m_{t-s}+o(1)$, we get
\begin{equation*}
  w_{1}(x+ \mathrm{c} s)
  = \mathbb{E}^{\beta,\sigma^{2}}\left(\prod_{u \in N_s^1} w_{1}\left(X_u(s)+x \right) \prod_{u \in N_s^2} w_{2}\left(X_u(s)+x\right)\right), \quad s\geq 0;
\end{equation*}
 and similarly $ w_{2}(x+\mathrm{c} s) = \mathsf{E} \left( \prod_{u \in \mathsf{N}_s} w_{2}\left( \mathsf{X}_u(s)+x\right)\right) $.
Then, as the derivation of \eqref{eq-coupled-pde}, using again the argument in \cite[Section 2.3]{BM21},  $(w_{1}(x+\mathrm{c}t),w_{2}(x+\mathrm{c}t))$
solves the coupled F-KPP equation:
\begin{equation*}
   \left\{ \begin{aligned}
 & \partial_t u    =\frac{\sigma^2}{2} \Delta u  - \beta  u(1-u)-  u(1-v),  \  \
\\
& \partial_t v  =\frac{1}{2} \Delta v-v(1-v).
\end{aligned} \right.
\end{equation*}
That is, $(w_{1},w_{2})$ is a traveling wave solution of this coupled PDE;  and  \eqref{eq-coupled-ode} follows.

\section{Preliminary results}

\subsection{Brownian motion estimates}

The following lemma gives an upper bound for the probability that  a Brownian bridge below a straight line.

\begin{lemma}[{\cite[Lemma 2]{Bramson78}}]
\label{lem-bridge-estimate-0}
  Let $(\zeta^{[0,t]}_{s})_{ s \in [0,t]}$ be a Brownian bridge from $0$ to $0$. Let $x_{1}, x_{2} \geq 0$, then
  \begin{equation*}
    \P\left( \zeta_{s} \leq \frac{s}{t} x_{1}+ \frac{t-s}{t}x_{2}, \forall s \in [0,t]\right) = 1 - e^{-\frac{2 x_{1}x_{2}}{t}} \leq \frac{2x_{1}x_{2}}{t}.
  \end{equation*}
\end{lemma}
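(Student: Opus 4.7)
The plan is to reduce the bridge inequality to a classical positivity question for Brownian bridges with shifted endpoints, and then to invoke the reflection principle for Brownian motion to obtain the exact formula $1 - e^{-2x_1x_2/t}$.

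First, I would perform the linear shift $Y_s := x_2 + \tfrac{x_1 - x_2}{t} s - \zeta_s^{[0,t]}$ for $s \in [0,t]$. Since adding a deterministic linear function to a Brownian bridge from $0$ to $0$ produces a Brownian bridge with the new endpoints, the process $(Y_s)_{s \in [0,t]}$ is a Brownian bridge starting at $x_2$ and ending at $x_1$. The event inside the probability is exactly $\{Y_s \geq 0 \text{ for all } s \in [0,t]\}$. Thus the claim reduces to
\begin{equation*}
\mathbf{P}\bigl( Y_s \geq 0 \text{ for all } s \in [0,t]\bigr) = 1 - e^{-2x_1 x_2/t}, \qquad x_1, x_2 \geq 0.
\end{equation*}

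Next I would realize this bridge as a conditioned Brownian motion. Let $(B_s)_{s\geq 0}$ be a standard Brownian motion with $B_0 = x_2$, and let $p_s(a,b) = (2\pi s)^{-1/2} e^{-(b-a)^2/(2s)}$. The law of $Y$ equals that of $(B_s)_{s\in[0,t]}$ conditioned on $B_t = x_1$. The reflection principle, applied to the hitting time of $0$ by $B$, gives the joint identity
\begin{equation*}
\mathbf{P}\bigl( B_s > 0 \ \forall s \in [0,t],\ B_t \in dy\bigr) = \bigl[ p_t(x_2, y) - p_t(-x_2, y) \bigr] \, dy,
\end{equation*}
for $y > 0$. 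A direct computation yields
\begin{equation*}
p_t(x_2, y) - p_t(-x_2, y) = p_t(x_2, y)\,\bigl( 1 - e^{-2 x_2 y / t}\bigr).
\end{equation*}
Dividing by the unconditional density $p_t(x_2, x_1)$ and setting $y = x_1$ gives the desired conditional probability $1 - e^{-2 x_1 x_2/t}$. Finally, the elementary inequality $1 - e^{-a} \leq a$ valid for all $a \geq 0$ yields the stated upper bound.

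There is no real obstacle here, as everything is a textbook manipulation. The only minor point to be careful with is handling the boundary cases $x_1 = 0$ or $x_2 = 0$, where both the exact formula and the upper bound reduce to $0$ and the statement is immediate (either by continuity in the parameters or by observing that a bridge from $0$ to a non-negative endpoint hits arbitrarily small values almost surely, so the probability of staying strictly above $0$ is $0$, matching $1 - e^0 = 0$). The overall length of the argument is a few lines.
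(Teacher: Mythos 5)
The paper states this lemma as a citation to Bramson's 1978 article and does not include its own proof, so there is no internal argument to compare against. Your derivation is correct and is the standard one: shift the bridge to have endpoints $x_2$ and $x_1$, realize it as Brownian motion conditioned on its endpoint, apply the reflection principle at level $0$ to get the subdensity $p_t(x_2,y) - p_t(-x_2,y)$, and divide by the free density. The only minor point worth flagging is that the event in the lemma is the closed condition $Y_s \geq 0$, whereas the reflection-principle identity is usually stated for $B_s > 0$ on $(0,t)$; these agree up to a null event because touching $0$ without crossing has probability zero for Brownian motion, but it is worth a one-line remark. You also correctly handle the degenerate cases $x_1 = 0$ or $x_2 = 0$, and the final bound follows from $1 - e^{-a} \leq a$ for $a \geq 0$. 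Nothing is missing.
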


\subsection{Branching Brownian motion estimates}
Recall that $\{ ( \mathsf{X}^{\beta,\sigma^{2}}_{u}(t), u \in \mathsf{N}_{t})_{t \geq 0}, \mathsf{P} \}$ is
a BBM with branching rate $\beta$ and diffusion coefficient $\sigma^2$.
Let $v= \sqrt{2 \beta \sigma^2}$ and $\theta= \sqrt{\frac{2 \beta}{\sigma^2}}$. Then for some constant $C>0$ there holds
\begin{equation}\label{eq-upper-envelope-0}
 \mathsf{P}\left(\exists s >0, u \in \mathsf{N}_s: \mathsf{X}^{\beta,\sigma^{2}}_u(s) \geq v s +K\right) \leq  C e^{-\theta K}.
  \end{equation}
In fact,  this probability
is comparable with respect to this upper bound, see \cite[Lemma 3.4]{Madaule16tail}.
We state some fundamental results for the standard BBM (i.e., $\beta=\sigma^2=1$) that will be used later. The first one is the tail probability of the maximum of BBM. Applying the
 first moment method, we get  a trivial upper bound:
  for every $y \geq 1$ and  $t >0$,
\begin{equation}\label{eq-BBM-tail-2}
  \mathsf{P}\left( \max_{u \in \mathsf{N_{t}}}\mathsf{X}_{u}(t)  \geq   y \right)  \leq  e^{t} \mathbb{P}( B_{t} \geq  y) \leq \frac{1}{\sqrt{2 \pi } } \frac{\sqrt{t}}{y} e^{ t -\frac{y^2}{2t}}.
 \end{equation}
A much better estimate, especially when $y $  nears $ \mathsf{m}(t):= \sqrt{2}t- \frac{3}{2\sqrt{2}} \log t$,
 was given in \cite[Corollary 10]{ABK12} as follows.   For every $x \geq 1$ and  $t >0$,
\begin{equation*}
\mathsf{P}\left(  \max_{u \in \mathsf{N_{t}}}\mathsf{X}_{u}(t)   \geq \mathsf{m}(t)  + x \right)  \leq C   x   \exp \left(-\sqrt{2} x-\frac{x^2}{2 t} + \frac{3}{2\sqrt{2}} \frac{ x \log(t+1)}{ t}\right).
\end{equation*}
We shall use a slight modification of this inequality  as follows.
\begin{lemma}\label{lem-Max-BBM-tail}
  There exists some constant $C>0$ such that for every $x \geq 1$ and  $t >0$,
\begin{align*}
\mathsf{P}\left(  \max_{u \in \mathsf{N_{t}}}\mathsf{X}_{u}(t)   \geq  \mathsf{m}(t) + x \right)
\leq   C
x   \exp \left(-\sqrt{2} x- \frac{1}{2 t} \left[x- \frac{3}{2\sqrt{2}}\log (t+1) \right]^{2}  \right).
\end{align*}
\end{lemma}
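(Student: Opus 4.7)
The plan is to deduce the stated bound directly from the Aidékon--Berestycki--Brunet--Kaczmarz (Corollary 10 of \cite{ABK12}) estimate that is quoted just before the lemma, via a simple algebraic rearrangement together with a uniform boundedness check. The work is essentially to observe that the desired exponent is obtained from the \cite{ABK12} exponent by subtracting a quantity that is bounded uniformly in $t>0$, so the price paid is only a multiplicative constant which can be absorbed into $C$.

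Concretely, I would first expand the square in the desired exponent. Writing $c_t := \frac{3}{2\sqrt{2}}\log(t+1)$, one computes
\begin{equation*}
-\frac{1}{2t}\bigl[x-c_t\bigr]^2
=-\frac{x^2}{2t}+\frac{c_t x}{t}-\frac{c_t^2}{2t}
=-\frac{x^2}{2t}+\frac{3}{2\sqrt{2}}\frac{x\log(t+1)}{t}-\frac{9\log^2(t+1)}{16\,t}.
\end{equation*}
Hence the exponent we want equals the exponent in the \cite{ABK12} bound minus $\frac{9\log^2(t+1)}{16t}$. Equivalently, the \cite{ABK12} bound equals the desired bound multiplied by the correction factor $\exp\bigl(\tfrac{9\log^2(t+1)}{16t}\bigr)$.

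Next I would show that this correction factor is bounded uniformly in $t\in(0,\infty)$. The function $t\mapsto \log^2(t+1)/t$ is continuous on $(0,\infty)$; as $t\to 0^+$ we have $\log(t+1)\sim t$ so $\log^2(t+1)/t\sim t\to 0$, and as $t\to\infty$, $\log^2(t+1)/t\to 0$. Therefore there exists a finite constant $K$ such that $\log^2(t+1)/t\leq K$ for all $t>0$, giving $\exp\bigl(\tfrac{9\log^2(t+1)}{16t}\bigr)\leq e^{9K/16}$. Combining this with the \cite{ABK12} inequality yields
\begin{equation*}
\mathsf{P}\!\left(\max_{u\in\mathsf{N}_t}\mathsf{X}_u(t)\geq \sqrt{2}t-\frac{3}{2\sqrt{2}}\log(t+1)+x\right)
\leq C\,e^{9K/16}\,x\exp\!\left(-\sqrt{2}x-\frac{1}{2t}\bigl[x-c_t\bigr]^2\right),
\end{equation*}
which is the claimed inequality after replacing the constant. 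There is no real obstacle here; the only thing to be careful about is the uniform bound in the regime where $t$ is neither small nor large, but continuity together with the two vanishing limits takes care of this automatically.
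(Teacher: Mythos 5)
Your proof is correct and is exactly the intended argument: the paper introduces the lemma as a ``slight modification'' of the quoted \cite[Corollary 10]{ABK12} bound and leaves the verification implicit, which is precisely the algebraic rearrangement and uniform-boundedness check you carry out. Expanding $-\tfrac{1}{2t}[x-c_t]^2$ with $c_t=\tfrac{3}{2\sqrt{2}}\log(t+1)$ reproduces the \cite{ABK12} exponent up to the extra term $-\tfrac{9\log^2(t+1)}{16t}$, and since $\sup_{t>0}\log^2(t+1)/t<\infty$ (vanishing at both $0^+$ and $\infty$, continuous in between), that extra term is absorbed into the constant $C$. No gap.
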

Note that $ \inf\limits_{x \geq 1, t>0} \frac{1}{2t} [x-\frac{3}{2\sqrt{2}} \log (t+1)  ]^2 -  \frac{x^2}{3t} >-\infty$.  By enlarging the constant $C$,  we also have,  for $x \geq 1$ and $t>0$,
\begin{equation}\label{eq-Max-BBM-tail-2}
  \mathsf{P}\left(  \max_{u \in \mathsf{N_{t}}}\mathsf{X}_{u}(t)   \geq \mathsf{m}(t) + x \right)   \leq   C
  x   \exp \left(-\sqrt{2} x-\frac{x^2}{3 t}  \right).
\end{equation}

Secondly, we need some estimates about the Laplace functional of the following point processes  associated with BBM:
\begin{equation*}
	\sum_{u \in \mathsf{N}_t}  \delta_{\mathsf{X}_u(t)-\rho t}    \   \ \text{ for all } \rho \geq \sqrt{2}.
\end{equation*}
When looking at the long-time behavior of the
Laplace functionals of these  point processes, there are two distinct regimes: $\rho = \sqrt{2}$ and $\rho > \sqrt{2}$.

\begin{lemma}[{\cite[Corollary 2.9]{Belloum22},\cite[Lemma 3.7]{BM21}}]
\label{thm-Laplace-BBM-order}
Let  $\varphi \in \mathcal{T}$, $\epsilon>0 $ and $\rho \geq \sqrt{2}$.
Define
\begin{equation}\label{def-Phi-rho}
 \Phi_{\rho}(t,x): =1-\mathsf{E}\left(e^{-\sum_{u \in \mathsf{N}_t} \varphi\left(x+\mathsf{X}_u(t)-\rho t\right)}\right).
\end{equation}

\begin{enumerate}[(i)]
  \item If $\rho = \sqrt{2}$,  for $x \in [-t^{1-\epsilon}, -t^{\epsilon}]$ uniformly
  \begin{equation*}
    \Phi_{\sqrt{2}}(t,x)= (1+o(1))\gamma_{\sqrt{2}}(\varphi)\frac{ (-x )}{t^{3 / 2}}e^{\sqrt{2} x-\frac{x^2}{2 t}} , \text{ as } t \to \infty,
\end{equation*}
 where $\gamma_{\sqrt{2}}(\varphi)= \displaystyle \sqrt{2}C_{\star} \int e^{-\sqrt{2} z}\left(1-\mathbb{E}\left(e^{-\langle\mathfrak{D}^{\sqrt{2}}, \varphi(\cdot+z)\rangle}\right)\right) \dif z$.
\item If $\rho>\sqrt{2}$,
 for $|x| \leq t^{1-\epsilon}$ uniformly
\begin{equation*}
   \Phi_{\rho}(t,x) =(1+o(1))
   \gamma_\rho(\varphi) \frac{e^{\left(1-\rho^2 / 2\right) t}}{\sqrt{t}} e^{\rho x-\frac{x^2}{2 t}}  , \text{ as } t \to \infty,
\end{equation*}
where $ \gamma_\rho(\varphi)=  \displaystyle\frac{C(\rho)   }{\sqrt{2 \pi }}\int e^{-\rho z}\left(1- \mathbf{E} (e^{-\left\langle\mathfrak{D}^\rho, \varphi(\cdot+z)\right\rangle})  \right) \dif z$.
\end{enumerate}
\end{lemma}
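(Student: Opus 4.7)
The plan is to treat the two regimes separately, since they require substantially different machinery: the supercritical case $\rho>\sqrt{2}$ is accessible via a first–moment/spine computation with a Girsanov tilt, whereas the critical case $\rho=\sqrt 2$ requires Bramson-type PDE asymptotics for the F-KPP equation.

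For part (ii), I would start from the many-to-one identity
\[
\mathsf{E}\Big[\sum_{u\in\mathsf{N}_t}\psi(\mathsf{X}_u(t))\Big]=e^{t}\,\mathbf{E}[\psi(B_t)]
\]
applied to $\psi(y)=1-\exp(-\varphi(x+y-\rho t))$. The idea is that for $\rho>\sqrt{2}$ the quantity $\Phi_\rho(t,x)$ is exponentially small, so after bounding the second moment (using the standard two-spine decomposition and the observation that two independent Brownian paths tilted to speed $\rho$ are very unlikely to both stay near level $\rho t$) one reduces to the first-moment expression. A Girsanov change of measure tilting $B_t$ by $\rho$ then produces the prefactor $e^{(1-\rho^2/2)t}e^{\rho x-x^2/(2t)}/\sqrt{2\pi t}$ together with a residual integral of $1-e^{-\varphi(\cdot)}$ against Lebesgue measure. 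The crucial remaining step is to upgrade the naive $1-e^{-\varphi}$ to $1-\mathbf{E}[e^{-\langle\mathfrak{D}^\rho,\varphi(\cdot+z)\rangle}]$: this is where the decoration process appears, and it comes from the fact that a spine particle reaching near level $\rho t - x$ produces (after rescaling around its endpoint) a cluster of near-maximal descendants whose law converges to $\mathfrak{D}^\rho$. I would invoke the convergence of the extremal process above high levels (in the spirit of Bovier--Hartung, used in \cite{BM21}) as a black box to identify the limiting constant $\gamma_\rho(\varphi)$, and verify uniformity in $|x|\le t^{1-\epsilon}$ via dominated convergence and standard Gaussian estimates.

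For part (i), the cleanest approach is via the F-KPP equation. Observing that $u(t,y):=1-\mathsf{E}[\exp(-\sum_{u\in\mathsf{N}_t}\varphi(y+\mathsf{X}_u(t)))]$ solves $\partial_t u=\tfrac12\Delta u-u(1-u)$ with initial data $u_0=1-e^{-\varphi}$ (bounded, nonnegative, with $\inf\operatorname{supp}u_0>-\infty$ and $u_0\not\equiv 0$), one identifies $\Phi_{\sqrt 2}(t,x)=u(t,x-\sqrt 2 t)$. Bramson's classical analysis of critical F-KPP then gives not only the traveling-wave limit
\[
u\bigl(t,\sqrt 2 t-\tfrac{3}{2\sqrt 2}\log t+y\bigr)\to w(y)
\]
uniformly on compacts, but also the precise Gaussian-front asymptotics in the moderate-deviation window $y=x$ with $|x|\in[t^\epsilon,t^{1-\epsilon}]$, namely $u(t,\sqrt 2 t+x)\sim C(\varphi)\,(-x)\,t^{-3/2}\,e^{\sqrt 2 x-x^2/(2t)}$. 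The identification of the constant $C(\varphi)$ with $\gamma_{\sqrt 2}(\varphi)$ is obtained by comparing, in the limit, the Laplace functional of $\sum_u\delta_{\mathsf{X}_u(t)-\sqrt 2 t+\tfrac{3}{2\sqrt 2}\log t}$ with the DPPP limit \eqref{eq-converge-to-DPPP}: the Laplace functional of $\operatorname{DPPP}(\sqrt 2 C_\star\mathsf{Z}_\infty e^{-\sqrt 2 z}\dif z,\mathfrak{D}^{\sqrt 2})$ evaluated against $\varphi$ produces exactly the integral $\int e^{-\sqrt 2 z}(1-\mathbb{E}[e^{-\langle\mathfrak{D}^{\sqrt 2},\varphi(\cdot+z)\rangle}])\dif z$, fixing the constant.

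The main obstacle in both parts is not the leading-order exponential/Gaussian profile, which follows from a routine first-moment calculation, but rather the identification of the multiplicative constant $\gamma_\rho(\varphi)$ as the decoration integral. This requires linking a finite-$t$ Laplace computation to the infinite-volume cluster law $\mathfrak{D}^\rho$; in practice this is inherited from either Bramson's PDE fine-asymptotics (case $\rho=\sqrt 2$) or the extremal process convergence above high levels (case $\rho>\sqrt 2$), both of which I would use as stated results rather than reprove. A secondary technical point is the uniformity of the error in $x$ throughout the window $[-t^{1-\epsilon},-t^\epsilon]$ (resp.\ $|x|\le t^{1-\epsilon}$), which one handles by combining the quantitative Gaussian tail bound \eqref{eq-Max-BBM-tail-2} for the localization of particles with Lemma~\ref{lem-bridge-estimate-0} for the Brownian bridge needed in the second-moment control.
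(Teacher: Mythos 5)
Your outline is sound and is essentially the argument underlying the two references the paper relies on; the paper itself gives no proof of this lemma, stating only that part (i) was proved for $x=-\Theta(\sqrt{t})$ in \cite[Corollary 2.9]{Belloum22} and part (ii) for $|x|=O(\sqrt{t})$ in \cite[Lemma 3.7]{BM21}, and that those proofs carry over to the wider windows. The one issue your sketch treats only in passing --- uniformity over the full ranges $x\in[-t^{1-\epsilon},-t^{\epsilon}]$ and $|x|\le t^{1-\epsilon}$, which is the only content genuinely beyond the cited statements --- is precisely the step the paper also declines to spell out, so at the level of detail offered there is no substantive difference between your route and the paper's.
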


In fact part (i) and part (ii) were proved for the case $x=- \Theta(\sqrt{t})$ in
\cite[Corollary 2.9]{Belloum22}  and for the case  $|x|= O(\sqrt{t})$ in \cite[Lemma 3.7]{BM21} respectively. However their proofs still work in our setting. We omit the repetitive proofs here.

Thirdly, we introduce several central limit theorems about the Gibbs measures associated with  standard BBM $\left\{\left(\mathsf{X}_u(t): u \in \mathsf{N}_t\right), \mathsf{P}\right\}$.
Conditioned on BBM at time $t$, we assign each  particle $u \in \mathsf{N}_t$ with probability
\begin{equation*}
  \frac{e^{\lambda \mathsf{X}_u(t)}}{\sum_{u \in \mathsf{N}_t} e^{\lambda \mathsf{X}_u(t)}}.
\end{equation*}
 Hence the additive martingale $\mathsf{W}_t(\lambda)=\sum_{u \in \mathsf{N}_t} e^{\lambda \mathsf{X}_u(t)-\left(\frac{\lambda^2}{2}+1\right) t}$ can be regarded as  a normalized partition function of the Gibbs measure.
The following law of large  numbers is well-known:
 for $0 \leq \lambda < \sqrt{2}$, and bounded continuous function $f$,
\begin{equation*}
  \lim _{t \rightarrow \infty}  \sum_{u \in \mathsf{N}_t} f\left( \frac{\mathsf{X}_u(t)}{t}\right)   e^{\lambda \mathsf{X}_u(t)-\left(\frac{\lambda^2}{2}+1\right) t}  = \mathsf{W}_{\infty}(\lambda) f(\lambda)    \   \text{ in probability,}
  \end{equation*}
  where $\mathsf{W}_{\infty}(\lambda)$ is the limit of the non-negative martingale $\mathsf{W}_{t}$. (See \cite[Proposition 2.5]{MR23} for a proof).
Furthermore,  a central limit theorem holds (see \cite[(1.14)]{Pain18}):   for $\lambda \in(0, \sqrt{2})$   and bounded continuous function $f$,
\begin{equation*}
\lim _{t \rightarrow \infty} \sum_{u \in \mathsf{N}_t} f\left(\frac{\mathsf{X}_u(t)-\lambda t}{\sqrt{t}}\right) e^{\lambda \mathsf{X}_u(t)-t\left(\frac{\lambda^2}{2}+1\right)}=W_{\infty}(\lambda) \int_{\mathbb{R}} f(z) e^{-\frac{z^2}{2}} \frac{\dif z}{\sqrt{2 \pi}}  \  \text{ in probability.}
\end{equation*}
In the following lemma, we generalize this central limit theorem to the case that the parameter $\lambda$ and test function $f$ both depend on $t$ in a certain way.
We   postpone its  proof to Appendix \ref{App-A}.

\begin{lemma}\label{lem-functional-convergence-derivative-martingale-2}   Let $G$ be a non-negative bounded  measurable function with compact support. Suppose $F_t(z)=$ $G\left(\frac{z-r_t}{h_t}\right)$
with $r_t$ and $h_t$ satisfying that for some $\epsilon>0$ and large
 $t$, $ |r_t | \leq \bar{r}<\infty$ and $|h_{t} | \leq \bar{h}<\infty$. Let $\lambda_{t} =\sqrt{2}(1-\frac{1}{\alpha_{t}})$,
  where $\alpha_{t}\geq 1$ and  $\sqrt{t}/\alpha_{t} \to \infty$.
 Define
  \begin{equation*}
   \mathsf{W}^{F_t}_{t}(\lambda_{t})  := \sum_{u \in \mathsf{N}_{t}}  F_{t} \left( \frac{\lambda_{t}t- \mathsf{X}_{u}(t) }{\sqrt{t}} \right)e^{\lambda_{t}\mathsf{X}_{u}(t)- (\frac{\lambda_{t}^2}{2}+1)t }   .
  \end{equation*}
Set $\mu_{\mathrm{Gau}}(\dif z)= \frac{1}{\sqrt{2\pi}}  e^{-\frac{z^2}{2}}  \dif z $. Write $\langle F_t, \mu_{\mathrm{Gau}} \rangle= \frac{1}{\sqrt{2 \pi}} \int_{0}^{\infty} F_t(z) e^{-\frac{z^2}{2}} \dif z $.
 \begin{enumerate}[(i)]
  \item If $\alpha_{t} \to \alpha \geq 1$,  and hence   $ \lambda_{t} \to \sqrt{2}(1-\frac{1}{\alpha})=: \lambda$,
    we have
 \begin{equation*}
    \lim_{t \to \infty} \frac{ \mathsf{W}^{F_t}_{t}(\lambda_{t})  }{ \langle F_{t},\mu_{\mathrm{Gau}} \rangle }
     =   \mathsf{W}_{\infty} (\lambda) \ \text{ in probability,}
    \end{equation*}

  \item  If $\alpha_{t} \to \infty$ and hence $\lambda_{t} \to \sqrt{2}$, we have
   \begin{equation*}
    \lim_{t \to \infty} \frac{ 1}{ \langle F_{t},\mu_{\mathrm{Gau}} \rangle }   \frac{ \mathsf{W}^{F_t}_{t}(\lambda_{t})}{ \sqrt{2}- \lambda_{t} }
   = 2 \mathsf{Z}_{\infty}\ \text{ in probability.}
  \end{equation*}

 \end{enumerate}
   \end{lemma}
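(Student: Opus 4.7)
The plan is to extend the classical CLT argument of Pain \cite[(1.14)]{Pain18} (and its martingale-truncation predecessors in the Biggins/Chauvin tradition) to allow both the tilt $\lambda_{t}$ and the test function $F_{t}$ to depend on the time horizon $t$. A many-to-one Girsanov computation already gives that $\mathbb{E}[\mathsf{W}^{F_{t}}_{t}(\lambda_{t})]$ is a constant multiple of $\langle F_{t},\mu_{\mathrm{Gau}}\rangle$ (the computation is immediate after writing $B_{t}=\lambda_{t}t+\sqrt{t}Z$), which justifies the normalization in the statement and also pins down the numerical factor $2$ in part (ii), stemming from the integration convention for $\langle\cdot,\mu_{\mathrm{Gau}}\rangle$.

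The main step is a truncation at an intermediate time $s=s(t)$ with $s\to\infty$, $s/t\to 0$, and (for part (ii)) $\sqrt{s}(\sqrt{2}-\lambda_{t})\to\infty$. Using the branching property I write
\[
\mathsf{W}^{F_{t}}_{t}(\lambda_{t})=\sum_{v\in\mathsf{N}_{s}}e^{\lambda_{t}\mathsf{X}_{v}(s)-(\lambda_{t}^{2}/2+1)s}\,\mathsf{R}_{v},
\]
where $(\mathsf{R}_{v})_{v\in\mathsf{N}_{s}}$ are conditionally independent given $\mathcal{F}_{s}$. A Gaussian integration gives
\[
\mathbb{E}[\mathsf{R}_{v}\mid\mathcal{F}_{s}]=\sqrt{\tfrac{t}{t-s}}\int F_{t}(z)\,\tfrac{1}{\sqrt{2\pi}}\exp\!\Bigl(-\tfrac{(y_{v}+\sqrt{t}\,z)^{2}}{2(t-s)}\Bigr)\,dz,\qquad y_{v}:=\mathsf{X}_{v}(s)-\lambda_{t}s.
\]
Under the size-biased (spine) measure associated with $\mathsf{W}_{s}(\lambda_{t})$ the contributing particles have $y_{v}=O(\sqrt{s})$, so $(y_{v}+\sqrt{t}z)^{2}/(t-s)\to z^{2}$ uniformly on $\mathrm{supp}(F_{t})$, yielding $\mathbb{E}[\mathsf{R}_{v}\mid\mathcal{F}_{s}]\sim c_{G}\,\langle F_{t},\mu_{\mathrm{Gau}}\rangle$ with $c_{G}$ the aforementioned constant. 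Hence
\[
\mathbb{E}[\mathsf{W}^{F_{t}}_{t}(\lambda_{t})\mid\mathcal{F}_{s}]\;\sim\;c_{G}\,\langle F_{t},\mu_{\mathrm{Gau}}\rangle\,\mathsf{W}_{s}(\lambda_{t}).
\]

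Next I need the $L^{2}$-tightness of the residual: $\mathrm{Var}(\mathsf{W}^{F_{t}}_{t}(\lambda_{t})\mid\mathcal{F}_{s})=o(\langle F_{t},\mu_{\mathrm{Gau}}\rangle^{2})$ in part (i) and $o((\sqrt{2}-\lambda_{t})^{2}\langle F_{t},\mu_{\mathrm{Gau}}\rangle^{2})$ in part (ii). By conditional independence this reduces to $\sum_{v}e^{2\lambda_{t}\mathsf{X}_{v}(s)-2(\lambda_{t}^{2}/2+1)s}\,\mathrm{Var}(\mathsf{R}_{v}\mid\mathcal{F}_{s})$, and each $\mathrm{Var}(\mathsf{R}_{v}\mid\mathcal{F}_{s})$ is estimated by a many-to-two computation inside the residual BBM of length $t-s$; the resulting integral $\int_{0}^{t-s}e^{-(\sqrt{2}-\lambda_{t})^{2}(t-s-u)}du=O(\alpha_{t}^{2})$ is finite thanks to $\lambda_{t}<\sqrt{2}$, and the barrier bound from Lemma~\ref{lem-bridge-estimate-0} is used to avoid overcounting. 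Once both halves are in place, Chebyshev gives
\[
\mathsf{W}^{F_{t}}_{t}(\lambda_{t})=c_{G}\,\langle F_{t},\mu_{\mathrm{Gau}}\rangle\,\mathsf{W}_{s}(\lambda_{t})+o_{\mathbb{P}}(\langle F_{t},\mu_{\mathrm{Gau}}\rangle)
\]
(with the analogous statement involving $(\sqrt{2}-\lambda_{t})$ in part (ii)).

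It remains to pass to the limit in $\mathsf{W}_{s}(\lambda_{t})$. In part (i), continuity of $\lambda\mapsto\mathsf{W}_{\infty}(\lambda)$ on $[0,\sqrt{2})$ combined with $\mathsf{W}_{s}(\lambda_{t})\to\mathsf{W}_{\infty}(\lambda)$ (as $t\to\infty$ then $s\to\infty$) finishes the proof. In part (ii), I Taylor-expand $\mathsf{W}_{s}(\lambda_{t})=\mathsf{W}_{s}(\sqrt{2})+(\sqrt{2}-\lambda_{t})\mathsf{Z}_{s}+O((\sqrt{2}-\lambda_{t})^{2}\partial^{2}\mathsf{W}_{s})$ and invoke the Seneta–Heyde norming $\sqrt{s}\,\mathsf{W}_{s}(\sqrt{2})\to\sqrt{2/\pi}\,\mathsf{Z}_{\infty}$ together with $\mathsf{Z}_{s}\to\mathsf{Z}_{\infty}$ a.s.; the scaling $\sqrt{s}(\sqrt{2}-\lambda_{t})\to\infty$ makes the critical-martingale term negligible compared to $(\sqrt{2}-\lambda_{t})\mathsf{Z}_{s}$, yielding $\mathsf{W}_{s}(\lambda_{t})/(\sqrt{2}-\lambda_{t})\to\mathsf{Z}_{\infty}$ in probability. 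The hardest step is the variance bound in part (ii), since the many-to-two factor becomes large as $\lambda_{t}\uparrow\sqrt{2}$ and is exactly where the hypothesis $\sqrt{t}/\alpha_{t}\to\infty$ is consumed.
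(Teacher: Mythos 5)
The architecture of your proof (truncate at an intermediate time, compute the conditional expectation of the residual, establish concentration of the residual, then pass to the limit in the coarser martingale) matches the paper's; the gaps are in the two technical ingredients you chose.

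First, the $L^{2}$/Chebyshev concentration step cannot work once $\lambda_{t}>1$, which happens eventually in part (ii) and also in part (i) whenever $\lambda>1$. The conditional variance decomposes over the split time $u\in[s_0,t]$ of a pair of particles, and the many-to-two weight along the spine is $\exp\{(\lambda_{t}^{2}-1)u\}$. The compact support of $F_{t}$ only contributes a truncation factor of order $\exp\{-\Theta(u^{2}/t)\}$ (the probability that a spine that travels at speed $2\lambda_{t}$ until the split time $u$ still has a descendant in the target window at time $t$). Since $\lambda_{t}^{2}-1\to 1$, the integrand $\exp\{(\lambda_{t}^{2}-1)u-\Theta(u^{2}/t)\}$ is maximized at $u\asymp t$ with a value of order $\exp\{\Theta(t)\}$, so the conditional variance blows up for any truncation time $s_{0}=o(t)$. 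In other words, the compact-support truncation rescues the diagonal ($u=v$) term but not the off-diagonal one, so your claimed bound $\int_0^{t-s}e^{-(\sqrt{2}-\lambda_{t})^{2}(t-s-u)}\,du=O(\alpha_{t}^{2})$ is not what the many-to-two computation produces. You gesture at a barrier truncation, but Lemma~\ref{lem-bridge-estimate-0} alone does not provide it, and none of the needed Seneta--Heyde apparatus is set up. The paper sidesteps the whole issue by applying the von~Bahr--Esseen inequality with a $t$-dependent fractional exponent $p_{t}=1+\frac{1}{2\alpha_{t}}$: the sum $\sum_{v}e^{p_{t}\lambda_{t}\mathsf{X}_{v}(k_{t})-p_{t}(\lambda_{t}^{2}/2+1)k_{t}}$ then has expectation $\exp\{-\Theta(k_{t}/\alpha_{t}^{2})\}\to 0$ under the condition $k_{t}/\alpha_{t}^{2}\to\infty$, and the residual $p_{t}$-th moments are uniformly bounded because $\mathsf{W}_{\infty}(\lambda_{t})$ has a uniformly bounded $p_{t}$-th moment.

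Second, the Taylor expansion $\mathsf{W}_{s}(\lambda_{t})=\mathsf{W}_{s}(\sqrt{2})+(\sqrt{2}-\lambda_{t})\mathsf{Z}_{s}+O((\sqrt{2}-\lambda_{t})^{2}\partial_{\lambda}^{2}\mathsf{W}_{s})$ does not give the claimed limit. A Madaule/Seneta--Heyde estimate shows $\partial_{\lambda}^{2}\mathsf{W}_{s}(\sqrt{2})=\Theta(\sqrt{s}\,\mathsf{Z}_{\infty})$, so the third term, after dividing by $\sqrt{2}-\lambda_{t}$, is $\Theta((\sqrt{s}/\alpha_{t})\,\mathsf{Z}_{\infty})$, which \emph{blows up} under the very condition $\sqrt{s}/\alpha_{t}\to\infty$ that you need to kill the critical term $\mathsf{W}_{s}(\sqrt{2})/(\sqrt{2}-\lambda_{t})$. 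Indeed, writing $\mathsf{W}_{s}(\lambda_{t})=e^{-s/\alpha_{t}^{2}}\sum_{u}e^{\sqrt{2}\mathsf{X}_{u}(s)-2s}\exp\{\tfrac{\sqrt{2s}}{\alpha_{t}}\cdot\tfrac{\sqrt{2}s-\mathsf{X}_{u}(s)}{\sqrt{s}}\}$ shows the correct limit is $\alpha_{t}\mathsf{W}_{s}(\lambda_{t})\to 2\sqrt{2}\,\mathsf{Z}_{\infty}$ (so $\mathsf{W}_{s}(\lambda_{t})/(\sqrt{2}-\lambda_{t})\to 2\mathsf{Z}_{\infty}$), whereas your two-term Taylor gives $\mathsf{Z}_{\infty}$ --- off by the crucial factor of~$2$ that the lemma asserts and that the higher-order corrections you discarded are responsible for. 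This is precisely why the paper does not attempt to rederive the near-critical martingale asymptotics but instead invokes Pain's \cite[Theorem~1.1(iv)]{Pain18} directly, together with \cite[Corollary~1.3]{Pain18} for the spatial localization needed in the continuity estimate on $\delta_{t}$.
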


The results in  Lemma  \ref{lem-functional-convergence-derivative-martingale-2} do not include
the case that  $\lambda_{t} \equiv \sqrt{2}$, where  the limiting distribution is no longer Gaussian.
 According to  \cite[Theorem 1.2]{Madaule16}, we know that  for every bounded continuous function $f$,
    \begin{equation*}
   \lim_{t \to \infty}\sqrt{t} \sum_{u \in \mathsf{N}_{t}} f\left( \frac{\sqrt{2} t- \mathsf{X}_{u}(t)}{\sqrt{t}} \right) e^{-\sqrt{2}(\sqrt{2} t-\mathsf{X}_{u}(t))}
   =    \sqrt{\frac{2}{\pi}} \, \mathsf{Z}_{\infty}  \int_{0}^{\infty} f(z)  z e^{-\frac{z^2}{2}} \dif z
      \end{equation*}
in probability.
The following lemma is
a generalization of this central limit theorem.

\begin{lemma}[{\cite[Proposition 2.6]{MR23}}]
\label{lem-functional-convergence-derivative-martingale}   Let $G$ be a non-negative bounded measurable function with compact support. Suppose $F_t(z)=$ $G\left(\frac{z-r_t}{h_t}\right)$
with $r_t$ and $h_t$ satisfying that
$ \log^{3}(t)/\sqrt{t} \leq r_{t} \leq \bar{r}<\infty$, $r_{t}+yh_{t}=\Theta(r_{t})$  uniformly for $y \in \mathrm{supp}(G)$ and $h_{t}$ decreases at most polynomially fast. Define
   \begin{equation*}
    \mathsf{W}^{F_t}_{t}(\sqrt{2})  := \sum_{u \in \mathsf{N}_{t}}  \mathsf{F}_{t} \left( \frac{\sqrt{2}t- \mathsf{X}_{u}(t) }{\sqrt{t}} \right)e^{-\sqrt{2}(\sqrt{2} t-\mathsf{X}_{u}(t))}   .
   \end{equation*}
  Put $\mu_{\mathrm{Mea}} (\dif z)= z e^{-\frac{z^2}{2}} 1_{\{z>0\}}\dif z $. Write $\langle F_t, \mu_{\mathrm{Mea}} \rangle= \int_{0}^{\infty} F_t(z)z e^{-\frac{z^2}{2}} \dif z $. Then we  have
   \begin{equation*}
   \lim_{t \to \infty}\frac{ \sqrt{ t } }{ \langle \mathsf{F}_{t},\mu_{\mathrm{Mea}} \rangle } \, \mathsf{W}^{F_t}_{t}(\sqrt{2}) = \sqrt{\frac{2}{\pi}} \mathsf{Z}_{\infty} \ \text{ in probability.}
   \end{equation*}
    \end{lemma}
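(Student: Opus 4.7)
The lemma extends \cite[Theorem 1.2]{Madaule16}, which covers a fixed bounded continuous test function, to a $t$-dependent test function $F_t$ that can concentrate at scale $h_t$ around $r_t$. My plan is to adapt the classical barrier-and-spine strategy of Madaule to this varying-function setting, exploiting the scaling hypothesis $r_t + y h_t = \Theta(r_t)$ uniformly for $y \in \operatorname{supp}(G)$.

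The first step is a truncation: for a barrier $K > 0$, restrict the sum to particles whose trajectory stays below the line $s \mapsto \sqrt{2}s + K$, obtaining
\[
\mathsf{W}^{F_t,(K)}_t(\sqrt{2}) := \sum_{u \in \mathsf{N}_t} F_t\!\left( \frac{\sqrt{2}t - \mathsf{X}_u(t)}{\sqrt{t}} \right) e^{-\sqrt{2}(\sqrt{2}t - \mathsf{X}_u(t))} \mathbf{1}_{\{\mathsf{X}_u(s) \leq \sqrt{2}s + K,\, \forall s \leq t\}}.
\]
The upper-envelope bound \eqref{eq-upper-envelope-0} implies that the discarded contribution is $o(\langle F_t, \mu_{\mathrm{Mea}}\rangle / \sqrt{t})$, uniformly in $t$, as $K \to \infty$. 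On the truncated sum I would apply Lyons' change of measure tied to the additive martingale at $\lambda = \sqrt{2}$; the spine's ``gap'' $\sqrt{2}s - \mathsf{X}_{\mathrm{spine}}(s)$ becomes a standard Brownian motion under the tilted law, and the barrier is converted into a one-sided reflection. A direct many-to-one computation combined with the ballot-type identity of Lemma~\ref{lem-bridge-estimate-0} then yields $\sqrt{t}\,\mathbf{E}[\mathsf{W}^{F_t,(K)}_t(\sqrt{2})] = \Theta(K)\,\langle F_t,\mu_{\mathrm{Mea}}\rangle (1+o(1))$, with a constant that identifies with $\sqrt{2/\pi}\,\mathbf{E}[\mathsf{Z}_\infty^{(K)}]$ through the martingale property of the truncated derivative martingale $\mathsf{Z}_t^{(K)} = \sum_u (K + \sqrt{2}t - \mathsf{X}_u(t))\,e^{-\sqrt{2}(\sqrt{2}t-\mathsf{X}_u(t))} \mathbf{1}_{\{\cdots\}}$, which satisfies $\mathsf{Z}_\infty^{(K)} \uparrow \mathsf{Z}_\infty$ as $K \to \infty$.

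Convergence in probability is then obtained by the standard peeling/recursion argument: split at an intermediate time $r$ and decompose $\sqrt{t}\,\mathsf{W}^{F_t,(K)}_t(\sqrt{2})$ as a sum, over the particles present at time $r$, of conditionally independent contributions from their sub-BBMs. A many-to-two second moment computation, again with the barrier in place, shows that the normalized variance vanishes as $t \to \infty$ and then $r \to \infty$, while the conditional means aggregate to $\sqrt{2/\pi}\,\mathsf{Z}_\infty^{(K)} \cdot \langle F_t,\mu_{\mathrm{Mea}}\rangle (1+o(1))$. Letting $K \to \infty$ finally replaces $\mathsf{Z}_\infty^{(K)}$ by $\mathsf{Z}_\infty$ and absorbs the truncation error.

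The principal obstacle is the degenerate regime in which $h_t$ and $r_t$ can both be small, down to $r_t$ as small as $t^{-1/2+\epsilon}$ and $h_t \downarrow 0$, so that only a vanishing fraction of the ``frontier'' particles contributes and fluctuations relative to the mean become harder to control. Here the scaling condition $r_t + y h_t = \Theta(r_t)$ is crucial: it makes $z e^{-z^2/2}$ essentially constant on $\operatorname{supp}(F_t)$, so that $\langle F_t, \mu_{\mathrm{Mea}}\rangle = \Theta\!\bigl(r_t h_t \int G\,\dif z\bigr)$, and the second moment estimate from the many-to-two formula can be made uniform across the admissible pairs $(r_t, h_t)$. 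Ensuring this uniformity in the ballot and Gaussian error terms throughout the spine computation is the technical heart of the proof.
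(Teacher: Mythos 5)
The paper does not actually prove this lemma; it is imported verbatim from \cite[Proposition 2.6]{MR23}, so there is no internal proof of it here to compare against. What I can evaluate is (a) whether your plan is sound and (b) how it relates to the one argument of this type the paper \emph{does} give, namely the Appendix~\ref{App-A} proof of the companion Lemma~\ref{lem-functional-convergence-derivative-martingale-2}.

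Your outline is the classical Madaule-style route for $\lambda=\sqrt2$ and looks correct. The first-moment scaling checks out: writing $B_t=\sqrt2 t-\sqrt t z$ and applying Lemma~\ref{lem-bridge-estimate-0}, the barrier gives the ballot factor $1-e^{-2K(K+\sqrt t z)/t}\sim 2Kz/\sqrt t$ (valid because $z=\Theta(r_t)\geq t^{-1/2+\epsilon}$ so $\sqrt t z\geq t^{\epsilon}\gg K$), which reweights the Gaussian $e^{-z^2/2}$ into the meander density $z e^{-z^2/2}$ and produces $\sqrt t\,\mathsf E[\mathsf W^{F_t,(K)}_t(\sqrt2)] \to \sqrt{2/\pi}\,K\,\langle F_t,\mu_{\mathrm{Mea}}\rangle$, matching $\mathsf E[\mathsf Z_\infty^{(K)}]=K$. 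Your identification of the key uniformity issue — that $r_t$ may be as small as $t^{-1/2+\epsilon}$ and $h_t$ may shrink, tamed by $r_t+yh_t=\Theta(r_t)$ so that $\langle F_t,\mu_{\mathrm{Mea}}\rangle=\Theta(r_t h_t)$ — is exactly the right focal point, and the barrier/spine/many-to-two framework does propagate that uniformity through the ballot and Gaussian estimates. One small point of language: under the Lyons tilt the spine gap $\sqrt2 s-\mathsf X_{\mathrm{spine}}(s)$ is a driftless Brownian motion, and the barrier event conditions it to stay above $-K$, which is a Doob $h$-transform (Bessel(3)/meander picture) rather than a ``one-sided reflection''; but this does not affect the argument.

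It is worth noting \emph{why} the paper's Appendix~\ref{App-A} technique, which proves the companion Lemma~\ref{lem-functional-convergence-derivative-martingale-2} by a shorter intermediate-time decomposition plus von Bahr–Esseen in $L^{p_t}$ with $p_t=1+\tfrac1{2\alpha_t}$, cannot be transplanted here: that argument relies on the exponent $(p_t-1)\bigl(\tfrac{\lambda_t^2 p_t}{2}-1\bigr)k_t=-\Theta(k_t/\alpha_t^2)\to-\infty$, which is negative precisely because $\lambda_t<\sqrt2$. At $\lambda=\sqrt2$ exactly one has $\alpha_t=\infty$, the $L^{p_t}$ window collapses, and any fixed $p>1$ makes the exponent $(p-1)^2 k_t\to+\infty$, so the moment bound fails. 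This is why the present lemma genuinely requires the barrier truncation you propose rather than a reduction to Pain's result, and your route is therefore not just valid but essentially forced.
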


\subsection{Many-to-one lemmas}
Recall that  the
type transformation time $T_u$
of some particle $u\in N^{2}_{t}$, is  the time at which the oldest ancestor of type $2$ of $u$ was born.
We write
\begin{equation}\label{eq-def-Born}
\mathcal{B}=\left\{u \in \bigcup_{t \geq 0} N_t^2, T_{u}=b_u \right\}
\end{equation}
for the set of particles of type $2$ that are born from a particle of type $1$.  We write $u^{\prime} \succcurlyeq u$ if $u^{\prime}$ is a descendant of $u$.

\begin{lemma}[Many-to-one lemmas {\cite[Section 4]{BM21}}]\label{many-to-one-1}
 Let $f$ be a   non-negative  measurable function.
 \begin{enumerate}[(i)]
\item  $
  \mathbb{E}^{\beta,\sigma^{2}} \left(\sum_{u \in \mathcal{B}} f\left(X_u(s), s \leq T_{u}\right)\right) = \int_0^{\infty} e^{\beta t}
  \mathbf{E}
  \left(f\left(  \sigma B_s, s \leq t\right)\right) \dif t$.
\item  $  \mathbb{E} ^{\beta,\sigma^{2}}  \left( e^{-\sum_{u \in \mathcal{B}} f\left(X_u(s), s \leq T_{u} \right)} \right) =\mathbb{E} ^{\beta,\sigma^{2}}  \left(\exp \{ -  \int_0^{\infty} \sum_{u \in N_t^1} ( 1-e^{-f\left(X_u(s), s \leq t\right)} ) \dif t \}\right) $.
 \end{enumerate}
\end{lemma}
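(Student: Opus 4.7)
The plan is to prove both identities by conditioning on the entire type 1 subsystem $\mathcal{F}^{(1)}:=\sigma\bigl(N^1_s, (X_w(s))_{w\in N^1_s}, s\geq 0\bigr)$ and exploiting the fact that, conditional on $\mathcal{F}^{(1)}$, the type 2 birth times along the type 1 trajectories form a Poisson point process of intensity $1$ with respect to Lebesgue measure along each type 1 lifespan. This Poisson structure is built into the graphical/Harris construction of the two-type BBM: the independent clock-families governing type 1 binary branching at rate $\beta$, the Brownian motion of type 1 particles, and the type 1 $\to$ type 2 births at rate $1$ can be constructed jointly but independently, so the ``type 2 mark'' process is Poisson given $\mathcal{F}^{(1)}$.

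For part (i), observe that each $u\in\mathcal{B}$ is born at $T_u$ from some $w\in N^1_{T_u}$ with $(X_u(s))_{s\leq T_u}=(X_w(s))_{s\leq T_u}$. Campbell's formula applied to the conditional Poisson process of type 2 birth marks gives
\[
\mathbb{E}^{\beta,\sigma^2}\Bigl[\sum_{u\in\mathcal{B}} f(X_u(s), s\leq T_u)\,\Big|\,\mathcal{F}^{(1)}\Bigr] = \int_0^\infty \sum_{w\in N^1_t} f(X_w(s), s\leq t)\,dt.
\]
Taking outer expectation, exchanging expectation and integration via Fubini, and applying the standard single-type many-to-one formula to the type 1 subsystem (which in isolation is a Yule BBM of parameters $(\beta,\sigma^2)$), namely
\[
\mathbb{E}^{\beta,\sigma^2}\Bigl[\sum_{w\in N^1_t} g(X_w(s), s\leq t)\Bigr] = e^{\beta t}\,\mathbf{E}\bigl[g(\sigma B_s, s\leq t)\bigr],
\]
yields $\int_0^\infty e^{\beta t}\mathbf{E}[f(\sigma B_s, s\leq t)]\,dt$, proving (i).

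For part (ii), I condition once more on $\mathcal{F}^{(1)}$. Since $\sum_{u\in\mathcal{B}} f(X_u(s), s\leq T_u)$ is a linear functional of the conditional Poisson process of type 2 marks, the exponential formula for Poisson point processes gives
\[
\mathbb{E}^{\beta,\sigma^2}\Bigl[\exp\bigl(-\textstyle\sum_{u\in\mathcal{B}} f(X_u(s), s\leq T_u)\bigr)\,\Big|\,\mathcal{F}^{(1)}\Bigr] = \exp\Bigl(-\int_0^\infty \sum_{w\in N^1_t}\bigl(1-e^{-f(X_w(s), s\leq t)}\bigr)\,dt\Bigr),
\]
and taking the outer expectation closes (ii).

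The only non-routine step is justifying the Poisson structure of the type 2 marks given $\mathcal{F}^{(1)}$; this follows either directly from the Harris construction or, if one prefers a hands-on argument, by induction on the number of type 1 branching events before a fixed time horizon using the independence and memorylessness of the governing exponential clocks. Once that structural fact is in place, both identities reduce to textbook applications of Campbell's formula and the Laplace functional of a Poisson process, so the remainder of the proof is essentially bookkeeping.
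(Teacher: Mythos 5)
Your proposal is correct and takes the standard route. Note that the paper itself does not prove Lemma~\ref{many-to-one-1}; it cites \cite[Section 4]{BM21}, and the argument there is of the same form you describe: condition on the $\sigma$-algebra generated by the type~1 subsystem, use the fact that the type~1~$\to$~type~2 birth events form a Poisson point process of unit intensity along the type~1 branches, apply Campbell's formula (resp.\ the Poisson Laplace functional) to get the conditional first moment (resp.\ conditional Laplace transform), and then — for part~(i) — integrate the single-type many-to-one formula $\mathsf{E}[\sum_{w\in \mathsf{N}_t} g(\mathsf{X}^{\beta,\sigma^2}_w(s), s\le t)] = e^{\beta t}\mathbf{E}[g(\sigma B_s, s\le t)]$ over $t$. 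Both steps and both identities are exactly as you wrote them, and your remark that the only structural fact requiring justification is the conditional Poissonicity of the type~2 marks (which follows from the Ulam--Harris construction) is the right thing to flag.
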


The many-to-one lemma is a fundamental tool to compute the first moment or the Laplacian transform of the functional of our two type BBM.
In the rest of this paper,
to simplify notation, when there is no ambiguity, we use $\mathbb{P}$ and $\mathbb{E}$ to denote  $\mathbb{P}^{\beta_{t},\sigma^{2}_{t}}$  and $\mathbb{E}^{\beta_{t},\sigma^{2}_{t}}$, respectively.

\begin{corollary}\label{cor-reduce-to-E-R}
 Let $m(t)$ be a function on $\mathbb{R}_+$.
  For each $R>0$, $t > 0$,  take $\Omega_{t}^{R} \subset [0,t] \times \R$.
\begin{enumerate}[(i)]
  \item   For $A>0$, $0\leq r \leq  t$ and $x \in \mathbb{R}$ define
  \begin{equation*}
  	\mathsf{F}_{t}(r, x)=\mathsf{F}_{t}(r, x ;m(\cdot)):=\mathsf{P}\left(x+ \max\limits_{u \in \mathsf{N}_{r}} \mathsf{X}_{u}(r) \geq m(t)-A\right) ,
  \end{equation*}
  and for $K>0$, define
  \begin{equation}\label{eq-assumption-EYtARK}
 I(t,R)=I(t,R; A,K) := \int_{0}^{t} e^{\beta_{t} s}
  \mathbf{E}
  \left[\mathsf{F}_{t}\left(t-s, \sigma_{t} B_{s}\right) 1_{\{\sigma_{t}B_{r} \leq v_{t}t+ \sigma_{t} K, \forall r \leq s \}}  1_{\left\{(s,\sigma_{t} B_{s} ) \notin  \Omega^{R}_{t}  \right\}}    \right]   \dif s .
  \end{equation}
If for each fixed  $A, K$, we have $\lim\limits_{R \to \infty} \limsup\limits_{t \to \infty} I(t,R) = 0$, then for each $A>0$,
  \begin{equation}\label{eq-control-of-tu-Xtu}
  \lim_{R \to \infty} \limsup_{t \to \infty} \mathbb{P}   \left( \exists u \in N^{2}_{t}: X_{u}(t)>m(t)-A,(T_{u},X_{u}(T_{u})) \notin \Omega^{R}_{t} \right) = 0.
\end{equation}
 \item Let $\widehat{\mathcal{E}}_{t}:=\sum_{u \in N_t^2}   \delta_{  X_u(t)-m(t) }  $  and  $\widehat{\mathcal{E}}_t^R
:=\sum_{u \in N_t^2} 1_{ \{  (T_{u},X_{u}(T_{u})) \in \Omega_{t}^{R} \}}  \delta_{  X_u(t)-m(t) } $.
For any  $\rho  \geq \sqrt{2}$ and $\varphi \in \mathcal{T}$,
using   $\Phi_{\rho}$
defined in \eqref{def-Phi-rho}, we can rewrite $ \mathbb{E}\left(e^{-\langle \varphi, \widehat{\mathcal{E}}^{R}_{t} \rangle} \right) $ as
\begin{multline*}
 \mathbb{E}\left[ \exp \left( -\int_{0}^{\infty} \sum_{u \in N^{1}_{s}}   \Phi_{\rho}\big(t-s,  X_{u}(s)+\rho(t-s)-m(t) \big) 1_{\{ (s,X_{u}(s) )\in \Omega^{R}_{t}  \} }  \dif s \right) \right].
\end{multline*}
 Moreover if  \eqref{eq-control-of-tu-Xtu} holds, then
  \begin{equation*}
 \lim_{R \to \infty} \limsup_{t \to \infty}   \left| \mathbb{E} \left(e^{-\left\langle\widehat{\mathcal{E}}_t^R, \varphi\right\rangle}\right)-\mathbb{E}\left(e^{-\left\langle \widehat{\mathcal{E}_{t}}, \varphi\right\rangle}\right)\right|    =0 \,, \text{ for all }\varphi \in \mathcal{T}.
\end{equation*}
\end{enumerate}
\end{corollary}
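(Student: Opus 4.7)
The plan is to combine the branching property with the two many-to-one identities in Lemma \ref{many-to-one-1}, using the upper envelope estimate \eqref{eq-upper-envelope-0} to truncate high-path contributions.

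For part (i), each $u \in N_t^2$ has a unique oldest type-$2$ ancestor $v \in \mathcal{B}$ with $T_u = T_v$ and $X_u(T_u) = X_v(T_v)$; conditionally on the history up to $T_v$, the subtree rooted at $v$ is a standard BBM shifted by $X_v(T_v)$. A union bound over such $v$, followed by Lemma \ref{many-to-one-1}(i) applied to $f(X(s),s\leq T) = 1_{\{(T,X(T))\notin \Omega_t^R\}}\mathsf{F}_t(t-T,X(T))$, yields
\begin{equation*}
\mathbb{P}(E_t^R) \leq \int_0^t e^{\beta_t s}\mathbf{E}\bigl[1_{\{(s,\sigma_t B_s)\notin \Omega_t^R\}}\mathsf{F}_t(t-s,\sigma_t B_s)\bigr]\dif s,
\end{equation*}
where $E_t^R$ denotes the event in \eqref{eq-control-of-tu-Xtu}. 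To introduce the Brownian-path indicator, I use the good event $\mathcal{G}_K := \{X_u(r)\leq v_t r+\sigma_t K\text{ for all }r\geq 0,\,u\in N_r^1\}$; by \eqref{eq-upper-envelope-0} applied to the type-$1$ single-type BBM, $\mathbb{P}(\mathcal{G}_K^c)\leq C e^{-\theta_t\sigma_t K}=C e^{-\sqrt{2\beta_t}K}$, and on $\mathcal{G}_K$ every ancestor path satisfies $X_v(r)\leq v_t r+\sigma_t K\leq v_t t+\sigma_t K$ for $r\leq T_v\leq t$. Hence $\mathbb{P}(E_t^R)\leq C e^{-\sqrt{2\beta_t}K}+I(t,R;A,K)$; letting $R\to\infty$ kills the second term by hypothesis, and letting $K\to\infty$ then kills the first.

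For part (ii), condition on the $\sigma$-field $\mathcal{F}$ generated by the type-$1$ skeleton together with the birth times and positions of all $v\in\mathcal{B}$. Given $\mathcal{F}$, the descendants of each $v\in\mathcal{B}$ with $T_v\leq t$ form mutually independent standard BBMs shifted by $X_v(T_v)$, so using \eqref{def-Phi-rho} after the change of variable $x = X_v(T_v)+\rho(t-T_v)-m(t)$,
\begin{equation*}
\mathbb{E}\bigl[e^{-\langle\widehat{\mathcal{E}}_t^R,\varphi\rangle}\big|\mathcal{F}\bigr] = \prod_{v\in\mathcal{B},\,T_v\leq t,\,(T_v,X_v(T_v))\in\Omega_t^R}\!\bigl(1-\Phi_\rho(t-T_v,\,X_v(T_v)+\rho(t-T_v)-m(t))\bigr).
\end{equation*}
Taking expectations, writing the product as an exponential, and applying Lemma \ref{many-to-one-1}(ii) with $f(X(s),s\leq T):=-\log\bigl(1-\Phi_\rho(t-T,X(T)+\rho(t-T)-m(t))\bigr)\,1_{\{(T,X(T))\in\Omega_t^R\}}$, which automatically vanishes for $T>t$ because $\Omega_t^R\subset[0,t]\times\mathbb{R}$, yields the stated identity, since $1-e^{-f}=\Phi_\rho(t-s,\cdot)\,1_{\{(s,X_u(s))\in\Omega_t^R\}}$. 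For the final comparison, the point processes $\widehat{\mathcal{E}}_t$ and $\widehat{\mathcal{E}}_t^R$ differ only on atoms $u$ with $(T_u,X_u(T_u))\notin\Omega_t^R$, and since $\varphi\in\mathcal{T}$ has $\inf\mathrm{supp}(\varphi)>-\infty$, any such atom with $X_u(t)-m(t)<-A$ (for $A$ slightly exceeding $-\inf\mathrm{supp}(\varphi)$) contributes zero. Consequently,
\begin{equation*}
\bigl|\mathbb{E}[e^{-\langle\widehat{\mathcal{E}}_t^R,\varphi\rangle}]-\mathbb{E}[e^{-\langle\widehat{\mathcal{E}}_t,\varphi\rangle}]\bigr| \leq \mathbb{P}\bigl(\exists u\in N_t^2:\,X_u(t)>m(t)-A,\,(T_u,X_u(T_u))\notin\Omega_t^R\bigr),
\end{equation*}
which is exactly the probability in \eqref{eq-control-of-tu-Xtu} and vanishes under the hypothesis of part (i).

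The main obstacle is the compatibility check in part (i): the indicator $\{\sigma_t B_r\leq v_t t+\sigma_t K\}$ appearing in $I(t,R)$ is weaker than the natural envelope $\{\sigma_t B_r\leq v_t r+\sigma_t K\}$ produced by $\mathcal{G}_K$, so I must verify that restricting to $\mathcal{G}_K$ is a fortiori contained in the former (which holds because $v_t r\leq v_t t$ for $r\leq t$), while simultaneously arranging that the decay rate $\theta_t\sigma_t=\sqrt{2\beta_t}$ of $\mathbb{P}(\mathcal{G}_K^c)$ stays uniformly bounded below as $(\beta_t,\sigma_t^2)\to(\beta,\sigma^2)$. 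Once this is in place, the proof reduces to a routine double-limit interchange guided by the hypothesis on $I(t,R)$.
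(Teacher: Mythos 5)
Your proof is correct and follows essentially the same approach as the paper's: part (i) splits the event on the upper envelope for the type-1 subsystem (your $\mathcal{G}_K$ is exactly the paper's first error term) and applies the many-to-one formula together with a Markov/first-moment bound on the envelope-restricted count; part (ii) decomposes the point process over the birth set $\mathcal{B}$, uses the branching property to produce the product over $v\in\mathcal{B}$, applies Lemma \ref{many-to-one-1}(ii), and controls the difference $\widehat{\mathcal{E}}_t-\widehat{\mathcal{E}}_t^R$ via the support of $\varphi$ and part (i). The ``obstacle'' you flag at the end — that $v_tr\le v_tt$ makes the envelope you carry stronger than the one appearing in $I(t,R)$, and that $\sqrt{2\beta_t}\to\sqrt{2\beta}>0$ keeps the error $Ce^{-\sqrt{2\beta_t}K}$ uniformly small — is exactly the point the paper's proof also relies on, and your resolution is fine.
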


\begin{proof}
(i).
Fix  $ A, K> 0$.
For $R, t > 0$, define
  \begin{equation*}
  Y_{t}(R)=Y_{t}(R;A,K):=\sum_{u \in \mathcal{B}}  1_{\left\{ X_{u}(r) \leq v_{t}r+
 \sigma_{t}K,
  \forall r \leq T_{u} \right\}} 1_{\left\{(T_{u},X_{u}(T_{u})) \notin  \Omega^{R}_{t}  \right\}}   1_{\left\{M_{t}^{u} \geq m(t)-A\right\}},
  \end{equation*}
  where $M_{t}^{u}$ is the position of the rightmost descendant of the individual $u$ at time $t$.    Then the probability in \eqref{eq-control-of-tu-Xtu} is less than
  \begin{equation*}
    \P(  \exists s \leq t, u \in \mathsf{N}_s: \mathsf{X}_u(s)
  \geq v_t s +K  )
    +\P( Y_{t}(R) \geq 1).
  \end{equation*}
  Applying the Markov inequality, $\P( Y_{t}(R) \geq 1)$ is bounded above by   $\mathbb{E}\left[Y_{t}(R)\right]$. The branching property implies that
  \begin{equation*}
    \mathbb{E}\left[Y_{t}(R)\right]=\mathbb{E} \left[\sum_{u \in \mathcal{B}}   \mathsf{F}_{t}(t-T_{u}, X_{u}(T_{u}) )1_{\left\{ X_{u}(r) \leq v_{t}r+
    \sigma_{t}K,
      \,
    \forall r \leq T_{u} \right\}} \right].
  \end{equation*}
   Applying  Lemma \ref{many-to-one-1} (i),
   we get
  \begin{equation*}
    \mathbb{E}\left[Y_{t}(R)\right]=  \int_{0}^{t} e^{\beta_{t} s}
    \mathbf{E}
    \left[\mathsf{F}_{t}\left(t-s, \sigma_{t} B_{s}\right) 1_{\{\sigma_{t}B_{r} \leq v_{t}t+ \sigma_{t} K,\, \forall r \leq s \}}  1_{\left\{(s,\sigma_{t} B_{s} ) \notin  \Omega^{R}_{t}  \right\}}    \right] \dif s.
    \end{equation*}
Then by the assumption, $\lim_{R \to \infty} \limsup_{t \to \infty}\P( Y_{t}(R) \geq 1)=0$.
Now the desired result \eqref{eq-control-of-tu-Xtu} follows from \eqref{eq-upper-envelope-0}.

(ii).
 Notice that  $ \langle\widehat{\mathcal{E}}_t^R, \varphi \rangle $ can be rewritten as
  \begin{equation*}
    \sum _{\substack{u \in \mathcal{B} \\ (T_{u},X_{u}(T_{u})) \in \Omega^{R}_{t}}}   \sum _{\substack{u^{\prime} \in N_t^2 \\ u^{\prime} \succcurlyeq u}}
    \ \varphi\left( X_{u'}(t)-X_{u}(T_{u})- \rho(t-T_{u})+X_{u}(T_u)+\rho(t-T_{u})-m(t)   \right)
  \end{equation*}
First using the branching property, and then applying Lemma \ref{many-to-one-1} (ii),  we get
\begin{equation*}
\begin{aligned}
&\mathbb{E}\left(e^{-\left\langle\widehat{\mathcal{E}}_t^R, \varphi\right\rangle}\right)
 =  \mathbb{E}\bigg(    \prod_{\substack{u \in \mathcal{B} \\   (T_{u} , X_{u}(T_{u})) \in
 \Omega^{R}_{t}
   }} \left[ 1-\Phi_{\rho}\big(t-T_{u} ,    X_{u}(T_u)+\rho(t-T_{u})-m(t)  \big)  \right]\bigg)
\\
&=\mathbb{E}\left(\exp \left(-\int_{0}^{\infty} \sum_{u \in N^{1}_{s}}   \Phi_{\rho}\big(t-s, X_{u}(s)+\rho(t-s)-m(t) \big) 1_{\{ (s,X_{u}(s) )\in \Omega^{R}_{t}
   \} }  \dif s \right)\right)
\end{aligned}
\end{equation*}
as desired.
Taking $A>0$ such that
$\mathrm{supp}(\varphi)\subset [-A,\infty)$, we have
  \begin{equation*}
   \left| \mathbb{E} \left(e^{-\left\langle\widehat{\mathcal{E}}_t^R, \varphi\right\rangle}\right)-\mathbb{E}\left(e^{-\left\langle \widehat{\mathcal{E}_{t}}, \varphi\right\rangle}\right)\right| \leq \mathbb{P}\left( \exists u \in N^{2}_{t}: X_{u}(t)>m(t)-A, (T_{u},X_{u}(T_{u})) \notin \Omega^{R}_{t}  \right) .
  \end{equation*}
 Then the result  assertion follows.
\end{proof}

\section{Approaching  $\mathscr{B}_{I,III}$ and $(1,1)$ from $\mathscr{C}_{I}$}
\subsection{Approaching $\mathscr{B}_{I,III}$ from $\mathscr{C}_{I}$}

In this subsection, we are going to prove Theorem \ref{thm-13-approximate} for the case $(\beta,\sigma^2) \in \mathscr{B}_{I,III}$ and  $(\beta_{t},\sigma^{2}_{t})_{t>0} \in \mathscr{A}^{h,-}_{(\beta,\sigma^2)}$.
For simplicity,   in this subsection, we set
\begin{equation*}
  h' := \min\{ h,1/2 \}.
\end{equation*}
Then $
m^{1,3}_{h,-}(t)= v_{t} t - \frac{3-4 h' }{2 \theta_{t}} \log t $. Define
\begin{equation}\label{def-delta}
\delta(x;s,t):= x-v_{t}s + (\theta_{t}-v_{t})(t-s)
\end{equation}
 and
\begin{equation}\label{eq-def-Omega-t-R-8}
  \Omega_{t,h}^{R}:= \left\{ (s,x):  t-s \in [ \frac{1}{R} t^{h'}  ,R t^{h'}] , \   | \delta(x;s,t) | \leq R \sqrt{t-s}     \right\}.
\end{equation}

\begin{lemma}\label{lem-contribution-area-6}
For any $A>0$ and  $h \in (0, \infty]$, we have
  \begin{equation*}
  \lim _{R \rightarrow \infty} \limsup _{t \rightarrow \infty} \mathbb{P}\left(\exists u \in N^{2}_{t}: X_{u}(t) \geq m^{1,3}_{h,-}(t)  -A, (T_{u} , X_{u}(T_{u}))  \notin \Omega_{t,h}^{R}  \right)=0 .
  \end{equation*}
  \end{lemma}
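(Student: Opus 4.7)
The plan is to apply Corollary 2.9(i) with $m(t) = m^{1,3}_{h,-}(t)$ and $\Omega^R_t = \Omega^R_{t,h}$, which reduces the desired conclusion to establishing $\lim_{R\to\infty}\limsup_{t\to\infty} I(t,R) = 0$. I begin by invoking the sharp BBM tail estimate (Lemma \ref{lem-Max-BBM-tail}) to bound $\mathsf{F}_t(t-s, \sigma_t B_s)$. Setting $r = t-s$ and changing the Brownian variable to $\eta := v_t s - \sigma_t B_s$, the Brownian density combined with $e^{\beta_t s}$ produces the Girsanov-type weight $(2\pi\sigma_t^2 s)^{-1/2} e^{\theta_t \eta - \eta^2/(2\sigma_t^2 s)}$.

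Next, I reparametrize $\eta = \lambda_t r + w$ with $\lambda_t := \theta_t - v_t > 0$ (positive because $\sigma^2 < 1$ throughout $\mathscr{B}_{I,III}$), so that $w = -\delta(v_t s - \eta; s, t)$ precisely matches the displacement condition in the definition of $\Omega^R_{t,h}$. The elementary algebraic identity
\begin{equation*}
  \frac{\lambda_t^2}{2} + 1 - \frac{v_t^2}{2} = -\frac{\beta_t}{t^h},
\end{equation*}
which follows directly from $\frac{1}{\beta_t}+\frac{1}{\sigma_t^2}=2-t^{-h}$ (the defining relation of $\mathscr{A}^{h,-}_{(\beta,\sigma^2)}$), then collapses the full exponent of the integrand to
\begin{equation*}
  -\frac{\beta_t r}{t^h} + \theta_t L_t - \frac{(w-L_t)^2}{2r} - \frac{(\lambda_t r + w)^2}{2\sigma_t^2 s} - \frac{3}{2}\log(r+1),
\end{equation*}
where $L_t := A + \frac{3-4h'}{2\theta_t}\log t$; the prefactor $e^{\theta_t L_t} = e^{\theta_t A}\, t^{(3-4h')/2}$ exactly matches the polynomial correction encoded in $m^{1,3}_{h,-}(t)$.

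Next I decompose $(\Omega^R_{t,h})^c$ into three subregions and bound each in turn. First, $|w| > R\sqrt{r}$: the Gaussian factor $e^{-(w-L_t)^2/(2r)}$ yields $e^{-cR^2}$ decay. Second, $r > R\, t^{h'}$: when $h \leq 1/2$ (so $h' = h$), the factor $e^{-\beta_t r/t^h} \leq e^{-\beta_t R}$ suffices; when $h > 1/2$ (so $h' = 1/2$ and $r \geq R\sqrt{t}$), the decay must be extracted from the cross term $e^{-(\lambda_t r + w)^2/(2\sigma_t^2 s)}$, which gives $e^{-cR^2}$ after integrating $w$. Third, $r < t^{h'}/R$: applying Lemma \ref{lem-bridge-estimate-0} (Bramson's bridge inequality) to the path indicator $\{\sigma_t B_u \leq v_t u + \sigma_t K \text{ for all } u \leq s\}$ produces an additional factor $\lesssim \eta(\eta+\sigma_t K)/(\sigma_t^2 s)$, which together with the small $r$-range compensates the $t^{(3-4h')/2}$ growth from $e^{\theta_t L_t}$.

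I expect the main obstacle to be the second subregion in the subcase $h > 1/2$: the exponential rate $e^{-\beta_t r/t^h}$ is ineffective there, and the decay must be extracted from the cross term $\frac{(\lambda_t r + w)^2}{2\sigma_t^2 s}$ after joint integration in $(r, w)$ against the $r$-dependent prefactor $x/(r+1)^{3/2} \approx (\theta_t - \sqrt{2})r/(r+1)^{3/2}$ produced by Lemma \ref{lem-Max-BBM-tail}. A careful bookkeeping of these $r$-dependent prefactors is where the sharper bound of Lemma \ref{lem-Max-BBM-tail} (versus the looser \eqref{eq-Max-BBM-tail-2}) appears essential, and where the interplay between $h$ and $h' = \min\{h,1/2\}$ must be handled most delicately.
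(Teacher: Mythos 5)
Your plan follows the paper's proof closely: both begin with Corollary~\ref{cor-reduce-to-E-R}(i), shift to the Girsanov weight $e^{\theta_t\eta-\eta^2/(2\sigma_t^2 s)}$, and hinge on the identity $\frac{(\theta_t-v_t)^2}{2}+1-\frac{v_t^2}{2}=-\beta_t t^{-h}$ (which you state correctly and which is the paper's ``$(\theta_t-v_t)^2-(v_t^2-2)=-2\beta_t/t^h$''). Your change of variables $r=t-s$, $\eta=\lambda_t r+w$ is precisely the paper's $t-s=\xi t^{h'}$, $y=\eta\sqrt{t-s}$ up to notation, and the paper's dominated-convergence step after this substitution is equivalent to your three-region decomposition with the same exponential mechanisms: $e^{-\beta_t r/t^h}$ for $h\le 1/2$, the cross term $e^{-(\lambda_t r+w)^2/(2\sigma_t^2 s)}$ for $h\ge 1/2$, and the $w$-Gaussian for the displacement.

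Two points to flag. First, your claim that the sharper Lemma~\ref{lem-Max-BBM-tail} is ``essential'' is not correct: the paper's proof of this lemma uses only the cruder first-moment bound~\eqref{eq-BBM-tail-2}, and it goes through. On $\mathscr{B}_{I,III}$ the decoration parameter $\theta>\sqrt{2}$, so the type-2 excursion is in the ``large deviation'' regime where~\eqref{eq-BBM-tail-2} and Lemma~\ref{lem-Max-BBM-tail} produce the same leading exponential and only differ in polynomial prefactors, which do not matter here. Using the sharper bound is of course also fine — it gives an equivalent exponent, as your computation shows (the extra $-\frac{3}{2}\log(r+1)$ trades off against the looser prefactor).

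Second, and more substantively, your account of the bridge estimate is misleading. You describe Lemma~\ref{lem-bridge-estimate-0} as producing an ``additional factor'' only in the subregion $r<t^{h'}/R$. In fact the factor $\lesssim(K+x)/s$ is needed globally: without it the prefactor $e^{\theta_t L_t}=e^{\theta_t A}\,t^{(3-4h')/2}$ combined with the remaining polynomial terms produces a quantity of order $t^{1-h'}$ that diverges as $t\to\infty$ for every fixed $R$, so the $e^{-cR^2}$ decay you extract in subregions~1 and~2 would be multiplied by an unbounded factor. The paper applies the bridge estimate once at the very start (producing the $\frac{K+x}{s^{3/2}}$ in~\eqref{eq-def-I-t-R-6}) and carries it through all regions. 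Since Corollary~\ref{cor-reduce-to-E-R}(i) builds the barrier indicator into $I(t,R)$, you would naturally pick up this factor as soon as you write down the integral — but as stated, your subregion-by-subregion claims about decay do not close without making the global application of the bridge explicit. Relatedly, you should also record the handling of the edge case where the tail estimate does not apply (the target displacement is $\le 1$); the paper does this with a separate $o(1)$ estimate showing that case forces $t-s\le(\log t)^2$.
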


\begin{proof}
  Applying  Corollary \ref{cor-reduce-to-E-R} with $m(t)=m^{1,3}_{h,-}(t)$
  and $\Omega^{R}_{t}=\Omega^{R}_{t,h}$ defined in \eqref{eq-def-Omega-t-R-8},
  it suffices to show that  for each $A, K>0$, $I(t,R) = I(t,R;A,K)$  defined in \eqref{eq-assumption-EYtARK}  vanishes
  first as
  $t \to \infty$ and then $R \to \infty$.
  By conditioning on ``$B_{s}=\sqrt{2\beta_{t}}  s - x$"  in \eqref{eq-assumption-EYtARK}, we have
  \begin{equation}\label{eq-def-I-t-R-6}
  \begin{aligned}
   I(t,R)
  &= \int_{0}^{t} \dif s \int_{-K}^{\infty} \mathbf{P} \left(  B_{r} \leq \sqrt{2\beta_{t}} r+  K, \forall r \leq s | B_{s} =  \sqrt{2\beta_{t}}  s - x   \right)     \\
   & \qquad \qquad\qquad   e^{\beta_{t}s}  \mathsf{F}_{t}\left(t-s, v_{t} s -\sigma_{t} x\right) 1_{\left\{(s ,  v_{t}s - \sigma_{t}x ) \notin  \Omega^{R}_{t,h} \right\}} e^{-\frac{ (\sqrt{2\beta_{t}}s-x)^2 }{2s}} \frac{\dif x}{\sqrt{2\pi s}}   \\
   & \lesssim_{K} \int_{0}^{t} \dif s \int_{-K}^{\infty}\frac{K+x}{s^{3/2}}  e^{ \sqrt{2 \beta_{t}} x - \frac{ x^2 }{2s}}    \mathsf{F}_{t}\left(t-s, v_{t} s -\sigma_{t} x\right) 1_{\left\{(s ,  v_{t}s - \sigma_{t}x ) \notin  \Omega^{R}_{t,h} \right\}}   \dif x  .
  \end{aligned}
  \end{equation}
 Above, we used  $ \mathbf{P} \left(  B_{r} \leq \sqrt{2\beta_{t}} t+  K, \forall r \leq s | B_{s} =  \sqrt{2\beta_{t}}  s - x   \right)   \lesssim_{K} \frac{  K+ x}{ s} $,  which holds by Lemma  \ref{lem-bridge-estimate-0} since $(B_{r}-\frac{r}{s}B_{s})_{r \leq s}$ is a Brownian bridge independent of $B_{s}$.
 Put $\mathrm{w}:= \frac{3-4h'}{2\theta_{t}} \log t + A$.
  \begin{itemize}
  \item
 If $v_{t}   (t-s) +\sigma_{t} x   -\mathrm{w}   \geq 1 $, by \eqref{eq-BBM-tail-2}, we have
    \begin{align}
  & \mathsf{F}_{t}\left(t-s,  v_{t} s - \sigma_{t} x \right)  =\mathsf{P}\left( \max\limits_{u \in \mathsf{N}_{t-s}} \mathsf{X}_{u}(t-s) >  v_{t}   (t-s) +\sigma_{t} x   -\mathrm{w}  \right) \notag \\
  &    \lesssim     \frac{\sqrt{t-s}}{ v_{t}  (t-s) + \sigma_{t} x   - \mathrm{w} } \exp \left\{- (\frac{v_{t}^2}{2}-1)(t-s)- v_{t}\sigma_{t}x +  v_{t} \mathrm{w} - \frac{(\sigma_{t}x- \mathrm{w})^2}{2(t-s)} \right\}.  \label{eq-bound-F-6}
  \end{align}
  \item If $v_{t}   (t-s) +\sigma_{t} x   -\mathrm{w}   \leq 1$, we simply upper bound  $\mathsf{F}_{t}\left(t-s,  v_{t} s - \sigma_{t} x \right)$ by $1$.  Note that as long as $t$ is large, we can deduce from
    $v_{t}   (t-s) +\sigma_{t} x   -\mathrm{w}   \leq 1$  that  $s \geq t- (\log t)^2$ and $\sigma_{t} x \leq  \mathrm{w}+1$, and hence
    $ \sqrt{2 \beta_{t} } x \leq \frac{3-4h'}{2} \log t + (A+1) \theta_{t} $. Therefore,
\begin{align}
 & \int_{t-(\log t)^{2}}^{t}  \dif s     \int_{-K}^{ O(\log t)}  \frac{K+x}{s^{3/2}}e^{ \sqrt{2 \beta_{t} } x - \frac{x^2}{2s}} 1_{ \{v_{t}   (t-s) +\sigma_{t} x   -\mathrm{w}   \leq 1 \}  } \dif x   \notag \\
 &\lesssim  \int_{t-(\log t)^{2}}^{t}  \dif s     \int_{-K}^{ O(\log t)}  \frac{K+O(\log t) }{ t^{3/2}}e^{ \sqrt{2 \beta_{t} } x  }   \dif x \lesssim
  (\log t)^{4}   t^{-3/2}  t^{ \frac{3-4h'}{2}} =  o(1).  \label{eq-bound-F-62}
\end{align}
  \end{itemize}
  Combining \eqref{eq-bound-F-6} and \eqref{eq-bound-F-62} and
letting  $J= \frac{K + x}{s^{3/2}} \frac{\sqrt{t-s}}{ v_{t}  (t-s) + \sigma_{t} x   - \mathrm{w} }  e^{v_{t} \mathrm{w}}$, we get
  \begin{equation*}
 I(t,R) \lesssim  \int_{0}^{t} \dif s\int_{-K}^{\infty}   J  \, e^{\sqrt{2\beta_{t}}(1-  \sigma_{t}^2)x - (\frac{v_{t}^2}{2}-1)(t-s) - \frac{(\sigma_{t}x- \mathrm{w})^2 }{2(t-s)} }
   e^{  -\frac{x^2}{2s}}    1_{\left\{(s ,  v_{t}s+\sigma_{t}x ) \notin  \Omega^{R}_{t,h} \right\}}     \dif x  + o(1) .
  \end{equation*}
  Make a change of variable $x = \frac{\theta_{t}-v_{t}}{\sigma_{t}}(t-s)+ y  = \sqrt{2 \beta_{t}}(\sigma_{t}^{-2}-1)(t-s)+y$.  Note that  $ (s ,  v_{t}s+\sigma_{t}x ) \in \Omega_{t,h}^{R}$ if and only if $(t-s)/ t^{h'} \in  \Gamma_{R} :=[R^{-1},R]$   and $ |\sigma_{t} y | \leq R\sqrt{s} $. Moreover,   computing that
 \begin{align*}
  & \sqrt{2\beta_{t}}(1-  \sigma_{t}^2)x - (\frac{v_{t}^2}{2}-1)(t-s) - \frac{(\sigma_{t}x- \mathrm{w})^2 }{2(t-s)} \\
   &= [2 \beta_{t}(1-\sigma^2_{t}) ( \sigma^{-2}_{t}-1)-(v_{t}^2/2-1)- \sigma_{t}^2  \beta_{t}(\sigma_{t}^{-2}-1)^2 ] (t-s) \\
    & \qquad \qquad \qquad  + [\sqrt{2\beta_{t}}(1- \sigma_{t}^2) - (\theta_{t}-v_{t}) \sigma_{t}] y  + (\theta_{t}-v_{t}) \mathrm{w} - \frac{(\sigma_{t}y-\mathrm{w})^2}{2(t-s)} \\
    &=  -  \left[\frac{ 2\beta_{t}-1}{\sigma_{t}^2 t^{h}}  - \frac{\beta_{t}}{\sigma_{t}^2 t^{2h}}  \right](t-s )    + (\theta_{t}-v_{t}) \mathrm{w} - \frac{(\sigma_{t}y-\mathrm{w})^2}{2(t-s)},
  \end{align*}
 we get
  \begin{equation*}
    I(t,R) \lesssim  \int_{0}^{t} \dif s \int_{-K}^{\infty}   e^{(\theta_{t}-v_{t})\mathrm{w}} J
   e^{
   -  [\frac{ 2\beta_{t}-1}{\sigma_{t}^2}+o(1)] \frac{t-s}{t^{h}}  } e^{    - \frac{(\sigma_{t}y-\mathrm{w})^2}{2(t-s)} }
   e^{  -\frac{x^2}{2s}}    1_{\left\{ \substack{ (t-s)/ t^{h'} \notin  \Gamma^{R} \\   \text{ or }
   |\sigma_{t} y |>R \sqrt{t-s} } \right\} }      \dif y + o(1).
  \end{equation*}
Now make change of variables again
$t-s = \xi t^{h'}$, $y= \eta \sqrt{t-s}$.
 Denote  $\tilde{J}:= t^{h'} \sqrt{t-s}\,  e^{(\theta_{t}-v_{t})\mathrm{w}} J  $. We have
  \begin{align*}
    I(t,R) \lesssim \int_{0}^{t^{1-h'}} \dif \xi\int_{\mathbb{R}}  \tilde{J}
      & \ e^{ -  \left[\frac{ 2\beta_{t}-1}{\sigma_{t}^2}+o(1) \right] \xi t^{h'-h}  }e^{   - \frac{1}{2} \left(\sigma_{t}\eta- \frac{o(1)}{ \sqrt{\xi  }}  \right)^2  }
    \\
    &\times  e^{  -   \frac{[(\theta_{t}-v_{t})\xi t^{h'}+  \sigma_{t}\eta \sqrt{\xi t^{h'}} ]^2}{2\sigma_{t}^2(t-\xi t^{h'})}}  1_{\left\{ \substack{ \xi \notin  \Gamma^{R}  , \text{or}\\
    |\sigma_{t} \eta |>R}\right\}}  \dif \eta + o(1) .
   \end{align*}
Notice that for fixed $\xi > 0$ and $\eta >0$,
\begin{align*}
 \lim_{t \to \infty} \tilde{J}  =  \lim_{t \to \infty}  t^{h'}      \frac{  (\theta_{t}-v_{t}) \xi t^{h'}+ o(t^{h'})  }{ (t- \xi t^{h'}) ^{3/2}} \frac{ \xi t^{h'} }{ \theta_{t}  \xi t^{h'} + o(t^{h'}) }  t^{\frac{{3-4h'}}{2}} e^{A \theta_{t}} = \frac{\theta- v}{\theta} \xi e^{A \theta}.
\end{align*}
  Letting $t \to \infty$, applying the dominated convergence theorem twice, we finally get
   \begin{equation*}
    \varlimsup_{t \to \infty}I(t,R) \lesssim  \int_{0}^{\infty} \dif \xi \int_{-K}^{\infty}   \xi
    e^{ - \frac{ 2\beta-1}{\sigma^2} \xi 1_{\{h \leq \frac{1}{2}\}}  }e^{ - \frac{\sigma^2\eta^2}{2}}
    e^{  -\frac{(\theta-v)^2}{2 \sigma^2} \xi^2 1_{\{h \geq \frac{1}{2}\}} }    1_{\left\{ \substack{ \xi \notin  \Gamma^{R}  , \text{or}\\
    |\sigma \eta |>R}\right\}}  \dif \eta \overset{R \to \infty }{\longrightarrow } 0.
   \end{equation*}
  This completes the proof.
  \end{proof}

Now we are ready to show Theorem \ref{thm-13-approximate} for the case that   $(\beta_{t},\sigma^{2}_{t})_{t>0} \in \mathscr{A}^{h,-}_{(\beta,\sigma^2)}$.

\begin{proof}[Proof of Theorem \ref{thm-13-approximate}
for   $(\beta_{t},\sigma^{2}_{t})_{t>0} \in \mathscr{A}^{h,-}_{(\beta,\sigma^2)}$
]
  Take $\varphi \in \mathcal{T}$. Applying
 Corollary \ref{cor-reduce-to-E-R} (ii)
  with $m(t)=m^{1,3}_{h,-}(t)$, $\rho=\theta_{t}$, and $\Omega^{R}_{t,h}$ defined in \eqref{eq-def-Omega-t-R-8}, it suffices to study the asymptotic behavior of $\mathbb{E}\left(e^{-\left\langle\widehat{\mathcal{E}}_t^R, \varphi\right\rangle}\right)  $, which equals
  \begin{equation*}
 \mathbb{E}\bigg(\exp \bigg\{ -  \int_{0}^{t} \sum_{u \in N^{1}_{s}}  \Phi_{\theta_{t}}\bigg(t-s, \delta(X_{u}(s);s,t) + \frac{3-4h'}{2\theta_{t}}\log t \bigg) 1_{\{ (s,X_{u}(s) )\in \Omega^{R}_{t,h}  \} }  \dif s \bigg\} \bigg),
  \end{equation*}
 where we used the fact that
  $X_{u}(s)+\theta_{t}(t-s)-m^{1,3}_{h,-}(t) =\delta(X_{u}(s);s,t)+\frac{3-4h'}{2\theta_{t}}\log t$
  with $\delta(\cdot, \cdot,\cdot)$  defined in \eqref{def-delta}.
  Since $\theta_{t} \to \theta > \sqrt{2}$,  applying part (Ii) of Lemma \ref{thm-Laplace-BBM-order}, we have,
  uniformly for $(s,X_{u}(s))\in \Omega^{R}_{t,h}$,
  \begin{align*}
   & \Phi_{\theta_{t}}\big(t-s, \delta(X_{u}(s);s,t)+\frac{3-4h'}{2\theta_{t}}\log t \big) \\
  &\sim  \frac{\gamma_{\theta}(\varphi)}{\sqrt{t-s}}  e^{-(\frac{\theta_{t}^2}{2}-1)(t-s)} e^{\theta_{t} [X_{u}(s)+\theta_{t}(t-s)-v_{t}t]+  \frac{3-4h'}{2}\log t  }e^{-\frac{1}{2(t-s)} \delta(X_{u}(s);s,t)^2} \\
  & \sim \gamma_{\theta}(\varphi) \frac{t^{3/2}}{t^{2h'}(t-s)^{1/2}} e^{-\beta_{t}\frac{t-s}{t^{h}}} e^{\theta_{t}X_{u}(s)- 2\beta_{t}s} e^{-\frac{1}{2(t-s)} \delta(X_{u}(s);s,t)^2}.
  \end{align*}
 Above, we used   that  $ -(\frac{\theta_{t}^2}{2}-1)(t-s)+\theta_{t}^2(t-s)-\theta_{t} v_{t}t= - \beta_{t}\frac{t-s}{t^{h}}-2\beta_{t}s$. Thus
    substituting this asymptotic equality into the integral, we get
    \begin{align}
   &  \int_{0}^{t} \sum_{u \in N^{1}_{s}}  \Phi_{\theta_{t}}\bigg(t-s, \delta(X_{u}(s);s,t) + \frac{3-4h'}{2\theta_{t}}\log t \bigg) 1_{\{ (s,X_{u}(s) )\in \Omega^{R}_{t,h}  \} }  \dif s  \notag \\
   & \sim  \int_{t-Rt^{h'}}^{t-\frac{1}{R}t^{h'}} \sum_{u \in N^{1}_{s}}  \frac{\gamma_{\theta}(\varphi)t^{3/2}}{t^{2h'}(t-s)^{1/2}} e^{-\beta_{t}\frac{t-s}{t^{h}}} e^{\theta_{t}X_{u}(s)- 2\beta_{t}s} e^{-\frac{1}{2(t-s)} \delta(X_{u}(s);s,t)^2}  1_{\{ (s,X_{u}(s) )\in \Omega^{R}_{t,h}  \} }   \dif s \notag \\
   &  \sim \gamma_{\theta}(\varphi)\int_{\frac{1}{R}}^{R}   \lambda e^{-\beta_{t}\lambda t^{h'-h}} \frac{t^{3/2}}{r^{3/2}}\sum_{u \in N^{1}_{s}}e^{\theta_{t}X_{u}(s)- 2\beta_{t}s} e^{-\frac{\delta(X_{u}(s);s,t)^2}{2r} }  1_{\{ (s,X_{u}(s) )\in \Omega^{R}_{t,h}  \} }   \dif \lambda, \label{eq-integral-Phi-1}
  \end{align}
where in the last equality we made  change of variables $t-s = r$ and   $ r=   \lambda t^{h'} $.
Let  $G(x)=G_{R}(x) =e^{-\frac{x^2}{2}}1_{\{|x|  \leq R \}}$, and define
    \begin{equation*}
W^G(s, r ; t):=\sum_{u \in N_s^1} e^{-\theta_{t}\left[v_{t} s-X_u(s)\right]} G\left(\frac{v_{t} s-X_u(s)-(\theta_{t}-v_{t}) r}{\sqrt{r}}\right)  \text{ for } s>0, r > 0.
    \end{equation*}
   Then  the integral in \eqref{eq-integral-Phi-1} can be  rewritten as $ \int_{\frac{1}{R}}^{R}  \lambda e^{-\beta_{t}\lambda t^{h'-h}} \left(  \frac{t}{r}  \right)^{3/2}W^G(t-r, r; t)   \dif \lambda $. Hence
  \begin{equation}\label{eq-integral-10}
   \mathbb{E}\left(e^{-\left\langle\widehat{\mathcal{E}}_t^R, \varphi\right\rangle}\right)  = \mathbb{E} \left( \exp \left\{ - [1+o(1)] \gamma_{\theta}(\varphi) \int_{\frac{1}{R}}^{R}  \lambda e^{-\beta_{t}\lambda t^{h'-h}} \left(  \frac{t}{r}  \right)^{3/2}W^G(t-r, r; t)   \dif \lambda  \right\} \right).
  \end{equation}

By the scaling property of Brownian motion,
  $\left\{X_u(s): u \in N_s^1\right\} \overset{law}{=} \left\{\frac{\sigma_{t}}{\sqrt{\beta_{t}}} \mathsf{X}_u(s') : u \in \mathsf{N}_{s'}\right\} $, where $s'=\beta_{t}s$. So $W^{G}(s,r;t)$ has the same distribution as
 \begin{equation*}
  \mathsf{W}^{G}(s',r ; t):= \sum_{u \in \mathsf{N}_{s'}} e^{ \sqrt{2} \mathsf{X}_{u}(s')-2 s'}   G\left( \frac{ \frac{\sqrt{2}s'-\mathsf{X}_{u}(s')}{\sqrt{s'}}  -\frac{\sqrt{\beta_{t}}}{\sigma_{t}} (\theta_{t}-v_{t}) \frac{r}{\sqrt{s'}} }{\frac{\sqrt{\beta_{t}}}{\sigma_{t}} \frac{\sqrt{r}}{\sqrt{s'}} }\right).
 \end{equation*}
 Here we remind that $\mathsf{W}^G$ is for single type BBM and $W^G$ is for two-type BBM.
For each fixed $\lambda>0$ and $r= \lambda t^{h'}$,
applying Lemma \ref{lem-functional-convergence-derivative-martingale} with $r_t=\frac{\sqrt{\beta_{t}}}{\sigma_{t}} (\theta_{t}-v_{t}) \frac{r}{\sqrt{\beta_t(t-r)}}$ and $h_t=\frac{\sqrt{\beta_{t}}}{\sigma_{t}} \frac{\sqrt{r}}{\sqrt{\beta_t(t-r)}}$
(noticing that $r_t=\Theta(t^{h'-\frac{1}{2}})$ and $h_t\ll r_t$, the conditions in  Lemma \ref{lem-functional-convergence-derivative-martingale}  are satisfied),
we have
    \begin{align*}
   & \lim _{t \to \infty} \left(\frac{t }{ r}\right)^{3/2}
   {W}^G
   \left(t-r, r ; t\right) = \lim _{t \to \infty} \left(\frac{t }{ r}\right)^{3/2} \mathsf{W}^{G}( \beta_{t}(t-r),r ; t)  \\
    &=  \mathsf{Z}_{\infty} \sqrt{\frac{2}{\pi}}  \lim_{t \to \infty} \left(\frac{t }{ r}\right)^{3/2}  \frac{1}{\sqrt{\beta_{t} t}}   \int_{0}^{\infty} z e^{-\frac{z^2}{2}}  G_{R}\left(  \frac{ z   -\frac{\sqrt{\beta_{t}}}{\sigma_{t}} (\theta_{t}-v_{t})
  \frac{r}{\sqrt{\beta_{t}(t-r)}} }{\frac{\sqrt{\beta_{t}}}{\sigma_{t}}  \frac{\sqrt{r}}{\sqrt{\beta_{t}(t-r)}} }
    \right) \dif z   \\
    &= \mathsf{Z}^{\beta,\sigma^2}_{\infty} \frac{(\theta-v)}{\sigma^3}   e^{-\frac{(\theta-v)^2}{2 \sigma^2} \lambda^2 1_{\{h \geq 1/2\}}} \sqrt{\frac{2}{\pi}} \int G_{R}(y) \dif  y    \quad \text{ in law,}
    \end{align*}
    where in the last equality we used the fact that $   \mathsf{Z}_{\infty}   \overset{law}{=}  \frac{\sqrt{\beta}}{\sigma} \mathsf{Z}_{\infty}^{\beta, \sigma^2}$.
  Letting $t \to \infty$ in \eqref{eq-integral-10} and applying  the dominated convergence theorem,   we finally get
  \begin{equation*}
 \lim_{t \to \infty} \mathbb{E}\left(e^{-\left\langle\widehat{\mathcal{E}}_t^R, \varphi\right\rangle}\right) =
\mathsf{E}
 \left(\exp \left\{-  C_{h,R} \gamma_{\theta}(\varphi)  \mathsf{Z}^{\beta,\sigma^2}_{\infty}   \right\}\right) ,
    \end{equation*}
 where
  \begin{align*}
   & C_{h,R}  = \int_{\frac{1}{R}}^{R}  e^{-\beta \lambda 1_{\{h \leq 1/2\}} }  \frac{(\theta-v) \lambda}{\sigma^3} e^{-\frac{(\theta-v)^2}{2 \sigma^2} \lambda^2 1_{\{h \geq 1/2\}}} \dif  \lambda  \sqrt{\frac{2}{\pi}}  \int G_{R}(y) \dif  y  \\
    & \overset{R \to \infty}{\longrightarrow}
    C_{h}:=
   \frac{{2}(\theta- v)}{\beta^{2} \sigma^{3}} 1_{\{  h < 1/2\}}
     +   \frac{2 1_{\{  h > 1/2\}}}{\sigma(\theta - v)}   + {2} \int_{0}^{\infty}  \frac{(\theta-v) \lambda}{\sigma^3} e^{-\beta \lambda -\frac{(\theta-v)^2}{2 \sigma^2} \lambda^2 } \dif  \lambda1_{\{h = 1/2\}} .
 \end{align*}
    Then by part (ii) of Corollary \ref{cor-reduce-to-E-R},
   letting $R \to \infty$   we  get
    \begin{equation*}
      \lim\limits_{t \to \infty}  \mathbb{E}\left(e^{-\left\langle\widehat{\mathcal{E}}_t, \varphi\right\rangle}\right)=
     \mathsf{E}
       \left(\exp \left\{-
     C_{h}
      \gamma_{\theta}(\varphi)  \mathsf{Z}^{\beta,\sigma^2}_{\infty}   \right\}\right) .
      \end{equation*}
  Recalling the definition of $\gamma_{\theta}(\varphi)$ in Lemma \ref{thm-Laplace-BBM-order}, the right hand side
    is the Laplace functional of
   $\operatorname{DPPP}\left( C_{h, -}   \mathsf{Z}_{\infty}^{\beta,\sigma^2}  \theta e^{-\theta x} \dif x, \mathfrak{D}^{\theta}\right)$ with $C_{h,-}:= \frac{C_{h}C(\theta)}{\theta \sqrt{2 \pi}}$.
  By \cite[Lemma 4.4]{BBCM22}, we complete the proof.
\end{proof}

\subsection{Approaching $(1,1)$ from $\mathcal{C}_{I}$}

In this subsection,
we are going to prove Theorem \ref{thm-11-approximate}
for the case $(\beta_{t},\sigma^{2}_{t})_{t>0} \in \mathscr{A}^{h,1}_{(1,1)}$. We set, in this subsection,
\begin{equation*}
  h' := \min\{ h, 1  \}.
\end{equation*}
Then $ m^{(1,1)}_{h,1}(t)= v_{t} t-\frac{3-2h'}{2\sqrt{2}} \log t$.  Define
\begin{equation}\label{eq-def-Omega-t-R-11}
  \Omega_{t,h}^{R}=
 \begin{cases}
 \{ (s,x):  t-s \in  [\frac{1}{R} t^{h}, R t^{h}], \    v_{t} s- x \in   [\frac{1}{R} \sqrt{t-s}, R\sqrt{t-s}]     \} \quad \text{ for } h \in (0,1) ; \\
 \{ (s,x): t-s \in [\frac{1}{R}t, (1-\frac{1}{R})t] , \ v_{t} s- x \in   [\frac{1}{R} \sqrt{t}, R\sqrt{t}]   \} \quad \text{ for } h \in [1,\infty] .
  \end{cases}
\end{equation}

\begin{lemma}\label{lem-contribution-area-11}
  For each $A>0$,
  \begin{equation*}
  \lim _{R \rightarrow \infty} \limsup _{t \rightarrow \infty} \mathbb{P}\left(\exists u \in N^{2}_{t}: X_{u}(t) \geq m^{(1,1)}_{h,1}(t)  -A, (T_{u} , X_{u}(T_{u}))  \notin \Omega_{t,h}^{R}  \right)=0 .
  \end{equation*}
  \end{lemma}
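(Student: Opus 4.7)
The plan is to follow the same route as the proof of Lemma \ref{lem-contribution-area-6}. By Corollary \ref{cor-reduce-to-E-R}(i) applied with $m(t) = m^{(1,1)}_{h,1}(t)$ and $\Omega^R_t = \Omega^R_{t,h}$ from \eqref{eq-def-Omega-t-R-11}, it suffices to check that, for each fixed $A, K > 0$, the quantity $I(t, R; A, K)$ from \eqref{eq-assumption-EYtARK} satisfies $\lim_{R \to \infty} \limsup_{t \to \infty} I(t, R) = 0$. Conditioning on the endpoint $B_s$ in \eqref{eq-assumption-EYtARK} and using Lemma \ref{lem-bridge-estimate-0} to bound the Brownian-bridge contribution yields, exactly as in \eqref{eq-def-I-t-R-6},
\begin{equation*}
I(t, R) \lesssim_K \int_0^t \! ds \int_{-K}^\infty \frac{K + x}{s^{3/2}}\, e^{\sqrt{2 \beta_t}\, x - x^2 / (2 s)}\, \mathsf{F}_t(t-s,\, v_t s - \sigma_t x)\, 1_{\{(s,\, v_t s - \sigma_t x) \notin \Omega^R_{t,h}\}}\, dx.
\end{equation*}

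The key novelty is the bound I use for $\mathsf{F}_t$. In the proof of Lemma \ref{lem-contribution-area-6} the crude estimate \eqref{eq-BBM-tail-2} was adequate because $v_t^2/2 - 1$ stayed away from $0$; here $\beta_t = \sigma_t^2 \to 1$ forces $v_t \to \sqrt{2}$, so \eqref{eq-BBM-tail-2} produces no decay in $t-s$. I would instead apply the sharp tail estimate \eqref{eq-Max-BBM-tail-2} for standard BBM. A pleasant simplification specific to the symmetric regime $\beta_t = \sigma_t^2$ is that $\sqrt{2\beta_t} = \sqrt{2}\sigma_t$, so the linear-in-$x$ contribution $\sqrt{2\beta_t}\, x$ coming from the Brownian bridge cancels exactly the $-\sqrt{2}\sigma_t x$ term produced by the leading exponential $e^{-\sqrt{2} x^*}$ inside \eqref{eq-Max-BBM-tail-2}. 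Setting $\tilde{\mathrm{w}} := \frac{3 - 2h'}{2\sqrt{2}} \log t + A$, the remaining exponential part of the integrand reduces (up to lower-order contributions) to
\begin{equation*}
\exp\!\left\{-\sqrt{2}(v_t - \sqrt{2})(t-s) - \frac{[(v_t - \sqrt{2})(t-s) + \sigma_t x - \tilde{\mathrm{w}}]^2}{2(t-s)} - \frac{x^2}{2 s}\right\},
\end{equation*}
and the polynomial prefactor is compensated by the power $t^{(3 - 2h')/2}$ extracted from $e^{\sqrt{2}\tilde{\mathrm{w}}}$.

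Next I would perform a change of variables matched to the two cases in \eqref{eq-def-Omega-t-R-11}. For $h \in (0,1)$, following the proof of Lemma \ref{lem-contribution-area-6}, I would set $t - s = \xi t^h$ and $x = \frac{v_t - \sqrt{2}}{\sigma_t}(t-s) + \eta \sqrt{t-s}$; this turns the first exponential term into $-\xi$ (since $\sqrt{2}(v_t - \sqrt{2}) \sim t^{-h}$), reduces the quadratic to $-\eta^2/2$, and renders all cross terms involving $\tilde{\mathrm{w}}$ of order $\log t / t^{h/2}$, hence negligible. For $h \geq 1$ the time window is macroscopic, so the natural choice is $t - s = \xi t$ with $\xi \in (0,1)$ and $x = \eta \sqrt{t}$; then $(v_t - \sqrt{2})(t-s) = O(t^{1-h})$ is negligible in the quadratic, and the two Gaussian pieces combine into $-\eta^2 / (2 \xi (1 - \xi))$. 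In either regime, dominated convergence yields an explicit finite limit integral that tends to $0$ as $R \to \infty$, because the indicator $1_{\{\cdot \notin \Omega^R_{t,h}\}}$ restricts to a set whose measure shrinks to $0$.

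The main obstacle I anticipate is the case $h \geq 1$. There no exponential decay in $t - s$ is available (the prefactor $v_t^2/2 - 1$ is negligible), so the convergence must come entirely from the Gaussian in the spatial variable together with the macroscopic constraint $\xi \in (0, 1)$. Verifying that the power $t^{(3 - 2h')/2} = t^{1/2}$ built into the centering $m^{(1,1)}_{h,1}(t)$ exactly cancels the $t$-factors produced by the Jacobian and by the prefactor $x^* / (t-s+1)^{3/2}$ from \eqref{eq-Max-BBM-tail-2}, while absorbing the residual $\log t$ remainders uniformly in $(\xi, \eta)$, is the technical heart of the proof and singles out the threshold $h' = \min\{h, 1\}$ (instead of the $h' = \min\{h, 1/2\}$ of Lemma \ref{lem-contribution-area-6}).
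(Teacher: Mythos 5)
Your proposal follows essentially the same route as the paper's proof: the reduction to bounding $I(t,R)$ via Corollary \ref{cor-reduce-to-E-R}, the switch from \eqref{eq-BBM-tail-2} to a sharp BBM tail estimate (the paper uses \eqref{eq-Max-BBM-tail-2}, you use Lemma \ref{lem-Max-BBM-tail}; both work here) because $v_t\to\sqrt{2}$ kills the exponential decay in $t-s$, the cancellation of linear-in-$x$ terms coming from $\sqrt{2\beta_t}=\sqrt{2}\,\sigma_t$, and the two scaling regimes ($t-s=\xi t^h$, $x=\eta\sqrt{t-s}$ for $h<1$; a macroscopic time scaling for $h\geq 1$) followed by dominated convergence. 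The one step your sketch glosses over, but which a full write-up should include, is the small region where the argument of the sharp tail bound is nonpositive (in the paper's notation $L_{s,t}+\sigma_t x\leq 1$), where one simply bounds $\mathsf{F}_t\leq 1$ and checks that this residual contribution is $o(1)$; the paper handles this explicitly.
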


\begin{proof}
  Applying  Corollary \ref{cor-reduce-to-E-R} with $m(t)=m^{(1,1)}_{h,1}(t)$  and $\Omega^{R}_{t,h}$ defined in \eqref{eq-def-Omega-t-R-11}, it suffices to show that  for each $A, K>0$, $I(t,R) = I(t,R;A,K)$  defined in \eqref{eq-assumption-EYtARK}  vanishes as first $t \to \infty$ and then $R \to \infty$. As  in \eqref{eq-def-I-t-R-6}  we have
  \begin{equation}\label{eq-def-I-t-R-61}
   I(t,R) \lesssim_{K} \int_{0}^{t} \dif s \int_{-K}^{\infty}\frac{K+x}{s^{3/2}}  e^{ \sqrt{2 \beta_{t}} x - \frac{ x^2 }{2s}}    \mathsf{F}_{t}\left(t-s, v_{t} s -\sigma_{t} x\right) 1_{\left\{(s ,  v_{t}s - \sigma_{t}x ) \notin  \Omega^{R}_{t,h} \right\}}   \dif x .
  \end{equation}
Now we need  an finer upper bound for  $\mathsf{F}_{t}$, which are given below.
Let $L_{s,t}:= \sqrt{2}(\sigma_{t}^2-1)(t-s) -  \frac{3}{2\sqrt{2}} \log (\frac{t}{t-s+1})  +  \frac{h'}{\sqrt{2}} \log t -A$.
  \begin{itemize}
  \item
   If $  L_{s,t} + \sigma_{t} x  > 1$,
 noticing that
   $(\beta_{t},\sigma^{2}_{t})_{t>0} \in \mathscr{A}^{h,1}_{(1,1)}$ implies that  $\theta_t=\sqrt{2}$ and $v_t=\sqrt{2}\sigma_t^2$,
   we have
  \begin{align*}
    & \mathsf{F}_{t}(t-s, v_{t} s - \sigma_{t} x ) = \mathsf{P} \left( \max\limits_{u \in \mathsf{N}_{t-s}} \mathsf{X}_{u}(t-s) \geq  v_{t}(t-s) + \sigma_{t} x -  \frac{3-2h'}{2\sqrt{2}} \log t-A \right)\\
     & = \mathsf{P} \left(\max\limits_{u \in \mathsf{N}_{t-s}} \mathsf{X}_{u}(t-s ) \geq  \sqrt{2} (t-s)-  \frac{3}{2\sqrt{2}}\log (t-s+1) +   L_{s,t}  +\sigma_{t} x \right) \\
     & \lesssim_{A} (\sigma_{t}x + L_{s,t}) \frac{t^{3/2 }}{(t-s + 1)^{3/2}t^{h'}} e^{ - \sqrt{2} \sigma_{t} x - 2(\sigma_{t}^2-1)(t-s) } e^{- \frac{(\sigma_{t} x+L_{s,t})^2}{3(t-s)}},
  \end{align*}
where in that last inequality we used \eqref{eq-Max-BBM-tail-2}.

 \item  If  $L_{s,t} + \sigma_{t} x  \leq  1$, we simply upper bound
 $ \mathsf{F}_{t}(t-s, v_{t} s - \sigma_{t} x ) $ by $1$.
 Note that $L_{s,t} + \sigma_{t} x  \leq  1$ implies that when $t$ is large,  $s \geq t/2$
  and  $ \sigma_{t} x \leq 1-L_{s,t} \leq   \frac{3}{2\sqrt{2}} \log (\frac{t}{t-s+1})  -  \frac{h'}{\sqrt{2}} \log t +1 $. Moreover as $\beta_{t}= \sigma_{t}^2$,
 \begin{equation*}
  \int_{t/2}^{t} \dif s \int_{-K}^{O(\log t)} \frac{K+x}{s^{3/2}}  e^{ \sqrt{2 \beta_{t}} x}  1_{ \{ L_{s,t} + \sigma_{t} x  \leq  1 \} }   \dif x  \lesssim \frac{O(\log t)^2 }{t^{h'}}    \int_{0}^{t/2}\frac{1}{ \left(u+1\right)^{3/2} } \dif u = o(1).
 \end{equation*}
  \end{itemize}
   Therefore we have
   \begin{equation}\label{eq-I(t,R)-1}
    I(t,R)  \lesssim  \int_{0}^{t} \int_{-K}^{\infty}  \frac{K+x}{s^{3/2}} |\sigma_{t}x + L_{s,t}|\frac{t^{3/2 }}{(t-s+1)^{3/2}t^{h'}} e^{  - 2(\sigma_{t}^2-1)(t-s) }  e^{-\frac{x^2}{2s}- \frac{(\sigma_{t} x+L_{s,t})^2}{3(t-s)}} \dif x \dif s + o(1).
   \end{equation}

   For the case  $h \in (0,1)$,
   make change of variables $t-s= \xi t^{h}$ and $x= \eta \sqrt{t-s}$ . Note that $(s, v_{t} s + \sigma_{t} x) \in  \Omega^{R}_{t,h}$ if and only if $\xi \in \Gamma_{R} :=[R^{-1},R] $ and $ \sigma_{t} \eta \in \Gamma_{R}$.
  Use $\ell_{\xi,t}$ to denote  $L(t-\xi t^{h},t) $.
   Noting that   $|\ell_{\xi,t}| \leq \sqrt{2}(\sigma_{t}^2-1)t^{h} \xi +  \frac{3}{2\sqrt{2}} \log (\xi t^{h}) + \frac{3-4h'}{2\sqrt{2}} \log t +A  = \Theta(\xi)+O(\log t)$, applying the dominated convergence theorem   we have
\begin{equation*}
  \begin{aligned}
I(t,R) &\lesssim \int_{0}^{t^{1-h}} \dif \xi \int_{-\frac{K}{\sqrt{\xi t^{h}}}}^{\infty}  1_{\left\{ \substack{ \xi \notin \Gamma_{R}  \text{ or}  \\  \sigma_{t} \eta \notin \Gamma_{R}
} \right\} }   \frac{K+\eta \sqrt{\xi t^{h}}}{(t-\xi t^{h}+1)^{3/2}} \\
& \qquad \qquad\qquad\qquad \times   [ \sigma_{t} \eta \sqrt{\xi t^{h}}+\ell_{\xi,t}]   \frac{t^{3/2}}{ \xi t^{h} }
e^{-2(\sigma_{t}^2-1)t^{h}\xi  -  \frac{1}{3}  (\sigma_{t} \eta + \frac{\ell_{\xi,t}}{\sqrt{\xi t^{h}}})^2}   \dif \eta
+ o(1)  \\
 & \overset{t \to \infty }{\longrightarrow }  \int_{0}^{\infty}  \dif \xi  \int_{0}^{\infty}
  1_{\left\{ \substack{ \xi \notin \Gamma_{R}   \\   \text{ or }\eta \notin \Gamma_{R}
 } \right\} }
  \eta^2    e^{-\xi  }
 e^{ - \frac{\eta^2}{3}}
  \dif \eta \overset{R \to \infty }{\longrightarrow }  0.
  \end{aligned}
\end{equation*}

For the case  $h \geq 1$,
make  change of variables $s= \xi t$ and $x= \eta \sqrt{t}$ to the integral in \eqref{eq-I(t,R)-1}. Now $(s,v_{t} s + \sigma_{t} x) \in  \Omega^{R}_{t,h}$ if and only if $\xi \in [R^{-1},1-R^{-1}] $ and $ \sigma_{t} \eta \in \Gamma_{R}$. Similarly letting  $\ell_{\xi,t}:=L(\xi t,t) $ and noting that   $|\ell_{\xi,t}| =\Theta(\xi)+O(\log t)$,  applying the dominated convergence theorem
\begin{equation*}
  \begin{aligned}
I(t,R) &\lesssim \int_{0}^{1}   \dif \xi \int_{-\frac{K}{\sqrt{t}}}^{\infty}  1_{\left\{ \substack{ \xi \notin [R{^{-1}}, 1-R^{-1}]    \\ \text{or } \sigma_{t} \eta \in \Gamma_{R}
} \right\} }   \frac{K+\eta \sqrt{t}}{(\xi t)^{3/2}} [\sigma_{t} \eta \sqrt{t} +\ell_{\xi,t} ]  \\
& \qquad  \qquad \qquad \qquad \qquad  \times \frac{t^{3/2} \sqrt{t}}{(t-\xi t+1)^{3/2}}   e^{  - \frac{\eta^2}{2\xi}- \frac{(\sigma_{t} \eta +\ell_{\xi,t}/\sqrt{t})^2}{3(1-\xi)} }  \dif \eta + o(1)  \\
 & \overset{t \to \infty }{\longrightarrow }  \int_{0}^{1}   \dif \xi \int_{0}^{\infty}  1_{\left\{ \substack{ \xi \notin [R{^{-1}}, 1-R^{-1}]    \\ \text{or }  \eta \in [R^{-1} , R  ]
 } \right\} }
  \eta^2  \frac{1}{\xi^{3/2}}  \frac{1}{(1-\xi)^{3/2}}  e^{-\frac{\eta^2}{2\xi } \frac{\eta^2}{3(1-\xi)} }        \dif \eta \overset{R \to \infty} \to 0,
  \end{aligned}
\end{equation*}
where we used the fact that
$ \int_{0}^{1}    \frac{1}{\xi^{3/2}}  \frac{1}{(1-\xi)^{3/2}}      \dif \xi \int_{0}^{\infty} \eta^2 e^{-\frac{\eta^2}{2\xi } -\frac{\eta^2}{3(1-\xi)} }   \dif \eta    < \infty $. We now complete the proof.
  \end{proof}

\begin{proof}[Proof of Theorem \ref{thm-11-approximate}
for $(\beta_{t},\sigma^{2}_{t})_{t>0} \in \mathscr{A}^{h,1}_{(1,1)}$
]
Take $\varphi \in \mathcal{T}$.
Applying Corollary \ref{cor-reduce-to-E-R} with $m(t)=m^{(1,1)}_{h,1}(t)$, $\rho=\sqrt{2}$, and $\Omega^{R}_{t,h}$ defined in \eqref{eq-def-Omega-t-R-11}, it suffices to study the asymptotic behavior of $\mathbb{E}\left(e^{-\left\langle\widehat{\mathcal{E}}_t^R, \varphi\right\rangle}\right)$, which is equal to
  \begin{equation*}
  \mathbb{E}\bigg(\exp \bigg\{ - \int_{0}^{t} \sum_{u \in N^{1}_{s}}  \Phi_{\sqrt{2}}\big(t-s, X_u(s) + \sqrt{2}(t-s)-  m^{(1,1)}_{h,1}(t)   \big) 1_{\{ (s,X_{u}(s) )\in \Omega^{R}_{t,h}  \} }  \dif s \bigg\} \bigg).
  \end{equation*}

Rewrite   $  \sqrt{2}(t-s)-  m^{(1,1)}_{h,1}(t)  $ as $ - v_{t}s-\mathrm{y}$, where $\mathrm{y}:=(v_{t}-\sqrt{2})(t-s)- \frac{3-2h'}{2 \sqrt{2}}\log t $.
For  $(s,X_{u}(s)) \in \Omega^{R}_{t,h} $, we have $t-s = \Theta(t^{h'})$, $ v_{t} s- X_{u}(s) =   \Theta(\sqrt{t-s})$ and  $\mathrm{y}=O(\log t)$.
 Then part (i) of  Lemma \ref{thm-Laplace-BBM-order} yields that as $t \to \infty$ uniformly in $(s,X_{u}(s))\in \Omega^{R}_{t,h}$,
\begin{align*}
& \Phi_{\sqrt{2}}(t-s, X_u(s) - v_{t}s -\mathrm{y} )   \\
& \sim \gamma_{\sqrt{2}}(\varphi)
\frac{ v_{t} s-X_{u}(s) + \mathrm{y}}{(t-s)^{3/2}}
e^{ \sqrt{2}(X_{u}(s)-v_{t}s)- \sqrt{2}(v_t-\sqrt{2})(t-s)} t^{\frac{3-2h'}{2}}
e^{-\frac{(X_{u}(s)-v_{t}s-\mathrm{y})^2}{2(t-s)}} \\
&\sim \gamma_{\sqrt{2}}(\varphi) \frac{t^{3/2} }{(t-s)^{3/2}t^{h'}}  e^{- 2(\beta_{t}-1)(t-s)} [\sqrt{2} \beta_{t} s-X_{u}(s)] e^{ \sqrt{2}X_{u}(s)-2 \beta_{t}s}e^{-\frac{(X_{u}(s)-\sqrt{2}\beta_{t} s)^2}{2(t-s)}},
\end{align*}
 where we used that  $v_{t}=\sqrt{2}\beta_{t}$ as $\beta_{t}=\sigma_{t}^2$. Thus
 \begin{equation}\label{eq-integral-Phi-2}
    \begin{aligned}
  & \int_{0}^{t} \sum_{u \in N^{1}_{s}}  \Phi_{\sqrt{2}}\big(t-s, X_u(s) + \sqrt{2}(t-s)-  m^{(1,1)}_{h,1}(t)   \big) 1_{\{ (s,X_{u}(s) )\in \Omega^{R}_{t,h}  \} }  \dif s    \\
  & = (1+o(1))  \gamma_{\sqrt{2}}(\varphi)  \int_{0}^{t} \frac{t^{3/2}  e^{- 2(\beta_{t}-1)(t-s)}   }{(t-s)^{3/2} t^{h'}} \sum_{u \in N^{1}_{s}}1_{\{ (s,X_{u}(s) )\in \Omega^{R}_{t,h}  \} }  \\
  & \qquad  \qquad  \qquad  \qquad \qquad\times   [\sqrt{2} \beta_{t} s-X_{u}(s) ] e^{ \sqrt{2}X_{u}(s)-2 \beta_{t}s} e^{-\frac{(X_{u}(s)-\sqrt{2}\beta_{t} s)^2}{2(t- s)}}    \dif s.
 \end{aligned}
 \end{equation}

\textbf{Case 1: $h \in (0,1)$}.
 In this case we need to slightly modify $\Omega^{R}_{t,h}$  to $\widetilde{\Omega}^{R}_{t,h}= \{(s,x): (t-s)/t^{h} \in  \Gamma_{R},  \frac{|X_{u}(s)-\sqrt{2}\beta_{t}s|}{\sqrt{t-\beta_{t} s}} \in \Gamma_{R} \}$. In the argument below,  we abuse the notation since both Lemma \ref{lem-contribution-area-11},  Corollary \ref{cor-reduce-to-E-R} and previous argument in this proof still hold for $\widetilde{\Omega}^{R}_{t,h}$.
By the Brownian scaling $\left( X_u(s): u \in N_s^1\right) \overset{law}{=} \left( \frac{\sigma_{t}}{\sqrt{\beta_{t}}} \mathsf{X}_u(s') : u \in \mathsf{N}_{s'}\right)$ where $s'=\beta_{t}s$.
Let $G(x)= G_{R}(x)=   x e^{-\frac{x^2}{2}} 1_{ \{ x \in \Gamma_{R}\}}$ (Recall that $\Gamma_{R}:=[R^{-1},R]$). Hence
for $(t-s)/t^{h} \in  \Gamma_{R} $ we have
  \begin{align*}
   & \sum_{u \in N^{1}_{s}}  e^{ \sqrt{2}X_{u}(s)-2 \beta_{t}s} \frac{ \sqrt{2} \beta_{t} s-X_{u}(s) }{\sqrt{t-s}} e^{-\frac{(X_{u}(s)-\sqrt{2}\beta_{t} s)^2}{2(t- s)}}    1_{\{ (s,X_{u}(s) )\in \Omega^{R}_{t,h}  \} }\\
    &= [1+o(1)] \sum_{u \in N^{1}_{s}}  e^{ \sqrt{2}X_{u}(s)-2 \beta_{t}s} \frac{\sqrt{2} \beta_{t} s-X_{u}(s) }{\sqrt{t-\beta_{t} s}} e^{-\frac{(X_{u}(s)-\sqrt{2}\beta_{t} s)^2}{2(t-\beta_{t}s)}}   1_{\left\{  \frac{|X_{u}(s)-\sqrt{2} s'|}{\sqrt{t-s'}} \in \Gamma_{R}   \right\} } \\
    &=  \sum_{u \in \mathsf{N}_{s'}} e^{\sqrt{2}\mathsf{X}_{u}(s')-2 s'} G\left( \frac{\sqrt{2}s'- \mathsf{X}_{u}(s')}{\sqrt{t-s'}} \right) =: \mathsf{W}^{G}(s',t).
  \end{align*}
  Making a change of variable $s = t-\lambda t^{h}$, we get
   \begin{equation*}
  \mathbb{E}\left(e^{-\left\langle\widehat{\mathcal{E}}_t^R, \varphi\right\rangle}\right)   =
  \mathsf{E}
  \bigg(\exp \bigg\{ -  [1+o(1)] \gamma_{\sqrt{2}}(\varphi) \int_{\frac{1}{R} }^{R} e^{-2(\beta_{t}-1)t^{h} \lambda} \, \frac{t^{3/2}}{\lambda t^{h}}    \mathsf{W}^{G}(\beta_{t}(t-\lambda  t^{h}), t )   \dif \lambda \bigg\} \bigg).
   \end{equation*}
Let $h_{t}= \frac{\sqrt{t-s' }} {\sqrt{s'}}$ where $s'=\beta_{t}s=\beta_{t}(t-\lambda t^{h})$.  Applying Lemma \ref{lem-functional-convergence-derivative-martingale}  we have
 \begin{align*}
  & \lim_{t \to \infty} \sqrt{t} \, \frac{\mathsf{W}^{G}(t- \lambda \beta_{t}t^{h},t)  }{ \int_{0}^{\infty} G(z/h_{t}) z e^{-\frac{z^2}{2}} \dif z }   = \sqrt{ \frac{2}{\pi}} \mathsf{Z}_{\infty} \\
  &\qquad  \Longrightarrow \lim_{t \to \infty}  \frac{t^{3/2}}{\lambda t^{h}}  \mathsf{W}^{G}(t- \lambda \beta_{t}t^{h},t)   = \sqrt{ \frac{2}{\pi}} \,   \mathsf{Z}_{\infty}  \int_{\frac{1}{R}}^{R} y^{2} e^{-\frac{y^2}{2}} \dif y .
 \end{align*}
 Letting $t \to \infty$ then $R\to \infty$, applying the dominated convergence theorem and part (ii) of  Corollary \ref{cor-reduce-to-E-R}, we have
  \begin{align*}
   &\lim_{t \to \infty}  \mathbb{E}\left(e^{-\left\langle\widehat{\mathcal{E}}_t, \varphi\right\rangle}\right)= \lim_{R \to \infty}  \lim_{t \to \infty}  \mathbb{E}\left(e^{-\left\langle\widehat{\mathcal{E}}_t^R, \varphi\right\rangle}\right)\\
    & = \lim_{R \to \infty}    \mathsf{E}\bigg(\exp \bigg\{ -  \gamma_{\sqrt{2}}(\varphi)\mathsf{Z}_{\infty} \int_{\frac{1}{R} }^{R}    e^{-\lambda}     \dif \lambda  \sqrt{ \frac{2}{\pi}}\int_{\frac{1}{R}}^{R} z^2 e^{-\frac{z^2}{2}} \dif z\bigg\} \bigg)  =    \mathsf{E}  \left[ e^{   - \gamma_{\sqrt{2}}(\varphi) \mathsf{Z}_{\infty}  }  \right] ,
  \end{align*}
  which is the Laplace functional of $\operatorname{DPPP}\left( \sqrt{2}C_{\star}   \mathsf{Z}_{\infty}   e^{-\sqrt{2} x} \dif x, \mathfrak{D}^{\sqrt{2}}\right)$ (by the definition of $\gamma_{\sqrt{2}}(\varphi)$ in Lemma \ref{thm-Laplace-BBM-order}).
  By \cite[Lemma 4.4]{BBCM22}, we complete the proof for the case $h \in (0,1)$.

  \textbf{Case 2: $h \in [1,\infty]$.}
   Making change of variable $s= \lambda t$, the integral in \eqref{eq-integral-Phi-2} equals
  \begin{align*}
  \int_{\frac{1}{R}}^{1-\frac{1}{R}}   \frac{ e^{- 2(\beta_{t}-1)(1-\lambda)t} }{(1-\lambda)^{3/2} } &\sum_{u \in N^{1}_{s}}  1_{\{ v_{t}s-X_{u}(s) \in [\frac{1}{R}\sqrt{t}, R\sqrt{t}]  \} }  \\
      &\qquad  \times  e^{ \sqrt{2}X_{u}(s)-2 \beta_{t}s} [\sqrt{2} \beta_{t} s-X_{u}(s) ] e^{-\frac{(X_{u}(s)-\sqrt{2}\beta_{t} s)^2}{2(t- s)}}   \dif \lambda.
   \end{align*}
   Let $ G_{\lambda}(x)= G_{\lambda,R}(x):= x e^{-\frac{\lambda}{2(1-\lambda)} x^2} 1_{ \{ \sqrt{\lambda }x \in \Gamma_{R}\}}$.
 Similar as the explanation at the beginning of case 1, with abuse of notation,  for
   $\lambda \in [\frac{1}{R} ,(1-\frac{1}{R})  ]$,  $s':= \beta_{t} s=\beta_{t} \lambda t$,  we have
  \begin{align*}
   & \sum_{u \in N^{1}_{s}}  e^{ \sqrt{2}X_{u}(s)-2 \beta_{t}s} \frac{ \sqrt{2} \beta_{t} s-X_{u}(s) }{\sqrt{\beta_{t} s}} e^{-\frac{(X_{u}(s)-\sqrt{2}\beta_{t} s)^2}{2(t- s)}}  1_{\{ v_{t}s-X_{u}(s) \in [\frac{1}{R}\sqrt{t}, R\sqrt{t}] \}} \\
    &\sim   \sum_{u \in N^{1}_{s}}
     e^{ \sqrt{2}X_{u}(s)-2 \beta_{t}s}
     \frac{\sqrt{2} \beta_{t} s-X_{u}(s) }{\sqrt{\beta_{t} s}} e^{-\frac{\lambda}{1-\lambda}\frac{(X_{u}(s)-\sqrt{2}\beta_{t} s)^2}{2 \beta_{t}s }}  1_{\{ \frac{ \sqrt{2}\beta_{t}s-X_{u}(s) }{\sqrt{\beta_{t} s}}\in [\frac{1}{R} \frac{\sqrt{t}}{\sqrt{s}}, R \frac{\sqrt{t}}{\sqrt{s}}  ] \}} \\
    &=  \sum_{u \in \mathsf{N}_{s'}} e^{\sqrt{2}\mathsf{X}_{u}(s')-2 s'} G_{\lambda}\left( \frac{\sqrt{2}s'- \mathsf{X}_{u}(s')}{\sqrt{s'}} \right) =: \mathsf{W}^{G_{\lambda}}(s' ;t) .
  \end{align*}
Therefore we get that  $   \mathbb{E}\left(e^{-\left\langle\widehat{\mathcal{E}}_t^R, \varphi\right\rangle}\right)$ equals
   \begin{equation*}
   \mathsf{E}
    \bigg(\exp \bigg\{ -  [1+o(1)] \gamma_{\sqrt{2}}(\varphi) \int_{\frac{1}{R}}^{1-\frac{1}{R}}   \frac{ e^{- 2(\beta_{t}-1)(1-\lambda)t} }{(1-\lambda)^{3/2} } \sqrt{\lambda \beta_{t} t}  \mathsf{W}^{G_{\lambda}}( \lambda \beta_{t}t, t)  \dif \lambda    \bigg\} \bigg).
   \end{equation*}
 Applying Lemma \ref{lem-functional-convergence-derivative-martingale} we get
 \begin{equation*}
  \lim_{t \to \infty}  \sqrt{ \lambda \beta_{t}t} \, \mathsf{W}^{G_{\lambda}}( \lambda \beta_{t}t, t)   =   \mathsf{Z}_{\infty} \sqrt{ \frac{2}{\pi}} \int_{0}^{\infty} G_{\lambda}(z) z e^{-\frac{z^2}{2}} \dif z  =   \mathsf{Z}_{\infty} \sqrt{ \frac{2}{\pi}} \,
  \int_{\frac{1}{R\sqrt{\lambda}}}^{ \frac{R}{\sqrt{\lambda}} } z^{2} e^{-\frac{z^2}{2(1-\lambda)}} \dif z
 \end{equation*}
 in probability. Let $
 C_{h,1} = 1_{\{ h > 1\}} + (1-e^{-1})1_{\{h=1\}}$.
 Letting $t \to \infty$ then $R\to \infty$ and by  the dominated convergence theorem and  Corollary \ref{cor-reduce-to-E-R}   we have
 \begin{equation*}
  \begin{aligned}
  & \lim _{t \rightarrow \infty} \mathbb{E}\left(e^{-\left\langle\widehat{\mathcal{E}}_t, \varphi\right\rangle}\right)=\lim _{R \rightarrow \infty} \lim _{t \rightarrow \infty} \mathbb{E}\left(e^{-\left\langle\widehat{\mathcal{E}}^{R}_t, \varphi\right\rangle}\right) \\
  & =\lim _{R \rightarrow \infty}
   \mathsf{E}
  \left(\exp \left\{ -\gamma_{\sqrt{2}}(\varphi) \mathsf{Z}_{\infty}
  \int_{\frac{1}{R}}^{1-\frac{1}{R}}
  \frac{e^{-\lambda 1_{\{h=1\}} }} {(1-\lambda)^{3 / 2}} \dif\lambda \sqrt{\frac{2}{\pi}} \int_{\frac{1}{R \sqrt{\lambda}}}^{\frac{R}{\sqrt{\lambda}}} z^2 e^{-\frac{z^2}{2(1-\lambda)}} \mathrm{d} z\right\}\right) \\
  & = \mathsf{E}
  \left(e^{-C_{h,1} \gamma_{\sqrt{2}}(\varphi) Z_{\infty}}\right),
  \end{aligned}
  \end{equation*}
  which is the Laplace functional of $\operatorname{DPPP}\left( C_{h,1} \sqrt{2}C_{\star}   \mathsf{Z}_{\infty}   e^{-\sqrt{2} x} \dif x, \mathfrak{D}^{\sqrt{2}}\right)$.
  By \cite[Lemma 4.4]{BBCM22}, we complete the proof for the case $h \in [1,\infty]$.
\end{proof}

\section{ Approaching $\mathscr{B}_{I,III}, \mathscr{B}_{II,III}$ and $(1,1)$ from $\mathscr{C}_{III}$}

We first introduce
several important constants  introduced in \cite{BM21}.
For $\left( \beta,\sigma^2\right) \in \mathscr{C}_{I I I}$,   we set
\begin{equation} \label{eq-def-star-a-b-p}
  \begin{aligned}
 b^{*}(\beta,\sigma^2) & :=\sqrt{2 \frac{\beta-1}{1-\sigma^2}} , \   a^{*}(\beta,\sigma^2):=\sigma^2 b(\beta,\sigma^2), \      p^{*}(\beta,\sigma^2):=\frac{\sigma^2+\beta-2}{2(\beta-1)\left(1-\sigma^2\right)}  ;  \\
 &  v^{*}(\beta,\sigma^2) :=a^{*}  p^{*} +b^{*} (1- p^{*})= \frac{\beta-\sigma^2}{\sqrt{2(\beta-1)\left(1-\sigma^2\right)}} .
  \end{aligned}
\end{equation}
Moreover we have
\begin{align}
\left(\beta -\frac{ (a^{*}) ^2}{2\sigma
^2 }\right)p^{*}  + \left(1-\frac{(b^{*})^2}{2} \right)(1-p^{*} )& =0;  \label{eq-condition-zero}\\
 b^{*} v^{*} -    \beta  -  \frac{  \sigma^{2}(b^{*}) ^{2}}{2 }  & = 0. \label{eq-condition-zero-2}
\end{align}
 For the sake of simplicity,
we will write
\begin{equation}\label{eq-def-a-b-p}
   b_{t}=b^{*}(\beta_{t},\sigma^2_{t}) ,\quad a_{t}:=a^{*}(\beta_{t},\sigma^2_{t}) ,\quad p_{t}=p^{*}(\beta_{t},\sigma^2_{t})  ,\quad v^{*}_{t} = v^{*}(\beta_{t},\sigma^2_{t}) .
\end{equation}

\begin{lemma}\label{lem-computation}
Let $s=p_{t} t +u \in (0,t)$, $y:= (\sqrt{2}-a_{t})s+(v^{*}_{t}-\sqrt{2})$ and
  \begin{equation}
L(u,t) = (\beta_{t}-\frac{a_t^2}{2\sigma
    ^2_{t}})s - \sqrt{2} y - \frac{y^2}{2(t-s)}  .
  \end{equation}
\begin{enumerate}[(i)]
  \item For $(\beta,\sigma^{2}) \in \mathscr{B}_{II,III}$ and $(\beta_{t},\sigma^{2}_{t})_{t>0} \in \mathscr{A}^{h,+}_{(\beta,\sigma^2)}$. We have
 \begin{equation*}
  L(\xi \sqrt{t},t) = -(1-\sigma^2)^{2}\xi^2 - R(\xi,t),
 \end{equation*}
  where for each fixed $\xi$, $ R(\xi,t) \to 0$  as  $t \to \infty$; and there is some  $c >0$ such that   $ L(\xi \sqrt{t},t)  \leq  - c \xi^{2}$ for all $\xi$ satisfying that  $p_{t}t+\xi \sqrt{t} \in (0,t)$.

 \item  For  $(\beta_{t},\sigma^{2}_{t})_{t>0} \in \mathscr{A}^{h,3}_{(1,1)}$ and fixed  $h \in (0,1)$,
 \begin{equation*}
  L(\xi t^{\frac{1+h}{2}}, t) = -(\sqrt{2}+1)\xi^2-R(\xi,t),
 \end{equation*}
 where for each fixed $\xi$, $ R(\xi,t) \to 0$  as  $t \to \infty$; and there is some  $c >0$ such that   $ L(\xi t^{\frac{1+h}{2}},t)  \leq  - c \xi^{2}$ for all $\xi$ satisfying that  $p_{t}t+\xi \sqrt{t} \in (0,t)$.
  \end{enumerate}
  \end{lemma}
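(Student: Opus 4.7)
The plan is to produce an exact closed-form expression for $L(u,t)$ via a second-order Taylor expansion around $u=0$, from which both parts reduce to routine asymptotics. First I would verify $L(0,t)=0$: substituting $s=p_{t}t$ one computes $y|_{u=0}=(1-p_{t})(b_{t}-\sqrt{2})t$, and after simplification $\sqrt{2}\,y|_{u=0}+y^{2}/(2(t-s))|_{u=0}=((b_{t}^{2}/2)-1)(1-p_{t})t$; the identity \eqref{eq-condition-zero} rewrites $\alpha_{t}p_{t}t$ (with $\alpha_{t}:=\beta_{t}-a_{t}^{2}/(2\sigma_{t}^{2})$) as exactly the same quantity, so the three terms of $L(0,t)$ cancel.

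Next I would compute $\partial_{u}^{2}L$ directly. A careful differentiation, using $\partial_{u}s=1$, $\partial_{u}y=\sqrt{2}-a_{t}$ and $\partial_{u}(t-s)=-1$, produces the remarkable compact formula
\[
\partial_{u}^{2}L(u,t)=-\frac{[(\sqrt{2}-a_{t})(t-s)+y]^{2}}{(t-s)^{3}}=-\frac{t^{2}(v_{t}^{*}-a_{t})^{2}}{(t-s)^{3}},
\]
the second equality being the telescoping $(\sqrt{2}-a_{t})(t-s)+y=(\sqrt{2}-a_{t})t+(v_{t}^{*}-\sqrt{2})t$. The numerator is independent of $u$, so $L$ is concave; combined with $L(0,t)=0$ this automatically yields $\partial_{u}L|_{u=0}=0$. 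Integrating twice gives the exact closed form
\[
L(u,t)=-\frac{(v_{t}^{*}-a_{t})^{2}\,u^{2}}{2(1-p_{t})^{2}\bigl((1-p_{t})t-u\bigr)}.
\]

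With this formula in hand, both parts are routine asymptotics. For part (i) the identity $v_{t}^{*}-a_{t}=(1-p_{t})b_{t}(1-\sigma_{t}^{2})$ combined with $p_{t}\to 0$, $b_{t}\to\sqrt{2}$, $\sigma_{t}^{2}\to\sigma^{2}$ gives $(v_{t}^{*}-a_{t})^{2}\to 2(1-\sigma^{2})^{2}$; substituting $u=\xi\sqrt{t}$ and using $(1-p_{t})^{2}((1-p_{t})t-u)\sim t$ yields $L=-(1-\sigma^{2})^{2}\xi^{2}+o(1)$. For part (ii) the parameter regime enforces $\beta_{t}\sigma_{t}^{2}=1$, so $p_{t}=1/2$ identically and $(v_{t}^{*}-a_{t})^{2}=(\beta_{t}-1)^{2}/(2\beta_{t})$, while the defining relation of $\mathscr{A}^{h,3}_{(1,1)}$ unwinds to $(\beta_{t}-1)^{2}/\beta_{t}=t^{-h}$. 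Plugging $u=\xi t^{(1+h)/2}$ into the closed form and expanding in the small parameter $\xi t^{(h-1)/2}$ produces the claimed quadratic with an explicit remainder. The uniform bound $L\le -c\xi^{2}$ then reads directly from the closed form: the denominator $2(1-p_{t})^{2}((1-p_{t})t-u)$ stays comparable to $(1-p_{t})^{3}t$ for all admissible $u$ (since $u<(1-p_{t})t$ and $p_{t}t+u>0$), while the numerator is a controlled constant times $u^{2}$.

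The main obstacle I foresee is tracking the algebra of Step 2 and Step 3(ii) cleanly: the cancellation giving the compact form of $\partial_{u}^{2}L$ depends intimately on the maximizer identities \eqref{eq-condition-zero}-\eqref{eq-condition-zero-2}, and identifying the correct numerical coefficient in part (ii) requires careful bookkeeping because $\beta_{t}-1$ and $1-\sigma_{t}^{2}$ both vanish at the same rate $t^{-h/2}$, so several naively distinct terms collapse at the same order and contribute to the final constant.
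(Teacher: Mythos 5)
Your closed-form derivation via the second derivative is a genuinely different route from the paper's Appendix~B, which decomposes $L=L_1+L_2$ (splitting off the convenient denominator $2(1-p_t)t$ from the actual $2(t-s)$) and then expands term by term. You instead observe that $\partial_u^2 L$ telescopes to $-t^2(v^*_t-a_t)^2/(t-s)^3$ with a $u$-independent numerator, and integrate. I checked that the telescoping $(\sqrt{2}-a_t)(t-s)+y=(v^*_t-a_t)t$ is correct, and that the closed form $L(u,t)=-(v^*_t-a_t)^2 u^2/\bigl(2(1-p_t)^2((1-p_t)t-u)\bigr)$ agrees with the definition of $L$; this is cleaner, and the uniform bound $L\le -c\xi^2$ then falls out almost immediately because $(1-p_t)t-u\le(1-p_t)t$ on the admissible range.

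There are, however, two genuine gaps. First, ``$L$ is concave and $L(0,t)=0$, hence $\partial_u L|_{u=0}=0$'' is a non sequitur: concavity plus a zero at the origin does not determine the slope (take $L(u)=u-u^2$). The conclusion is true but needs its own argument. A direct computation gives $\partial_u L|_{u=0}=\kappa_t-b_t(\sqrt{2}-a_t)-\tfrac12(b_t-\sqrt{2})^2$ where $\kappa_t=\beta_t-a_t^2/(2\sigma_t^2)$; using $a_t=\sigma_t^2 b_t$ and \eqref{eq-condition-zero-2} this becomes $\tfrac12(b_t^2-2)-b_t^2(1-\sigma_t^2)p_t$, which vanishes by the definition of $p_t$. (The paper's proof achieves the same cancellation by showing the $u$-coefficient in $L_1$ is $\Delta_t^2$ and that in $L_2$ is $-\Delta_t^2$.) Second and more seriously, the claim that plugging $u=\xi t^{(1+h)/2}$ into the closed form ``produces the claimed quadratic'' in part~(ii) is false. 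From $p_t\equiv\tfrac12$ and $(v^*_t-a_t)^2=t^{-h}/2$---both of which you derive correctly---your formula reads $L=-t^{-h}u^2/(t/2-u)$, and with $u=\xi t^{(1+h)/2}$, $h<1$, one gets $L\to -2\xi^2$, not $-(\sqrt{2}+1)\xi^2$. Since the closed form checks out both algebraically and numerically against the raw definition of $L$, this mismatch exposes an inconsistency with the stated lemma that your write-up would be forced to confront. Chasing it through Appendix~B, the culprit is the factorization of $y$ as $\sqrt{2}(1-p_t)t\,(\Delta_t+\cdots)$ while the stated $A_t=(\sqrt{2}-a_t)/((1-p_t)t)$ omits that $\sqrt{2}$; with $A_t$ corrected to carry a $\sqrt{2}$ in the denominator, the leading term of $r(\xi t^{(1+h)/2},t)$ becomes $\xi^2$ rather than $\sqrt{2}\,\xi^2$, giving $r\to\tfrac32\xi^2$ and hence $L\to-2\xi^2$.
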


The proof of Lemma \ref{lem-computation}  is postponed to Appendix \ref{App-B}.

\subsection{Approaching $\mathscr{B}_{II,III}$ from $\mathscr{C}_{III}$}

Assume that $(\beta,\sigma^2)\in \mathscr{B}_{II,III}$, and   $(\beta_{t},\sigma^{2}_{t})_{t>0} \in
\mathscr{A}^{h,+}_{(\beta,\sigma^2)}$.
Combining with \eqref{eq-def-star-a-b-p}   we have
\begin{equation}\label{eq-asymptotic-abv}
   p_{t}\sim \frac{1}{2(1-\sigma^2)^2t^{h}} , \  \frac{b_{t}}{\sqrt{2}}-1  \sim \frac{1}{2(1-\sigma^2)t^{h}}  \text{ and }  v^{*}_{t}=\sqrt{2}+\frac{1}{b_{t}}\left(\frac{b_{t}}{\sqrt{2}}-1 \right)^2.
\end{equation}
In fact, as $t\to\infty$,
$p_{t}= \frac{\sigma^2_{t}+\beta_{t}-2}{2(\beta_{t}-1)\left(1-\sigma^2_{t}\right)} = \frac{ 1}{2(\beta_{t}-1)\left(1-\sigma^2_{t}\right)t^{h}}\sim \frac{ 1}{2(\beta -1)\left(1-\sigma^2\right)t^{h}}= \frac{ 1}{2\left(1-\sigma^2\right)^2t^{h}}$, $\frac{b_{t}}{\sqrt{2}}-1= \sqrt{\frac{\beta_{t}-1}{1-\sigma^{2}_{t}}} -1 = \sqrt{\frac{1-\sigma_{t}^{2}+t^{-h}}{1-\sigma^{2}_{t}}} -1 \sim \frac{1}{2(1-\sigma_{t}^{2})t^{h}} \sim \frac{1}{2(1-\sigma^{2})t^{h}}$ and  $v^{*}_{t}=\frac{\beta_{t}-1+1-\sigma^2_{t}}{\sqrt{2(\beta_{t}-1)\left(1-\sigma^2_{t}\right)}}=\frac{1}{\sqrt{2}}(\frac{b_{t}}{\sqrt{2}} + \frac{\sqrt{2}}{b_{t}})=\sqrt{2}+\frac{1}{b_{t}}\left(\frac{b_{t}}{\sqrt{2}}-1 \right)^2$.

Let $ m^{2,3}_{h,+}(t):=   v^{*}_{t}  t - \frac{h' }{ \sqrt{2}} \log t$, where   $h'  = \min\{ h,1/2 \}$,
Define
\begin{equation}\label{eq-def-Omega-t-R-2}
  \Omega_{t,h}^{R}=
  \begin{cases}
    \{ (s,x): | s-p_{t} t| \leq    R \sqrt{t}, \   | x - a_{t} s | \leq R \sqrt{s} \} \text{ if }  h \in (0,\frac{1}{2}) , \\
  \{ (s,x):   s \in [\frac{1}{R} \sqrt{t}, R \sqrt{t}],  | x - a_{t} s | \leq R \sqrt{s} \}  \text{ if }  h \in [\frac{1}{2}, \infty].
\end{cases}
\end{equation}

\begin{lemma}\label{lem-contribution-area-2}
  For all $A>0$,
  \begin{equation*}
  \lim _{R \rightarrow \infty} \limsup _{t \rightarrow \infty} \mathbb{P}\left(\exists u \in N^{2}_{t}: X_{u}(t) \geq m^{2,3}_{h,+}(t) - A ,  (T_{u} , X_{u}(T_{u}) )  \notin \Omega_{t,h}^{R}  \right)=0 .
  \end{equation*}
  \end{lemma}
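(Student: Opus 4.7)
The proof follows the same blueprint as Lemma~\ref{lem-contribution-area-6}, with the roles adapted to the present setting: the slope $v_t$ is replaced by the optimal Biggins slope $a_t$, the exponential decay rate $\theta_t$ is replaced by $\sqrt 2$, and the quadratic estimate performed by hand there is replaced by Lemma~\ref{lem-computation}(i). First I would invoke Corollary~\ref{cor-reduce-to-E-R}(i) with $m(t)=m^{2,3}_{h,+}(t)$ and $\Omega^R_t=\Omega^R_{t,h}$ as in \eqref{eq-def-Omega-t-R-2}, reducing the statement to showing that $\lim_{R\to\infty}\limsup_{t\to\infty}I(t,R;A,K)=0$ for each fixed $A,K>0$. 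Conditioning on $B_s$ in \eqref{eq-assumption-EYtARK} and using Lemma~\ref{lem-bridge-estimate-0} exactly as in the derivation of \eqref{eq-def-I-t-R-6} produces a concrete double-integral upper bound for $I(t,R)$ with $\mathsf{F}_t(t-s,\sigma_t B_s)$ as the only non-elementary factor.

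Next, I would estimate $\mathsf{F}_t(t-s,\sigma_t B_s)$ via Lemma~\ref{lem-Max-BBM-tail} in the form \eqref{eq-Max-BBM-tail-2} whenever the target level exceeds $\sqrt 2(t-s)-\tfrac{3}{2\sqrt 2}\log(t-s+1)+1$, and by the trivial bound $1$ otherwise; in the trivial-bound regime one has $t-s\lesssim(\log t)^2$ and the $B_s$-range has logarithmic size, so its contribution is $o(1)$ by a computation analogous to \eqref{eq-bound-F-62}. The decisive step is then the change of variables $\sigma_t B_s=a_t s+z$, centering the integration around the optimal ancestor path of slope $a_t$. Combining $e^{\beta_t s}$ from Lemma~\ref{many-to-one-1}(i), the Brownian density, and the tail bound, and using the two identities \eqref{eq-condition-zero}--\eqref{eq-condition-zero-2} at $(\beta_t,\sigma^2_t)$ to telescope all $O(t)$ and $O(\sqrt t)$ linear contributions, one is left with an exponent of the form
\begin{equation*}
  L(s-p_t t,\,t)\;-\;\tfrac{1}{2\sigma_t^{2}\,s}(\text{centred quadratic in }z)\;+\;o(1),
\end{equation*}
where $L$ is precisely the quantity estimated in Lemma~\ref{lem-computation}(i). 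Rescaling $s-p_t t=\xi\sqrt t$ and $z=\sigma_t\eta\sqrt s$, the window $(s,\sigma_t B_s)\in\Omega^R_{t,h}$ translates into $(\xi,\eta)\in[-R,R]^2$ (respectively $\xi\in[R^{-1},R]$, $\eta\in[-R,R]$ when $h\ge 1/2$, in which case $p_t t=o(\sqrt t)$ is absorbed into the $\xi$-scale).

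By Lemma~\ref{lem-computation}(i) we have $L(\xi\sqrt t,t)\le -c\xi^2$ uniformly in $t$ and $L(\xi\sqrt t,t)\to -(1-\sigma^2)^2\xi^2$ pointwise, so the dominated convergence theorem yields
\begin{equation*}
  \limsup_{t\to\infty} I(t,R)\;\lesssim_{A,K}\; \int_{(\xi,\eta)\notin\widetilde\Gamma_R} P(\xi,\eta)\,e^{-(1-\sigma^2)^2\xi^2-\tfrac{\eta^2}{2}}\,d\xi\,d\eta,
\end{equation*}
where $P$ is an explicit polynomial arising from the prefactors in \eqref{eq-Max-BBM-tail-2} and the bridge estimate, and $\widetilde\Gamma_R$ is the rescaled admissible window; this vanishes as $R\to\infty$. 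The main obstacle is not a single estimate but the algebraic cancellation at the heart of the calculation: one must align the linear-in-$t$ contributions of $e^{\beta_t s}$, of the Brownian density, of the leading exponential in Lemma~\ref{lem-Max-BBM-tail}, and of the logarithmic correction $\tfrac{h'}{\sqrt 2}\log t$ defining $m^{2,3}_{h,+}$, so that they telescope exactly into the function $L$ of Lemma~\ref{lem-computation}. The relations \eqref{eq-condition-zero}--\eqref{eq-condition-zero-2} at $(\beta_t,\sigma^2_t)$ are what make this possible, and uniformly handling the two regimes of \eqref{eq-def-Omega-t-R-2} across the transition at $h=1/2$ (where the order of magnitude of $p_t t$ changes relative to $\sqrt t$) requires a little care.
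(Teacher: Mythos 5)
Your plan follows the same blueprint as the paper's proof and is structurally sound: reduce via Corollary~\ref{cor-reduce-to-E-R}(i), condition on $B_s$, re-center at slope $a_t$, apply a Gaussian-tail bound for $\mathsf{F}_t$, invoke Lemma~\ref{lem-computation}(i), rescale, and finish by dominated convergence. However, the tail bound you cite is the wrong one, and this matters.

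You propose to estimate $\mathsf{F}_t$ via the relaxed form~\eqref{eq-Max-BBM-tail-2}, whose quadratic exponent is $\exp(-x^2/(3t))$, yet your subsequent claim that the exponent telescopes to $L(s-p_t t,t)+\cdots$ presupposes the full Lemma~\ref{lem-Max-BBM-tail}, which carries $\exp\bigl(-\tfrac{1}{2t}[x-\tfrac{3}{2\sqrt2}\log(t+1)]^2\bigr)$. The distinction is not cosmetic. Lemma~\ref{lem-computation}(i) produces the cancellation $(\beta_t-\tfrac{a_t^2}{2\sigma_t^2})s-\sqrt2\,\mathrm{y}-\tfrac{\mathrm{y}^2}{2(t-s)}=-(1-\sigma^2)^2\xi^2+o(1)$ with the \emph{exact} coefficient $\tfrac{1}{2}$ in the quadratic. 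With~\eqref{eq-Max-BBM-tail-2} instead, one is left with a non-telescoped term $+\tfrac{\mathrm{y}^2}{6(t-s)}$; since for $h<\tfrac{1}{2}$ one has $\mathrm{y}=(b_t-\sqrt2)(1-p_t)t+(\sqrt2-a_t)\xi\sqrt t=\Theta(t^{1-h})$, this extra term is $\Theta(t^{1-2h})\to\infty$ and the resulting bound on $I(t,R)$ diverges. You must use Lemma~\ref{lem-Max-BBM-tail} itself, as the paper does in~\eqref{eq-bound-F-2}. (For $h\ge\tfrac{1}{2}$, where $\mathrm{y}=\Theta(\sqrt t)$, the weaker bound would still yield Gaussian decay, but the lemma has to hold for all $h>0$.)

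Two smaller points. The trivial-bound regime you set up (when the target level falls below $\sqrt2(t-s)-\tfrac{3}{2\sqrt2}\log(t-s+1)+1$) is actually empty here: the paper verifies directly that $\mathrm{y}-\sigma_t x-\mathrm{w}>1$ for all admissible $(s,x)$, because $\sqrt2-v_t$ stays bounded away from $0$ near $\mathscr{B}_{II,III}$. This differs from the $\mathscr{B}_{I,III}$ approximation in Lemma~\ref{lem-contribution-area-6}, where a separate small-$(t-s)$ regime genuinely arises. Also, your remark that ``$p_t t=o(\sqrt t)$'' at $h\ge\tfrac{1}{2}$ is true only for $h>\tfrac{1}{2}$; at $h=\tfrac{1}{2}$, $p_t\sqrt t\to\tfrac{1}{2(1-\sigma^2)^2}\ne 0$ and the admissible $\xi$-window must be shifted by this limit (the paper's $\xi_0(h)$). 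Neither point is fatal, but the first adds unnecessary casework and the second needs the shift handled explicitly.
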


  \begin{proof}
    Applying  Corollary \ref{cor-reduce-to-E-R} with $m(t)=m^{2,3}_{h,+}(t)$,  and $\Omega^{R}_{t,h}$ defined in \eqref{eq-def-Omega-t-R-2}, it suffices to show that  for each $A, K>0$, $I(t,R) = I(t,R;A,K)$  defined in \eqref{eq-assumption-EYtARK}  vanishes as first $t \to \infty$ and then $R \to \infty$. Conditioned on the Brownian motion $B_{s}$ in \eqref{eq-assumption-EYtARK} equals $ \frac{a_{t}}{\sigma_{t}} s + x $,    we have
\begin{equation} \label{eq-def-I-R-t-2}
  \begin{aligned}
 I(t,R)  &= \int_{0}^{t} \dif s \int_{-\infty}^{\frac{v_{t}-a_{t}}{\sigma_{t}} s + K} \mathbf{P} \left(  B_{r} \leq \sqrt{2\beta_{t}} r+  K, \forall r \leq s | B_{s} =  \frac{a_{t}}{\sigma_{t}} s + x   \right)     \\
 & \qquad \qquad\qquad   e^{\beta_{t}s}  \mathsf{F}_{t}\left(t-s, v_{t} s -\sigma_{t} x\right) 1_{\left\{(s ,  v_{t}s - \sigma_{t}x ) \notin  \Omega^{R}_{t,h} \right\}} e^{-\frac{ (  a_{t} s/\sigma_{t}  + x )^2 }{2s}} \frac{\dif x}{\sqrt{2\pi s}}   \\
  &   \lesssim     \int_{0}^{t}\dif s  \int_{-\infty}^{\frac{v_{t}-a_{t}}{\sigma_{t}} s + K}  \mathsf{F}_{t}\left(t-s, a_{t} s +\sigma_{t} x\right) 1_{\left\{(s ,  a_{t}s+\sigma_{t}x ) \notin  \Omega^{R}_{t,h} \right\}}   e^{(\beta_{t}   -\frac{a_{t}^2}{2\sigma^{2}_{t}} )s -\frac{a_{t}}{\sigma_{t}} x  -\frac{x^2}{2s} }  \frac{\dif x}{\sqrt{ s}}  .
  \end{aligned}
\end{equation}

Now we aim to get an upper bound  for $\mathsf{F}_{t}\left(t-s, a_{t} s +\sigma_{t} x\right)$.
  Let $\mathrm{y}:= (\sqrt{2}-a_{t})s+(v^{*}_{t}-\sqrt{2})t  $ and $\mathrm{w}:=\frac{h'}{\sqrt{2}} \log t -\frac{3}{2\sqrt{2}}\log(t-s+1) +A$.  By Lemma \ref{lem-Max-BBM-tail},  provided that $\mathrm{y}-\sigma_{t}x-\mathrm{w}>1$, we have
 \begin{equation}\label{eq-bound-F-2}
   \begin{aligned}
& \mathsf{F}_{t}\left(t-s,  a_{t} s + \sigma_{t} x \right)
= \mathsf{P} \left( \max\limits_{u \in \mathsf{N}_{t-s}} \mathsf{X}_{u}(t-s) \geq   v^{*}_{t} t - a_{t}s-  \sigma_{t} x -  \frac{h'}{\sqrt{2}} \log t-A \right)\\
&  =\mathsf{P} \left( \max\limits_{u \in \mathsf{N}_{t-s}} \mathsf{X}_{u}(t-s) > \sqrt{2}(t-s) - \frac{3}{2\sqrt{2}} \log (t-s+1) +\mathrm{y}-\sigma_{t}x-\mathrm{w} \right) \\
& \lesssim_{A}  (\mathrm{y}-\sigma_{t}x-\mathrm{w}) \frac{t^{h'}}{(t-s+1)^{3/2}}\exp\left\{ -\sqrt{2}\mathrm{y}  + \sqrt{2}\sigma_{t}x-\frac{1}{2(t-s)}[\mathrm{y}-\sigma_{t}x- \widetilde{\mathrm{w}}  ]^2 \right\},
 \end{aligned}
 \end{equation}
 where $\widetilde{\mathrm{w}}:= \mathrm{w}- \frac{3}{2\sqrt{2}} \log(t-s+1)$.
 We claim that for large $t$ we always have  $\mathrm{y}-\sigma_{t}x-\mathrm{w}>1$ for $s \in (0,t)$ and $\sigma_{t}x \leq (v_{t}-a_{t})s + \sigma_{t}K $. In fact,  $\mathrm{y}-\sigma_{t}x-\mathrm{w}> (\sqrt{2} - v_{t})s - \frac{h'}{\sqrt{2}} \log t + \frac{3}{2\sqrt{2}}\log(t-s+1)- O(1)$. By our assumption $(\beta_{t},\sigma_{t}^2) \to (\beta,\sigma^2) \in \mathscr{B}_{II,III}$,  we have  $   \sqrt{2}-v_{t} > \delta >0$ for large $t$. Then  for each  $\delta s > 2\log t$,
$\mathrm{y}-\sigma_{t}x-\mathrm{w}>2\log t - \frac{h'}{\sqrt{2}} \log t-O(1)  > 1$;
  for each  $\delta s \leq  2\log t$,
 we have $t-s+1 \geq t/2$, and  hence $  \mathrm{y}-\sigma_{t}x-\mathrm{w}> \frac{3}{2\sqrt{2}}\log(\frac{t}{2})-\frac{h'}{\sqrt{2}} \log t- O(1) >1$.

Substituting \eqref{eq-bound-F-2} into \eqref{eq-def-I-R-t-2}, we get
\begin{equation}  \label{eq-bound-I-t-R-1}
   \begin{aligned}
 I(t,R)  \lesssim   & \int_{0}^{t}  \frac{t^{h'}}{(t-s+1)^{3/2} s^{1/2}} \exp\left\{ (\beta_{t}-\frac{a_t^2}{2\sigma
  ^2_{t}}) s - \sqrt{2} \mathrm{y} - \frac{\mathrm{y}^2 }{2(t-s)} \right\} \dif s \\
  &    \times  \int_{\mathbb{R}}   | \mathrm{y}-\sigma_{t}x-\mathrm{w} |
 \exp \left\{(\sqrt{2}\sigma_{t}-\frac{a_{t}}{\sigma_{t}}+\frac{\mathrm{y}\sigma_{t}}{t-s})x \right\}
  e^{\frac{-\widetilde{\mathrm{w}} \mathrm{y}}{t-s}}
  e^{-\frac{[\sigma
  _{t}x+\widetilde{\mathrm{w}}]^2}{2(t-s)} -\frac{x^2}{2s}}
1_{\left\{(s ,  a_{t}s+\sigma_{t}x ) \notin  \Omega^{R}_{t,h} \right\}}  \dif x.
 \end{aligned}
\end{equation}
 Making a change of variable  $s=p_{t} t+ \xi \sqrt{t}$,  by \eqref{eq-def-star-a-b-p}, we have  $\mathrm{y}=(b_{t}-\sqrt{2})(1-p_{t})t+(\sqrt{2}-a_{t})\xi \sqrt{t}$. Thanks to Lemma \ref{lem-computation},
   \begin{equation}\label{eq-estimate-I1-1}
  (\beta_{t}-\frac{a_t^2}{2\sigma
    ^2_{t}}) s - \sqrt{2} \mathrm{y} - \frac{\mathrm{y}^2}{2(t-s)}   =   L(\xi \sqrt{t},t)=  - (1-\sigma^2)^{2} \xi^2 - R(\xi,t).
   \end{equation}
  Moreover,  since  $a_{t}/\sigma_t=\sigma_t b_t$ (see \eqref{eq-def-star-a-b-p}), the coefficient for the term $x$ in \eqref{eq-bound-I-t-R-1} is
  \begin{equation}\label{eq-estimate-I1-2}
  \sqrt{2}\sigma_{t}-\frac{a_{t}}{\sigma_{t}}+\frac{\mathrm{y}\sigma_{t}}{t-s} =
  \frac{\sigma
    _t(b_{t}-a_t)}{t-s} \xi \sqrt{t} .
  \end{equation}
Let $\Gamma^{R}_{h}=[-R,R]$ if $h\in (0,1/2)$ and $\Gamma^{R}_{h}=[  R^{-1},R]$ if $h \in [1/2,\infty]$.
 Note that $(s ,  a_{t}s+\sigma_{t}x ) \in  \Omega^{R}_{t,h}$ if and only if $ \xi +p_{t}\sqrt{t} 1_{\{ h \geq 1/2\}}   \in \Gamma^{R}_{h}$ and $|\sigma_{t} x|  \leq R\sqrt{s}$.
 Combining equalities \eqref{eq-estimate-I1-1} and \eqref{eq-estimate-I1-2}, we have
   \begin{multline*}
    I_{1}(t,R) =\int_{-p_{t} \sqrt{t}}^{(1-p_t)\sqrt{t}}     \frac{t^{h'} t^{1/2}}{(t-s)^{3/2} s^{1/2}}
    e^{  - (1-\sigma^2)^{2} \xi^2 - R(\xi,t) }  \dif \xi \\
      \times \int_{\mathbb{R}}   1_{\left\{
  \substack{  p_{t}\sqrt{t} 1_{\{ h \geq 1/2\}}   +\xi  \notin  \Gamma^{R}_{h}  \\
   \text{ or } |x|>R\sqrt{s} }\right\}}
     | \mathrm{y}-\sigma_{t}x-\mathrm{w}| \, e^{ \frac{\sigma
    _t(b_{t}-a_t)\sqrt{t}}{t-s} \xi  x  }
    \, e^{\frac{-\widetilde{\mathrm{w}} \mathrm{y}}{t-s}} \,
    e^{-\frac{[\sigma
    _{t}x+\widetilde{\mathrm{w}}]^2}{2(t-s)} -\frac{x^2}{2s}}
     \dif x .
   \end{multline*}
   Now  again making a change of variable $x=\eta \sqrt{s}$ (where $s= p_{t}t+\xi \sqrt{t}$),   we get
   \begin{multline*}
    I_{1}(t,R) \lesssim \int_{-p_{t} \sqrt{t}}^{(1-p_t)\sqrt{t}}     \frac{t^{h'+1/2}}{(t-s)^{3/2}} e^{  - (1-\sigma^2)^{2} \xi^2 - R(\xi,t) }  \dif \xi \\
    \times \int_{\mathbb{R}}   1_{\left\{
   \substack{   p_{t}\sqrt{t} 1_{\{ h \geq 1/2\}}   +\xi   \notin  \Gamma^{R}_{h} \\
     \text{ or } |\eta|>R  }\right\}}
     | \mathrm{y}-\sigma_{t}\eta \sqrt{s}-\mathrm{w}| \, e^{ \frac{\sigma
     _t(b_{t}-a_t)\sqrt{ts}}{t-s} \xi \eta  } \,
     e^{\frac{-\widetilde{\mathrm{w}} \mathrm{y}}{t-s}} \,
     e^{-\frac{[\sigma
     _{t}\eta \sqrt{s}+\widetilde{\mathrm{w}}]^2}{2(t-s)} -\frac{\eta^2}{2}}
      \dif \eta .
    \end{multline*}
 For fixed $\xi$, by \eqref{eq-asymptotic-abv}, $p_{t}=\Theta(t^{-h})$  and  $b_{t}-\sqrt{2}=\Theta(t^{-h})$,
 and then
  $s=O(t^{1-h})+O(\sqrt{t})= O(t^{1-h'})$,  $\mathrm{y}=(b_{t}-\sqrt{2})(1-p_{t})t+(\sqrt{2}-a_{t})\xi \sqrt{t} =O(t^{1-h'})$ and  $\mathrm{w}= O(\log t)=\widetilde{\mathrm{w}}$. Let $\xi_{0}(h)=1_{\{ h\geq 1/2\}} \lim_{t\to\infty} p_{t}\sqrt{t} =   \frac{1}{2(1-\sigma^2)^2} 1_{\{ h=1/2\}} $.
 Then the dominated convergence theorem  yields that
  \begin{equation*}
  \limsup_{t \to \infty} I_{1}(t,R) \lesssim \int_{-\lim_{t} p_{t}\sqrt{t} }^{\infty} e^{ -  (1-\sigma^{2})^{2}\xi^2 }   \dif \xi \int_{\mathbb{R}}
     C(h,\xi)
  e^{  -\frac{\eta^2}{2}}  1_{\left\{
    \substack{   \xi \notin  \Gamma^{R}_{h}-\xi_{0}(h)  \\ \text{ or }
     |\eta|>R  }\right\}}
  \dif \eta ,
  \end{equation*}
  where   for fixed $\xi$, by \eqref{eq-asymptotic-abv},  $ C(h,\xi):= \lim_{t}
   \frac{| \mathrm{y}-\sigma_{t}\eta \sqrt{s}-\mathrm{w}| }{t^{1-h'}} =\lim_{t}
   \frac{ \mathrm{y}  }{t^{1-h'}}  = \frac{1_{\{ h \leq 1/2 \}}}{\sqrt{2}(1-\sigma^2)}  +  \sqrt{2}(1-\sigma^2)\xi     1_{ \{h \geq 1/2 \} } $. Finally, letting $R \to \infty $, the desired result follows.
  \end{proof}

We now show Theorem \ref{thm-23-approximate}
for the case that  for $(\beta_{t},\sigma^{2}_{t})_{t>0} \in
{\mathscr{A}}^{h,+}_{(\beta,\sigma^2)}$.

\begin{proof}[Proof of Theorem \ref{thm-23-approximate}
for $(\beta_{t},\sigma^{2}_{t})_{t>0} \in
{\mathscr{A}}^{h,+}_{(\beta,\sigma^2)}$
]
  Take $\varphi \in \mathcal{T}$. Applying
  Corollary \ref{cor-reduce-to-E-R} with $m(t)=m^{2,3}_{h,+}(t)$, $\rho=\sqrt{2}$, and $\Omega^{R}_{t,h}$ defined in \eqref{eq-def-Omega-t-R-2}, it suffices to study the asymptotic behavior of $ \mathbb{E}\left(e^{-\left\langle\widehat{\mathcal{E}}_t^R, \varphi\right\rangle}\right) $ that equals
\begin{equation*}
\mathbb{E}\bigg(\exp \bigg\{ -  \int_{0}^{t} \sum_{u \in N^{1}_{s}}   \Phi_{\sqrt{2}}\big(t-s, X_{u}(s)-a_{t}s-\mathrm{y}+ \frac{h'}{\sqrt{2}}\log t  \big) 1_{\{ (s,X_{u}(s)) \in \Omega_{t,h}^{R} \} }  \dif s \bigg\} \bigg) .
\end{equation*}
where we used the fact that
  $\sqrt{2}(t-s)-m^{2,3}_{h,+}(t)= -a_{t}s-\mathrm{y}+ \frac{h'}{\sqrt{2}}\log t$ and  $\mathrm{y}: = (\sqrt{2}-a_{t})s+(v^{*}_{t}-\sqrt{2})t$.
 Moreover,
by Lemma \ref{thm-Laplace-BBM-order}, uniformly for
 $(s,X_{u}(s)) \in \Omega_{t,h}^{R}$, $s= \Theta(t^{1-h'})$ and  $|X_{u}(s)-a_{t}s| = \Theta( \sqrt{s})$, we have
\begin{equation}\label{eq-Phi-bound-11}
    \begin{aligned}
     & \Phi_{\sqrt{2}}\left(t-s, X_{u}(s)-a_{t}s+ \frac{h'}{\sqrt{2}}\log t- \mathrm{y} \right)\\
      & \sim  \frac{\gamma_{\sqrt{2}} (\varphi)\, \mathrm{y}}{(t -s)^{3/2}} t^{h'}
      \exp \left\{    \sqrt{2}X_{u}(s)-\sqrt{2} a_t s  -\sqrt{2}\mathrm{y} -\frac{[X_{u}(s)-a_{t}s-\mathrm{y}-\Theta(\log t)]^2}{2(t-s)}  \right\}
        \\
      &\sim  \gamma_{\sqrt{2}}(\varphi) \frac{ t^{h'} \mathrm{y}}{t^{3/2}} \exp \left\{ \sqrt{2}(X_{u}(s)-  a_t s)  -\sqrt{2}\mathrm{y}-\frac{\mathrm{y}^2}{2(t-s)} -\frac{\mathrm{y} (X_u(s)-a_{t}s)}{t-s}  \right\}.
    \end{aligned}
\end{equation}
Make a change of variable $s= p_{t} t+ \xi \sqrt{t}$. On the one hand,
 \begin{equation}\label{eq-Phi-bound-12}
  \begin{aligned}
       \frac{\mathrm{y}(X_u(s)-a_{t}s) }{t-s}&=(b_t-\sqrt{2}) (X_u(s)-a_{t}s) + \frac{(b_t-a_t)\xi \sqrt{t}}{t-s} (X_u(s)-a_{t}s) \\
  &=  (b_t-\sqrt{2}) (X_u(s)-a_{t}s) + o(1) .
   \end{aligned}
\end{equation}
  On the other hand, by Lemma \ref{lem-computation}, we have
  \begin{equation}\label{eq-Phi-bound-13}
    -\sqrt{2}\mathrm{y}-\frac{\mathrm{y}^2}{2(t-s)}   = -(\beta_t-\frac{a_t^2}{2 \sigma^2_{t}})s -(1-\sigma^2)^{2}\xi^2+o(1).
  \end{equation}
 So  combining \eqref{eq-Phi-bound-11}, \eqref{eq-Phi-bound-12} and  \eqref{eq-Phi-bound-12},  we have,  uniformly for $(s,X_{u}(s)) \in \Omega_{t,h}^{R}$,
    \begin{align*}
 & \Phi_{\sqrt{2}}\big(t-s, X_{u}(s)-a_{t}s+ \frac{h'}{\sqrt{2}}\log t- \mathrm{y} \big) \\
    &=(1+o(1))  \gamma_{\sqrt{2}}(\varphi) \frac{ t^{h'} \mathrm{y}}{t^{3/2}}  e^{ b_{t} X_{u}(s)- b_{t} a_{t} s  -(\beta_t-\frac{a_t^2}{2 \sigma^2_{t}})s   }  e^{ -  (1-\sigma^2)^{2}\xi^2 } \\
    &=(1+o(1))  \gamma_{\sqrt{2}}(\varphi) \frac{ t^{h'} \mathrm{y}}{t^{3/2}}  e^{ b_{t}X_{u}(s)- (\beta_{t}+\frac{\sigma^2_{t} b^{2}_{t}}{2}) s }  e^{ -  (1-\sigma^2)^{2}\xi^2 } ,
   \end{align*}
 where we used the fact $a_{t}= \sigma^{2} b_{t}$ (see \eqref{eq-def-star-a-b-p}).
 We compute that
\begin{align*}
 &  \int_{0}^{t} \sum_{u \in N^{1}_{s}}   \Phi_{\sqrt{2}}\big(t-s, X_{u}(s)-a_{t}s-\mathrm{y}+ \frac{h'}{\sqrt{2}}\log t  \big) 1_{\{ (s,X_{u}(s)) \in \Omega_{t,h}^{R} \} }  \dif s   \\
  &\sim   \int_{\Gamma^{R}_{h}- p_{t}\sqrt{t} 1_{\{ h \geq 1/2\} }} \frac{ \gamma_{\sqrt{2}}(\varphi) \, \mathrm{y}}{t^{1-h'} }  \sum_{u \in N^{1}_{s}} e^{ b_{t}X_{u}(s)-  (\beta_{t}+\frac{\sigma^2_{t} b^{2}_{t}}{2}) s } 1_{\{  |X_{u}(s)-\sigma_{t}^2 b_{t}s| \leq R\sqrt{s} \} }      e^{ -  (1-\sigma^2)^{2}\xi^2 }  \dif \xi \\
  &\sim
  \int_{  \xi \in  \Gamma^{R}_{h}- p_{t}\sqrt{t} 1_{\{ h \geq 1/2\} }} \frac{ \gamma_{\sqrt{2}}(\varphi) \,  \mathrm{y}}{t^{1-h'} }  W(p_{t} t +\xi \sqrt{t} ;t )
      e^{ -  (1-\sigma^2)^{2}\xi^2 }  \dif \xi,
  \end{align*}
  where $ W(s;t):=  \sum_{u \in N^{1}_{s}} e^{ b_{t}X_{u}(s)-  (\beta_{t}+\frac{\sigma^2_{t} b^{2}_{t}}{2}) s } 1_{\{  |X_{u}(s)-\sigma_{t}^2 b_{t}s| \leq R\sqrt{s} \} }  $.
By the Brownian scaling, $(X_u(s):u \in N_s^1) \overset{law}{=} ( \frac{\sigma_{t}}{\sqrt{\beta_{t}}}\mathsf{X}_u(s'):u \in \mathsf{N}_{s'})$, where $s'=\beta_{t}s$. Let $\lambda_{t}= b_{t}\sigma_{t}/\sqrt{\beta_{t}}$. We have
\begin{equation*}
 W(s,t)   \overset{law}{=}  \sum_{u \in \mathsf{N}_{s'}}  e^{ \lambda_{t} X_{u}(s') - (1+\lambda_{t}^2/2)s'} 1_{\{| \frac{ \mathsf{X}_{u}(s')- \lambda_{t}s'}{\sqrt{s'}}| \leq \frac{ R}{\sigma_{t} } \}}.
\end{equation*}
Since $\lambda_{t} \to  \sqrt{2}\sigma/\sqrt{\beta} < \sqrt{2}$, by part(i) of Lemma \ref{lem-functional-convergence-derivative-martingale-2}, we have
\begin{equation*}
 \lim_{t \to \infty} W(p_{t}t+\xi \sqrt{t},t)  =\mathsf{W}_{\infty} \left( \frac{ \sqrt{2}\sigma}{\sqrt{\beta}} \right) \int_{[-\frac{R}{\sigma},\frac{R}{\sigma}]} e^{-\frac{x^2} {2}}  \frac{   \dif x  }{\sqrt{2 \pi }} =\mathsf{W}^{\beta,\sigma^2}_{\infty}(\sqrt{2}) \int_{[-\frac{R}{\sigma},\frac{R}{\sigma}]} e^{-\frac{x^2} {2}}  \frac{   \dif x  }{\sqrt{2 \pi }}
\end{equation*}
in law.
Also we have  $\lim\limits_{t \to \infty} \frac{\mathrm{y} }{t^{1-h'}}  =  C(h,\xi) =  \frac{1_{\{h \leq 1/2\}}}{\sqrt{2}(1-\sigma^2)} +\sqrt{2}(1-\sigma^2) \xi 1_{\{h \geq 1/2\}} $. Therefore
 \begin{multline*}
  \lim\limits_{t \to \infty} \mathbb{E}\left(e^{-\left\langle\widehat{\mathcal{E}}_t^R, \varphi\right\rangle}\right)
  \\
  =\mathsf{E}
  \bigg(\exp \bigg\{ -   \gamma_{\sqrt{2}}(\varphi)
  \mathsf{W}^{\beta,\sigma^2}_{\infty}(\sqrt{2})
  \int_{  \xi \in  \Gamma^{R}_{h}- \xi_{0}(h) } C(h,\xi)  e^{ - (1- \sigma^2)^{2}\xi^2 }  \dif \xi
  \int_{ |\sigma x| \leq R } e^{-\frac{x^2}{2 } } \frac{\dif x }{\sqrt{2 \pi  }}    \bigg\} \bigg),
 \end{multline*}
 where   $\xi_{0}(h)=1_{\{ h\geq 1/2\}} \lim_{t \to \infty } p_{t}\sqrt{t} =   \frac{1}{2(1-\sigma^2)^2} 1_{\{ h=1/2\}} $.
Applying Corollary \ref{cor-reduce-to-E-R}   we finally  get
 \begin{align*}
   \lim_{t \to \infty}  \mathbb{E}\left(e^{-\left\langle\widehat{\mathcal{E}}_t, \varphi\right\rangle}\right)= \lim_{R \to \infty}  \lim_{t \to \infty}  \mathbb{E}\left(e^{-\left\langle\widehat{\mathcal{E}}_t^R, \varphi\right\rangle}\right)  =
  \mathsf{E}
   \bigg(\exp \big\{ - C_{h,+}\gamma_{\sqrt{2}}(\varphi) \mathsf{Z}_{\infty}    \big\} \bigg),
 \end{align*}
 which is the Laplace functional of $\operatorname{DPPP}\left( C_{h,+}\sqrt{2}C_{\star}   \mathsf{Z}_{\infty}   e^{-\sqrt{2} x} \dif x, \mathfrak{D}^{\sqrt{2}}\right)$, and where
 \begin{align*}
  C_{h,+} := \lim_{R \to \infty} & \int_{\Gamma^{R}_{h}-\xi_{0}(h)}C(h,\xi)  e^{ - (1- \sigma^2)^{2}\xi^2 }  \dif \xi   =  \frac{\sqrt{\pi /2}}{ (1-\sigma^{2})^{2}} 1_{\{h<1/2\} } + \frac{1_{\{ h>1/2 \}}}{\sqrt{2}(1-\sigma^2)} \\
   & +  \int_{-\frac{1}{2(1-\sigma^{2})^{2}}}^{\infty} (  \frac{1}{\sqrt{2}(1-\sigma^2)} +\sqrt{2}(1-\sigma^2) \xi) e^{ - (1- \sigma^2)^{2}\xi^2 }  \dif \xi 1_{\{h=1/2\}}.
 \end{align*}
 We now complete the proof.
\end{proof}

\subsection{Approaching to $(1,1)$ from $\mathcal{C}_{III}$}

Assume  that $(\beta_{t},\sigma^{2}_{t})_{t>0} \in \mathscr{A}^{h,3}_{(1,1)}$. Let $ m^{(1,1)}_{h,3}(t)=v^{*}_{t} t-\frac{h'}{2\sqrt{2}}\log t$ where  $h'=\min\{h,1\}$. By the assumption   $\beta_{t}+\sigma^{2}_{t}=\frac{1}{\beta_{t}}+\frac{1}{\sigma^2_{t}} $, we have
$\beta_{t}\sigma_{t}^2=1$. So   $v_{t}=\sqrt{2}$ and   $\theta_{t}=\sqrt{2}/\sigma_{t}^2$.   Moreover, we have
\begin{equation}\label{eq-asymptotic-abv-3}
  \sigma_{t}=1- \frac{1}{2t^{h/2}} +  \Theta(\frac{1}{t^{h}}), \ p_{t} \equiv \frac{1}{2} \ , \ b_{t}=\frac{\sqrt{2}}{\sigma_{t}} \ , \ \sqrt{2}- a_{t} \sim   \frac{1}{\sqrt{2}t^{h/2}} \ , \ v^{*}_{t}- \sqrt{2} \sim   \frac{1}{4\sqrt{2}t^{h}} \,.
\end{equation}
In fact, \eqref{eq-asymptotic-abv-3} follows from the following computations.
Firstly, $\beta_{t}+ \sigma^{2}_{t}=\sigma^{2}_{t}+ \sigma^{-2}_{t}=2+t^{-h} \Rightarrow (\sigma_{t}^{2}-1)^{2}= \sigma_{t}^{2} t^{-h} \sim t^{-h} \Rightarrow \sigma_{t}^{2} =1- t^{-h/2}+ \Theta(t^{-h})$. Secondly,
$p_{t}=\frac{\beta_{t}+\sigma^{2}_{t}-2}{2(1-\sigma^{2}_{t})(\beta_{t} -1) }= \frac{t^{-h}}{2[ \beta_{t}+\sigma_{t}^{2}-1- \beta_{t}\sigma_{t}^{2} ]}=\frac{1}{2}$. Thirdly, we  have  $(\beta_{t}-1)^{2} = \beta_{t} t^{-h}$. Thus $b_{t}=\sqrt{2} \sqrt{\frac{\beta_{t} -1}{1-\sigma^{2}_{t}}}=\sqrt{2} (\frac{\beta_{t}}{\sigma_{t}^{2}} )^{1/4}=\frac{\sqrt{2}}{\sigma_{t}}$. Besides $\sqrt{2}-a_{t}=\sqrt{2}(1-\sigma_{t}) \sim \frac{1}{\sqrt{2}t^{h/2}}$. Finally $v^{*}_{t}- \sqrt{2} = \frac{1}{\sqrt{2}}(\sigma_{t}+\frac{1}{\sigma_{t}}-2)=\frac{(\sigma_{t}-1)^{2}}{\sqrt{2} \sigma_{t} } \sim \frac{1}{4\sqrt{2}t^{h}}$.

Define
\begin{equation}\label{eq-def-Omega-t-R-3}
  \Omega_{t,h}^{R}=
    \begin{cases}
    \{ (s,x): | s - \frac{t}{2}| \leq R t^{\frac{1+h}{2}}, \   | x -a_{t} s | \leq  R  \sqrt{s}   \} \quad \text{ for } h \in (0,1) ; \\
    \{ (s,x): s \in [\frac{1}{R}t, (1-\frac{1}{R})t] , \     \sqrt{2} s -x  \in [\frac{1}{R} \sqrt{t}, R\sqrt{t}]    \} \quad \text{ for } h \in [1,\infty]
  \end{cases}
\end{equation}

\begin{lemma}\label{lem-contribution-area-3}
  For all $A>0$,  and $h \in (0,\infty]$
  \begin{equation*}
  \lim _{R \rightarrow \infty} \limsup _{t \rightarrow \infty} \mathbb{P}\left(\exists u \in N^{2}_{t}: X_{u}(t) \geq m^{(1,1)}_{h,3}(t) - A ,  (T_{u} , X_{u}(T_{u}) )  \notin \Omega_{t}^{R,h}  \right)=0 .
  \end{equation*}
  \end{lemma}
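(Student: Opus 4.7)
The plan is to follow the template of Lemma~\ref{lem-contribution-area-2}. I apply Corollary~\ref{cor-reduce-to-E-R}(i) with $m(t)=m^{(1,1)}_{h,3}(t)$ and $\Omega^{R}_{t}=\Omega^{R}_{t,h}$ from \eqref{eq-def-Omega-t-R-3}, reducing the task to showing that $I(t,R;A,K)\to 0$ as first $t\to\infty$ and then $R\to\infty$, for every fixed $A,K>0$. Conditioning on $B_s=(a_t/\sigma_t)s+x$ and using Lemma~\ref{lem-bridge-estimate-0} together with the refined BBM tail bound Lemma~\ref{lem-Max-BBM-tail} (applied with $\mathrm{y}=(\sqrt{2}-a_t)s+(v^*_t-\sqrt{2})t$ and $\mathrm{w}=\tfrac{h'}{2\sqrt{2}}\log t-\tfrac{3}{2\sqrt{2}}\log(t-s+1)+A$), the leading exponent in the resulting bound for $I(t,R)$ reduces, modulo harmless cross terms and a residual Gaussian factor in $x$, to the quantity $L(s-p_t t, t) = (\beta_t - a_t^2/(2\sigma_t^2))s - \sqrt{2}\mathrm{y} - \mathrm{y}^2/(2(t-s))$ analysed in Lemma~\ref{lem-computation}. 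Exactly as in the proof of Lemma~\ref{lem-contribution-area-11}, the subregion where $\mathrm{y}-\sigma_t x-\mathrm{w}\le 1$ is treated by the crude bound $\mathsf{F}_t\le 1$ and contributes $o(1)$.

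For $h\in(0,1)$, I change variables $s=p_t t+\xi t^{(1+h)/2}$ and $x=\eta\sqrt{s}$, so that the excluded region $(\Omega^R_{t,h})^c$ becomes $\{|\xi|>R\}\cup\{|\eta|>R\}$. Lemma~\ref{lem-computation}(ii) then supplies both the pointwise limit $L(\xi t^{(1+h)/2},t)\to -(\sqrt{2}+1)\xi^2$ and the uniform upper bound $L(\xi t^{(1+h)/2},t)\le -c\xi^2$. Using the asymptotics \eqref{eq-asymptotic-abv-3}, the cross term $\sigma_t(b_t-a_t)\sqrt{t}\,\xi\eta/(t-s)$ is $O(t^{-h/2})$ and hence negligible on compact sets of $(\xi,\eta)$, while the residual Gaussian in $x$ contributes $e^{-\eta^2/2}$ after absorbing the Jacobian. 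Dominated convergence then bounds $\limsup_t I(t,R)$ by $\iint_{\{|\xi|>R\text{ or }|\eta|>R\}} P(\xi,\eta)\,e^{-c\xi^2-\eta^2/2}\,d\xi\,d\eta$ for some polynomial $P$, which vanishes as $R\to\infty$.

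For $h\in[1,\infty]$, which I expect to be the main obstacle since Lemma~\ref{lem-computation}(ii) is stated only for $h\in(0,1)$, the correct rescaling is the global one, $s=\lambda t$ with $\lambda\in(0,1)$ and $\sqrt{2}s-x=\eta\sqrt{t}$ (valid because $v_t=\sqrt{2}$ exactly in this regime, as $\beta_t\sigma_t^2=1$). A direct Taylor expansion using \eqref{eq-asymptotic-abv-3} shows that the leading exponential $e^{L(\lambda t - p_t t, t)}$ is $\Theta(1)$ when $h=1$ and tends to $1$ when $h>1$, so that all decay must come from the geometric prefactor and the surviving quadratic terms in $\eta$. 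The prefactor $t^{h'}/((t-s+1)^{3/2}\sqrt{s})$ then combines with the Gaussian factor $e^{-\eta^2/(2\lambda)-\eta^2/(3(1-\lambda))}$ (arising from $-x^2/(2s)$ together with the residual quadratic piece of the $\mathsf{F}_t$ exponent) to produce an integrand dominated by $\eta^2/(\lambda(1-\lambda))^{3/2}\cdot e^{-c\eta^2}$ times the indicator of the excluded region. This majorant is integrable in $(\lambda,\eta)$, and the indicator forces the integral to vanish as $R\to\infty$, in complete analogy with the $h\ge 1$ argument in Lemma~\ref{lem-contribution-area-11}.
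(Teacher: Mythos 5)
The overall scaffolding you propose (split into $h\in(0,1)$ and $h\in[1,\infty]$, invoke Corollary~\ref{cor-reduce-to-E-R}(i), bound $\mathsf{F}_t$ via Lemma~\ref{lem-Max-BBM-tail}, change variables, apply dominated convergence) matches the paper's proof, but there is a concrete computational error in the $h\in(0,1)$ case. After the substitution $s=p_t t+\xi t^{(1+h)/2}$, $x=\eta\sqrt{s}$, the coefficient of the bilinear term in the exponent is $\frac{\sigma_t(b_t-a_t)}{t-s}\,t^{(1+h)/2}\sqrt{s}$, not $\frac{\sigma_t(b_t-a_t)\sqrt{t}}{t-s}$: you have dropped the $t^{(1+h)/2}$ Jacobian factor. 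With the correct factor the coefficient has order $t^{-h/2}\cdot t^{-1}\cdot t^{(1+h)/2}\cdot t^{1/2}=t^0$, and the paper computes $\lim_{t\to\infty}\frac{\sigma_t(b_t-a_t)}{t-s}t^{(1+h)/2}\sqrt{s}=2$. So the cross term is \emph{not} negligible; the limiting integrand has exponent $-(\sqrt 2+1)\xi^2+2\xi\eta-\eta^2$, not the product form $-c\xi^2-\eta^2/2$ you assert. The conclusion survives because $(\sqrt 2+1)\xi^2-2\xi\eta+\eta^2=\sqrt 2\xi^2+(\xi-\eta)^2$ is positive definite, so integrability on $\mathbb R^2\setminus[-R,R]^2$ still decays as $R\to\infty$, but you need to say this explicitly; the majorant $P(\xi,\eta)e^{-c\xi^2-\eta^2/2}$ is valid only after absorbing the $2\xi\eta$ term via the elementary inequality $2\xi\eta\le\xi^2+\eta^2$, which your reasoning currently skips.

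For $h\in[1,\infty]$ your route differs from the paper's. You propose to keep the conditioning that generates the function $L$, then Taylor-expand $L(\lambda t-p_t t,t)$ beyond the range of Lemma~\ref{lem-computation}(ii). That expansion does work — one finds $L=-(\sqrt 2-a_t)^2(\lambda-\tfrac12)^2 t-r(u,t)$ and both pieces are $\Theta(t^{1-h})$, so $L=O(1)$ for $h=1$ and $L\to 0$ for $h>1$ — but this requires reproving the nonnegativity of $r$ on the full range of $u$ and verifying a $t$-uniform bound rather than just the pointwise limit. The paper instead conditions on $B_s=\sqrt{2\beta_t}s-x$ (possible here because $\beta_t\sigma_t^2=1$), which makes the exponential prefactor $e^{\beta_t s-(\sqrt{2\beta_t}s)^2/(2s)}$ collapse to $1$ and leaves only the benign factor $e^{\sqrt 2(\sqrt{\beta_t}-\sigma_t)x}$, with $(\sqrt{\beta_t}-\sigma_t)\sqrt t\to 1_{\{h=1\}}$; the function $L$ never appears. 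Your route should deliver the same final integrand, but it is longer and requires the extension of Lemma~\ref{lem-computation} that you only gesture at; the paper's change of conditioning is the cleaner path for this case.
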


  \begin{proof}
Applying  Corollary \ref{cor-reduce-to-E-R} with $m(t)=m^{(1,1)}_{h,3}(t)$,  and $\Omega^{R}_{t,h}$ defined in \eqref{eq-def-Omega-t-R-3}, it suffices to show that  for each $A, K>0$, $I(t,R) = I(t,R;A,K)$  defined in \eqref{eq-assumption-EYtARK}  vanishes as first $t \to \infty$ and then $R \to \infty$.

  \textbf{Case 1: $h \in (0,1)$.}  Conditioned on the Brownian motion $B_{s}$ in \eqref{eq-assumption-EYtARK} equals $ \frac{a_{t}}{\sigma_{t}} s + x $,    we have
  \begin{equation}\label{eq-I-t-R-010}
    \begin{aligned}
   I(t,R) \lesssim  &\int_{0}^{t}\dif s  \int_{-\infty}^{\frac{\sqrt{2}-a_{t}}{\sigma_{t}} s + K}   \frac{(\sqrt{2}-a_{t})s+\sigma_{t}K-\sigma_{t}x}{s^{3/2} }    \\
  & \qquad \qquad \times  \mathsf{F}_{t}\left(t-s, a_{t} s +\sigma_{t} x\right)   e^{(\beta_{t}   -\frac{a_{t}^2}{2\sigma^{2}_{t}} )s -\frac{a_{t}}{\sigma_{t}} x  -\frac{x^2}{2s} } 1_{\left\{(s ,  a_{t}s+\sigma_{t}x ) \notin  \Omega^{R}_{t,h} \right\}} \dif x   .
    \end{aligned}
  \end{equation}
 where we use that  $ \mathbf{P}(B_{u} \leq \frac{\sqrt{2}}{\sigma_{t}}u+K, \forall u \leq s| B_{s}= \frac{a_{t}}{\sigma_{t}}+x) \lesssim \frac{(\sqrt{2}-a_{t})s+\sigma_{t}K-\sigma_{t}x}{s } $  by Lemma \ref{lem-bridge-estimate-0}.

We still denote  $\mathrm{y}:= (\sqrt{2}-a_{t})s+(v^{*}_{t}-\sqrt{2})t $. Let  $\mathrm{w}:=\frac{h}{2\sqrt{2}} \log t -\frac{3}{2\sqrt{2}}\log(t-s+1) +A$. As in  \eqref{eq-bound-F-2}, provided that $\mathrm{y}-\sigma_{t}x-\mathrm{w}>1$, we have
  \begin{equation}\label{eq-bound-F-010}
  \begin{aligned}
     & \mathsf{F}_{t}\left(t-s,  a_{t} s + \sigma_{t} x \right) \\
  &  \lesssim_{A}\frac{  (\mathrm{y}-\sigma_{t}x-\mathrm{w}) t^{h/2}}{(t-s+1)^{3/2}}\exp\left\{ -\sqrt{2}\mathrm{y}  + \sqrt{2}\sigma_{t}x-\frac{1}{2(t-s)}[\mathrm{y}-\sigma_{t}x- \widetilde{\mathrm{w}}  ]^2 \right\} .
  \end{aligned}
    \end{equation}
  where $\widetilde{\mathrm{w}}:= \mathrm{w}- \frac{3}{2\sqrt{2}} \log(t-s+1)$. Indeed for large $t$ and for all $a_{t}s + \sigma_{t} x \leq \sqrt{2} s + \sigma_{t} K$ we have $\mathrm{y} - \sigma_{t}x-\mathrm{w} \geq (v^{*}_{t}-\sqrt{2})t - O(\log t) = \Theta(t^{1-h}) > 1$  by \eqref{eq-asymptotic-abv-3}.

Let  $J=J_{x,s,t}:=   \frac{|(\sqrt{2}-a_{t})s+\sigma_{t}K-\sigma_{t}x|}{s^{3/2} } \frac{  |\mathrm{y}-\sigma_{t}x-\mathrm{w}| t^{h/2}}{(t-s+1)^{3/2}} $. Substituting \eqref{eq-bound-F-010} into \eqref{eq-I-t-R-010} we get
  \begin{align*}
  I(t,R)  \lesssim
 &  \int_{0}^{t}    \exp\bigg\{ (\beta_{t}-\frac{a_t^2}{2\sigma
  ^2_{t}}) s - \sqrt{2} \mathrm{y} - \frac{\mathrm{y}^2 }{2(t-s)} \bigg\}  \dif s
 \\ & \times  \int_{\mathbb{R}}
 J  \exp \bigg\{(\sqrt{2}\sigma_{t}-\frac{a_{t}}{\sigma_{t}}+\frac{\mathrm{y}\sigma_{t}}{t-s})x \bigg\}
  e^{\frac{-\widetilde{\mathrm{w}}\mathrm{y}}{t-s}}
  e^{-\frac{[\sigma_{t}x+\widetilde{\mathrm{w}}]^2}{2(t-s)} -\frac{x^2}{2s}}   1_{\left\{(s ,  a_{t}s+\sigma_{t}x ) \notin  \Omega^{R}_{t,h} \right\}} \dif x.
    \end{align*}
Making a change of variable  $s=p_{t} t+ \xi t^{\frac{1+h}{2}}=\frac{1}{2}t+ \xi t^{\frac{1+h}{2}}$, by Lemma \ref{lem-computation},  we have
   \begin{align*}
     (\beta_{t}-\frac{a_t^2}{2\sigma
       ^2_{t}}) s - \sqrt{2} \mathrm{y} - \frac{\mathrm{y}^2}{2(t-s)}  & =   L(\xi t^{\frac{1+h}{2}}, t) =    - (\sqrt{2}+1) \xi^2 - R(\xi,t) , \\
       (\sqrt{2}\sigma_{t}-\frac{a_{t}}{\sigma_{t}}+\frac{\mathrm{y}\sigma_{t}}{t-s}) & =  \frac{\sigma  _t(b_{t}-a_t)}{t-s} \xi t^{\frac{1+h}{2}} .
  \end{align*}
  Note that $(s ,  a_{t}s+\sigma_{t}x ) \in  \Omega^{R}_{t,h}$ if and only if $ |\xi| \leq R $ and $|\sigma_{t} x|  \leq R\sqrt{s}$. Then we have
      \begin{align*}
 I(t,R)  \lesssim
  & \int_{- \frac{1}{2}t^{\frac{1-h}{2}}}^{\frac{1}{2}t^{\frac{1-h}{2}}}
  e^{  - (\sqrt{2}+1)\xi^2  - R(\xi,t) }  \dif \xi
   \\
  & \times   \int_{\mathbb{R}}  t^{\frac{1+h}{2}} J\exp \bigg\{ \frac{\sigma_t(b_{t}-a_t)}{t-s} \xi t^{\frac{1+h}{2}} x -\frac{\widetilde{\mathrm{w}}\mathrm{y}}{t-s}
  -\frac{[\sigma_{t}x+\widetilde{\mathrm{w}}]^2}{2(t-s)} -\frac{x^2}{2s} \bigg\}   1_{\left\{  \substack{ |\xi|>R, \text{or} \\ |\sigma_{t} x|>R\sqrt{s} }\right\}} \dif x .  \end{align*}
Again  making   a change of variable $x=\eta \sqrt{s}$,   we get
 \begin{align*}
  I(t,R)  \lesssim
   & \int_{- \frac{1}{2}t^{\frac{1-h}{2}}}^{\frac{1}{2}t^{\frac{1-h}{2}}}
   e^{  - (\sqrt{2}+1)\xi^2  - R(\xi,t) }  \dif \xi \\
   & \times
     \int_{\mathbb{R}} \sqrt{s}\,  t^{\frac{1+h}{2}}  J\exp \bigg\{
   \frac{\sigma_t(b_{t}-a_t)}{t-s}  \sqrt{s} \, t^{\frac{1+h}{2}} \xi \eta -\frac{\widetilde{\mathrm{w}}\mathrm{y}}{t-s}
   -\frac{[\sigma_{t} \eta \sqrt{s}+\widetilde{\mathrm{w}}]^2}{2(t-s)} -\frac{\eta^2}{2} \bigg\}   1_{\left\{  \substack{ |\xi|>R, \text{or} \\ |\sigma_{t} \eta|>R }\right\}} \dif \eta.
  \end{align*}
  By \eqref{eq-asymptotic-abv-3},  $\sqrt{2}-a_{t} \sim \frac{1}{\sqrt{2}t^{h/2}}$, $\mathrm{y} \sim (\sqrt{2}-a_{t})\frac{t}{2}$ and $\mathrm{w}=O(\log t)$.  For fixed $\xi, \eta$  we have
  \begin{align*}
    \lim_{t \to \infty}   \sqrt{s}\,  t^{\frac{1+h}{2}}    J & = \lim_{t \to \infty}    t^{\frac{1}{2}+h}     \frac{|(\sqrt{2}-a_{t})s+\sigma_{t}K-\sigma_{t}\eta \sqrt{s}|}{s } \frac{  |\mathrm{y}-\sigma_{t}\eta \sqrt{s}-\mathrm{w}| }{(t-s+1)^{3/2}} \\
    & \leq  \lim_{t \to \infty}    t^{\frac{1}{2}+h}   [ (\sqrt{2}-a_{t}) +  \frac{\eta}{\sqrt{t}} ]   \frac{ (\sqrt{2}-a_{t})\frac{t}{2}  }{(t/2)^{3/2}} = 2 .
  \end{align*}
Similarly, $\lim_{t \to \infty} \frac{\sigma_t(b_{t}-a_t)}{t-s}  \sqrt{s} \, t^{\frac{1+h}{2}} = 2$. Then the dominated convergence theorem   yields that
     \begin{equation*}
     \limsup_{t \to \infty}  I(t,R) \lesssim \iint_{\R^2\backslash [-R,R]^2}  \exp\{-(\sqrt{2}+1)\xi^2+2\xi \eta-\eta^2 \}
     \dif \eta \dif \xi \overset{R \to \infty}{\longrightarrow} 0.
     \end{equation*}

 \textbf{Case 2: $h \in [1,\infty]$.} Now  conditioned on the Brownian motion $B_{s}$ in \eqref{eq-assumption-EYtARK} equals $ \sqrt{2 \beta_{t}}s - x $,  we get
     \begin{equation}
     \begin{aligned}
 I(t,R)  & = \int_{0}^{t} \dif s \int_{-K}^{\infty} \mathbf{P}\left(  B_{r}  \leq  \sqrt{2\beta_{t}}r+  K, \forall r \leq s  \mid    B_{s}=\sqrt{2\beta_{t}}s - x  \right) \\
 & \qquad  e^{\beta_{t}s}  \mathsf{F}_{t}\left(t-s,  \sqrt{2}s-\sigma_{t}x \right) 1_{\left\{(s, \sqrt{2}s-\sigma_{t}x) \notin  \Omega^{R}_{t,h} \right\}} e^{-\frac{-(\sqrt{2\beta_{t}}s-x)^2}{2s}}
 \frac{\dif x}{\sqrt{2\pi s}}  \\
 & \lesssim \int_{0}^{t}   \dif s \int_{-K}^{\infty} \frac{K+x}{s^{3/2}}   \mathsf{F}_{t}\left(t-s,  \sqrt{2}s-\sigma_{t}x \right) 1_{\left\{(s, \sqrt{2}s-\sigma_{t}x) \notin  \Omega^{R}_{t,h} \right\}}
 e^{-\sqrt{2 \beta_{t}} x -\frac{x^{2}}{2s} }
 \dif x .
  \end{aligned}
  \end{equation}
  Let $\mathrm{w}:=  \frac{1}{2\sqrt{2}}\log t - \frac{3}{2\sqrt{2}}\log (t-s+1)  + A$.  By Lemma \ref{lem-Max-BBM-tail}, if  $\sigma_{t}x-\mathrm{w}>1$, we have
  \begin{equation}\label{eq-bound-F-3}
    \begin{aligned}
  & \mathsf{F}_{t}\left(t-s,  \sqrt{2} s - \sigma_{t} x \right) \\
  &  =\mathsf{P}( \max\limits_{u \in \mathsf{N}_{t-s}} \mathsf{X}_{u}(t-s) > \sqrt{2}(t-s)- \frac{3}{2\sqrt{2}} \log(t-s+1) +\sigma_{t}x - \mathrm{w} ) \\
 & \lesssim_{A}  (\sigma_{t}x-\mathrm{w}) \frac{t^{1/2}}{(t-s+1)^{3/2}} \exp\bigg\{ \sqrt{2} \sigma_{t} x -\frac{[\sigma_{t} x - \widetilde{\mathrm{w}}]^2}{2 (t-s)} \bigg\},
  \end{aligned}
  \end{equation}
  where $ \widetilde{\mathrm{w}}:= \mathrm{w}- \frac{3}{2\sqrt{2}} \log(t-s+1)$.
Note that $ \sigma_{t}x - \mathrm{w} \geq    \frac{3}{2\sqrt{2}}\log (t-s+1) - \frac{1}{2\sqrt{2}}\log t- O(1)$. So if  $\sigma_{t}x - \mathrm{w} \leq 1$ it must be $s \geq t/2$ and $-\sqrt{2 \beta_{t}} x \leq - \sqrt{2} \beta_{t} \mathrm{w}=-\sqrt{2} \mathrm{w} + o(1)$. We upper bound $  \mathsf{F}_{t}\left(t-s, \sqrt{2}s-\sigma_{t}x \right)$ by $1$. Futhermore, there holds
\begin{equation*}
  \int_{t/2}^{t}   \dif s \int_{-K}^{O(\log t)} \frac{K+x}{s^{3/2}}
  e^{-\sqrt{2 \beta_{t}} x -\frac{x^{2}}{2s} }1_{\left\{   \sigma_{t}x -\mathrm{w} \leq 1 \right\}}
  \dif x \lesssim   \frac{O(\log^{2} t)}{t^{3/2}}  \int_{t/2}^{t} \frac{t^{1/2}\, \dif s }{(t-s+1)^{3/2}} =o(1).
\end{equation*}
In summary, we have
\begin{align*}
  I(t,R) \lesssim \int_{0}^{t}  \dif s & \int_{-K}^{\infty} \frac{|K+x| \, t^{1/2} \, |\sigma_{t}x-\mathrm{w}|}{s^{3/2}(t-s+1)^{3/2}} 1_{\left\{(s, \sqrt{2}s-\sigma_{t}x) \notin  \Omega^{R}_{t,h} \right\}} \\
  &\qquad  \qquad \qquad \times \exp \left\{\sqrt{2}(\sqrt{\beta_{t}}-\sigma_{t})x \right\} \exp\left\{-\frac{x^2}{2s}-\frac{(\sigma_{t}x- \widetilde{\mathrm{w}} )^2}{2(t-s)} \right\}  \dif x .
\end{align*}
Make change of variables $s=\xi t$ and $x=\eta \sqrt{t}$. Then $(s,\sqrt{s}-\sigma_{t} x  ) \in \Omega_{t,h}^{R}$ if and only if $\xi \in [R^{-1},1-R^{-1}]$ and $\sigma_{t} \eta \in [R^{-1}, R] $. Hence
\begin{align*}
  I(t,R) \lesssim \int_{0}^{1} \dif \xi & \int_{-\frac{K}{\sqrt{t}}}^{\infty}\frac{ ( |\eta|+\frac{K}{\sqrt{t}}) (|\eta| + \frac{\mathrm{w}}{\sqrt{t}}) }{\xi^{3/2} (1-\xi)^{3/2}} \ 1_{\left\{\substack{ \xi \notin [R^{-1},1-R^{-1}] \\
  \text{or } \sigma_{t} \eta \notin [R^{-1},R]} \right\}}\\
&  \qquad\times \exp\left\{ \sqrt{2}(\sqrt{\beta_{t}}-\sigma_{t})\sqrt{t}\eta \right\}
\exp\left\{-\frac{\eta^2}{2\xi}-\frac{(\sigma_{t} \eta- \widetilde{\mathrm{w}}/\sqrt{t})^2}{2(1-\xi)} \right\}  \dif \eta .
\end{align*}
By \eqref{eq-asymptotic-abv-3},  we have $\lim_{t \to \infty} (\sqrt{\beta_{t}}-\sigma_{t}) \sqrt{t} =\lim_{t \to \infty} (\sigma_{t}^{-1}-\sigma_{t}) \sqrt{t}=  1_{\{ h=1/2\}}$.  Applying the dominated convergence theorem, we get
\begin{equation*}
 \limsup_{t \to \infty} I(t,R) \lesssim
 \int_{0}^{1} \dif \xi
 \int_{0}^{\infty}
 \frac{ \eta^2}{\xi^{3/2} (1-\xi)^{3/2}}
 e^{\sqrt{2}\eta} e^{-\frac{\eta^2}{2\xi}-\frac{\eta^2}{2(1-\xi)}} 1_{\left\{\substack{ \xi \notin [R^{-1},1-R^{-1}] \\
  \text{or } \eta \notin [R^{-1},R]} \right\}} \dif \eta   \overset{R \to \infty}{\longrightarrow} 0 ,
\end{equation*}
where we use the integratibility of  $ \frac{   \eta^2}{\xi^{3/2} (1-\xi)^{3/2}}  e^{\sqrt{2}\eta-  \frac{\eta^2}{2\xi}-\frac{\eta^2}{2(1-\xi)}}$  on $(0,1)\times [0,\infty]$. We now compelte the proof.
     \end{proof}

Finally we give the proof of Theorem \ref{thm-11-approximate}
for the case that $(\beta_{t},\sigma^{2}_{t})_{t>0} \in \mathscr{A}^{h,3}_{(1,1)}$.

\begin{proof}[Proof of Theorem \ref{thm-11-approximate}
for $(\beta_{t},\sigma^{2}_{t})_{t>0} \in \mathscr{A}^{h,3}_{(1,1)}$
]
Take $\varphi \in \mathcal{T}$. Applying  Corollary \ref{cor-reduce-to-E-R} with $m(t)=m^{(1,1)}_{h,3}(t)$, $\rho=\sqrt{2}$, and $\Omega^{R}_{t,h}$ defined in \eqref{eq-def-Omega-t-R-3}, it suffices to study the asymptotic of $ \mathbb{E}\left(e^{-\left\langle\widehat{\mathcal{E}}_t^R, \varphi\right\rangle}\right)  $ which equals
\begin{equation*}
 \mathbb{E}\bigg(\exp \bigg\{ -  \int_{0}^{t} \sum_{u \in N^{1}_{s}}   \Phi_{\sqrt{2}}\big(t-s, X_{u}(s)+ \sqrt{2}(t-s)- m^{(1,1)}_{h,3}(t) \big) 1_{\{ (s,X_{u}(s)) \in \Omega_{t,h}^{R} \} }  \dif s \bigg\} \bigg) .
      \end{equation*}
\textbf{Case 1: $h \in (0,1)$}. Let $\mathrm{y}: =(v^{*}_{t}-\sqrt{2})t +(\sqrt{2}-a_{t})s$, then  $\sqrt{2}(t-s)-m^{(1,1)}_{h,3}(t)= -a_{t} s-\mathrm{y}+ \frac{h}{2\sqrt{2}}\log t$. Making a change of variable
 $s=\frac{t}{2}+\xi t^{\frac{1+h}{2}}$,
 we get that $  \mathbb{E}\left(e^{-\left\langle\widehat{\mathcal{E}}_t^R, \varphi\right\rangle}\right) $ is equal to
 \begin{align*}
  \mathbb{E}\bigg(\exp \bigg\{ -  t^{\frac{1+h}{2}}  \int_{-R}^{R} \sum_{u \in N^{1}_{s}} \Phi_{\sqrt{2}}\left(t-s, X_{u}(s)-a_{t}s  - \mathrm{y}+\frac{h}{2\sqrt{2}}\log t \right) 1_{\{  |X_{u}(s)-a_{t}s| \leq R\sqrt{s} \} }   \dif \xi
 \bigg\} \bigg)  .
 \end{align*}
By \eqref{eq-asymptotic-abv-3}, uniformly for  $(s,X_{u}(s)) \in \Omega_{t,h}^{R}$, we have $\mathrm{y}= [1+o(1)] \frac{1}{2\sqrt{2}}t^{1-\frac{h}{2}}$. Then  by Lemma \ref{thm-Laplace-BBM-order}, uniformly for $(s,X_{u}(s)) \in \Omega_{t,h}^{R}$,  we have
    \begin{align*}
 & \Phi_{\sqrt{2}}\left(t-s, X_{u}(s)-a_{t}s- \mathrm{y}+ \frac{h }{2\sqrt{2}}\log t \right) \\
 &\sim  \gamma_{\sqrt{2}}(\varphi) \frac{ t^{h/2} \mathrm{y} }{(t-s)^{3/2}}
 \exp\left\{  \sqrt{2}(X_{u}(s)-  a_t s )-\sqrt{2}\mathrm{y}-\frac{[X_{u}(s)-a_{t}s-\mathrm{y}+ O(\log t)]^2   }{2(t-s)}\right\} \\
 &\sim     \frac{  \gamma_{\sqrt{2}}(\varphi)}{t^{1/2} }
 \exp \left\{  (\sqrt{2} + \frac{\mathrm{y}  }{t-s} )  (X_{u}(s)-  a_t s )  -\sqrt{2}\mathrm{y}-\frac{\mathrm{y}^2}{2(t-s)}-\frac{(X_{u}(s)-a_{t}s)^2}{2(t-s)}    \right\}.
  \end{align*}
We now simplify the term inside exponential.
By  \eqref{eq-asymptotic-abv-3}, $\delta:=\delta_{t,\xi,h} =\frac{(b_t-a_t)}{t-s}t^{\frac{1+h}{2}} = [1+o(1)]  \frac{2\sqrt{2}}{\sqrt{t}}$. Now  $\mathrm{y}=(b_t-\sqrt{2})(t-s)+(b_t-a_t) t^{\frac{1+h}{2}} \xi $  can be written as $\sqrt{2}+\frac{\mathrm{y}}{t-s}= b_{t}+ \delta \xi$.
Part (ii) of Lemma \ref{lem-computation} yields that
$-\sqrt{2}\mathrm{y}-\frac{\mathrm{y}^2}{2(t-s)} = - (\beta_t-\frac{a_t^2}{2 \sigma^2_{t}}) s - ( \sqrt{2}+1)\xi^2+o(1)$.
Besides,
$(\beta_{t}-\frac{a_{t}^2}{2 \sigma_{t}^2})s +a_{t}(b_{t}+\delta \xi)s = [\beta_{t}+ \frac{\sigma^2_{t}}{2}(b_{t}+\delta \xi)^2]s -\frac{\sigma^2_{t} }{2}\delta^2 s \xi^2 =  [\beta_{t}+ \frac{\sigma^2_{t}}{2}(b_{t}+\delta \xi)^2]s-2 \xi^{2}+o(1)$
and
$-\frac{(X_{u}(s)-a_{t}s)^2}{2(t-s)}=-\frac{(X_{u}(s)-a_{t}s)^2}{2s}+o(1) $. Thus,
\begin{align*}
 & \Phi_{\sqrt{2}}\left(t-s, X_{u}(s)-a_{t}s- \mathrm{y}+ \frac{h }{2\sqrt{2}}\log t \right) \\
 & \sim  \frac{  \gamma_{\sqrt{2}}(\varphi)}{t^{1/2} }
 \exp\left\{  (b_{t}+ \delta \xi ) X_{u}(s)- [\beta_t+\frac{\sigma^2_{t}}{2}(b_{t}+\delta \xi)^2 ]s  -\frac{(X_{u}(s)-a_{t} s)^2}{2s} -(\sqrt{2}-1)\xi^2  \right\} .
  \end{align*}
As a consequence,  we have
      \begin{equation}\label{eq-asymptotic-Laplace-02}
     \mathbb{E}\left(e^{-\left\langle\widehat{\mathcal{E}}_t^R, \varphi\right\rangle}\right) = \mathbb{E}\bigg(\exp \bigg\{ - [1+o(1)] \gamma_{\sqrt{2}}(\varphi)  \int_{-R}^{R} t^{h/2} W^{G}_{s}(b_{t}+\delta \xi ) e^{-(\sqrt{2}-1)\xi^2} \dif \xi \bigg\} \bigg) ,
    \end{equation}
  where  $G(x)= G_{R}(x)=e^{-\frac{x^2}{2}}1_{\{|x| \leq R\}}$ and
 \begin{equation*}
   W^{G}_{s}(b_{t}+\delta \xi ):=  \sum_{u \in N^{1}_{s}} e^{(b_{t}+ \delta \xi ) X_{u}(s)- [\beta_t+\frac{\sigma^2_{t}}{2}(b_{t}+\delta \xi)^2 ]s  }  G\left(\frac{a_{t} s-X_{u}(s)}{\sqrt{s}} \right) .
  \end{equation*}

 By the Brownian scaling property,
$\left( X_u(s): u \in N_s^1\right) \overset{law}{=} \left(\frac{\sigma_{t}}{\sqrt{\beta_{t}}} \mathsf{X}_u(s') : u \in \mathsf{N}_{s'} \right)$, where $s'=\beta_{t}s $.
Let $\lambda_{t}:= \frac{\sigma_{t}}{\sqrt{\beta_{t}}}(b_{t}+ \delta\xi) = a_{t} + \sigma_{t}^{2} \delta \xi$
(noticing that
$\frac{\sigma_{t}}{\sqrt{\beta_{t}}} =\sigma_{t}^2 $).  So $W^{G}_{s}(b_{t}+\delta\xi)$ has the same distribution as
\begin{equation*}
 \mathsf{W}^{G}_{s'}(\lambda_{t}):= \sum_{u \in \mathsf{N}_{s'}} e^{ \lambda_{t} \mathsf{X}_{u}(s')-(\frac{\lambda^2_{t} }{2}+1) s'}   G\left( \frac{  \lambda_{t} s'-\mathsf{X}_{u}(s')  }{\frac{1}{\sigma_{t}}  \sqrt{s'}} - \sigma_{t}^{2} \delta \sqrt{s} \xi  \right).
\end{equation*}
By \eqref{eq-asymptotic-abv-3},
$\sqrt{2}-\lambda_{t} \sim \sqrt{2}-a_{t}\sim \frac{1}{\sqrt{2}t^{h/2}} $, and $\lim_{t \to \infty} \sigma_{t}^{2} \delta \sqrt{s} = 2 $.
Applying part (ii) of Lemma \ref{lem-functional-convergence-derivative-martingale-2}, we have
  \begin{equation*}
   \lim_{t \to \infty}  t^{h/2}\mathsf{W}^{G}_{s'}(\lambda_{t})= \sqrt{2} \mathsf{Z}_{\infty} \int_{\mathbb{R}} G(z-2 \xi) e^{-\frac{z^2}{2}} \frac{\dif z }{\sqrt{2\pi}} .
\end{equation*}
Letting $t \to \infty$ in \eqref{eq-asymptotic-Laplace-02}, we get that
 \begin{equation*}
 \lim_{t \to \infty} \mathbb{E}\left(e^{-\left\langle\widehat{\mathcal{E}}_t^R, \varphi\right\rangle}\right) =\mathsf{E}
\bigg(\exp \bigg\{ - \gamma_{\sqrt{2}}(\varphi) \mathsf{Z}_{\infty}
\int_{-R}^{R} e^{-(\sqrt{2}-1)\xi^2} \dif \xi \int_{\mathbb{R}} G_{R}(z-2\xi) e^{-\frac{z^{2}}{2}} \frac{\dif z }{\sqrt{ \pi}} \bigg\} \bigg).
    \end{equation*}
Then letting $R \to \infty$, by Corollary \ref{cor-reduce-to-E-R}, we have
\begin{equation*}
  \begin{aligned}
& \lim _{t \rightarrow \infty} \mathbb{E}\left(e^{-\left\langle\widehat{\mathcal{E}}_t, \varphi\right\rangle}\right)= \lim_{R \to \infty}\lim_{t \to \infty} \mathbb{E}\left(e^{-\left\langle\widehat{\mathcal{E}}_t^R, \varphi\right\rangle}\right) \\
& =  \mathsf{E}
  \left(\exp \left\{ - \sqrt{2}\gamma_{\sqrt{2}}(\varphi) \mathsf{Z}_{\infty}
  \iint_{\mathbb{R}^{2}} e^{-\sqrt{2}\xi^{2}} e^{-(z-\xi)^{2}}  \frac{\dif \xi \dif z }{\sqrt{2 \pi}} \right\}\right)=
  \mathsf{E}
   \left( e^{ - \sqrt{\frac{\pi}{\sqrt{2}}} \gamma_{\sqrt{2}}(\varphi) \mathsf{Z}_{\infty}
   }\right),
  \end{aligned}
  \end{equation*}
  which is the Laplace functional of $\operatorname{DPPP}\left(  \sqrt{\frac{\pi}{\sqrt{2}}} \sqrt{2}C_{\star}   \mathsf{Z}_{\infty}   e^{-\sqrt{2} x} \dif x, \mathfrak{D}^{\sqrt{2}}\right)$.

\textbf{Case 2: $h \in [1,\infty]$.} Now $\sqrt{2}(t-s)-m^{(1,1)}_{h,3}(t)=-\sqrt{2}s + \frac{1}{2\sqrt{2}}\log t $.
Making a change of variable $s= \xi t$, we can rewrite $\mathbb{E}\left(e^{-\left\langle\widehat{\mathcal{E}}_t^R ,\varphi\right\rangle}\right)$ as
\begin{align*}
 \mathbb{E}\bigg(\exp \bigg\{ -   t \int_{\frac{1}{R}}^{1-\frac{1}{R}} \sum_{u \in N^{1}_{s}}  \Phi_{\sqrt{2}}\left(t-s, X_{u}(s)- \sqrt{2} s +\frac{1}{2\sqrt{2}}\log t \right)   1_{\{  \sqrt{2}s- X_{u}(s) \in [\frac{1}{R}\sqrt{s}, R\sqrt{s}] \} }  \dif \xi  \bigg\} \bigg)  .
 \end{align*}
By Lemma \ref{thm-Laplace-BBM-order},   uniformly for $(s,X_{u}(s)) \in \Omega_{t,h}^{R}$, we have
    \begin{align*}
    & \Phi_{\sqrt{2}}\left(t-s, X_{u}(s)- \sqrt{2} s +\frac{1}{2\sqrt{2}}\log t \right)\\
    &\sim   \frac{ \gamma_{\sqrt{2}}(\varphi) t^{1/2} s^{1/2}}{(t-s)^{3/2}} \frac{ \sqrt{2} s- X_{u}(s)}{ \sqrt{s}}
    \exp \left\{ \sqrt{2}(X_{u}(s)-\sqrt{2}s) -\frac{1}{2(t-s)}(X_{u}(s)-\sqrt{2}s)^2 \right\}.
    \end{align*}
Then we have
\begin{equation}\label{eq-asymptotic-Laplace-03}
\mathbb{E}\left(e^{-\left\langle\widehat{\mathcal{E}}_t^R, \varphi\right\rangle}\right)=  \mathbb{E}\bigg(\exp \bigg\{ - [1+o(1)] \gamma_{\sqrt{2}}(\varphi) \int_{\frac{1}{R}}^{1-\frac{1}{R}}  \frac{1}{(1 -\xi)^{3/2} } \sqrt{\xi t} \, W^{G_{\xi}}_{\xi t} \dif \xi  \bigg\} \bigg)  .
 \end{equation}
    where $G_{\xi}(x)=G_{\xi,R}(x): = x e^{-\frac{\xi}{2(1-\xi)} x^2 } 1_{\{x \in [R^{-1},R]\}}  $ and
    \begin{equation*}
       W^{G_{\xi}}_{s} :=  \sum_{u \in N^{1}_{s}}  e^{\sqrt{2}(X_{u}(s)-\sqrt{2}s)}  G_{\xi} \left( \frac{\sqrt{2}s-X_{u}(s)}{\sqrt{s}} \right).
    \end{equation*}
Since  $\left( X_u(s): u \in N_s^1\right) \overset{law}{=} \left(\frac{\sigma_{t}}{\sqrt{\beta_{t}}} \mathsf{X}_u(s') : u \in \mathsf{N}_{s'} \right)$(where $s'=\beta_{t}s$) and $\beta_{t} \sigma_{t}^{2}=1$,   $W^{G_{\xi}}_{s}$ has the same distribution as
    \begin{equation*}
      \mathsf{W}^{G_{\xi}}_{s'} :=  \sum_{u \in \mathsf{N}_{s}}  e^{\sqrt{2} [ \mathsf{X}_{u}(s')- \sqrt{2} s' ] } e^{\sqrt{2}(1-\sigma_{t}^2)[\sqrt{2}s'-\mathsf{X}_{u}(s')]}  G_{\xi}\left(  \sigma_{t} \frac{\sqrt{2}s'-\mathsf{X}_{u}(s')}{\sqrt{s'}} \right).
    \end{equation*}
    Applying Lemma \ref{lem-functional-convergence-derivative-martingale}, we have for $s'=\beta_{t}s=\beta_{t} \xi t$, as $1-\sigma_{t}^2 \sim \frac{1}{t^{h/2}}$,
    \begin{equation*}
     \lim_{t \to \infty} \sqrt{s'}  \mathsf{W}^{G_{\xi}}_{s'} =  \mathsf{Z}_{\infty} \sqrt{\frac{2}{\pi}}  \int_{\frac{1}{R}}^{R}  e^{\sqrt{2 \xi} 1_{\{h=1\}} z} e^{-\frac{\xi}{2(1-\xi)}z^2}   z^2  e^{-\frac{z^2}{2}} \dif z.
    \end{equation*}
    Letting $t \to \infty$ and then $R \to \infty$ in \eqref{eq-asymptotic-Laplace-02},  by Corollary \ref{cor-reduce-to-E-R}  we get that $  \lim\limits_{t \to \infty} \mathbb{E}\left(e^{-\left\langle\widehat{\mathcal{E}}_t \varphi\right\rangle}\right) = $
    \begin{equation*}
   \mathsf{E}
   \bigg(\exp \bigg\{ - \gamma_{\sqrt{2}}(\varphi) \mathsf{Z}_{\infty} \int_{0}^{1} \frac{1}{(1-\xi)^{3/2}} \dif \xi  \int_{0}^{\infty} e^{\sqrt{2 \xi} 1_{\{h=1\}} z} e^{-\frac{\xi}{2(1-\xi)}z^2}   \mu_{\mathrm{Bes}}(\dif z)    \bigg\} \bigg)  ,
    \end{equation*}
 which is the Laplace functional of $\operatorname{DPPP}\left(   C_{h,3}\sqrt{2}C_{\star}   \mathsf{Z}_{\infty}   e^{-\sqrt{2} x} \dif x, \mathfrak{D}^{\sqrt{2}}\right)$,     where
    \begin{align*}
   C_{h,3} & =  \int_{0}^{1} \frac{1}{(1-\xi)^{3/2}} \dif \xi  \sqrt{\frac{ 2}{\pi} } \int_{0}^{\infty} z^{2} e^{\sqrt{2 \xi} 1_{\{h=1\}} z} e^{-\frac{1}{2(1-\xi)}z^2}    \dif z \\
   & = \int_{0}^{1} \dif \xi  \sqrt{\frac{ 2}{\pi} } \int_{0}^{\infty} z^{2} e^{\sqrt{2 \xi(1-\xi)} 1_{\{h=1\}} z} e^{-\frac{1}{2}z^2}    \dif z \\
   & = 1_{\{h>1\}} +  \int_{0}^{1} \dif \xi  \sqrt{\frac{ 2}{\pi} } \int_{0}^{\infty} z^{2} e^{\sqrt{2 \xi(1-\xi)}  z} e^{-\frac{1}{2}z^2}    \dif z 1_{\{h=1\}}.
    \end{align*}
    We now complete the proof.
\end{proof}

  \subsection{Approaching  $\mathscr{B}_{I,III}$ from $\mathscr{C}_{III}$}

    Assume that $(\beta,\sigma^2)\in \mathscr{B}_{I,III}$, and   $(\beta_{t},\sigma^{2}_{t})_{t>0} \in \mathscr{A}^{h,+}_{(\beta,\sigma^2)}$. In this case
    \begin{equation}\label{eq-asymptotic-abv-2}
      1- p_{t} \sim \frac{1}{2 \left(\sigma^{-2}-1\right)^{2} t^{h}}  , \  \theta_{t}-b_{t} \sim  \frac{v}{2(1-\sigma^{2}) t^{h}} , \    v_{t} -a_{t}\sim \frac{\sigma^2  }{2(1-\sigma^2)}  \frac{v}{t^{h}} ,  \ v^{*}_{t}-v_{t} = \Theta(t^{-2h}).
    \end{equation}
    In fact, $1-p_{t}= 1-\frac{\sigma^2_{t}+\beta_{t}-2}{2(\beta_{t}-1)\left(1-\sigma^2_{t}\right)} =  \frac{\beta_{t}^{-1}+\sigma_{t}^{-2}-2}{2(1-\beta_{t}^{-1})\left(\sigma^{-2}_{t}-1\right)} \sim \frac{1}{2 \left(\sigma^{-2}-1\right)^{2} t^{h}}  $;
   $  b_{t}= \theta_{t}   ( \frac{1-\beta_{t}^{-1}}{\sigma_{t}^{-2}-1})^{1/2}= \theta_{t}   ( 1-\frac{ 1}{(\sigma_{t}^{-2}-1)t^{h} })^{1/2}  = \theta_{t} -  \frac{v}{2(1-\sigma^{2}) t^{h}}  +o(\frac{1}{t^{h}})  $;
   $v^{*}_{t}= v_{t} \, \frac{\sigma_{t}^{-2}-\beta_{t}^{-1}}{2\sqrt{(1-\beta_{t}^{-1})( \sigma_{t}^{-2}-1)}} = v_{t}[ 1+ \frac{\theta_{t}}{2b_{t}}(\frac{b_{t}}{\theta_{t}}-1)^{2}] $.

  Recall that $h'  = \min\{ h,1/2 \}$, $ m^{1,3}_{h,+}(t):=   v^{*}_{t}  t - \frac{h' }{ \theta_{t}} \log t$.
   Define
    \begin{equation}\label{eq-def-deltaxst-2}
      \delta(x;s,t) := x- a_{t} s + (b_{t}- a_{t})(p_{t} t-s),
    \end{equation}
    and
    \begin{equation}\label{eq-def-Omega-t-R-4}
      \Omega_{t,h}^{R}=
      \begin{cases}
        \{ (s,x): | s-p_{t} t| \leq    R \sqrt{t}, \   | \delta(x;s,t) | \leq R \sqrt{t-s} \} \text{ for }  h \in (0,\frac{1}{2}) , \\
      \{ (s,x):    t - s \in [\frac{1}{R} \sqrt{t}, R \sqrt{t}],  | \delta(x;s,t) | \leq R \sqrt{t-s} \}  \text{ for }  h \in [\frac{1}{2}, \infty].
    \end{cases}
    \end{equation}

  \begin{lemma}\label{lem-contribution-area-4}
  For all $A>0$,  and $h \in (0,\infty]$
  \begin{equation*}
   \lim _{R \rightarrow \infty} \limsup _{t \rightarrow \infty} \mathbb{P}\left(\exists u \in N^{2}_{t}: X_{u}(t) \geq m^{1,3}_{h,+}(t) - A ,  (T_{u} , X_{u}(T_{u}) )  \notin \Omega_{t}^{R,h}  \right)=0 .
   \end{equation*}
   \end{lemma}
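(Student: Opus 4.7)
The proof follows the blueprint of Lemma \ref{lem-contribution-area-2}, with the boundary $\mathscr{B}_{I,III}$ playing the role that $\mathscr{B}_{II,III}$ played there. The plan is first to apply Corollary \ref{cor-reduce-to-E-R}(i) with $m(t) = m^{1,3}_{h,+}(t)$ and $\Omega^{R}_{t,h}$ from \eqref{eq-def-Omega-t-R-4}, which reduces the lemma to showing that for every fixed $A, K > 0$,
\begin{equation*}
\lim_{R \to \infty} \limsup_{t \to \infty} I(t,R;A,K) = 0,
\end{equation*}
where $I(t,R;A,K)$ is defined by \eqref{eq-assumption-EYtARK}.

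I would next condition the Brownian motion appearing in $I(t,R;A,K)$ on $B_s = \tfrac{a_t}{\sigma_t} s + x$, so that $\sigma_t B_s = a_t s + \sigma_t x$. Lemma \ref{lem-bridge-estimate-0} then produces a bound of order $\bigl[(v_t-a_t)s + \sigma_t K - \sigma_t x\bigr]/s$ on the conditional bridge probability (valid throughout the effective range $\sigma_t x \leq (v_t-a_t)s + \sigma_t K$). Since $v^*_t > v_t$ on $\mathscr{C}_{III}$, the target level $m^{1,3}_{h,+}(t) - A - a_t s - \sigma_t x$ exceeds $\sqrt{2}(t-s)$ by a macroscopic amount, so Lemma \ref{lem-Max-BBM-tail} gives a Gaussian-type bound on $\mathsf{F}_t(t-s, a_t s + \sigma_t x)$ completely analogous to \eqref{eq-bound-F-2}, with the same exceptional regime $\mathrm{y} - \sigma_t x - \mathrm{w} \leq 1$ contributing only $o(1)$ after integration.

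I would then substitute $s = p_t t + \xi \sqrt{t}$ and reorganise the exponent using the saddle-point identities \eqref{eq-condition-zero}, \eqref{eq-condition-zero-2}. The analogue needed here is
\begin{equation*}
\bigl(\beta_t - \tfrac{a_t^2}{2\sigma_t^2}\bigr) s - v_t \mathrm{y} - \tfrac{\mathrm{y}^2}{2(t-s)} = -c\,\xi^2 + o(1)
\end{equation*}
for some $c>0$ (uniformly in $\xi$ in the relevant range), where $\mathrm{y} := (v_t-a_t)s + (v^*_t-v_t)t$. This is the $\mathscr{B}_{I,III}$ counterpart of Lemma \ref{lem-computation}(i) and follows from a direct Taylor expansion using the asymptotics \eqref{eq-asymptotic-abv-2} together with the defining relations for $(a_t, b_t, p_t, v^*_t)$. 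After a further change $x = \eta\sqrt{s}$ (or equivalently rewriting in the variable $\delta$ of \eqref{eq-def-deltaxst-2} via $\delta = \eta\sqrt{t-s}$), the cross term $\exp\bigl\{\bigl(\sqrt{2}\sigma_t - \tfrac{a_t}{\sigma_t} + \tfrac{\mathrm{y}\sigma_t}{t-s}\bigr) x\bigr\}$ simplifies exactly as in \eqref{eq-estimate-I1-2} to $\exp\bigl\{\tfrac{\sigma_t(b_t-a_t)}{t-s}\,\xi\sqrt{t}\cdot x\bigr\}$, which remains $O(1)$ per unit $\xi\eta$.

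The main obstacle is the bookkeeping of the exponent: one must verify that the linear-in-$x$ coefficient cancels precisely at leading order (this is where the specific structure $a_t = \sigma_t^2 b_t$ on the anomalous branch and the asymptotics $\theta_t - b_t = \Theta(t^{-h})$, $v_t - a_t = \Theta(t^{-h})$ from \eqref{eq-asymptotic-abv-2} enter), so that the resulting quadratic form in $(\xi,\eta)$ is strictly negative definite outside a compact set. Once this is established, the integrand is bounded by an integrable function times $C(h,\xi)\,e^{-c\xi^2 - c'\eta^2}$, and the dominated convergence theorem yields
\begin{equation*}
\limsup_{t \to \infty} I(t,R;A,K) \lesssim \iint_{\mathbb{R}^2 \setminus \Omega^R_{\infty,h}} C(h,\xi)\, e^{-c\xi^2 - c'\eta^2}\, \mathrm{d}\xi\,\mathrm{d}\eta \xrightarrow[R\to\infty]{} 0.
\end{equation*}
The sub-cases $h \in (0,1/2)$ and $h \in [1/2,\infty]$ differ only in the range of $\xi$: for $h<1/2$ both endpoints $\pm p_t t/\sqrt{t}$ tend to $\pm\infty$ and the constraint is $|\xi|\le R$, while for $h \geq 1/2$ the constraint is $t-s \in [R^{-1}\sqrt{t}, R\sqrt{t}]$, equivalent to $-\xi + (1-p_t)\sqrt{t}/1 \in \Gamma_R$ after an obvious re-labelling, so the two regimes are handled by the same computation as in the proof of Lemma \ref{lem-contribution-area-2}.
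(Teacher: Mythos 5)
Your high-level plan — reduce via Corollary \ref{cor-reduce-to-E-R}, condition on $B_s=\tfrac{a_t}{\sigma_t}s+x$, bound the bridge probability and the tail $\mathsf{F}_t$, substitute $s=p_t t+\xi\sqrt t$ and a second change of variable in $x$, then apply dominated convergence — is exactly the skeleton of the paper's proof, and in outline it is sound. The choice between Lemma \ref{lem-Max-BBM-tail} and the trivial first-moment bound \eqref{eq-BBM-tail-2} for $\mathsf{F}_t$ is also harmless: the paper uses the latter because here the target level sits a macroscopic multiple of $(t-s)$ above $\sqrt2(t-s)$, but both give a Gaussian-type bound with the same exponent.

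The genuine gap is the algebraic identity you single out as the ``$\mathscr{B}_{I,III}$ counterpart of Lemma \ref{lem-computation}(i)''. You set $\mathrm{y}:=(v_t-a_t)s+(v^*_t-v_t)t$ (i.e.\ you decompose $v^*_t t - a_ts = v_t(t-s)+\mathrm{y}$) and claim
$(\beta_t-\tfrac{a_t^2}{2\sigma_t^2})s - v_t\mathrm{y} - \tfrac{\mathrm{y}^2}{2(t-s)} = -c\xi^2+o(1)$. This is not the exponent that actually appears: after the trivial bound on $\mathsf{F}_t$ one gets the additional term $(1-\tfrac{v_t^2}{2})(t-s)$, and even after including it a direct expansion around $v_t$ leaves a linear-in-$u$ coefficient equal to $\tfrac12(b_t-v_t)^2$. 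Since $b_t-v_t\to\theta-v>0$ on $\mathscr{B}_{I,III}$, this term is of order $\sqrt t$ under $u=\xi\sqrt t$ and destroys the claimed bound. In short, $v_t$ is the wrong reference speed: for $\mathscr{B}_{I,III}$ the descendants of type-$2$ particles travel at speed $b_t\to\theta>v$, not $v_t$.

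The paper instead decomposes $v^*_t t - a_t s = b_t(t-s)+(b_t-a_t)u$, then expands the square around $b_t(t-s)$. With this choice three exact cancellations occur: the linear-in-$x$ coefficient $\sigma_t b_t - a_t/\sigma_t$ vanishes because $a_t=\sigma_t^2 b_t$; the order-$t$ constant term vanishes by \eqref{eq-condition-zero}; and the linear-in-$u$ coefficient equals $\beta_t-1+\tfrac{\sigma_t^2-1}{2}b_t^2=0$ because $b_t^2=2\tfrac{\beta_t-1}{1-\sigma_t^2}$. None of these cancellations occur with the $v_t$-centered decomposition, so that route does not produce a $\xi$-quadratic form. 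The repair is straightforward — replace $v_t$ by $b_t$ throughout your exponent bookkeeping and verify the three identities above — but as written the key algebraic step of your argument is false, and this is precisely the point where the $\mathscr{B}_{I,III}$ boundary differs from the $\mathscr{B}_{II,III}$ boundary you used as a template.
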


 \begin{proof}
    Applying  Corollary \ref{cor-reduce-to-E-R} with $m(t)=m^{1,3}_{h,+}(t)$ and
    $\Omega^{R}_{t}=\Omega^{R}_{t,h}$
    defined in \eqref{eq-def-Omega-t-R-4}, it suffices to show that  for each $A, K>0$, $I(t,R) = I(t,R;A,K)$  defined in \eqref{eq-assumption-EYtARK}  vanishes as first $t \to \infty$ and then $R \to \infty$. Conditioned on the Brownian motion $B_{s}$ in \eqref{eq-assumption-EYtARK} equals $ \frac{a_{t}}{\sigma_{t}} s + x $,    we have
    \begin{align*}
    I(t,R)
   & = \int_{0}^{t} \dif s \int_{\mathbb{R}} \mathbf{P} \left( \sigma_{t}B_{r} \leq v_{t} r+\sigma_{t} K, \forall r \leq s \mid B_{s} =  \frac{a_{t}}{\sigma_{t}}s + x   \right)     \\
   & \qquad \qquad  \times
   e^{\beta_{t}s}  \mathsf{F}_{t}\left(t-s, a_{t} s +\sigma_{t} x\right) 1_{\left\{(s ,  a_{t}s+\sigma_{t}x ) \notin  \Omega^{R}_{t,h} \right\}}   \exp \left\{-\frac{a_{t}^2}{2\sigma^{2}_{t}}s -\frac{a_{t}}{\sigma_{t}}x  -\frac{x^2}{2s} \right\}   \frac{\dif x}{\sqrt{2\pi s}}    \\
   & \lesssim    \int_{0}^{t} \dif s \int_{-\infty}^{\frac{(v_{t}-a_{t})}{\sigma_{t}}s+ K}   \frac{(v_{t}-a_{t})s+ K-\sigma_{t}x}{\sigma_{t} s^{3/2}}
   \\&   \qquad \qquad  \qquad    \times\mathsf{F}_{t}\left(t-s, a_{t} s +\sigma_{t} x\right) 1_{\left\{(s ,  a_{t}s+\sigma_{t}x ) \notin  \Omega^{R}_{t,h} \right\}} \,
   e^{   (\beta_{t} -\frac{a_{t}^2}{2\sigma^{2}_{t}} )s -\frac{a_{t}}{\sigma_{t}}x  -\frac{x^2}{2s} }  \dif x,
  \end{align*}
 where we used  that
  $\mathbf{P} \left( \sigma_{t}B_{r} \leq v_{t} s+\sigma_{t} K, \forall r \leq s | B_{s} =  \frac{a_{t}}{\sigma_{t}}s + x   \right) \lesssim_{K} \frac{(v_{t}-a_{t})s+ K-\sigma_{t}x}{\sigma_{t} s} $,
  which holds by Lemma \ref{lem-bridge-estimate-0}.

  \begin{itemize}
    \item  If $  v^{*}_{t} t -a_{t}s-\sigma_{t} x   - \mathrm{w} > 1$,  where $\mathrm{w}:= \frac{h'}{\theta_{t}} \log t +A $.
    By the  Markov inequality and Gaussian tail inequality,
    we have
    \begin{equation}\label{eq-bound-F-4}
      \begin{aligned}
   & \mathsf{F}_{t}\left(t-s,  a_{t} s + \sigma_{t} x \right)  =\mathsf{P}\left( \max\limits_{u \in \mathsf{N}_{t-s}} \mathsf{X}_{u}(t-s) >  v^{*}_{t} t -a_{t}s-\sigma_{t} x   - \mathrm{w}\right) \\
   & \lesssim_{A}      \frac{\sqrt{t-s}}{ v^{*}_{t} t -a_{t}s-\sigma_{t} x   - \mathrm{w} } \exp\left\{(t-s)-\frac{1}{2(t-s)}[ v^{*}_{t} t -a_{t}s-\sigma_{t} x   - \mathrm{w}]^2 \right\} .
    \end{aligned}
    \end{equation}
  \item  If $v^{*}_{t} t -a_{t}s-\sigma_{t} x - \mathrm{w}   \leq 1$, we simply upper bound  $\mathsf{F}_{t}\left(t-s,  v_{t} s - \sigma_{t} x \right)$ by $1$. Note that, as $ \sigma_{t} x + a_{t}s \leq v_{t} s + \sigma_{t} K $ and $v_{t}^{*}> v_{t}$,
       we have
       $1 \geq v^{*}_{t} t -a_{t}s-\sigma_{t} x   - \mathrm{w} \geq  v_{t}(t-s) - \mathrm{w}-\sigma_{t}K$. So provided $t$ is large, it  must be $s \geq t- (\log t)^2$ and  $  -\sigma_{t} x \leq  - v_{t}^{*}t + a_{t} s + \mathrm{w} +1$. Thanks to \eqref{eq-condition-zero-2}, we have
  \begin{equation}
    \begin{aligned}
  & \int_{t-(\log t)^{2}}^{t}  \dif s     \int_{-\infty}^{ O(\log t)}   \frac{(v_{t}-a_{t})s+ K-\sigma_{t}x}{\sigma_{t} s^{3/2}} e^{   (\beta_{t} -\frac{a_{t}^2}{2\sigma^{2}_{t}} )s -\frac{a_{t}}{\sigma_{t}}x  -\frac{x^2}{2s} }  1_{ \{v^{*}_{t} t -a_{t}s-\sigma_{t} x - \mathrm{w}    \leq 1 \}  } \dif x \\
   & \lesssim   \int_{t-(\log t)^{2}}^{t}  \dif s   \int_{-\infty}^{ O(\log t)}   \frac{ O(\log t) }{ t^{3/2}} e^{   (\beta_{t} + \frac{\sigma_{t}^{2} b_{t}^2}{2} )s - b_{t} v_{t}^{*}t  + b_{t} \mathrm{w} }   \dif x
   \lesssim  O(\log t)^{4} t^{\frac{b_{t}}{\theta_{t}} h' - 3/2}= o(1).
    \end{aligned}
  \end{equation}
  \end{itemize}

  Let $J=J_{s,x,t}:=  \frac{|(v_{t}-a_{t})s+ K-\sigma_{t}x|}{ s^{3/2}}  \frac{\sqrt{t-s}}{ | v^{*}_{t} t -a_{t}s-\sigma_{t} x   - \mathrm{w} | +1 }  $.
   Then  we have
  \begin{align*}
    I(t,R) \lesssim  \int_{0}^{t} \dif s  &  \int_{-\infty}^{\frac{(v_{t}-a_{t})}{\sigma_{t}}s+ K}    J \, 1_{\left\{(s ,  a_{t}s+\sigma_{t}x ) \notin  \Omega^{R}_{t,h} \right\}} \\
   & \times \exp \left\{ (\beta_{t}-\frac{a_{t}^2}{2\sigma^{2}_{t}})s + (t-s)-\frac{[ v^{*}_{t} t -a_{t}s-\sigma_{t} x   - \mathrm{w}]^2}{2(t-s)}  -\frac{a_{t}}{\sigma_{t}}x \right\} e^{-\frac{x^2}{2s} }  \dif x + o(1).
  \end{align*}
  Making a   change of variable $s= p_{t} t+u$, we have
  \begin{equation*}
   \frac{(v^{*}_{t} t -a_{t}s-\sigma_{t} x   - \mathrm{w})^2}{2(t-s)}=\frac{b_{t}^2}{2}(t-s)+b_{t}(b_{t}-a_{t})u-\sigma_{t}b_{t}x-b_{t} \mathrm{w}+ \frac{[(b_{t}-a_{t})u-\sigma_{t}x-\mathrm{w}]^2}{2(t-s)},
  \end{equation*}
where we used
  $ v^{*}_{t} t -a_{t} s= b_{t}(1-p_{t})t-a_{t}u=b_{t}(t-s)+(b_{t}-a_{t})u  $. Applying   \eqref{eq-condition-zero}
  and the identity $a_{t}=\sigma_{t}^2 b_{t}$, we have
  \begin{align*}
  & (\beta_{t}-\frac{a_{t}^2}{2\sigma^{2}_{t}})s + (t-s)-\frac{[ v^{*}_{t} t -a_{t}s-\sigma_{t} x   - \mathrm{w}]^2}{2(t-s)}  -\frac{a_{t}}{\sigma_{t}}x  \\
  &= (\beta_{t}-\frac{a_{t}^2}{2 \sigma_{t}^2})s +  (1-\frac{b_{t}^2}{2})(t-s) - b_{t}^{2}(1-\sigma_{t}^{2})u  + b_{t} \mathrm{w} - \frac{[(b_{t}-a_{t})u-\sigma_{t}x-\mathrm{w}]^2}{2(t-s)}  \\
  &=  (\beta_{t} -\frac{a_{t}^2}{2 \sigma_{t}^2})p_{t} t +  (1-\frac{b_{t}^2}{2} )(1-p_{t}) t  + \left(\beta_{t}  -1+\frac{\sigma_{t}^2 -1}{2} b_{t}^2 \right)u + b_{t} \mathrm{w} - \frac{[(b_{t}-a_{t})u-\sigma_{t}x-\mathrm{w}]^2}{2(t-s)}  \\
  &=   b_{t} \mathrm{w} - \frac{[(b_{t}-a_{t})u-\sigma_{t}x-\mathrm{w}]^2}{2(t-s)}.
  \end{align*}
   Therefore,
  \begin{equation*}
    I(t,R) \lesssim   \int_{-p_{t}t}^{(1-p_{t})t} \dif u  \int_{\mathbb{R}}
     e^{b_{t} \mathrm{w} }  J  e^{ -  \frac{[(b_{t}-a_{t})u-\sigma_{t}x-\mathrm{w}]^2}{2(t-s)}  }
      e^{-\frac{x^2}{2s} } 1_{\left\{(s ,  a_{t}s+\sigma_{t}x ) \notin  \Omega^{R}_{t,h} \right\}} \dif x + o(1) .
  \end{equation*}
By making a change of variable  $x= \frac{(b_{t}-a_{t})}{\sigma_{t}}u+ z$, we get
    $(s,a_{t}s+\sigma_{t}x) \in \Omega^{R}_{t,h}$ if and only if $ ( [1-p_{t}]\sqrt{t} 1_{\{ h \geq 1/2 \}} - \frac{u}{\sqrt{t}}, \frac{\sigma_{t}z }{\sqrt{t-s}}) \in \Gamma^{R}_{h} \times [-R,R]$,  where  $\Gamma^{R}_{h}=[-R,R]$ if $h\in (0,1/2)$ and $\Gamma^{R}_{h}=[R^{-1},R]$ if $h \in [1/2,\infty]$.
   Hence,
  \begin{equation*}
    I(t,R) \lesssim \int_{-p_{t}t}^{(1-p_{t})t} \dif u
     \int_{\mathbb{R}} Je^{b_{t}    \mathrm{w}} e^{ -  \frac{(\sigma_{t}z+ \mathrm{w})^2}{2(t-s)}-\frac{1}{2s}\big[  \frac{(b_{t}-a_{t})}{\sigma_{t}}u+ z\big]^2  }   1_{\left\{
      \substack{ (1-p_{t})\sqrt{t} 1_{\{ h \geq 1/2 \}} -\frac{u}{\sqrt{t}}\notin  \Gamma^{R}_{h} \\
     \text{ or }  | \frac{\sigma_{t} z}{\sqrt{t-s}} |>R}\right\}}  \dif z   +o(1).
  \end{equation*}
   Making  change of variables   $u= - \xi \sqrt{t}$ and  $z= \eta \sqrt{t-s}$ again,  we have
  \begin{align*}
  I(t,R) \lesssim & \int_{-(1-p_{t})\sqrt{t}}^{p_{t}\sqrt{t}} \dif \xi
  \int_{\mathbb{R}} J \sqrt{t(t-s)} \, e^{b_{t} \mathrm{w}}   1_{\left\{ \substack{ (1-p_{t})\sqrt{t} 1_{\{ h \geq 1/2 \}} + \xi \notin  \Gamma^{R}_{h} \\
  \text{ or } |\sigma_{t} \eta |>R}\right\}}
  \\ & \times   \exp\left\{ -\frac{t}{2s}\big[  \frac{(b_{t}-a_{t})}{\sigma_{t}} \xi  - \eta\sqrt{\frac{t-s}{t}}\big]^2   -  \frac{1}{2 }(\sigma_{t}\eta + \frac{\mathrm{w}}{\sqrt{t-s}} )^2 \right\}
  \dif \eta    +o(1).
  \end{align*}
  As   $v^{*}_{t}t-a_{t}s-\sigma_{t}x=b_{t}(t-s)-\sigma_{t}\eta \sqrt{t-s}$ and  $s= p_{t}t - \xi \sqrt{t}$, for each fixed $\xi$ and $\eta$, by \eqref{eq-asymptotic-abv-2}, we have
  \begin{align*}
  &\lim_{t \to \infty}
  J\sqrt{t(t-s)}  e^{b_{t} \mathrm{w}}
   = \lim_{t \to \infty}  \frac{t-s}{s^{3/2}} \
   \frac{|(v_{t}-a_{t})s+(b_{t}-a_{t})\xi \sqrt{t}-\sigma_{t}\eta \sqrt{t-s} |+K} { | b_t(t-s)-\sigma_{t}\eta \sqrt{t-s}|+1}  \  t^{\frac{b_{t}}{\theta} h ' + 1/2}  \\
  & \leq  \lim_{t \to \infty}  \frac{1}{b_{t}} \
   \frac{ t^{h'} (v_{t}-a_{t}) t + t^{h'} (b_{t}-a_{t}) |\xi |\sqrt{t} } { t }   =  \frac{\sigma^4}{2(1-\sigma^2)}  1_{\{h \leq \frac{1}{2}\}}+  (1-\frac{v}{\theta}) |\xi| 1_{\{h \geq 1/2\}}=:C(\xi;h) .
  \end{align*}
Define $\xi_0(h):= 1_{ \{ h \geq 1/2\}}\lim_{t \to \infty} (1-p_{t})\sqrt{t} = \frac{1_{ \{ h = 1/2\}}}{2(\sigma^{-2}-1)^{2}}$.  Applying the dominated convergence theorem, we get
  \begin{equation*}
   \limsup_{t \to \infty} I(t,R) \lesssim
  \int_{-\lim_{t} (1-p_{t}) \sqrt{t}}^{\infty}  \dif \xi
    \int   C(\xi;h) e^{ -\frac{(\theta-v)^2}{2\sigma^2}\xi^2   -  \frac{\sigma^2 \eta^2 }{2 } } 1_{\left\{ \substack{  \xi \notin  \Gamma^{R}_{h}- \xi_0(h) , \text{or}\\
   |\sigma  \eta |>R}\right\}}    \dif \eta\dif \xi.
    \end{equation*}
    Letting $R \to \infty$,  we get $\lim\limits_{R \to \infty} \lim\limits_{t \to \infty} I(t,R) = 0$ as desired.
  \end{proof}

  \begin{proof}[Proof of Theorem \ref{thm-13-approximate}
 for $(\beta_{t},\sigma^{2}_{t})_{t>0} \in \mathscr{A}^{h,+}_{(\beta,\sigma^2)}$
   ]
    Take $\varphi \in \mathcal{T}$. Applying
     Corollary \ref{cor-reduce-to-E-R} with $m(t)=m^{1,3}_{h,+}(t)$, $\rho=b_{t}$ and
     $\Omega^{R}_{t}=\Omega^{R}_{t,h}$
     defined in \eqref{eq-def-Omega-t-R-4}, it suffices to study the asymptotic of   $\mathbb{E}\left(e^{-\left\langle\widehat{\mathcal{E}}_t^R, \varphi\right\rangle}\right)
     $, which is equal to
     \begin{equation*}
      \mathbb{E}\bigg(\exp \bigg\{ - \int_{0}^{t} \sum_{u \in N^{1}_{s}}  \Phi_{b_{t}}\big(t-s, \delta(X_{u}(s);s,t)+\frac{h'}{\theta_{t}}\log t \big) 1_{\{ (s,X_{u}(s) )\in \Omega^{R}_{t,h}  \} }  \dif s \bigg\} \bigg).
     \end{equation*}
   where we used the fact that $X_{u}(s)+b_{t}(t-s)-m^{1,3}_{h,+}(t) =\delta(X_{u}(s);s,t)+\frac{h'}{\theta_{t}}\log t$, which holds  by \eqref{eq-def-deltaxst-2}.
 By \eqref{eq-asymptotic-abv-2}, $b_{t} \to \theta > \sqrt{2}$. Applying Lemma \ref{thm-Laplace-BBM-order}, we have, uniformly for $(s,X_{u}(s))\in \Omega^{R}_{t,h}$,
    \begin{align*}
     & \Phi_{b_{t}}\big(t-s, \delta(X_{u}(s);s,t)+\frac{h'}{\theta_{t}}\log t \big) \\
    &= (1+o(1)) \frac{\gamma_{\theta}(\varphi)}{\sqrt{t-s}}  e^{-(\frac{b_{t}^2}{2}-1)(t-s)} e^{b_{t} \delta(X_{u}(s);s,t)+ b_{t}\frac{h'}{\theta_{t}}\log t  }e^{-\frac{1}{2(t-s)} \delta(X_{u}(s);s,t)^2} \\
    & = (1+o(1)) \gamma_{\theta}(\varphi) \frac{t^{ h'}}{\sqrt{t-s}}  e^{b_{t}X_{u}(s)- (\frac{\sigma_{t}^2 b_{t}^2}{2}+\beta_{t})s} e^{-\frac{1}{2(t-s)} \delta(X_{u}(s);s,t)^2},
    \end{align*}
where we used the computation that
  $  b_{t} \delta(X_{u}(s);s,t) -(\frac{b_{t}^2}{2}-1)(t-s)   = b_{t} X_{u}(s) + [b_{t}(b_{t}-a_{t})p_{t} -\frac{b_{t}^2}{2}+1]t -(\frac{b_{t}^2}{2}+1)s  = b_{t} X_{u}(s) - (\frac{b_{t}^2}{2}+1)   s  = b_{t} X_{u}(s) - \left(\frac{\sigma_{t}^2 b_{t}^2}{2}+\beta_{t} \right) s $.
 Thus $  \mathbb{E}\left(e^{-\left\langle\widehat{\mathcal{E}}_t^R, \varphi\right\rangle}\right)$ equals
    \begin{equation*}
  \mathbb{E}\bigg(  \exp \bigg\{ - \int_{0}^{t} \sum_{u \in N^{1}_{s}} \frac{[\gamma_{\theta}(\varphi) +o(1)] t^{  h'}}{\sqrt{t-s}}  e^{b_{t}X_{u}(s)- (\frac{\sigma_{t}^2 b_{t}^2}{2}+\beta_{t})s} e^{-\frac{\delta(X_{u}(s);s,t)^2}{2(t-s)} }   1_{\{ (s,X_{u}(s) )\in \Omega^{R}_{t,h}  \} }   \dif s \bigg\}  \bigg).
    \end{equation*}

   \textbf{Case 1: $h \in (0,\frac{1}{2})$.}
   Making a change of variable $s=p_t t- \xi \sqrt{t}$ and letting  $r=r_{\xi,t}=\xi \sqrt{t}$, we  have
   \begin{equation}\label{eq-asymptotic-Laplace-functional-01}
     \mathbb{E}\left(e^{-\left\langle\widehat{\mathcal{E}}_t^R, \varphi\right\rangle}\right)= \mathbb{E}\bigg(\exp \big\{ - (1+o(1)) \gamma_{\theta} (\varphi)  \int_{-R}^{R}  t^{h}	\frac{\sqrt{t}}{\sqrt{t-s}} W^{G}(p_{t}t-\xi \sqrt{t},\xi \sqrt{t} ; t)  \dif \xi \big\} \bigg)\,,
   \end{equation} where  $G(x)=G_{R}(x) :=e^{-\frac{x^2}{2}}1_{\{|x| \leq R\}}$ and
    \begin{equation*}
      W^{G}(s,r; t):=\sum_{u \in N^{1}_{s}} e^{ b_{t} X_{u}(s)-(\frac{\sigma^2_{t}b^2_{t}}{2}+\beta_{t}) s}  G\left( \frac{X_{u}(s)-a_{t} s + (b_{t}-a_{t}) r}{\sqrt{t-s}} \right) \,.
    \end{equation*}
   By the Brownian scaling property, $(X_u(s):u \in N_s^1) \overset{law}{=} ( \frac{\sigma_{t}}{\sqrt{\beta_{t}}}\mathsf{X}_u(s'):u \in \mathsf{N}_{s'})$, where $s'=\beta_{t}s$.
   Then, letting $\lambda_{t}=b_{t} \frac{\sigma_{t}}{\sqrt{\beta_{t}}}$,
    $W^{G}(s,r;t)$ has the same distribution as
   \begin{equation*}
    \mathsf{W}^{G}(s',r; t):= \sum_{u \in \mathsf{N}_{s'}} e^{ \lambda_{t} \mathsf{X}_{u}(s')-(\frac{\lambda^2_{t} }{2}+1) s'}  G\left( \frac{ \frac{\lambda_{t} s'-\mathsf{X}_{u}(s')}{\sqrt{s'}} -\frac{\sqrt{\beta_{t}}}{\sigma_{t}} (b_{t}-a_{t}) \frac{r}{\sqrt{s'}}}{\frac{1}{\sigma_{t}}  \sqrt{\frac{\beta_{t}t-s'}{s'}}} \right).
   \end{equation*}
  Note that $\sqrt{2}-\lambda_{t} \sim \frac{1}{\sqrt{2}(\sigma^{-2}-1)t^{h}}$,   $\frac{\sqrt{\beta_{t}}}{\sigma_{t}} (b_{t}-a_{t}) \frac{r}{\sqrt{s'}}\sim \frac{\theta-v}{\sigma} \xi $ and $\frac{1}{\sigma_{t}}  \sqrt{\frac{\beta_{t}t-s'}{s'}} \sim \frac{1}{\sigma} \sqrt{\frac{t-s}{s}} $.
  Applying part (ii) of Lemma \ref{lem-functional-convergence-derivative-martingale-2},
    we have
   \begin{align*}
    & \mathsf{W}^{G}(p_{t}t-\xi \sqrt{t},\xi \sqrt{t} ; t) \\
    & = [1+o_{\mathsf{P}}(1)]2 \mathsf{Z}_{\infty} (\sqrt{2}-\lambda_{t})\int_{\mathbb{R}}  G\left( \frac{ z -\frac{\sqrt{\beta_{t}}}{\sigma_{t}} (b_{t}-a_{t}) \frac{r}{\sqrt{s'}}}{\frac{1}{\sigma_{t}}  \sqrt{\frac{\beta_{t}t-s'}{s'}}} \right) e^{-\frac{z^2}{2}} \frac{\dif z}{\sqrt{2 \pi}} \\
    &=  [1+o_{\mathsf{P}}(1)] 2 \mathsf{Z}_{\infty} (\sqrt{2}-\lambda_{t}) \left(\int_{\mathbb{R}}  G\left( y \right) \frac{\dif y}{\sqrt{2 \pi}} \right)\,
    e^{-   \frac{(\theta-v)^2}{2\sigma^2} \xi^2 } \frac{1}{\sigma} \sqrt{\frac{t-s}{s}} .
   \end{align*}
  As a consequence, using $\mathsf{Z}_{\infty} \overset{d}{=} \frac{\sqrt{\beta}}{\sigma}  \mathsf{Z}^{\beta,\sigma^2}_{\infty}$,  we have
   \begin{equation*}
     \lim_{t \to \infty}  t^{h}	\frac{\sqrt{t}}{\sqrt{t-s}} W^{G}(p_{t}t-\xi \sqrt{t},\xi \sqrt{t} ; t) = \mathsf{Z}^{\beta,\sigma^{2}}_{\infty}  \frac{\sqrt{2 \beta}}{ 1-\sigma^{2} }  \left(\int_{\mathbb{R}}  G\left( y \right) \frac{\dif y}{\sqrt{2 \pi}} \right)\,
     e^{-   \frac{(\theta-v)^2}{2\sigma^2} \xi^2 }\quad\mbox{ in law}.
   \end{equation*}
   Letting $t \to \infty$ in \eqref{eq-asymptotic-Laplace-functional-01}, by the dominated convergence theorem we get
   \begin{equation*}
     \lim_{t \to \infty}\mathbb{E}\left(e^{-\left\langle\widehat{\mathcal{E}}_t^R, \varphi\right\rangle}\right)=
    \mathsf{E}
     \bigg(\exp \bigg\{ -   \gamma_{\theta} (\varphi) \mathsf{Z}^{\beta,\sigma^2}_{\infty}  \frac{\sqrt{2\beta}}{1-\sigma^{2}}  \int_{-R}^{R}
     e^{-   \frac{(\theta-v)^2}{2\sigma^2} \xi^2 }     \dif \xi  \int_{-R}^{R} e^{-\frac{y^2}{2}} \frac{\dif y}{\sqrt{2 \pi}}  \,  \bigg\} \bigg) .
   \end{equation*}
   Applying Corollary \ref{cor-reduce-to-E-R},  we finally  get
   \begin{align*}
     \lim_{t \to \infty}  \mathbb{E}\left(e^{-\left\langle\widehat{\mathcal{E}}_t, \varphi\right\rangle}\right)= \lim_{R \to \infty}  \lim_{t \to \infty}  \mathbb{E}\left(e^{-\left\langle\widehat{\mathcal{E}}_t^R, \varphi\right\rangle}\right)  =
   \mathsf{E}
     \bigg(\exp \big\{ - C_{h,+}\gamma_{\theta}(\varphi) \mathsf{Z}_{\infty}^{\beta,\sigma^{2}}    \big\} \bigg),
   \end{align*}
   which is the Laplace functional of $\operatorname{DPPP}\left( C_{h,+} \frac{C(\theta)}{\theta \sqrt{2 \pi}}   \mathsf{Z}_{\infty}^{\beta,\sigma^2}  \theta e^{-\theta x} \dif x, \mathfrak{D}^{\theta}\right)$,
  and where
   \begin{align*}
    C_{h,+} &:= \lim_{R \to \infty} \frac{\sqrt{2\beta}}{1-\sigma^{2}}  \int_{-R}^{R}
    e^{-   \frac{(\theta-v)^2}{2\sigma^2} \xi^2 }     \dif \xi  = \frac{ \sqrt{2 \pi} }{1-\sigma^{2}} \frac{v}{\theta- v}.
   \end{align*}

   \textbf{Case 2: $h \in [\frac{1}{2},\infty)$.} Make a change of variable $s= t- \xi \sqrt{t}$ and let  $r=r_{\xi,t}=\xi \sqrt{t}$. As before we have
   \begin{equation}\label{eq-asymptotic-Laplace-functional-012}
     \mathbb{E}\left(e^{-\left\langle\widehat{\mathcal{E}}_t^R, \varphi\right\rangle}\right)= \mathbb{E}\bigg(\exp \big\{ - (1+o(1)) \gamma_{\theta} (\varphi)  \int_{R^{-1}}^{R}   	\frac{t}{\sqrt{t-s}} W^{G}(t-\xi \sqrt{t},\xi \sqrt{t} ; t)  \dif \xi \big\} \bigg) .
   \end{equation}
  Letting
  $\lambda_{t}=b_{t}\frac{\sigma_{t}}{\sqrt{\beta_{t}}}=\sqrt{2}-\frac{1}{\alpha_{t}}$, we have $\alpha_{t} \sim \sqrt{2}(\sigma^{-2}-1)t^{h}$ by \eqref{eq-asymptotic-abv-2}.    $W^{G}(s,r;t)$ has the same distribution as
   \begin{align*}
    & \mathsf{W}^{G}(s',r; t):= \sum_{u \in \mathsf{N}_{s'}} e^{ \lambda_{t} \mathsf{X}_{u}(s')-(\frac{\lambda^2_{t} }{2}+1) s'}  G\left( \frac{  \lambda_{t} s'-\mathsf{X}_{u}(s')  -\frac{\sqrt{\beta_{t}}}{\sigma_{t}} (b_{t}-a_{t})  r }{\frac{1}{\sigma_{t}}  \sqrt{ \beta_{t}t-s'}} \right) \\
    &= e^{-\frac{(s')^2}{2 \alpha_{t}^2}}\sum_{u \in \mathsf{N}_{s'}} e^{ \sqrt{2} \mathsf{X}_{u}(s')-2 s'}  e^{\frac{\sqrt{2}s'-\mathsf{X}_{u}(s')}{\alpha_{t}}} G\left( \frac{  \sqrt{2} s'-\mathsf{X}_{u}(s') -\frac{s'}{\alpha_{t}}- \frac{\sqrt{\beta_{t}}}{\sigma_{t}} (b_{t}-a_{t}) r}{\frac{1}{\sigma_{t}}  \sqrt{\beta_{t} t-s'}} \right) \\
     & = [1+o(1)]e^{\frac{(s')^2}{2 \alpha_{t}^2}+ \frac{\sqrt{\beta_{t}}}{\sigma_{t}} (b_{t}-a_{t})  \frac{r}{\alpha_{t}}} \sum_{u \in \mathsf{N}_{s'}} e^{ \sqrt{2} \mathsf{X}_{u}(s')-2 s'}    G\left( \frac{  \sqrt{2} s'-\mathsf{X}_{u}(s') -\frac{s'}{\alpha_{t}}- \frac{\sqrt{\beta_{t}}}{\sigma_{t}} (b_{t}-a_{t}) r}{\frac{1}{\sigma_{t}}  \sqrt{\beta_{t} t-s'}} \right).
    \end{align*}
   Note that  $r_{s'}=  \frac{\sqrt{s'}}{\alpha_{t}}+ \frac{\sqrt{\beta_{t}}}{\sigma_{t}} (b_{t}-a_{t}) \frac{r}{\sqrt{s'}} \to \frac{\sqrt{\beta}}{\sqrt{2}(\sigma^{-2}-1)}1_{\{h=1/2\}}+ \frac{\theta-v}{\sigma} \xi $ and $h_{s'}= \frac{1}{\sigma_{t}} \sqrt{\frac{\beta_{t}t-s'}{s'}} \sim \frac{1}{\sigma} \sqrt{\frac{t-s}{t}}$.
    Applying Lemma \ref{lem-functional-convergence-derivative-martingale}
    and using the fact
    $\mathsf{Z}^{\infty} \overset{d}{=} \frac{\sqrt{\beta}}{\sigma}  \mathsf{Z}^{\beta,\sigma^2}_{\infty}$, we have
   \begin{align*}
    & \mathsf{W}^{G}(s',r;t)=\frac{[1+o_{\mathsf{P}}(1)] }{\sqrt{s'}}e^{\frac{(s')^2}{2 \alpha_{t}^2}+ \frac{\sqrt{\beta_{t}}}{\sigma_{t}} (b_{t}-a_{t})  \frac{r}{\alpha_{t}}}
     \int_{0}^{\infty}G\left( \frac{ z -r_{s'}}{h_{s'}} \right) z e^{-\frac{z^2}{2}} \dif z \,  \sqrt{\frac{2}{\pi} } \mathsf{Z}_{\infty} \\
     &= \frac{[1+o_{\mathsf{P}}(1)]}{\sqrt{\beta t}} e^{\frac{(s')^2}{2 \alpha_{t}^2}+ \frac{\sqrt{\beta_{t}}}{\sigma_{t}} (b_{t}-a_{t})  \frac{r}{\alpha_{t}}}
     \int_{\mathbb{R}}G\left(y \right)   \dif y \, (r_{s'}  e^{-\frac{r_{s'}^2}{2}})  h_{s'} \, \sqrt{\frac{2}{\pi} }\mathsf{Z}_{\infty} \\
     & = \frac{[1+o_{\mathsf{P}}(1)]} {\sqrt{\beta t}} \, h_{s'} r_{s'}   e^{- \frac{ \beta_{t}}{2 \sigma_{t}^{2}} (b_{t}-a_{t})^{2} \frac{r^{2}}{s'}  } \,
     \int_{\mathbb{R}}G\left(y \right)   \dif y \,  \sqrt{\frac{2}{\pi} }\mathsf{Z}_{\infty} \\
     &=[1+o_{\mathsf{P}}(1)]
      \frac{\sqrt{t-s}}{t}
      \left[ \frac{\sqrt{2\beta}}{ (1-\sigma^2)}1_{\{h=1/2\}}+ \frac{2(\theta-v)}{\sigma^3} \xi \right]
      e^{-\frac{(\theta-v)^2}{2 \sigma^2}\xi^2}
     \int_{\mathbb{R}}G\left(y \right) \frac{\dif y}{\sqrt{2 \pi}}   \mathsf{Z}_{\infty}^{\beta,\sigma^{2}}.
   \end{align*}
 Letting $t \to \infty$ in \eqref{eq-asymptotic-Laplace-functional-01}, we get that $ \lim\limits_{t \to \infty} \mathbb{E}\left(e^{-\left\langle\widehat{\mathcal{E}}_t^R, \varphi\right\rangle}\right)$ equals
    \begin{align*}
   \mathsf{E}
    \bigg(\exp \bigg\{ - \gamma_{\theta} (\varphi)    \mathsf{Z}^{\beta,\sigma^2}_{\infty} \int_{\frac{1}{R}}^{R} \, \left[ \frac{\sqrt{2\beta}\,1_{\{h=1/2\}}}{ (1-\sigma^2)}+ \frac{2(\theta-v)}{\sigma^3} \xi \right]   e^{-\frac{(\theta-v)^2}{2 \sigma^2}\xi^2}   \dif \xi  \int_{-R}^{R} e^{-\frac{y^2}{2}} \frac{ \dif y}{\sqrt{2 \pi}}  \bigg\} \bigg).
     \end{align*}
   Applying Corollary \ref{cor-reduce-to-E-R},   we finally  get
   \begin{align*}
     \lim_{t \to \infty}  \mathbb{E}\left(e^{-\left\langle\widehat{\mathcal{E}}_t, \varphi\right\rangle}\right)= \lim_{R \to \infty}  \lim_{t \to \infty}  \mathbb{E}\left(e^{-\left\langle\widehat{\mathcal{E}}_t^R, \varphi\right\rangle}\right)  =
     \mathsf{E}
      \bigg(\exp \big\{ - C_{h,+}\gamma_{\theta}(\varphi) \mathsf{Z}_{\infty}^{\beta,\sigma^{2}}    \big\} \bigg) ,
   \end{align*}
   which is the Laplace functional of $\operatorname{DPPP}\left( C_{h,+} \frac{C(\theta)}{\theta \sqrt{2 \pi}}   \mathsf{Z}_{\infty}^{\beta,\sigma^2}  \theta e^{-\theta x} \dif x, \mathfrak{D}^{\theta}\right)$,
  and where
   \begin{align*}
    C_{h,+} := &\lim_{R \to \infty} \int_{\frac{1}{R}}^{R} \, \left[ \frac{\sqrt{2\beta}}{ (1-\sigma^2)}1_{\{h=1/2\}} + \frac{2(\theta-v)}{\sigma^3} \xi \right]   e^{-\frac{(\theta-v)^2}{2 \sigma^2}\xi^2}   \dif \xi  \\
    = &\sqrt{\frac{\pi}{2} } \frac{\sigma^{2}}{(1-\sigma^{2})^{2}}1_{\{h=1/2\}}+ \frac{2}{\sigma(\theta- v)}.
   \end{align*}
   We now complete the proof.
    \end{proof}

 \section{Approaching $\mathscr{B}_{II,III}$ and $(1,1)$ from $\mathscr{C}_{II}$}

\subsection{Approaching $\mathscr{B}_{II,III}$ from $\mathscr{C}_{II}$}

Assume that $(\beta,\sigma^2) \in \mathscr{B}_{II,III}$ and $(\beta_{t},\sigma^2_{t}) \in
\mathscr{A}^{h,-}_{(\beta,\sigma^2)}$.
Recall that $
  m^{2,3}_{h,-}(t)= \sqrt{2} t - \frac{3-4 h' }{2 \sqrt{2}} \log t $,  $h' = \min\{ h,1/2 \}$.
 Define
\begin{equation}\label{eq-def-Omega-t-R-1}
  \Omega_{t,h}^{R}:= \left\{ (s,x):  s \in [ \frac{1}{R} t^{h'}  ,R t^{h'}] , \   | x - \sqrt{2} \sigma^2_{t} s | \leq R \sqrt{s}     \right\}.
\end{equation}

\begin{lemma}\label{lem-contribution-area-1}
For any $A>0$,
  \begin{equation*}
  \lim _{R \rightarrow \infty} \limsup _{t \rightarrow \infty} \mathbb{P}\left(\exists u \in N^{2}_{t}: X_{u}(t) \geq m^{2,3}_{h,-}(t)  -A, (T_{u} , X_{u}(T_{u}))  \notin \Omega_{t,h}^{R}  \right)=0 .
  \end{equation*}
  \end{lemma}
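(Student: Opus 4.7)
The plan is to mirror the structure of Lemma \ref{lem-contribution-area-6} (approximation from $\mathscr{C}_I$ to $\mathscr{B}_{I,III}$), but with two substantive differences. First, in the $\mathscr{C}_{II}$ regime type $2$ particles dominate, so the relevant ``type transformation times'' $T_u$ are small, of order $t^{h'}$, rather than within $t^{h'}$ of $t$; geometrically the localization window is now anchored at $s = 0$ instead of $s = t$. Second, the surviving residual lifetime $t - s \asymp t$ is long and the targeted BBM speed equals $\sqrt 2$, so the crude first-moment bound \eqref{eq-BBM-tail-2} used in Lemma \ref{lem-contribution-area-6} is too loose: one must invoke the Bramson-sharp tail Lemma \ref{lem-Max-BBM-tail}, exactly as in Lemmas \ref{lem-contribution-area-11} and \ref{lem-contribution-area-3}.

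Concretely, I would first apply Corollary \ref{cor-reduce-to-E-R}(i) with $m(t) = m^{2,3}_{h,-}(t)$ and $\Omega^R_t = \Omega^R_{t,h}$ from \eqref{eq-def-Omega-t-R-1}, reducing the claim to showing $\lim_{R\to\infty}\limsup_{t\to\infty} I(t,R; A, K) = 0$ for each fixed $A, K > 0$. Then, exactly as in \eqref{eq-def-I-t-R-6}, I would condition the underlying Brownian motion on $B_s = \sqrt{2\beta_t}\, s - x$ and use Lemma \ref{lem-bridge-estimate-0} to peel off the upper-envelope indicator, arriving at
\begin{equation*}
I(t,R) \lesssim_K \int_0^t \! ds \int_{-K}^\infty \frac{K+x}{s^{3/2}}\, e^{\sqrt{2\beta_t}\, x - x^2/(2s)}\, \mathsf{F}_t\bigl(t-s, v_t s - \sigma_t x\bigr)\, \mathbf 1_{\{(s,\, v_t s - \sigma_t x)\notin\Omega^R_{t,h}\}}\, dx .
\end{equation*}
Next I would split according to whether the argument of $\mathsf{F}_t$ exceeds the Bramson centering by at least $1$: in the main regime apply Lemma \ref{lem-Max-BBM-tail} (with the analog of the quantity $L_{s,t}$ used in \eqref{eq-bound-F-010}) to obtain a bound of the form $\mathsf{F}_t \lesssim_A \mathrm{poly} \cdot e^{-\sqrt 2(v_t s - \sigma_t x) + (t-s)} e^{-\text{(quadratic)}/(t-s)}$; in the opposite regime, bound $\mathsf{F}_t \leq 1$ and show that the resulting contribution is $o(1)$ by an elementary computation paralleling \eqref{eq-bound-F-62}.

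The core of the argument is then the exponential bookkeeping after the substitutions $s = \xi t^{h'}$ and $\sigma_t x = \sqrt 2(1-\sigma_t^2)s + \sigma_t \sqrt s \, \eta$ (the latter centering the Gaussian factor on the dominant tail value). Using the defining relation $\beta_t + \sigma_t^2 = 2 - t^{-h}$ one checks that the collected exponent reduces to roughly $-c\, \xi \, \mathbf 1_{\{h < 1/2\}} - c' \eta^2 - c''\xi^{-1}\cdot \mathbf 1_{\{h \geq 1/2\}}$ plus vanishing corrections, while the algebraic prefactors converge to the finite measure $\xi^{?} \, d\xi \cdot d\eta$; the indicator on $(s, v_t s - \sigma_t x)\notin\Omega^R_{t,h}$ becomes $\mathbf 1_{\{\xi\notin[R^{-1},R]\ \text{or}\ |\eta|>R\}}$. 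Dominated convergence then delivers $\limsup_{t\to\infty} I(t,R) \to 0$ as $R\to\infty$.

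The main obstacle I anticipate is verifying the exact cancellation of the $t^{h'}$ factor produced by the sharp-tail bound of Lemma \ref{lem-Max-BBM-tail} against the logarithmic centering $\tfrac{3-4h'}{2\sqrt 2}\log t$ embedded in $m^{2,3}_{h,-}(t)$, together with the fact that the $h$-dependent exponent emerging from the rearrangement of $\sqrt{2\beta_t}\,x - \sqrt 2 \sigma_t x - 2(\sigma_t^2 - 1)(t-s)$ correctly produces $\xi$-integrable decay in both subcases $h \leq 1/2$ and $h > 1/2$ (with a Gaussian term in $\xi$ taking over in the latter). This is precisely the arithmetic that pins down the threshold $h' = \min\{h, 1/2\}$ appearing in the statement, and it is the only step that is not a direct transcription of the proofs of Lemmas \ref{lem-contribution-area-6} and \ref{lem-contribution-area-2}.
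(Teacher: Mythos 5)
Your plan has the right architecture and identifies the essential ingredients, which are exactly the paper's: reduce via Corollary~\ref{cor-reduce-to-E-R}(i), peel off the envelope indicator with Lemma~\ref{lem-bridge-estimate-0}, invoke the Bramson-sharp tail Lemma~\ref{lem-Max-BBM-tail} (indeed necessary here since $t-s=\Theta(t)$ and the target speed is $\sqrt{2}$), rescale $s=\xi t^{h'}$, and finish with dominated convergence. However, three of the formulas you sketch are wrong, and they are not cosmetic. First, the paper conditions on $B_s = \sqrt{2}\sigma_t s + x$, not on $B_s = \sqrt{2\beta_t}\,s-x$: with the paper's choice the argument $\sigma_t B_s = \sqrt{2}\sigma_t^2 s + \sigma_t x$ is anchored directly at the type-$2$ center of $\Omega^R_{t,h}$ and $x$ lives at scale $\sqrt{s}$. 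Your conditioning is workable, but then the localization forces $\sigma_t x = (v_t-\sqrt{2}\sigma_t^2)s + O(\sqrt{s})$, not $\sqrt{2}(1-\sigma_t^2)s + O(\sqrt{s})$ as you write — the two coincide only when $\beta_t\sigma_t^2=1$, i.e.\ at $(1,1)$. Second, the bound ``$\mathsf{F}_t \lesssim \mathrm{poly}\cdot e^{-\sqrt{2}(v_ts-\sigma_t x)+(t-s)}\,e^{-(\cdot)^2/(t-s)}$'' conflates \eqref{eq-BBM-tail-2} with Lemma~\ref{lem-Max-BBM-tail}: using the sharp bound and expanding around the Bramson centering, the $e^{t-s}$ factor cancels and the leading exponential is $e^{-\sqrt{2}[(\sqrt{2}-v_t)s+\sigma_t x]} = e^{-(2-\sqrt{2}v_t)s-\sqrt{2}\sigma_t x}$, with the \emph{opposite} sign on $\sigma_t x$ from what you wrote — that sign error would wreck the cancellation entirely. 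Third, for $h\geq 1/2$ the limiting $\xi$-decay is \emph{Gaussian}: the quadratic term $(L_{s,t}-\sigma_t x)^2/(t-s)\sim 2(1-\sigma^2)^2\xi^2\,t^{2h'-1}$ contributes $e^{-(1-\sigma^2)^2\xi^2}$ when $h'=1/2$, not the $-c''\xi^{-1}$ you indicate, which would not even decay as $\xi\to\infty$.

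That said, the cancellation you single out as the main obstacle does go through in either coordinate system: after centering, the coefficient of $s$ collapses, via $\sigma_t^2\theta_t=v_t$, to
\begin{equation*}
\frac{\sigma_t^2(\theta_t-\sqrt{2})^2}{2} - \bigl(2-\sqrt{2}v_t\bigr) = \beta_t + \sigma_t^2 - 2 = -t^{-h},
\end{equation*}
which with $s=\xi t^{h'}$ gives $e^{-\xi t^{h'-h}}\to e^{-\xi\,1_{\{h\leq 1/2\}}}$. So the plan is salvageable, but the three formulas above must be repaired — and, given the sign error, it is not clear you have actually located the correct cancellation rather than assumed it.
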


\begin{proof}
  Applying part (i) of  Corollary \ref{cor-reduce-to-E-R}  with $m(t)=m^{2,3}_{h,-}(t)$  and
  $\Omega^{R}_{t}=\Omega^{R}_{t,h}$ defined in \eqref{eq-def-Omega-t-R-1}, it suffices to show that  for each $A, K>0$, $I(t,R) = I(t,R;A,K)$  defined in \eqref{eq-assumption-EYtARK}  vanishes as first $t \to \infty$ and then $R \to \infty$. Conditioned on the Brownian motion $B_{s}$ in \eqref{eq-assumption-EYtARK} equals $ \sqrt{2} \sigma_{t}  s + x $, we have
\begin{equation}\label{eq-def-I-t-R-1}
\begin{aligned}
 I(t,R) &=  \int_{0}^{t} \dif s  \int_{-\infty}^{\sqrt{2} (\sqrt{\beta_{t}}-\sigma_{t})s+ K}  \mathsf{F}_{t}\left(t-s, \sqrt{2} \sigma_{t}^2 s + \sigma_{t} x \right) 1_{\left\{(s,\sqrt{2} \sigma_{t}^2 s + \sigma_{t} x) \notin  \Omega^{R}_{t,h}  \right\}} \\
& \qquad   \quad      \times   \mathbf{P}(B_{r} \leq \sqrt{2 \beta_{t}} r + K, \forall r \leq s \mid B_{s}= \sqrt{2} \sigma_{t}  s + x ) \frac{1}{\sqrt{2 \pi s}} e^{\beta_{t}s} e^{-\frac{(\sqrt{2} \sigma_{t} s + x)^2}{2s}} \dif x \\
& \lesssim_{K}  \int_{0}^{t} \dif s  \int_{-\infty}^{\sqrt{2} (\sqrt{\beta_{t}}-\sigma_{t})s+ K}  \mathsf{F}_{t}\left(t-s, \sqrt{2} \sigma_{t}^2 s + \sigma_{t} x \right) 1_{\left\{(s,\sqrt{2} \sigma_{t}^2 s + \sigma_{t} x) \notin  \Omega^{R}_{t,h}  \right\}} \\
&   \qquad   \qquad  \qquad      \qquad  \qquad   \left[ \sqrt{2}(\sqrt{\beta_{t}}-\sigma_{t}) + \frac{K-x}{s} \right] \frac{1}{\sqrt{s}} e^{(\beta_{t}- \sigma^2_{t})s }e^{-\sqrt{2}\sigma_{t} x - \frac{x^2}{2s}}   \dif x,
\end{aligned}
\end{equation}
where in the inequality we used  $ \mathbf{P}(B_{r} \leq \sqrt{2 \beta_{t}} r + K, \forall r \leq s \mid B_{s}= \sqrt{2} \sigma_{t}   s + x ) \lesssim_{K}   \sqrt{2}(\sqrt{\beta_{t}}-\sqrt{\sigma^{2}_{t}}) + \frac{K-x}{s}$, which holds by Lemma \ref{lem-bridge-estimate-0}.
 By the definition of $\mathsf{F}_{t}(r,x)$ in  Corollary \ref{cor-reduce-to-E-R} with $m(t)=m^{2,3}_{h,-}(t)$, we have
 \begin{align*}
   & \mathsf{F}_{t}(t-s, \sqrt{2} \sigma^2_{t} s + \sigma_{t} x ) = \mathsf{P} \left(\max\limits_{u \in \mathsf{N}_{t-s}} \mathsf{X}_{u}(t-s) \geq  \sqrt{2} t- \frac{3-4h'}{2\sqrt{2}}\log t   - A- \sqrt{2} \sigma^2_{t} s-  \sigma_{t} x  \right)\\
    & = \mathsf{P} \left(\max\limits_{u \in \mathsf{N}_{t-s}} \mathsf{X}_{u}(t-s) \geq  \sqrt{2} (t-s)-  \frac{3}{2\sqrt{2}}\log (t-s+1)  + L_{s,t} - \sigma_{t} x   \right)    ,
 \end{align*}
 where $L_{s,t}:=\sqrt{2}(1-\sigma^2_{t})s - \frac{3}{2\sqrt{2}}\log (\frac{t}{t-s+1})+  \sqrt{2}h' \log t   - A$.  Applying Lemma \ref{lem-Max-BBM-tail}, provided that  $L_{s,t}-\sigma_{t}x >1$  we have
 \begin{equation}\label{eq-bound-F-01}
   \mathsf{F}_{t}(t-s, \sqrt{2} \sigma^2_{t} s + \sigma_{t} x ) \lesssim_{A} (L_{s,t}-\sigma_{t} x ) e^{-2(1-\sigma^2_{t})s} (\frac{t}{t-s+1})^{3/2} \frac{1}{t^{2 h'}} e^{\sqrt{2} \sigma_{t} x  } e^{-\frac{(L_{s,t}-\sigma_{t}x)^2}{3(t-s)}} .
 \end{equation}
 In fact, for large $t$, $L_{s,t}-\sigma_{t}x >1$ holds for all $s \in [0,t]$,  $x \leq \sqrt{2} (\sqrt{\beta_{t}}-\sigma_{t})s+ K$.  To see this, note that
$L_{s,t}-\sigma_{t}x \geq (\sqrt{2}-v_{t})s - \frac{3}{2\sqrt{2}}\log (\frac{t}{t-s+1})+  \sqrt{2}h' \log t  - A-\sigma_{t}K$. By our assumption $(\beta_{t},\sigma_{t}^2) \to (\beta,\sigma^2) \in \mathscr{B}_{II,III}$, for large $t$ we have  $  \sqrt{2}-v_{t} > \delta >0$. Then  for each  $\delta s > 2\log t$,   $ L_{s,t}-\sigma_{t}x > \sqrt{2}h' \log t - A-\sigma_{t} K > 1$;   for each  $\delta s \leq  2\log t$, $t-s+1 \geq t/2$ hence $  L_{s,t}-\sigma_{t}x  \geq \sqrt{2} h' \log t - 4\log (2)  -A -\sigma_{t} K>1$.

 Substituting  the inequality  \eqref{eq-bound-F-01} into \eqref{eq-def-I-t-R-1}, and  by the hypothesis $\beta_{t}+ \sigma^2_{t} =2 - 1/t^{h}$, we get
 \begin{align*}
   I(t,R) & \lesssim  \int_{0}^{t}   \dif s \int_{\mathbb{R}}  1_{\left\{(s,\sqrt{2} \sigma_{t}^2 s + \sigma_{t} x) \notin  \Omega^{R}_{t,h}  \right\}}   \frac{|L_{s,t}-\sigma_{t} x |}{\sqrt{ s } t^{2 h'}}   \frac{t^{3/2} }{(t-s+1)^{3/2}}  e^{-\frac{s}{t^{h}}   - \frac{x^2}{2s} -\frac{(L_{s,t}-\sigma_{t}x)^2}{3(t-s)}} \dif x  .
 \end{align*}
We now make change of variables $s= \xi t^{h'}$ and $x= \eta \sqrt{s}$ in the integral above.  Then  $(s,\sqrt{2} \sigma_{t}^2 s + \sigma_{t} x) $ belongs to $ \Omega^{R}_{t,h}  $ if and only if $  \xi \in [R{^{-1}}, R]$ and  $ |\sigma_{t} \eta | \leq R$.  Applying the dominated convergence theorem twice we get
 \begin{equation*}
   \begin{aligned}
 & I(t,R)  \lesssim
 \int_{0}^{t^{1-h'}} \int_{ \mathbb{R} }
 1_{\left\{ \substack{ \xi \notin [R{^{-1}}, R]    \\ \text{or } |\sigma_{t} \eta | > R } \right\} }
  \frac{ |L(\xi t^{h'},t)-\sigma_{t} x|}{t^{ h'}}
    \\
   &  \qquad \qquad \qquad \qquad \qquad \times \left( \frac{t}{t-\xi t^{h'}+1} \right) ^{3/2}
   e^{-\xi t^{h'-h}  - \frac{\eta^2}{2}}
     e^{-\frac{\left(L(\xi t^{h'},t)-\sigma_{t} \eta \sqrt{\xi t^{h'}} \right)^2}{3(t-\xi t^{h'})}}  \dif \xi \dif \eta   \\
  & \overset{t \to \infty }{\longrightarrow }  \int_{0}^{\infty} \int_{\mathbb{R}}  1_{\left\{ \substack{ \xi \notin [R{^{-1}}, R]    \\ \text{or } | \sigma^2 \eta | > R
  } \right\} }     \sqrt{2}(1-\sigma^2) \xi e^{-\xi 1_{ \{h \leq 1/2\}}}e^{ - \frac{\eta^2}{2}}    e^{- (1-\sigma^2)^2 \xi^2 1_{\{h \geq 1/2\}}  }  \dif \xi \dif \eta  \overset{R \to \infty }{\longrightarrow } 0.
   \end{aligned}
 \end{equation*}
This completes the proof.
 \end{proof}

Next we will prove Theorem \ref{thm-23-approximate} for $(\beta_{t},\sigma^2_{t}) \in
{\mathscr{A}}^{h,-}_{(\beta,\sigma^2)}$.

\begin{proof}[Proof of Theorem \ref{thm-23-approximate}
for $(\beta_{t},\sigma^2_{t}) \in
 {\mathscr{A}}^{h,-}_{(\beta,\sigma^2)}$.
]
Take $\varphi \in \mathcal{T}$. Applying part (ii) of
Corollary \ref{cor-reduce-to-E-R} with $m(t)=m^{2,3}_{h,-}(t)$, $\rho=\sqrt{2}$, and $\Omega^{R}_{t}=\Omega^{R}_{t,h}$ defined in \eqref{eq-def-Omega-t-R-1}, it suffices to study the asymptotic of $ \mathbb{E}\left(e^{-\left\langle\widehat{\mathcal{E}}_t^R, \varphi\right\rangle}\right)  $, can be rewritten as
\begin{equation*}
  \mathbb{E}\bigg(\exp \bigg\{ - \int_{\frac{1}{R}t^{h'}}^{Rt^{h'}} \sum_{u \in N^{1}_{s}}  \Phi_{\sqrt{2}}\big(t-s, X_u(s)-\sqrt{2}s + \frac{3-4h'}{2\sqrt{2}}\log t \big) 1_{\{ (s,X_{u}(s) )\in \Omega^{R}_{t,h}  \} }  \dif s \bigg\} \bigg).
\end{equation*}
Observe that,  uniformly for $(s,X_{u}(s)) \in \Omega^{R}_{t,h} $, we have   $ \sqrt{2}s- X_{u}(s) = \sqrt{2}(1-\sigma^{2}_{t})s+\Theta(\sqrt{s})$.   Lemma \ref{thm-Laplace-BBM-order} yields that,
uniformly for $(s,X_{u}(s))\in \Omega^{R}_{t,h}$, as $t \to \infty$,
 \begin{align}
  & \Phi_{\sqrt{2}}\left(t-s, X_{u}(s)-\sqrt{2}s + \frac{3-4h'}{2\sqrt{2}}\log t \right)  \notag  \\
   & =[(1+o(1))]\gamma_{\sqrt{2}}(\varphi) \frac{(\sqrt{2}s-X_{u}(s)) }{t^{3/2}}e^{\sqrt{2}X_{u}(s)-2 s} t^{\frac{3-4h'}{2}} e^{- \frac{(X_{u}(s)-\sqrt{2}s)^2}{2t}  }  \notag\\
  & =[(1+o(1))]\gamma_{\sqrt{2}}(\varphi) \frac{ \sqrt{2}(1-\sigma^{2}_{t})s }{t^{2h'}} e^{\sqrt{2}X_{u}(s)-(\beta_{t}+\sigma^2_{t}) s-\frac{s}{t^{h}}} e^{- \frac{(1-\sigma^2_{t}) ^2 s^2}{t}  },  \label{eq-asymptotic-F-1}
 \end{align}
where in last equality we replaced  $2$ by $\beta_{t}+\sigma^{2}_{t}+1/t^{h}$.
Let
\begin{equation*}
 W(s;t):= \sum_{u \in N^{1}_{s}}  e^{\sqrt{2}X_{u}(s) - (\beta_{t}+\sigma^2_{t}) s} 1_{\{|X_{u}(s)-\sqrt{2}\sigma^{2}_{t}s| \leq R\sqrt{s} \}} .
\end{equation*}
By the asymptotic equality \eqref{eq-asymptotic-F-1}, we have
\begin{align*}
 &  \mathbb{E}\left(e^{-\left\langle\widehat{\mathcal{E}}_t^R, \varphi\right\rangle}\right)
=  \exp \left\{   [1+o(1)]\gamma_{\sqrt{2}}(\varphi) \int_{\frac{1}{R} t^{h'}}^{Rt^{h'}}   \frac{\sqrt{2}(1-\sigma^{2}_{t})s}{t^{2h'}} e^{-\frac{s}{t^{h}} - \frac{(1-\sigma^2_{t}) ^2 s^2}{t}  }  W(s,t)   \dif s \right\}
\\
& =  \exp \left\{     [1+o(1)] \gamma_{\sqrt{2}}(\varphi)\sqrt{2}(1-\sigma^2_{t})  \int_{\frac{1}{R}}^{R} \xi e^{-\xi t^{h'-h} - (1-\sigma^2_{t})^2 \xi^2 t^{2h'-1}}  W(\xi t^{h'},t)    \dif \xi \right\},
\end{align*}
where in the last equality we made a change variable $ s=\xi t^{h'}$.
By the Brownian scaling property, $(X_u(s):u \in N_s^1)$ has the same distribution as $( \frac{\sigma_{t}}{\sqrt{\beta_{t}}}\mathsf{X}_u(s'):u \in \mathsf{N}_{s'})$, where $s'=\beta_{t}s$. Put $\lambda_{t}= \sqrt{2}\sigma_{t}/\sqrt{\beta_{t}}$. We have
\begin{equation*}
W(s;t)  \overset{law}{=}  \sum_{u \in \mathsf{N}_{s'}}  e^{ \lambda_{t} X_{u}(s') - (1+\lambda_{t}^2/2)s'} 1_{\{| \mathsf{X}_{u}(s')- \lambda_{t}s'| \leq R\sqrt{s'}/ \sigma_{t} \}}.
\end{equation*}
Since $\lambda_{t} \to  \sqrt{2}\sigma/\sqrt{\beta} < \sqrt{2}$, by part(i) of Lemma \ref{lem-functional-convergence-derivative-martingale-2}, we have
\begin{equation*}
 \lim_{t \to \infty} W(\xi t^{h'},t)  =\mathsf{W}_{\infty}( \frac{\sqrt{2}\sigma}{\sqrt{\beta}} ) \int_{[-\frac{R}{\sigma},\frac{R}{\sigma}]} e^{-\frac{x^2} {2}}  \frac{   \dif x  }{\sqrt{2 \pi }} =\mathsf{W}^{\beta,\sigma^2}_{\infty}(\sqrt{2}) \int_{[-\frac{R}{\sigma},\frac{R}{\sigma}]} e^{-\frac{x^2} {2}}  \frac{   \dif x  }{\sqrt{2 \pi }}
 \ \text{ in law.}
\end{equation*}
Applying the dominated convergence theorem, we get
\begin{equation*}
 \lim_{t \to \infty}  \mathbb{E}\left(e^{-\left\langle\widehat{\mathcal{E}}_t^R, \varphi\right\rangle}\right) =
 \mathsf{E}
 \bigg(\exp \big\{ -   \gamma_{\sqrt{2}}(\varphi) \mathsf{W}_{\infty}^{\beta,\sigma^2}(\sqrt{2}) C_{R,h} \big\} \bigg) ,
\end{equation*}
where
\begin{align*}
 &  C_{R,h}:= \sqrt{2}\int_{\frac{1}{R}}^{R}  (1-\sigma^2) \xi e^{-\xi 1_{\{h \leq 1/2\}}- (1-\sigma^2)^2\xi^21_{\{h \geq 1/2\}}}  \dif \xi  \int_{[-\frac{R}{\sigma},\frac{R}{\sigma}]} e^{-\frac{x^2} {2}}  \frac{   \dif x  }{\sqrt{2 \pi }} \\
  & \overset{R \to \infty}{\longrightarrow}  C_{h,-}:=\sqrt{2}(1-  \sigma^{2} ) 1_{\{h<1/2\}}  +  \frac{ 1_{\{h> 1/2\}}}{\sqrt{2}(1-  \sigma^{2} )}  \\
  &\qquad \qquad \qquad \qquad \qquad  + \sqrt{2}\int_{0}^{\infty}  (1-\sigma^2) \xi e^{-\xi - (1-\sigma^2)^2\xi^2 }  \dif \xi 1_{\{ h=1/2\}} .
\end{align*}
Letting $R\to \infty$ and applying Corollary \ref{cor-reduce-to-E-R}, we get
\begin{equation*}
  \lim_{t \to \infty}  \mathbb{E}\left(e^{-\left\langle\widehat{\mathcal{E}}_t, \varphi\right\rangle}\right)= \lim_{R \to \infty}  \lim_{t \to \infty}  \mathbb{E}\left(e^{-\left\langle\widehat{\mathcal{E}}_t^R, \varphi\right\rangle}\right) =
  \mathsf{E}
  \bigg(\exp \big\{ - C_{h,-}  \mathsf{W}_{\infty}^{\beta,\sigma^2}(\sqrt{2})  \gamma_{\sqrt{2}}(\varphi)  \big\} \bigg) ,
\end{equation*}
which is the Laplace functional of $\operatorname{DPPP}\left( C_{h,-} \sqrt{2}C_{\star}   \mathsf{W}_{\infty}^{\beta,\sigma^2}(\sqrt{2})  e^{-\sqrt{2} x} \dif x, \mathfrak{D}^{\sqrt{2}}\right)$. Using \cite[Lemma 4.4]{BBCM22}, we complete the proof.
\end{proof}

\subsection{Approaching  $(1,1)$ from $\mathcal{C}_{II}$}

Assume that now  $(\beta_{t},\sigma^{2}_{t})_{t>0} \in \mathscr{A}^{h,2}_{(1,1)}$, i.e., $\beta_{t} = \sigma_{t}^2= 1- \frac{1}{2 t^{h}}$. Let $ m^{(1,1)}_{h,2}(t)= \sqrt{2} t-\frac{3-2h'}{2\sqrt{2}}\log t$, where $h'=\min\{h,1\}$.   Define
\begin{equation}\label{eq-def-Omega-t-R-10}
  \Omega_{t,h}^{R}=
    \begin{cases}
    \{ (s,x): s \in  [\frac{1}{R} t^{h}, R t^{h}], \    \sqrt{2} \sigma_{t}^2 s- x \in   [\frac{1}{R} \sqrt{s}, R\sqrt{s}]     \} \quad \text{ for } h \in (0,1) ; \\
    \{ (s,x): s \in [\frac{1}{R}t, (1-\frac{1}{R})t] , \       \sqrt{2} \sigma_{t}^2 s- x \in   [\frac{1}{R} \sqrt{s}, R\sqrt{s}]   \} \quad \text{ for } h \in [1,\infty].
  \end{cases}
\end{equation}

\begin{lemma}\label{lem-contribution-area-10}
  For any $A>0$,
  \begin{equation*}
  \lim _{R \rightarrow \infty} \limsup _{t \rightarrow \infty} \mathbb{P}\left(\exists u \in N^{2}_{t}: X_{u}(t) \geq  m^{(1,1)}_{h,2}(t)  -A, (T_{u} , X_{u}(T_{u}))  \notin \Omega_{t,h}^{R}  \right)=0 .
  \end{equation*}
  \end{lemma}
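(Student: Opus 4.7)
The proof will mirror those of Lemmas~\ref{lem-contribution-area-1} and~\ref{lem-contribution-area-11}. First, I would apply part~(i) of Corollary~\ref{cor-reduce-to-E-R} with $m(t) = m^{(1,1)}_{h,2}(t)$ and $\Omega^{R}_{t} = \Omega^{R}_{t,h}$ from~\eqref{eq-def-Omega-t-R-10}, so it suffices to show that $\lim_{R \to \infty} \limsup_{t \to \infty} I(t,R) = 0$ for each $A, K > 0$. A pleasant simplification here is that $\beta_t = \sigma_t^2$ forces $\sqrt{2\beta_t} = \sqrt{2}\sigma_t = v_t/\sigma_t$, so after conditioning on $B_s = \sqrt{2}\sigma_t s + x$ and applying Lemma~\ref{lem-bridge-estimate-0} with $x_1 = K-x$, $x_2 = K$ (valid for $x \leq K$, the only relevant range), the bridge probability is $\lesssim_K (K-x)/s$, and cancelling $e^{\beta_t s - \sigma_t^2 s} = 1$ against the Gaussian density yields
\begin{equation*}
I(t,R) \lesssim_K \int_{0}^{t} \!\! \int_{-\infty}^{K} \frac{K-x}{s^{3/2}} \, \mathsf{F}_t(t-s, \sqrt{2}\sigma_t^2 s + \sigma_t x) \, e^{-\sqrt{2}\sigma_t x - \frac{x^2}{2s}} \, 1_{\{(s, \sqrt{2}\sigma_t^2 s + \sigma_t x) \notin \Omega^R_{t,h}\}} \, \dif x \dif s.
\end{equation*}

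Next, I would write the argument of $\mathsf{F}_t$ as $\sqrt{2}(t-s) - \tfrac{3}{2\sqrt{2}}\log(t-s+1) + L_{s,t} - \sigma_t x$, where
\begin{equation*}
L_{s,t} := \sqrt{2}(1-\sigma_t^2)\,s - \tfrac{3}{2\sqrt{2}}\log \tfrac{t}{t-s+1} + \tfrac{h'}{\sqrt{2}}\log t - A,
\end{equation*}
and invoke \eqref{eq-Max-BBM-tail-2} on $\{L_{s,t}-\sigma_t x > 1\}$ to obtain
\begin{equation*}
\mathsf{F}_t(t-s, \sqrt{2}\sigma_t^2 s + \sigma_t x) \lesssim_A (L_{s,t} - \sigma_t x) \, \frac{t^{3/2}}{(t-s+1)^{3/2}} \, t^{-h'} \, e^{-s/t^h + \sqrt{2}\sigma_t x} \, e^{-(L_{s,t} - \sigma_t x)^2/(3(t-s))}.
\end{equation*}
The factor $e^{\sqrt{2}\sigma_t x}$ cancels against $e^{-\sqrt{2}\sigma_t x}$ in the density. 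The complementary event $\{L_{s,t} - \sigma_t x \leq 1\}$ forces $s \geq t - (\log t)^{2}$ and $\sigma_t x = O(\log t)$, contributing $o(1)$ by the elementary bound already used in Lemma~\ref{lem-contribution-area-11}. Note that $-2(1-\sigma_t^2)s = -s/t^{h}$, so the exponential decay in $s$ is governed by the parameter $h$ exactly as in Lemma~\ref{lem-contribution-area-1}.

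Finally, I would split by $h$. For $h \in (0,1)$ (so $h' = h$), substitute $s = \xi t^{h}$ and $x = \eta\sqrt{s}$; membership in $\Omega^R_{t,h}$ becomes $\xi \in [R^{-1}, R]$ and $\sigma_t\eta \in [-R, -R^{-1}]$. Since $L_{s,t} = O(\log t) \ll \sigma_t \eta \sqrt{s} = \Theta(t^{h/2})$, one has $|L_{s,t}-\sigma_t x| \asymp |\eta|\sqrt{s}$ and $(L_{s,t}-\sigma_t x)^{2}/(3(t-s)) = O(t^{h-1}) = o(1)$; the Jacobian $\sqrt{\xi}\, t^{3h/2}$ cancels the $t$-powers in the prefactors, and a routine computation shows the integrand converges pointwise to $\eta^{2}\, e^{-\xi - \eta^{2}/2}$, to which dominated convergence applies on $\Omega^{c}_{t,h}$. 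For $h \in [1, \infty]$ (so $h' = 1$), substitute $s = \xi t$ and $x = \eta\sqrt{s}$; membership becomes $\xi \in [R^{-1}, 1-R^{-1}]$ and $\sigma_t \eta \in [-R, -R^{-1}]$. Now $(L_{s,t}-\sigma_t x)^{2}/(3(t-s))$ converges to $\eta^{2}\xi/(3(1-\xi))$ and contributes a non-trivial Gaussian factor, so the limiting integrand is proportional to
\begin{equation*}
\frac{\eta^{2}}{\xi^{3/2}(1-\xi)^{3/2}} \, \exp\!\Big\{{-\xi \mathbf{1}_{\{h=1\}} - \tfrac{\eta^{2}}{2} - \tfrac{\eta^{2}\xi}{3(1-\xi)}}\Big\},
\end{equation*}
which is integrable on $(0,1)\times\mathbb{R}$, so its integral over $\Omega^{c}_{t,h}$ vanishes as $R\to\infty$. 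The main obstacle is the bookkeeping: the coefficient $\tfrac{3-2h'}{2\sqrt{2}}$ in $m^{(1,1)}_{h,2}$ (rather than $\tfrac{3-4h'}{2\sqrt{2}}$) produces only a $t^{-h'}$ prefactor in $\mathsf{F}_t$, weaker than the $t^{-2h'}$ of Lemma~\ref{lem-contribution-area-1}, which reflects the doubled threshold $h = 1$ for negligible perturbation at $(1,1)$; one must verify that the dominant contribution to $L_{s,t} - \sigma_t x$ is $-\sigma_t x$ rather than the logarithm, so that the $\sqrt{s}$-scaling drives the analysis and the $\log t$ terms are harmless.
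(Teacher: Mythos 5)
Your proof tracks the paper's proof of Lemma~\ref{lem-contribution-area-10} step by step: the reduction via Corollary~\ref{cor-reduce-to-E-R}, the simplification from $\beta_t=\sigma_t^2$ in the bridge estimate, the tail bound~\eqref{eq-Max-BBM-tail-2} together with the split on $\{L_{s,t}-\sigma_t x\leq 1\}$, the substitutions $s=\xi t^h$ (for $h<1$) resp.\ $s=\xi t$ (for $h\geq 1$) with $x=\pm\eta\sqrt{s}$, and dominated convergence. The one slip is the claim that the complementary event forces $s\geq t-(\log t)^2$: that is the constraint in Lemma~\ref{lem-contribution-area-6}, but here $\frac{h'}{\sqrt{2}}\log t\leq\frac{3}{2\sqrt{2}}\log\frac{t}{t-s+1}+O(1)$ only yields $t-s=O(t^{1-2h'/3})$, far larger than $(\log t)^2$ (the paper uses the still weaker, but sufficient, $s\geq t/2$); since the elementary bound you invoke from Lemma~\ref{lem-contribution-area-11} is exactly the one that works with $s\geq t/2$, the conclusion stands despite the misstated constraint.
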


\begin{proof}
 As in the proof of Lemma \ref{lem-contribution-area-1}, applying  Corollary \ref{cor-reduce-to-E-R} with $m(t)=m^{(1,1)}_{h,2}(t)$,   and $\Omega^{R}_{t}=\Omega^{R}_{t,h}$ defined in \eqref{eq-def-Omega-t-R-10}, it suffices to show that  for each $A, K>0$, $I(t,R) = I(t,R;A,K)$,  defined in \eqref{eq-assumption-EYtARK},  vanishes as first $t \to \infty$ and then $R \to \infty$. As  in \eqref{eq-def-I-t-R-1}, noting that $\beta_{t}=\sigma_{t}^2$,  it now suffices to show that
\begin{equation}\label{eq-def-I-t-R-10}
\begin{aligned}
I(t,R) \lesssim  \int_{0}^{t} \dif s  \int_{-\infty}^{  K}  \mathsf{F}_{t} (t-s, \sqrt{2} \sigma_{t}^2 s + \sigma_{t} x  ) 1_{\left\{(s,\sqrt{2} \sigma_{t}^2 s + \sigma_{t} x) \notin  \Omega^{R}_{t,h}  \right\}}   \frac{K-x}{s^{3/2}}  e^{-\sqrt{2}\sigma_{t} x - \frac{x^2}{2s}}   \dif x
\end{aligned}
\end{equation}
vanishes as first $t \to \infty$ and then $R \to \infty$.
 Let $L_{s,t}:=\sqrt{2}(1-\sigma^2_{t})s +  \frac{h'}{\sqrt{2}}  \log t - \frac{3}{2\sqrt{2}}\log (\frac{t}{t-s+1})  - A$.
\begin{itemize}
  \item
   If $L_{s,t}-\sigma_{t}x>1 $,
   by Lemma \ref{lem-Max-BBM-tail},
\begin{align*}
  & \mathsf{F}_{t}(t-s, \sqrt{2} \sigma^2_{t} s + \sigma_{t} x ) \\
    & = \mathsf{P} \left(\max\limits_{u \in \mathsf{N}_{t-s}} \mathsf{X}_{u}(t-s) \geq  \sqrt{2} (t-s)-  \frac{3}{2\sqrt{2}}\log (t-s+ 1)  + L_{s,t} - \sigma_{t} x   \right)    \\
  & \lesssim_{A} (L_{s,t}-\sigma_{t} x ) e^{-2(1-\sigma^2_{t})s} (\frac{t}{t-s+1})^{3/2} \frac{1}{t^{h'}} e^{\sqrt{2} \sigma_{t} x  } e^{-\frac{(L_{s,t}-\sigma_{t}x)^2}{3(t-s)}}.
\end{align*}
\item If $ L_{s,t}-\sigma_{t}x \leq 1$, we simply upper bound  $ \mathsf{F}_{t}(t-s, \sqrt{2} \sigma^2_{t} s + \sigma_{t} x ) $ by $1$. Moreover, $ L_{s,t}-\sigma_{t}x \leq 1$ holds only if $h \geq 1$, $s \geq t/2$ and $ -\sigma_{t}x \leq  1- L_{s,t} \leq - \frac{h'}{\sqrt{2}}  \log t + \frac{3}{2\sqrt{2}}\log (\frac{t}{t-s+1})+ A+1  $.  Thus
\begin{equation*}
  \int_{t/2}^{t} \dif s  \int_{ -O(\log t)}^{K}   \frac{K-x}{s^{3/2}}  e^{-\sqrt{2}\sigma_{t} x - \frac{x^2}{2s}}   1_{ \{  L_{s,t}-\sigma_{t}x \leq 1 \}}  \dif x  \leq \frac{O(\log^2 t)}{t^{h'}}  \int_{0}^{t/2}\frac{\dif u}{(u+1)^{3/2}}  \to 0.
\end{equation*}
\end{itemize}
Substituting these inequalities into \eqref{eq-def-I-t-R-10}, and using the assumption $\beta_{t}+ \sigma^2_{t}-2= - t^{-h}$, we get
\begin{align*}
 I(t,R) \lesssim
   \int_{0}^{t}  \dif s & \int_{-\infty}^{K}      1_{\left\{(s,\sqrt{2} \sigma_{t}^2 s + \sigma_{t} x) \notin  \Omega^{R}_{t,h}  \right\}}    \frac{K-x}{s^{3/2}}    e^{-\frac{s}{t^{h}} }e^{ - \frac{x^2}{2s}}   \\
   &\qquad \times  \frac{ | L_{s,t}-\sigma_{t} x | }{t^{  h'}}   (\frac{t}{t-s+1})^{3/2}  e^{-\frac{(L_{s,t}-\sigma_{t}x)^2}{3(t-s)}}    \dif x  +o(1).
\end{align*}

In the case  $h \in (0,1)$, make change of variables $s= \xi t^{h}$ and $-x= \eta \sqrt{s}$. Noting that $(s,\sqrt{2} \sigma_{t}^2 s + \sigma_{t} x) \in  \Omega^{R}_{t,h}$ if and only if $\xi \in [R^{-1},R] $ and $ \sigma_{t} \eta \in [R^{-1} , R  ]$, applying the dominated convergence theorem twice we get
\begin{equation*}
  \begin{aligned}
    I_{1}(t,R,K) &\lesssim \int_{0}^{t^{1-h}} \dif \xi \int_{-K/\sqrt{\xi t^{h}}}^{\infty}  1_{\left\{ \substack{ \xi \notin [R{^{-1}}, R]    \\ \text{or } \sigma_{t} \eta \in [R^{-1} , R  ]} \right\} }   \frac{K+\eta \sqrt{\xi t^{h}}}{\xi t^{h}}   e^{-\xi }e^{ - \frac{\eta^2}{2}} \\
    &\qquad \qquad  \qquad \qquad \qquad \times  [ L(\xi t^{h},t)+\sigma_{t} \eta \sqrt{\xi t^{h}} ]  (\frac{t}{t-\xi t^{h}+1})^{3/2}   \dif \eta    \\
 & \overset{t \to \infty }{\longrightarrow }  \int_{0}^{\infty}  \int_{0}^{\infty}  1_{\left\{ \substack{ \xi \notin [R{^{-1}}, R]    \\ \text{or }  \eta \in [R^{-1} , R  ]
 } \right\} }  e^{-\xi  }e^{ - \frac{\eta^2}{2}}   \eta^2   \dif \xi \dif \eta \overset{R \to \infty} \to 0.
  \end{aligned}
\end{equation*}
If $h \geq 1$, make  change of variables $s= \xi t$ and $-x= \eta \sqrt{s}$. Noting that $(s,\sqrt{2} \sigma_{t}^2 s + \sigma_{t} x) \in  \Omega^{R}_{t,h}$ if and only if $\xi \in [R^{-1},1-R^{-1}] $ and $ \sigma_{t} \eta \in [R^{-1} , R  ]$, applying the dominated convergence theorem twice we get
\begin{equation*}
  \begin{aligned}
    I_{1}(t,R,K) &\lesssim \int_{0}^{1} \int_{-\frac{K}{\sqrt{\xi t}}}^{\infty}  1_{\left\{ \substack{ \xi \notin [R{^{-1}}, 1-R^{-1}]    \\ \text{or } \sigma_{t} \eta \in [R^{-1} , R  ]
} \right\} }   e^{ - \frac{\eta^2}{2}}   \frac{K+\eta \sqrt{\xi t}}{\xi t} \\
&\qquad \qquad\quad\quad \times   [ L(\xi t,t)+\sigma_{t} \eta \sqrt{\xi t} ]
(1-\xi)^{-3/2}
 e^{-\frac{(L_{\xi t,t}-\sigma_{t}x)^2}{3(t-\xi t)}}    \dif \xi \dif \eta    \\
 & \overset{t \to \infty }{\longrightarrow }  \int_{0}^{1} \int_{0}^{\infty}  1_{\left\{ \substack{ \xi \notin [R{^{-1}}, R]    \\ \text{or }  \eta \in [R^{-1} , R  ]
 } \right\} }   e^{ - \frac{\eta^2}{2}}   \eta^2  (\frac{1}{1-\xi})^{3/2}  e^{-\frac{\eta^2}{2} \frac{ \xi}{1-\xi} }      \dif \xi \dif \eta \overset{R \to \infty }{\longrightarrow }    0,
  \end{aligned}
\end{equation*}
where we used the fact $ \int_{0}^{1} (\frac{1}{1-\xi})^{3/2}   \dif \xi \int_{0}^{\infty}      \eta^2   e^{-\frac{\eta^2}{2} \frac{ 1}{1-\xi} }    \dif \eta =  \int_{0}^{1}    \dif \xi \int_{0}^{\infty}      \lambda^2   e^{-\frac{\lambda^2}{2}   }    \dif \lambda   < \infty $.
\end{proof}

\begin{proof}[Proof of Theorem \ref{thm-11-approximate}
for $(\beta_{t},\sigma^{2}_{t})_{t>0} \in \mathscr{A}^{h,2}_{(1,1)}$
]
  Take $\varphi \in \mathcal{T}$. Applying
  Corollary \ref{cor-reduce-to-E-R} with $m(t)=m^{(1,1)}_{h,2}(t)$, $\rho=\sqrt{2}$, and $\Omega^{R}_{t}=\Omega^{R}_{t,h}$ defined in \eqref{eq-def-Omega-t-R-1}, it suffices to study the asymptotic of   $\mathbb{E}\left(e^{-\left\langle\widehat{\mathcal{E}}_t^R, \varphi\right\rangle}\right) $,  which equals
  \begin{equation*}
   \mathbb{E}\bigg(\exp \bigg\{ - \int_{0}^{t} \sum_{u \in N^{1}_{s}}  \Phi_{\sqrt{2}}\big(t-s, X_u(s)-\sqrt{2}s + \frac{3-4h'}{2\sqrt{2}}\log t \big) 1_{\{ (s,X_{u}(s) )\in \Omega^{R}_{t,h}  \} }  \dif s \bigg\} \bigg).
  \end{equation*}
  Note that  uniformly for $(s,X_{u}(s)) \in \Omega^{R}_{t,h} $, we have   $ \sqrt{2}s- X_{u}(s) =  \Theta(\sqrt{s})=\Theta(t^{\frac{h'}{2}})$.  Then Lemma \ref{thm-Laplace-BBM-order} yields that
   \begin{align*}
  & \Phi_{\sqrt{2}}(t-s, X_{u}(s)-\sqrt{2}s + \frac{3-2h'}{2\sqrt{2}}\log t )   \\
  &= [1+o(1)]
 \gamma_{\sqrt{2}}(\varphi) \frac{(\sqrt{2}s-X_{u}(s)) }{(t-s)^{3/2}}e^{\sqrt{2}X_{u}(s)-2 s} t^{\frac{3-2h'}{2}} e^{- \frac{(X_{u}(s)-\sqrt{2}s)^2}{2(t-s)}  }
   \end{align*}
  as $t \to \infty$ uniformly for $(s,X_{u}(s))\in \Omega^{R}_{t,h}$.

  \textbf{Case 1: $h \in (0,1)$}. Since now $s=\Theta(t^{h}) \ll t$, in fact we have
   \begin{align*}
     \Phi_{\sqrt{2}}(t-s, X_{u}(s)-\sqrt{2}s + \frac{3-2h'}{2\sqrt{2}}\log t )
    \sim \gamma_{\sqrt{2}}(\varphi)
    \frac{\sqrt{2}\beta_{t}s- X_{u}(s)}{t^{h}} e^{\sqrt{2}X_{u}(s)-2 \beta_{t} s-\frac{s}{t^{h}}}
   \end{align*}
  as $t \to \infty$ uniformly in $(s,X_{u}(s))\in \Omega^{R}_{t,h}$, where we used that  $\beta_{t}=1-\frac{1}{2 t^{h}}$. Using this we can rewrite   $ \mathbb{E}\left(e^{-\left\langle\widehat{\mathcal{E}}_t^R, \varphi\right\rangle}\right) $ as
  \begin{align*}
      \mathbb{E}\bigg(\exp \bigg\{ - [1+o(1)] \gamma_{\sqrt{2}}(\varphi)  \int_{\frac{1}{R} t^{h}}^{Rt^{h}} \sum_{u \in N^{1}_{s}}   \frac{\sqrt{2}\beta_{t}s- X_{u}(s)}{t^{h}} e^{\sqrt{2}X_{u}(s)-2 \beta_{t} s-\frac{s}{t^{h}}} 1_{\{ (s,X_{u}(s)) \in \Omega_{t,h}^{R}\}}  \dif s \bigg\} \bigg) .
  \end{align*}
  Making a change of variable $ s=\lambda t^{h}$,
  and noticing that
   $\beta_{t}=\sigma^{2}_{t}$,   $\{X_{u}(s):u \in N^{1}_{s}\}$ has the same law of $\{\mathsf{X}_{u}(\beta_{t}s): u\in \mathsf{N}_{\beta_{t}s}\}$, we have
  \begin{equation*}
  \mathbb{E}\left(e^{-\left\langle\widehat{\mathcal{E}}_t^R, \varphi\right\rangle}\right)   =
  \mathsf{E}
   \bigg(\exp \bigg\{ -  [1+o(1)] \gamma_{\sqrt{2}}(\varphi) \int_{\frac{1}{R} }^{R}  \mathsf{Z}^{R}(\lambda \beta_{t} t^{h} )  e^{-\lambda}     \dif \lambda \bigg\} \bigg) ,
  \end{equation*}
where
\begin{equation*}
  \mathsf{Z}^{R}(t) := \sum_{u \in \mathsf{N}_t}[\sqrt{2}t- \mathsf{X}_{u}(t)]  e^{\sqrt{2}\mathsf{X}_{u}(t)-2 t}   1_{\{ \sqrt{2}t-\mathsf{X}_{u}(t) \in [\frac{1}{R}\sqrt{t}, R\sqrt{t}]   \}}.
\end{equation*}
 By Lemma \ref{lem-functional-convergence-derivative-martingale}, for each $\lambda > 0$,  $   \mathsf{Z}^{R}(t) \to \mathsf{Z}_{\infty}  \sqrt{ \frac{2}{\pi}}\int_{1/R}^{R} z^2 e^{-\frac{z^2}{2}} \dif z$ in probability.
 Letting $t \to \infty$ then $R\to \infty$ and applying the dominated convergence theorem and  Corollary \ref{cor-reduce-to-E-R},   we have
  \begin{align*}
    &\lim_{t \to \infty}  \mathbb{E}\left(e^{-\left\langle\widehat{\mathcal{E}}_t, \varphi\right\rangle}\right)= \lim_{R \to \infty}  \lim_{t \to \infty}  \mathbb{E}\left(e^{-\left\langle\widehat{\mathcal{E}}_t^R, \varphi\right\rangle}\right)\\
    & = \lim_{R \to \infty}
    \mathsf{E}
     \bigg(\exp \bigg\{ -  \gamma_{\sqrt{2}}(\varphi)\mathsf{Z}_{\infty} \int_{\frac{1}{R} }^{R}    e^{-\lambda}     \dif \lambda  \sqrt{ \frac{2}{\pi}}\int_{\frac{1}{R}}^{R} z^2 e^{-\frac{z^2}{2}} \dif z\bigg\} \bigg)  =
      \mathsf{E}
     \bigg( e^{ - \gamma_{\sqrt{2}}(\varphi) \mathsf{Z}_{\infty}   } \bigg),
  \end{align*}
  which is the Laplace functional of $\operatorname{DPPP}\left( \sqrt{2}C_{\star}   \mathsf{Z}_{\infty}   e^{-\sqrt{2} x} \dif x, \mathfrak{D}^{\sqrt{2}}\right)$.

  \textbf{Case 2: $h \in [1,\infty]$}.  Notice that for  $(s,X_{u}(s)) \in \Omega^{R}_{t,h} $,
  we have
  $s = \Theta(t)$ and  $ \sqrt{2}s- X_{u}(s) =   \Theta(\sqrt{t})$. Now Lemma \ref{thm-Laplace-BBM-order} yields that
   \begin{align*}
    & \Phi_{\sqrt{2}}(t-s, X_{u}(s)-\sqrt{2}s + \frac{3-2h'}{2\sqrt{2}}\log t )   \\
     & =[1+o(1)]\gamma_{\sqrt{2}}(\varphi) \frac{t^{3/2}}{(t-s)^{3/2}t}[\sqrt{2} \beta_{t} s-X_{u}(s)]e^{\sqrt{2}X_{u}(s)-2\beta_{t} s-\frac{s}{t^{h}}}  e^{-\frac{ s}{2(t-s)} \frac{(X_{u}(s)-\sqrt{2}\beta_{t} s)^2}{\beta_{t} s }  }
   \end{align*}
   as $t \to \infty$ uniformly for $(s,X_{u}(s))\in \Omega^{R}_{t,h}$, where we used the fact that  $\beta_{t}=1-\frac{1}{2 t^{h}}$. Then $ \mathbb{E}\left(e^{-\left\langle\widehat{\mathcal{E}}_t^R, \varphi\right\rangle}\right) $ equals
   \begin{multline*}
        \mathbb{E}\bigg(\exp \bigg\{ - [1+o(1)] \gamma_{\sqrt{2}}(\varphi) \int_{\frac{1}{R} t}^{(1-\frac{1}{R})t} \sum_{u \in N^{1}_{s}} \frac{t^{3/2} [\sqrt{2} \beta_{t} s-X_{u}(s)]}{t(t-s)^{3/2}} \,  \exp \{-\frac{s}{t^{h}} \}
       \\    \times \exp\left\{\sqrt{2}X_{u}(s)-2\beta_{t} s\right\}  \exp\left\{- \frac{(X_{u}(s)-\sqrt{2}\beta_{t} s)^2}{2\beta_{t}(t-s)}  \right\}
       1_{\{ (s,X_{u}(s)) \in \Omega_{t,h}^{R}\}}   \dif s \bigg\} \bigg)
   \end{multline*}
    Making a change of variable  $ s=\lambda t$, we get that $  \mathbb{E}\left(e^{-\left\langle\widehat{\mathcal{E}}_t^R, \varphi\right\rangle}\right) $ is equal to
   \begin{equation*}
  \mathsf{E}
   \bigg(\exp \bigg\{ -  [1+o(1)] \gamma_{\sqrt{2}}(\varphi)  \int_{\frac{1}{R} }^{1-\frac{1}{R}} \frac{ e^{-\lambda 1_{\{h=1\}}}}{(1-\lambda)^{3/2}}  \sqrt{\lambda \beta_{t} t} \mathsf{W}^{G_{\lambda}}(\lambda \beta_{t} t; \sqrt{2})  \dif \lambda \bigg\} \bigg) ,
     \end{equation*}
where $G_{\lambda}(x)=G_{\lambda,R}(x)= x e^{-\frac{\lambda}{2(1-\lambda)} x^2 } 1_{ \{x \in [R^{-1},R]\}}$, and
     \begin{equation*}
    \mathsf{W}^{G_{\lambda}}(t;\sqrt{2}) = \sum_{u \in \mathsf{N}_t}   e^{\sqrt{2}\mathsf{X}_{u}(t)-2 t}   G_{\lambda}\left( \frac{\sqrt{2}t- \mathsf{X}_{u}(t) }{\sqrt{t}}  \right) .
     \end{equation*}
  It follows from Lemma \ref{lem-functional-convergence-derivative-martingale} that  for every $\lambda > 0$,  $ \lim\limits_{t \to \infty} \sqrt{t} \mathsf{W}^{G_{\lambda}}(t;\sqrt{2}) = \mathsf{Z}_{\infty}  \sqrt{ \frac{2}{\pi}}\int_{1/R}^{R} G_{\lambda}(z) z e^{-\frac{z^2}{2}} \dif z = \mathsf{Z}_{\infty}  \sqrt{ \frac{2}{\pi}}\int_{1/R}^{R}   z^2 e^{-\frac{z^2}{2(1-\lambda)}} \dif z $.
    Letting $t \to \infty$ first and  then $R\to \infty$,
    and applying the dominated convergence theorem and  Corollary \ref{cor-reduce-to-E-R},   we have
    \begin{align*}
      &\lim_{t \to \infty}  \mathbb{E}\left(e^{-\left\langle\widehat{\mathcal{E}}_t, \varphi\right\rangle}\right)= \lim_{R \to \infty}  \lim_{t \to \infty}  \mathbb{E}\left(e^{-\left\langle\widehat{\mathcal{E}}_t^R, \varphi\right\rangle}\right)\\
      & = \lim_{R \to \infty}
     \mathsf{E}
       \bigg(\exp \bigg\{ -  \gamma_{\sqrt{2}}(\varphi)\mathsf{Z}_{\infty} \int_{\frac{1}{R} }^{1-\frac{1}{R}}  \frac{  e^{-\lambda 1_{\{h=1\}}}}{(1-\lambda)^{3/2}}    \sqrt{ \frac{2}{\pi}}\int_{\frac{1}{R}}^{R} z^2 e^{-\frac{z^2}{2(1-\lambda)}} \dif z  \dif \lambda \bigg\} \bigg)
      \\
     & =    \mathsf{E}
      \bigg(\exp \big\{ - C_{h,2}\gamma_{\sqrt{2}}(\varphi) \mathsf{Z}_{\infty}    \big\} \bigg),
    \end{align*}
    which is the Laplace functional of $\operatorname{DPPP}\left( C_{h,2}\sqrt{2}C_{\star}   \mathsf{Z}_{\infty}   e^{-\sqrt{2} x} \dif x, \mathfrak{D}^{\sqrt{2}}\right)$.
   Note that $ C_{h,2}=  1$ if $h > 1$ and $ C_{h,2}=  ( 1- e^{-1}) $ if $ h=1 $, and we complete the proof.
  \end{proof}

\begin{appendix}

\section{Proof of Lemma \ref{lem-functional-convergence-derivative-martingale-2}}\label{App-A}

\begin{proof}[Proof of Lemma \ref{lem-functional-convergence-derivative-martingale-2}]
To prove (ii), we only need to prove
\begin{equation*}
  \lim_{t \to \infty} \frac{ \alpha_{t} }{ \langle F_{t},\mu_{\mathrm{Gau}} \rangle }  \mathsf{W}^{F_t}_{t}(\lambda_{t})
     = 2\sqrt{2} \mathsf{Z}_{\infty} \ \text{ in probability.}
\end{equation*}
 The case $F_{t}\equiv 1$ was proved in \cite[Theorem 1.1 (iv)]{Pain18}.
   That is,
  $    \alpha_{t} \mathsf{W}_{t}(\lambda_{t})  $ converges to $  2 \sqrt{2} \mathsf{Z}_{\infty} $ in probability. So it suffices to show that
  \begin{equation}\label{suffice}
 \zeta_{t}:= \alpha_{t} \left(  \frac{ \mathsf{W}^{F_t}_{t}(\lambda_{t})}{ \langle F_{t},\mu_{\mathrm{Gau}} \rangle }  - \mathsf{W}_{t}(\lambda_{t}) \right) \to 0  \text{ in probability.}
  \end{equation}
  Note that the above is also sufficient for (i). We are now  left to prove \eqref{suffice}.

Take   $k_{t}, t> 0, $ such that $k_{t} \leq  t/2 $,  $k_{t}/ \alpha_{t}^{2} \to \infty$ and  $k_{t}/t \to 0$ as $t \to \infty$.
Sometimes we also write $k(t)$.
Such $(k_{t})_{t>0}$ exists by the hypothesis $\alpha_{t}=o(\sqrt{t})$.
 For each $v \in \mathsf{N}_{k(t)}$, let $X^{v}_{u}(s):= X_{u}(k_{t}+s) - X_{v}(k_{t})$, $u \in \mathsf{N}^{v}_{s}$, where $\mathsf{N}^{v}_{s}$ are descendants of $v$ at time $k_{t}+s$.  The branching property yields that conditioned on $\mathcal{F}_{k(t)}$, $\{ X^{v} : v \in \mathsf{N}_{k(t)}\}$ are independent standard BBMs.
We rewrite $\mathsf{W}^{F_t}_{t}(\lambda_{t})$ as
$$\mathsf{W}^{F_t}_{t}(\lambda_{t}) = \sum_{v \in \mathsf{N}_{k(t)}} e^{\lambda_{t}\mathsf{X}_{v}(k_{t})-(\frac{\lambda_{t}^2}{2}+1)k_{t}}  \mathsf{W}^{v,F_t}_{t-k_{t}}(\lambda_{t}), $$
where $\mathsf{W}^{v,F_t}_{t-k_{t}}(\lambda_{t})$ is defined to be
  \begin{equation*}
  \sum_{u \in \mathsf{N}^{v}_{t-k(t)}} e^{\lambda_{t}\mathsf{X}^{v}_{u}(t-k_{t})-(\frac{\lambda_{t}^2}{2}+1)(t-k_{t})} F_{t} \left( \frac{\lambda_{t} k_{t}- \mathsf{X}_{v}(k_{t})}{\sqrt{t}}+ \frac{\lambda_{t}(t-k_{t})- \mathsf{X}_{u}^{v}(t-k_{t}) }{\sqrt{t}} \right) .
  \end{equation*}
By the many-to-one formula,
  \begin{align*}
    \mathsf{E} \left[   \mathsf{W}^{v,F_t}_{t-k_{t}}(\lambda_{t})   | \mathcal{F}_{k(t)} \right]  & =   \mathsf{E} \left[   \mathsf{W}^{F_t(y + \cdot )}_{t-k_{t}}(\lambda_{t})   \right] |_{y=\frac{\lambda_{t} k_{t}- \mathsf{X}_{v}(k_{t})}{\sqrt{t}} } \\
    & =\frac{\sqrt{t}}{\sqrt{2 \pi t-k_{t}}} \int F_{t}(y+  z) e^{-\frac{t}{2(t-k_{t})}z^2} \dif z |_{y=\frac{\lambda_{t} k_{t}- \mathsf{X}_{v}(k_{t})}{\sqrt{t}} }.
  \end{align*}
 For $y \in \mathbb{R}$,  define
 $ \delta_{t}(y) :=  \frac{\sqrt{t}}{\sqrt{ t-k_{t}}}     (\int F_{t}(y+  z)  e^{-\frac{t}{2(t-k_{t})}z^2} \dif z ) / (\int F_{t}(z) e^{-\frac{z^2}{2}} \dif z   ) $. Then
 \begin{equation*}
  \mathsf{E} \left[  \frac{ \mathsf{W}^{v,F_t}_{t-k_{t}}(\lambda_{t})}{  \langle F_{t}, \mu_{\mathrm{Gau}}\rangle} | \mathcal{F}_{k(t)} \right] =  \delta_{t} \left(\frac{\lambda_{t} k_{t}- \mathsf{X}_{v}(k_{t})}{\sqrt{t}} \right).
\end{equation*}
\textbf{Step 1}.  Let
   \begin{equation*}
   \widetilde{\mathsf{W}}_{k_{t}}(\lambda_{t}):= \sum_{v \in \mathsf{N}_{k(t)}} e^{\lambda_{t}\mathsf{X}_{v}(k_{t})-(\frac{\lambda_{t}^2}{2}+1)k_{t}}  \, \delta_{t} \left(\frac{\lambda_{t} k_{t}- \mathsf{X}_{v}(k_{t})}{\sqrt{t}} \right),
   \end{equation*}
  and $p_{t}= 1+ \frac{1}{2 \alpha_{t}}$.  Then, using
 the von Bahr-Esseen inequality  which claims that
  for any sequence $\left(X_i\right)_{i \in \mathbb{N}}$ of independent centered random variables and for any $\gamma \in[1,2]$, $\mathbb{E}\left[\left|\sum X_i\right|^\gamma\right] \leq$ $2 \sum \mathbb{E}\left[\left|X_i\right|^\gamma\right]$, we get
   \begin{align*}
 & \mathsf{E} \left[  \left|  \frac{ \mathsf{W}^{F_t}_{t}(\lambda_{t})}{ \langle F_{t},\mu_{\mathrm{G}} \rangle }  -  \widetilde{\mathsf{W}}_{k_{t}}(\lambda_{t})  \right|^{p_{t}} \big|  \mathcal{F}_{k(t)} \right] \\
 & \leq  2   \sum_{v \in \mathsf{N}_{k(t)}} e^{\lambda_{t}p_{t} \mathsf{X}_{v}(k_{t})    -(\frac{\lambda_{t}^2}{2}+1)p_{t}k_{t}}    \mathsf{E}\left|   \frac{ \mathsf{W}^{v,F_t}_{t-k_{t}}(\lambda_{t})}{  \langle F_{t}, \mu_{\mathrm{G}}\rangle}     - \delta_{t} \left(\frac{\lambda_{t} k_{t}- \mathsf{X}_{v}(k_{t})}{\sqrt{t}} \right) \right| ^{p_{t}}  \\
 & \leq 2^{p_{t}+1}\sum_{v \in \mathsf{N}_{k(t)}} e^{\lambda_{t}p_{t} \mathsf{X}_{v}(k_{t})    -(\frac{\lambda_{t}^2}{2}+1)p_{t}k_{t}}   \left[  \frac{\mathsf{E} |  \mathsf{W}^{v,F_t}_{t-k_{t}}(\lambda_{t}) |^{p_{t}}}{  \langle F_{t}, \mu_{\mathrm{G} }\rangle^{p_{t}}}  +    \delta_{t} \left(\frac{\lambda_{t} k_{t}- \mathsf{X}_{v}(k_{t})}{\sqrt{t}} \right)   ^{p_{t}} \right],
   \end{align*}
   where in the last inequality we used $
    |x+y|^p \leq   2^p\left(|x|^p+|y|^p\right)$.
    \begin{itemize}
      \item Firstly, since $F_{t}$ is bounded,  $\mathsf{E} |  \mathsf{W}^{v,F_t}_{t-k_{t}}(\lambda_{t}) |^{p_{t}} \leq \|G\|_{\infty}^{2} \mathsf{E} |  \mathsf{W}_{t-k_{t}}(\lambda_{t}) |^{p_{t}}  \leq \|G\|_{\infty}^{2} \mathsf{E} | \mathsf{W}_{\infty}(\lambda_{t}) |^{p_{t}} \leq \|G\|_{\infty}^{2} c_{8}$, where $c_{8}>0$ is the constant in  \cite[(4.1)]{Pain18}.
      \item  Secondly, suppose $\mathrm{supp}(G)\subset [-A,A]$. If $F_{t}(z) >0$, then  $ |z-r_{t}| \leq A h_{t}$, and hence  $|z| \leq \bar{r}+ A \bar{h}$.  Then for large $t$
  \begin{align*}
   \delta_{t}(y)  &=  \frac{\sqrt{t}}{\sqrt{  t-k_{t}}}  \frac{\int F_{t}(z)  e^{-\frac{t}{2(t-k_{t})}(z-y)^2}   \dif z  } {\int F_{t}(z) e^{-\frac{z^2}{2}} \dif z} \leq  2  \frac{\int F_{t}(z)  e^{- \frac{(z-y)^2}{2}}   \dif z  } {\int F_{t}(z) e^{-\frac{z^2}{2}} \dif z}  \\
  & \leq  2  \frac{\int F_{t}(z)  e^{- \frac{z^2}{2} +   |z| |y| }   \dif z  } {\int F_{t}(z) e^{-\frac{z^2}{2}} \dif z} e^{- \frac{y^{2}}{2} }
  \leq  2 e^{  (\bar{r}+ A \bar{h}) |y|- \frac{y^2}{2}}  \leq \max_{y } 2 e^{  (\bar{r}+ A \bar{h}) |y|- \frac{y^2}{2}}  := c_{0}<\infty.
  \end{align*}
    \end{itemize}
  Therefore, we have
  \begin{align*}
  &  \mathsf{E} \left(  \alpha_{t}^{p_{t}}\left|  \frac{ \mathsf{W}^{F_t}_{t}(\lambda_{t})}{ \langle F_{t},\mu_{\mathrm{G}} \rangle }  -  \widetilde{\mathsf{W}}_{k_{t}}(\lambda_{t})  \right|^{p_{t}}  \right)
   \\
   &  \leq
    \alpha_{t}^{p_{t}} 2^{p_{t}+1}   \left[ \frac{\|G\|_{\infty}^{2} c_{8} }{  \langle F_{t}, \mu_{\mathrm{G} }\rangle^{p_{t}}} + (c_0)^{p_{t}}  \right]  \mathsf{E} \left[ \sum_{v \in \mathsf{N}_{k(t)}} e^{\lambda_{t}p_{t} \mathsf{X}_{v}(k_{t})    -(\frac{\lambda_{t}^2}{2}+1)p_{t}k_{t}} \right]  \\
    & =  \alpha_{t}^{p_{t}} 2^{p_{t}+1}   \left[ \frac{\|G\|_{\infty}^{2} c_{8} }{  \langle F_{t}, \mu_{\mathrm{G} }\rangle^{p_{t}}} + (c_0)^{p_{t}}  \right]  \exp\left\{ (\frac{\lambda_{t}^2p_{t}^2}{2}+1)k_{t} - (\frac{\lambda_{t}^2}{2}+1)p_{t}k_{t}  \right\}   \overset{t \to \infty}{\longrightarrow} 0,
  \end{align*}
where for the limit above we used the fact that
  $(\frac{\lambda_{t}^2p_{t}^2}{2}+1)k_{t} - (\frac{\lambda_{t}^2}{2}+1)p_{t}k_{t} = (\frac{\lambda_{t}^2p_{t}}{2}-1)(p_{t}-1)k_{t}=-\Theta(\frac{k_{t}}{\alpha_{t}^2}) \to -\infty$. As a consequence,
 \begin{equation*}
  \alpha_{t} \left|  \frac{ \mathsf{W}^{F_t}_{t}(\lambda_{t})}{ \langle F_{t},\mu_{\mathrm{G}} \rangle }  -  \widetilde{\mathsf{W}}_{k_{t}}(\lambda_{t})  \right| \to 0 \ \text{ in probability}.
 \end{equation*}

   \textbf{Step 2}. By  \cite[Theorem 1.1 (iv)]{Pain18} again, as $\sqrt{k_{t}}/\alpha_{t} \to \infty$, we have $\alpha_{t} \mathsf{W}_{k_{t}}(\lambda_{t}) \to 2\sqrt{2}  \mathsf{Z}_{\infty}$ in probability.
  To prove \eqref{suffice}, it suffices to show that
   \begin{equation*}
   \alpha_{t} [\widetilde{\mathsf{W}}_{k_{t}}(\lambda_{t})- \mathsf{W}_{k_{t}}(\lambda_{t}) ] =  \alpha_{t}\mathsf{W}_{k_{t}}(\lambda_{t}) \sum_{v \in \mathsf{N}_{k(t)}} \frac{e^{\lambda_{t}\mathsf{X}_{v}(k_{t})-(\frac{\lambda_{t}^2}{2}+1)k_{t}} }{\mathsf{W}_{k_{t}}(\lambda_{t})} \left[ \delta_{t} \left(\frac{\lambda_{t} k_{t}- \mathsf{X}_{v}(k_{t})}{\sqrt{t}} \right) - 1 \right]  \to 0
   \end{equation*}
in probability as $t\to\infty$.
By \cite[Corollary 1.3]{Pain18}, for any   $\epsilon > 0$,  there exists $K > 0$
such that for $t$ large enough, with probability at least $1-\epsilon/2$ we have
\begin{equation*}
  \sum_{v \in \mathsf{N}_{k(t)}} \frac{e^{\lambda_{t}\mathsf{X}_{v}(k_{t})-(\frac{\lambda_{t}^2}{2}+1)k_{t}} }{\mathsf{W}_{k_{t}}(\lambda_{t})}    1_{ \{ |\lambda_{t} k_{t}- \mathsf{X}_{v}(k_{t})| > K \sqrt{k_{t}}  \} }  <  \epsilon.
\end{equation*}
Again, by $\lim_{t\to\infty}\alpha_{t} \mathsf{W}_{k_{t}}(\lambda_{t})=2\sqrt{2}  \mathsf{Z}_{\infty}$,
there exists $K'>0$ such that  with probability at least $1-\epsilon/2$,  we have  $ | \alpha_{t} \mathsf{W}_{k_{t}}(\lambda_{t})| \leq K'$. Then with probability $1-\epsilon$,
\begin{align*}
  &  \alpha_{t} |\widetilde{\mathsf{W}}_{k_{t}}(\lambda_{t})- \mathsf{W}_{k_{t}}(\lambda_{t}) | \\
  & \leq
  K' \sum_{v \in \mathsf{N}_{k(t)}} \frac{e^{\lambda_{t}\mathsf{X}_{v}(k_{t})-(\frac{\lambda_{t}^2}{2}+1)k_{t}} }{\mathsf{W}_{k_{t}}(\lambda_{t})} \left| \delta_{t} \left(\frac{\lambda_{t} k_{t}- \mathsf{X}_{v}(k_{t})}{\sqrt{t}} \right) - 1 \right|  1_{ \{ |\lambda_{t} k_{t}- \mathsf{X}_{v}(k_{t})| \leq  K \sqrt{k_{t}}  \} } + (c_0+1)\epsilon  \\
 & \leq K'  \left(  \sup\{ |\delta_{t}(y)-1|: |y| \leq K \sqrt{k_{t}/t} \} +  (c_0 + 1) \epsilon \right) .
\end{align*}
 Noticing that $ \delta_{t}(y) \leq 2 e^{  (\bar{r}+ A \bar{h}) |y|- \frac{y^2}{2}} $ and
\begin{align*}
  \delta_{t}(y)  &=  \frac{\sqrt{t}}{\sqrt{  t-k_{t}}}  \frac{\int F_{t}(z)  e^{-\frac{t}{2(t-k_{t})}(z-y)^2}   \dif z  } {\int F_{t}(z) e^{-\frac{z^2}{2}} \dif z}
   \geq     \frac{\int F_{t}(z)  e^{- \frac{(z-y)^2}{2} - \frac{k_{t}}{2(t-k_{t})} (z-y)^2  }   \dif z  } {\int F_{t}(z) e^{-\frac{z^2}{2}} \dif z}  \\
 & \geq    \frac{\int F_{t}(z)  e^{- \frac{z^2}{2} -   |z| |y| - \frac{k_{t}}{t} (|z|+|y|)^2  }   \dif z  } {\int F_{t}(z) e^{-\frac{z^2}{2}} \dif z} e^{- \frac{y^{2}}{2} }
 \geq    e^{  (\bar{r}+ A \bar{h}) |y|- \frac{y^2}{2}}  e^{- \frac{k_{t}}{t} (|y|+\bar{r}+ A \bar{h} )^2  },
 \end{align*}
we have  $\sup\{ |\delta_{t}(y)-1|: |y| \leq K \sqrt{k_{t}/t} \} \to 0$ as $t \to \infty $.
Then $\alpha_{t} |\widetilde{\mathsf{W}}_{k_{t}}(\lambda_{t})- \mathsf{W}_{k_{t}}(\lambda_{t}) |\to 0$ in probability as $t\to\infty$ by the arbitrarily of $\epsilon$, and the desired result follows.
\end{proof}

\section{Proof of Lemma \ref{lem-computation}}\label{App-B}
\begin{proof}[Proof of Lemma \ref{lem-computation}]
By definition $v^{*}_{t}=a_{t}p_{t}+b_{t}(1-p_{t})$, so we have  $y=(b_t-\sqrt{2})(1-p_t)t+(\sqrt{2}-a_{t})u $.
      Consider first
    \begin{equation*}
      \begin{aligned}
   & L_{1}:= (\beta_{t}-\frac{a_t^2}{2\sigma
    ^2_{t}})s - \sqrt{2} y - \frac{y^2}{2(1-p_{t})t}\\
    &= \left[ (\beta_{t}-\frac{a_t^2}{2\sigma
    ^2_{t}})p_{t} - (\sqrt{2}b_{t}-2)(1-p_{t})-\frac{(b_{t}-\sqrt{2})^2 (1-p_{t}) }{2} \right] t \\
     & \qquad  +  \left[ (\beta_{t}-\frac{a_t^2}{2\sigma
     ^2_{t}}) - \sqrt{2}(\sqrt{2}-a_{t})-(b_t-\sqrt{2})(\sqrt{2}-a_t)   \right] u     - \frac{(\sqrt{2}-a_{t})^2}{2(1-p_{t})}\frac{u^2}{t} .
      \end{aligned}
    \end{equation*}
    We write $\Delta_t:=\frac{b_t}{\sqrt{2}}-1  $.
    Recalling our definition  \eqref{eq-def-a-b-p} and \eqref{eq-def-star-a-b-p},
     a little computation yields the following claims.
    \begin{itemize}
      \item The coefficient for term $t$ equals  $(\beta_{t}-\frac{a_t^2}{2\sigma
    ^2_{t}})p_{t} + (1-\frac{b_t^2}{2})(1-p_{t})=0 $  by \eqref{eq-condition-zero}.
    \item The coefficient for term $u$ equals  $\beta_{t}+\frac{\sigma^2_t b^2_t}{2}-\sqrt{2}b_{t}=\beta_t+\sigma^2_t \frac{\beta
    _t-1}{1-\sigma^2_t} - \sqrt{2}b_t=\frac{\beta_t-\sigma^2_t}{1-\sigma^2_t}-\sqrt{2}b_t=\frac{b_t^2}{2}+1-\sqrt{2}b_t =\Delta_{t}^2$.
    \end{itemize}
    Thus $L_1= \Delta_{t}^2  u  - \frac{(\sqrt{2}-a_{t})^2}{2(1-p_{t})}\frac{u^2}{t}$.
   Secondly, letting $A_t= \frac{(\sqrt{2}-a_{t})}{(1-p_t)t}$,  we have
    \begin{equation*}
      \begin{aligned}
     L_2 &= \frac{y^2}{2(1-p_{t})t}- \frac{y^2}{2(1-p_{t})t-2u} =  \frac{-u[(b_t-\sqrt{2})(1-p_t)t+(\sqrt{2}-a_{t})u]^2}{2(1-p_t)t[(1-p_t)t-u]} \\
     &= -u \left[ (\frac{b_t}{\sqrt{2}}-1)  + \frac{(\sqrt{2}-a_{t})u}{(1-p_t)t} \right]^2 (1- \frac{u}{(1-p_t)t} )^{-1}\\
     &=    -u \left( \Delta_{t} +  A_{t} u \right)^{2} (1+  \frac{u}{(1-p_t)t-u} ) = - \Delta_{t}^{2} u -r(u,t),
   \end{aligned}
    \end{equation*}
   where $r(u,t):=  (2\Delta_{t} + A_{t} u) A_{t} u^{2}    + \frac{ \left( \Delta_{t} +  A_{t} u \right)^{2}}{(1-p_t)t-u} u^{2}  $. Therefore
   \begin{equation*}
    L(u,t)= L_{1}(u,t)+ L_{2}(u,t) =  -\frac{(\sqrt{2}-a_{t})^2}{2(1-p_{t})}\frac{u^2}{t} - r(u,t).
   \end{equation*}

  \textbf{Case (i).} By \eqref{eq-asymptotic-abv}   we have  $p_{t}\sim \frac{1}{2(1-\sigma^2)^2t^{h}}$,  $\Delta_{t}\sim \frac{1}{2(1-\sigma^2)t^{h}}$, $\sqrt{2}-a_{t}\sim \sqrt{2}(1-\sigma^2)$ and   $A_{t} \sim \frac{\sqrt{2}(1-\sigma^2)}{t}$. Then, for fixed  $ \xi$,
    \begin{equation*}
    L(\xi \sqrt{t},t) =  -\frac{(\sqrt{2}-a_{t})^2}{2(1-p_{t})}\xi ^{2} - r(\xi \sqrt{t},t)  \to  -(1-\sigma^2)^{2} \xi^2   \text{ as } t \to \infty.
    \end{equation*}
Moreover, we claim that for large $t$,  $r(u; t) \geq 0$ for all $u>-p_{t} t$. In fact,
the  claim is true if we have
 $2\Delta_{t} + A_{t} u \geq 2\Delta_{t} - A_{t} p_{t} t \geq 0 $.
 Note that
  $2\Delta_{t} - A_{t} p_{t} t = 2 \Delta_{t}-  \frac{(\sqrt{2}-a_{t})}{(1-p_t)} p_{t}= [2 \Delta_{t} t^{h} -  \frac{(\sqrt{2}-a_{t})}{(1-p_t)} p_{t} t^{h} ] t^{-h}$. By  \eqref{eq-asymptotic-abv}, we have  $ [2 \Delta_{t} t^{h} -  \frac{(\sqrt{2}-a_{t})}{(1-p_t)} p_{t} t^{h} ]  \to \frac{1}{1-\sigma^{2}}-\frac{1}{\sqrt{2}(1-\sigma^2)}>0 $.  This proves our claim, and thus $L(\xi \sqrt{t},t) \leq    -\frac{(\sqrt{2}-a_{t})^2}{2(1-p_{t})}\xi ^{2}  \leq - c \xi^{2} $ for some constant $c$.

  \textbf{Case (ii)}. By \eqref{eq-asymptotic-abv-3}, we have $p_{t} = 1/2$,
  $\Delta_{t}\sim \frac{1}{2 t^{h/2}}$, $\sqrt{2}-a_{t}\sim \frac{1}{\sqrt{2} t^{h/2}}$ and   $A_{t} \sim \frac{\sqrt{2}}{t^{1+\frac{h}{2}}}$.  Then  $\lim_{t\to\infty} r(\xi \sqrt{t},t)=  2\Delta_{t} A_{t} t^{1+h} \xi^{2}    +  2\Delta_{t}^{2} t^{h} \xi^{2} =  (\sqrt{2}+\frac{1}{2})\xi^{2}$, and
  \begin{equation*}
L(\xi t^{\frac{1+h}{2}},t) =    -(\sqrt{2}-a_{t})^2 t^{h} \xi ^{2}
-  r(\xi \sqrt{t},t)  \to   -(\sqrt{2}+1)\xi^2 .
  \end{equation*}
Finally we prove that for large $t$,  $r(u; t) \geq 0$ for all $u>-1/2 t$ by  showing that   $2\Delta_{t} + A_{t} u \geq 2\Delta_{t} - \frac{1}{2} A_{t} t \geq 0 $. This follows from the fact that  $\lim_{t\to\infty} t^{h/2}[2\Delta_{t} - \frac{1}{2} A_{t}t ]  \to 1-\frac{\sqrt{2}}{2} >0$. Hence $L(\xi \sqrt{t},t) \leq  - (\sqrt{2}-a_{t})^2t^{h} \xi ^{2} \leq - c \xi^{2} $ for some constant $c$.
    \end{proof}
\end{appendix}



\bibliographystyle{amsplain}


\end{document}